\numberwithin{equation}{section}
\newtheorem{thm}{Theorem}[section]
\newtheorem{proposition}[thm]{Proposition}
\newtheorem{corollary}[thm]{Corollary}
\newtheorem{lemma}[thm]{Lemma}
\theoremstyle{definition}
\newtheorem*{remark*}{Remark}
\newtheorem{remark}[thm]{Remark}
\newcommand{\gunonsq}{\varepsilon} 
\newcommand{\F}{\mathbb{F}}
\newcommand{\lup}{{\ell_2}}
\newcommand{\ldown}{{\ell_1}}
\newcommand{\g}{\mathfrak{g}}
\newcommand{\gl}{\mathfrak{gl}}
\newcommand{\gu} {\mathfrak{gu}}
\newcommand{\GU}{\mathrm{GU}}
\newcommand{\GUU}[2]{\mathrm{GU}_{#1}(
\cO_{#2})}
\newcommand{\GL}{\mathrm{GL}}
\newcommand{\B}{\mathrm{B}}
\newcommand{\U}{\mathrm{U}}
\newcommand{\cO}{\mathfrak{o}}
\newcommand{\smat}[4]{\left[\begin{smallmatrix}
		#1 & #2 \\  #3 & #4 \end{smallmatrix}\right]} 
\newcommand{\mat}[4]{\left[ \begin{matrix}
		#1 & #2 \\  #3 & #4 \end{matrix}\right] }
\newcommand{\cmat}[2]{\left[\begin{smallmatrix}
		#1 \\  #2 \end{smallmatrix}\right]}
\newcommand{\Z}{\mathrm{Z}}
\renewcommand{\det}{{\mathrm{det}}}
\newcommand{\G}{\mathrm{G}}
\newcommand{\C}{\mathrm{C}}
\newcommand{\co}{{\mathfrak{o}}}
\newcommand{\sns}{{\mathbf{sns}}}
\renewcommand{\ss}{\mathbf{ss}}
\newcommand{\cus}{\mathbf{cus}}
\newcommand{\ind}{\mathrm{Ind}}
\newcommand{\K}{\mathrm{K}}
\newcommand{\T}{\mathrm{T}}
\newcommand{\I}{\mathrm{I}}
\newcommand{\Lri}{\mathfrak{O}}
\newcommand{\lri}{\cO}
\renewcommand{\tt}{\mathfrak{t}}
\newcommand{\nreg}{\mathbf{nreg}}
\newcommand{\nonsq}{\epsilon} 
\newcommand{\mfp}{\mathfrak{p}}
\newcommand{\cOl}{{\cO_{\ell}}}
\newcommand{\cOd}{{\cO_{\ldown}}}
\newcommand{\god}{\g(\cOd)}
\newcommand{\Kd}{\K^{\ldown}}
\newcommand{\Gol}
{\G(\cOl)}
\newcommand{\mfP}{\mathfrak{P}}
\newcommand{\tC}{\mathrm{C}}
\newcommand{\rl}{R_\ell}
\newcommand{\dtag}{D (\tilde{\alpha_1},\tilde{\alpha_2}, g_1, g_2)}
\newcommand{\ti}[1]{\tilde{#1}}
\title[On Tensor products of regular characters of $\GL_2$ and $\GU_2$]{On Tensor products of regular characters of  the General \\ Linear  and unitary  groups of degree two over the  \\ principal ideal   local  rings of finite length} 
\author{Archita Gupta}
\address{Department of Mathematics and Statistics, IIT Kanpur,  Kanpur 208016, India}
\email{architagup20@iitk.ac.in}
\author{M Hassain}
\address{Statistics and Mathematics Unit, Indian Statistical Institute, Bangalore 560059, India}
\email{hassainm\_pd@isibang.ac.in}
\author{Pooja Singla}
\address{Department of Mathematics and Statistics, IIT Kanpur,  Kanpur 208016, India}
\email{psingla@iitk.ac.in}
\keywords{Tensor product, Kronecker product, multiplicity-free, multiplicity bound, regular representations, Principal ideal local rings, General Linear groups, Unitary groups}
\subjclass[2010]{Primary 20G05; Secondary 20C15, 20G25, 15B33.}
\begin{document}

\begin{abstract}
	Let $R$ be a principal ideal local ring of finite length with a finite residue field of odd characteristic. Let $G(R)$ denote either the general linear group or the general unitary group of degree two over 
	$R$. We study the decomposition of tensor products of irreducible representations of 
	$G(R)$. It is known that the irreducible representations of 
	$G(R)$ are built from regular representations, which are classified into three types: cuspidal, split semisimple, and split non-semisimple.
	
	We prove that the tensor product of any two regular irreducible representations of distinct types has irreducible constituents with multiplicity at most two. Moreover, we show that the regular part of the tensor product of a cuspidal representation with any other regular representation is multiplicity free. When both factors are of split semisimple type, we show that the multiplicity of any regular irreducible constituent is at most $\mathrm{length}(R) + 1$, and that this bound is achieved only when the constituent is also split semisimple. In contrast, we demonstrate that the multiplicity in the tensor product of two split non-semisimple representations can grow with the cardinality of the residue field when the length of the ring is at least two.
	
	In the case when $R$ is a finite field, all such tensor product multiplicities are uniformly bounded above by two. This highlights a significant difference between the behaviour of tensor products in the field case and in the more general finite local ring setting.  
\end{abstract}
\maketitle
\tableofcontents
    
\section{Introduction}
 The tensor product problem, a classical question in representation theory, concerns decomposing the tensor product of two irreducible representations into a direct sum of irreducible representations. This problem appears widely across mathematics. For instance, in Schur–Weyl duality, the decomposition of tensor powers of the standard representation of $\mathrm{GL}_n$ illustrates the rich interplay between linear and symmetric group representations. Similarly, in the context of finite groups, tensor product decompositions are central to  understanding the structure of representations of groups.

The problem has been extensively studied for various families of groups. In the case of the polynomial representations of $\GL_n(\mathbb{C})$, Littlewood and Richardson~\cite{Littlewood-Richardson}, and independently Robinson~\cite{MR1507943}, proposed a rule describing the decomposition of such tensor products. This rule was rigorously proved later in~\cites{MR498826, MR511739}. The tensor product problem for irreducible characters of the symmetric and alternating groups, as well as their double covers, has been studied in depth in~\cites{MR1201916, MR1722888, MR1725703, MR1847134}. Although the problem remains open in general, a complete classification of irreducible representations of $S_n$ with multiplicity-free tensor products was obtained in~\cite{MR3720803}, and analogous results for plethysms of Schur functions appeared in~\cite{MR4439501}.

For finite general linear groups, Hiss and Lübeck~\cite{MR2125073} proved that for $\GL_n(\mathbb{F}_q)$ and $\GU_n(\mathbb{F}_q)$, the multiplicity of a unipotent character in the tensor product of two unipotent characters is a polynomial in $q$ with rational coefficients. In most cases, the tensor square of the Steinberg representation of a finite simple group of Lie type contains every irreducible character~\cite{MR3056296}.

  In recent work, Letellier-Nam~\cite{letellier2025saxlconjecturetensorsquare} established an analogue of the Saxl conjecture for the tensor square of unipotent characters of $\GL_n(\mathbb{F}_q)$. The tensor products of generic irreducible characters of $\GL_n(\mathbb{F}_q)$ were studied in~\cites{MR3022764,MR3034296}, and those of split semisimple (not necessarily generic) irreducible characters in~\cite{Scognamiglio_2024}. Further, Letellier and Rodriguez-Villegas~\cite{letellier2024ennoladualitydecompositiontensor} investigated Ennola duality in the decomposition of tensor products of unipotent and generic characters of $\GL_n(\mathbb{F}_q)$ and $\GU_n(\mathbb{F}_q)$, by relating the multiplicities of irreducible characters in these groups. Despite this progress, the tensor product problem for $\GL_n(\mathbb{F}_q)$ and $\GU_n(\mathbb{F}_q)$ even for $n \geq 3$ remains open in general. 
A few partial results for $\GL_2(\F_q)$ and
$\GL_3(\mathbb F_q)$ are included in \cites{MR1757476, MR3201448}. 
For $\GL_2(\mathbb{F}_q)$, a complete decomposition of the tensor product was independently obtained in \cite{kaur2023gl2} and \cite{gupta-Hassain2025tensor}.

In this article, we 
study the tensor product problem for the general linear and unitary groups of degree two over the principal ideal local rings. These groups are natural generalization of $\GL_2(\mathbb F_q)$ and $\GU_2(\mathbb F_q).$
 
Let $\lri$ be a complete discrete valuation ring with residue field $\mathsf{k}$ of odd characteristic. Let~$\mfp$ be the maximal ideal and let $\pi$ be a fixed uniformizer. Let $\Lri$ be an unramified quadratic extension of $\lri$.   For $\ell \in \mathbb N$, we let $\lri_\ell=\lri/\mfp^\ell$ denote the finite quotient.
Let $\G$ denote either the general
linear group $\GL_2$ or the unitary group $\GU_2$ associated with $\Lri$.

The representation theory of $\G(\co_\ell)$ is well studied, see \cites{MR2588859, MR2456275, MR3737836, Campbell-thesis}. It is known that the irreducible representations of $\G(\cO_\ell)$ fall into two categories: {\bf regular} and {\bf non-regular}. The non-regular representations arise, up to a twist, via induction from the regular representations of $\G(\cO_i)$ for some $i < \ell$. In this spirit, the {\bf regular representations} are the building blocks of the representation theory of $\G(\cO_\ell)$. For $\GL_2$, regular representations coincide with the so-called generic representations~\cite{MR4399251}. Any regular representation $\rho$ of $\G(\cO_\ell)$ for $\ell \geq 2$ is known to have its dimension in the set
\[
\{ (q-1)q^{\ell-1}, (q+1)q^{\ell-1}, (q^2-1)q^{\ell-2} \}.
\]
Based on these dimensions and their constructions, regular representations are classified into types $\tt(\rho)$ as follows:
\begin{itemize}
\item Cuspidal: $\tt(\rho) = \cus$  if $\dim(\rho) = (q-1)q^{\ell-1}$,

\item Split semisimple: $\tt(\rho) = \ss$ if $\dim(\rho) = (q+1)q^{\ell-1}$,

\item Split non-semisimple: $\tt(\rho) = \sns$ if $\dim(\rho) = (q^2-1)q^{\ell-2}$.
\end{itemize} 
For $\ell = 1$, the dimension formulas differ slightly. To describe results uniformly, we define all non-linear irreducible representations of $\G(\cO_1)$ as regular, with types determined analogously:
 \begin{itemize} 
\item $ \cus$ if $\dim(\rho) = q-1$,

\item $ \ss$ if $\dim(\rho) = q+1$,

\item $\sns$ if $\dim(\rho) = q$.
\end{itemize}

Our focus here is on the {\bf tensor product of regular representations} of $\G(\cO_\ell),$ particularly determining the {\bf multiplicity} of regular constituents in such products.  This problem for $\G = \GL_2$ and $\ell = 1$ has been previously studied in \cite{gupta-Hassain2025tensor}, we extend those results to $\ell \geq 1$ for $\GL_2(\cO_\ell)$ and also include the results for $\GU_2(\cO_\ell)$. In particular, we aim to classify pairs of regular representations $\rho_1$ and $\rho_2$  such that their tensor product $\rho_1 \otimes \rho_2$ is {\bf multiplicity free}.

Let $\lambda, \mu, \nu$ be regular irreducible representations of $\G(\cO_\ell)$. We denote the multiplicity of $\nu$ in $\lambda \otimes \mu$ by $g_{\lambda \mu}^\nu$. 
Our main results provide sharp upper bounds for the multiplicities of regular constituents in tensor products of regular representations, classified according to the types involved.

\begin{thm}
\label{thm:main-theorem}
Let $\ell \geq 1$, and let $\lambda, \mu, \nu$ be regular irreducible representations of $\G(\cO_\ell)$. 
\begin{enumerate} 
\item If $\cus \in \{\tt(\lambda), \tt(\mu), \tt(\nu)\},$ then 
\[
g_{\lambda \mu}^\nu \leq 1. 
\]
\item If the set $\{\tt(\lambda), \tt(\mu), \tt(\nu)\}$ consists of exactly two types, then 
\[
g_{\lambda \mu}^\nu \leq 2,
\]
with equality occurring only when the triple $(\tt(\lambda), \tt(\mu), \tt(\nu))$ is a permutation of $(\ss, \sns, \ss).$
\item If all three representations are of type $\ss$, i.e., $\{\tt(\lambda), \tt(\mu), \tt(\nu)\} = \{\ss\}$, then
\[
g_{\lambda \mu}^\nu \leq  \ell+1.
\]
\end{enumerate}     
\end{thm}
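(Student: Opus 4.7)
The plan is to reduce $g_{\lambda\mu}^\nu = \dim\mathrm{Hom}_{\G(\cO_\ell)}(\lambda\otimes\mu,\nu)$ to an intertwining problem on a much smaller subgroup via Clifford theory on the congruence filtration. For $\ell = 2m$, the $m$-th congruence kernel $\K^m = \ker(\G(\cO_\ell)\to\G(\cO_m))$ is abelian and isomorphic to the additive group of $\g(\cO_m)$, and each regular representation $\rho$ restricts to $\K^m$ as a sum over the $\G(\cO_\ell)$-orbit of a single character $\psi_{A_\rho}$, with $A_\rho\in\g(\cO_m)$ a regular element whose $\G(\cO_1)$-orbit type matches $\tt(\rho)$. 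Since $\K^m$ is abelian, tensor products behave additively: the restriction of $\lambda\otimes\mu$ to $\K^m$ decomposes according to the sums $A_\lambda^g+A_\mu^h$, so $g_{\lambda\mu}^\nu$ vanishes unless $A_\nu$ is $\G(\cO_\ell)$-conjugate to such a sum, and otherwise reduces to an intertwining computation at the stabiliser of the sum. For $\ell$ odd, a Heisenberg-extension variant of this reduction applies.

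For part (1), a cuspidal factor $\lambda$ corresponds to an anisotropic regular $A_\lambda$ whose centraliser is an anisotropic torus; the Clifford extension is a character of this torus, and the compatibility forced by $\nu$ pins it down uniquely, giving $g_{\lambda\mu}^\nu\leq 1$. The cases $\tt(\mu)=\cus$ and $\tt(\nu)=\cus$ reduce to this one by the standard symmetries $g_{\lambda\mu}^\nu=g_{\mu\lambda}^\nu=g_{\nu\lambda^*}^\mu$. For part (2), the non-trivial case is $\{\tt(\lambda),\tt(\mu),\tt(\nu)\}=\{\ss,\sns\}$ (when $\cus$ is involved, part (1) gives the sharper bound $1$). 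An index-two ambiguity in the Clifford extension arises when the stabiliser of $A_\lambda+A_\mu$ is split semisimple; orbit analysis shows this occurs precisely for the permutations of $(\ss,\sns,\ss)$, producing the sharp bound $2$, while other $\{\ss,\sns\}$-triples yield $g_{\lambda\mu}^\nu\leq 1$.

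For part (3), all three representations are of type $\ss$ and can be realised as principal series $\ind_{\B(\cO_\ell)}^{\G(\cO_\ell)}\chi_\bullet$. The Bruhat-type decomposition of $\G(\cO_\ell)$ has exactly $\ell+1$ double cosets $\B\backslash\G/\B$, indexed by the valuation $v\in\{0,1,\ldots,\ell-1,\infty\}$ of the lower-left entry. Applying the projection formula and Frobenius reciprocity yields
\[
g_{\lambda\mu}^\nu=\sum_{g\in\B\backslash\G/\B}\dim\mathrm{Hom}_{\B\cap \B^g}\bigl(\chi_\lambda|_{\B\cap \B^g}\otimes\chi_\mu^g,\,\nu|_{\B\cap \B^g}\bigr),
\]
and a coset-by-coset analysis of these restrictions bounds each summand by $1$ (using regularity of $\chi_\nu$), giving $g_{\lambda\mu}^\nu\leq\ell+1$.

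The principal obstacle is the detailed analysis in part (3): one must verify coset-by-coset that each summand contributes at most $1$, and characterise when all $\ell+1$ are simultaneously non-zero (expected to force $\nu$ itself to be of type $\ss$). A secondary subtlety, present throughout, is the uniform treatment of $\GL_2$ and $\GU_2$, since the split versus non-split structure of the torus alters the orbit combinatorics at several points and requires case-by-case adjustments.
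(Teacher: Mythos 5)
Your overall framework (Clifford theory on the congruence filtration, Mackey decomposition over double cosets of the relevant stabilizers, and the principal-series realization $\ind_{\B(\cO_\ell)}^{\G(\cO_\ell)}(\chi_1,\chi_2)$ for the split semisimple case) is the same architecture the paper uses. However, the one place where you commit to a concrete step, part~(3), contains a genuine error. You assert that in
\[
g_{\lambda\mu}^\nu=\sum_{g\in\B\backslash\G/\B}\dim\mathrm{Hom}_{\B\cap \B^g}\bigl(\chi_\lambda|_{\B\cap \B^g}\otimes\chi_\mu^g,\,\nu|_{\B\cap \B^g}\bigr)
\]
``a coset-by-coset analysis \dots bounds each summand by~$1$.'' This is false for the Weyl-element double coset: there $\B\cap\B^g=\T(\cO_\ell)$, the summand is $\langle(\chi_\lambda\chi_\mu^g)|_{\T(\cO_\ell)},\nu|_{\T(\cO_\ell)}\rangle$, and the restriction of an $\ss$-type $\nu$ to $\T(\cO_\ell)$ can contain a fixed character with multiplicity as large as $\ell+1$ (this is exactly Proposition~\ref{prop:Induction T to G SS multiplicities} in the paper, whose maximum is $\ell-n_j(\omega_1,\omega_2)+1$ with $n_j$ able to equal~$0$). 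The bound $\ell+1$ therefore does \emph{not} come from ``$\ell+1$ cosets, each contributing at most $1$''; it comes from a disjointness argument on the sets $S_j^k$ (Lemma~\ref{lem: borel rep isomorphism}, Propositions~\ref{prop:condition for ss in phi i} and~\ref{prop:Induction T to G SS multiplicities}, Lemma~\ref{lem:b to G ind}): the pieces $\ind_\B^\G\delta_i$ ($1\leq i\leq\ell-1$) contribute only when $\nu\in S_1^{\ell-i}\cup S_2^{\ell-i}$, the $\ind_\T^\G$ piece contributes $>1$ only when $\nu\in S_3^{\ell-1}\cup S_4^{\ell-1}$, and these regimes are mutually exclusive. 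Without that compensating disjointness your argument would not give the stated bound.

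Parts~(1) and~(2) are also much thinner than they appear: the phrases ``the compatibility forced by $\nu$ pins it down uniquely'' and ``orbit analysis shows this occurs precisely for permutations of $(\ss,\sns,\ss)$'' hide the two real technical burdens of the proof. You need (a) that each Mackey piece $V(\phi_1,\phi_2^g)$ is multiplicity-free on regular constituents --- for the cuspidal-cuspidal case this is Theorem~\ref{thm:SA multiplicity free}, which rests on the character estimate $|\chi_\phi(g)|=1$ off $\Z D^{\ell_1}(\tilde A)$ (Proposition~\ref{prop:character values}) --- and (b) that distinct double cosets produce disjoint sets of regular constituents, which is the content of the $|W_g|$ computations in Theorems~\ref{thm:distinct non irred mAB(C)} and~\ref{prop: cus with other case |W_g|}, and of Proposition~\ref{prop:irr with irr}. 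Neither of these is obtained for free from the abelianness of $\K^m$ or from the structure of the anisotropic torus; sketching a Heisenberg lift for odd $\ell$ does not supply them. The cases of the $\ell=1$ statement (handled in the paper via explicit character tables for $\GU_2(\mathbb F_q)$ and by citation for $\GL_2(\mathbb F_q)$) are also not covered by your congruence-filtration reduction, which presumes $\ell\geq 2$.
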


\begin{corollary}
\label{cor:tensor-product-results}
  Let $\lambda$ and $\mu$ be regular irreducible representations of $\G(\cO_\ell)$ with $\tt(\lambda) = \cus$.
\begin{enumerate} 
\item If $\tt(\lambda) \neq \tt(\mu)$, then the tensor product $\lambda \otimes \mu$ is multiplicity free.

\item The {\bf regular part} of $\lambda \otimes \mu$ that is, the sum of regular irreducible constituents of $\lambda \otimes \mu$  is multiplicity free.
\end{enumerate} 
\end{corollary}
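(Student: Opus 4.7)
The plan is to deduce the corollary from Theorem~\ref{thm:main-theorem} together with a Clifford-theoretic analysis of the restriction of $\lambda\otimes\mu$ to the last congruence subgroup $\K^{\ell-1}=\ker(\G(\cO_\ell)\to\G(\cO_{\ell-1}))$, with the case $\ell=1$ (where $\K^{0}$ is the whole group) handled separately. Part~(2) is essentially a direct translation of Theorem~\ref{thm:main-theorem}(1): since $\tt(\lambda)=\cus$, for any regular irreducible $\nu$ the set $\{\tt(\lambda),\tt(\mu),\tt(\nu)\}$ contains $\cus$, so $g_{\lambda\mu}^\nu\leq 1$. Given Part~(2), Part~(1) reduces to showing that $\lambda\otimes\mu$ has no non-regular constituents when $\tt(\mu)\in\{\ss,\sns\}$.

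For $\ell\geq 2$, I would invoke the standard orbit-theoretic description of regular representations. The subgroup $\K^{\ell-1}$ is abelian, and its characters are identified with elements of the residue-field Lie algebra $\g(\residuefield)$ via the trace pairing. Under this identification, the restriction of any irreducible representation of $\G(\cO_\ell)$ to $\K^{\ell-1}$ is supported on a single $\G(\residuefield)$-orbit, and the representation is regular precisely when that orbit consists of regular (non-central) elements of $\g(\residuefield)$. The cuspidal type corresponds to orbits of anisotropic elements (irreducible characteristic polynomial over $\residuefield$), while the types $\ss$ and $\sns$ correspond to orbits of split elements. The character of $(\lambda\otimes\mu)|_{\K^{\ell-1}}$ decomposes as a sum of characters corresponding to pairwise sums $A+B$ with $A$ anisotropic and $B$ split, so it suffices to prove that no such sum is central.

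This is the key elementary observation: if $A\in\g(\residuefield)$ is anisotropic and $B$ is split, then $A+B$ cannot be central. Indeed, $A+B=cI$ would force $A=cI-B$ to be split, since translation by a scalar preserves the property of having all eigenvalues in $\residuefield$; this would contradict the anisotropy of $A$. Hence every $\K^{\ell-1}$-character appearing in $\lambda\otimes\mu$ corresponds to a regular element of $\g(\residuefield)$, so every irreducible constituent of $\lambda\otimes\mu$ is regular, and Part~(2) then concludes the argument.

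I expect the main point requiring care to be the uniform treatment of $\G=\GL_2$ and $\G=\GU_2$, since the relevant Lie algebras ($\mathfrak{gl}_2$ and $\mathfrak{gu}_2$) and the precise meaning of anisotropic versus split differ in detail even though the dichotomy needed for the argument is the same. The case $\ell=1$ lies outside the congruence-subgroup framework; there the statement for $\GL_2(\residuefield)$ follows from the explicit tensor product decomposition obtained in~\cite{gupta-Hassain2025tensor} and~\cite{kaur2023gl2}, and an analogous classical character computation handles $\GU_2(\residuefield)$.
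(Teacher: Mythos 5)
Your proof is correct, and the core observation it isolates --- that if $\tt(A)=\cus$ and $\tt(B)\in\{\ss,\sns\}$ then $A+gBg^{-1}$ is regular (non-central) for every $g$, because $A=cI-gBg^{-1}$ would force $A$ to have the same type as $B$, using that type is conjugation- and scalar-translation-invariant --- is exactly the observation the paper makes implicitly at the start of \autoref{sec:proof-of-them-s1-s3}, where it notes ``Since $A_1+A_2^g$ is regular for $g\in\G(\cO_\ell)$, every irreducible constituent of $V(\phi_1\otimes\phi_2^g)$ is a regular representation.'' The organizational route is slightly different: the paper lets this fact feed into \autoref{thm:main-theorem-2}(2), which directly asserts $\langle\rho_1\otimes\rho_2,\rho\rangle\leq1$ for \emph{all} irreducible $\rho$ (not just regular ones), so that Corollary~\ref{cor:tensor-product-results}(1) is read off immediately; you instead derive part~(1) as a two-step consequence of part~(2) plus the regularity of constituents, which is a cleaner deduction if one only has \autoref{thm:main-theorem} in hand. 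One point worth tightening in your write-up: for $\GU_2$ the split/anisotropic dichotomy is not literally ``all eigenvalues in $\residuefield$'' versus not, since the trace of an anti-Hermitian matrix lies in $\nonsq\cO_\ell$ and the types are distinguished by $\sigma$ being a square versus a non-square unit (with the roles reversed relative to $\GL_2$); but your argument survives verbatim once phrased as ``type is invariant under scalar translation and under negation and under conjugation'', all of which are immediate from \autoref{lem:orbit-representatives-gol}. The $\ell=1$ case is indeed handled separately in the paper in \autoref{sec:unitary over field} exactly as you indicate.
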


\begin{thm}
\label{thm:sns-theorem}
Let $\ell \geq 1$ and let $\lambda, \mu, \nu$ be regular irreducible representations of $\G(\co_\ell)$ such that 
\[
\{ \tt(\lambda), \tt(\mu), \tt(\nu) \} = \{ \sns \}.
\]
\begin{enumerate}
    \item For $\ell = 1$, we have $g_{\lambda \mu}^\nu \leq 1$. 
    \item For $\ell \geq 2$, there exist representations $\lambda, \mu, \nu$ such that
    \[
    g_{\lambda \mu}^\nu \geq (q - 2)q^{\lfloor \frac{\ell}{2} \rfloor-1}.
    \]
\end{enumerate}
\end{thm}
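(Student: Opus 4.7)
For part (1), the argument reduces to the classical decomposition of the Steinberg representation. When $\ell = 1$, every split non-semisimple irreducible representation of $\G(\mathsf{k})$ is of the form $\mathrm{St} \otimes (\chi \circ \det)$ (and similarly for $\GU_2(\mathsf{k})$ via the relevant one-dimensional characters). Since tensor product commutes with twisting by linear characters, the claim $g^\nu_{\lambda\mu} \leq 1$ reduces to the statement that each sns constituent of $\mathrm{St} \otimes \mathrm{St}$ appears with multiplicity at most one. For $\G = \GL_2$ this is part of the complete decomposition recorded in \cite{gupta-Hassain2025tensor}; an analogous character-table computation gives the $\GU_2$ case.

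For part (2), the plan is to construct explicit sns representations $\lambda, \mu, \nu$ of $\G(\cO_\ell)$ realizing the lower bound. Set $e = \lceil \ell/2 \rceil$ and $\ell' = \lfloor \ell/2 \rfloor = \ell - e$. Every regular sns representation arises, via Clifford theory, as $\rho_A = \mathrm{Ind}_{H_A}^{\G(\cO_\ell)} \tilde{\psi}_A$, where $A \in M_2(\cO_{\ell'})$ reduces modulo $\mfp$ to a Jordan block $\smat{a}{1}{0}{a}$ up to $\G(\mathsf{k})$-conjugation, the character $\psi_A(I + X) = \psi(\mathrm{tr}(AX))$ lives on the principal congruence subgroup $K^e$, and $H_A = C_{\G(\cO_\ell)}(A) \cdot K^e$. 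Restricting to $K^e$ decomposes $\lambda|_{K^e}$ as a sum of $\psi_{A'}$ over the $\G$-orbit of $A_1$, and similarly for $\mu$ and $\nu$; this gives, via Frobenius reciprocity, a lower bound on $g^\nu_{\lambda\mu}$ by a weighted count of triples $(A', A'', A''')$ with $A' + A'' = A'''$ in the respective orbits, corrected for the $H_{A_3}$-stabilizer action.

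The construction then selects $A_1, A_2$ of sns type with matching residual nilpotent parts, so that $A_1 + A_2$ is again sns, and takes $A_3$ in this orbit. Ranging the residual trace parameter over $\mathsf{k}$ and excluding the two degenerate values at which the sum drops out of the sns stratum yields the factor $(q - 2)$, while the congruence layers $(K^1 \cap C)/K^{\ell'+1}$ in the centralizer of $\bar A_1$ contribute the factor $q^{\ell' - 1}$ of admissible perturbations. The principal obstacle is the Clifford--Mackey bookkeeping needed to certify that the enumerated pairs correspond to genuinely distinct intertwiners, rather than being identified under the diagonal $H_{A_3}$-action, and that the count carries through uniformly in both the $\GL_2$ and $\GU_2$ cases; the unitary setting introduces a Galois twist into the conjugation action, but the parallel structure of sns centralizers preserves the same polynomial count up to the anticipated minor adjustments.
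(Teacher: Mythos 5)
Part~(1) is fine and essentially the paper's route: reduce via twisting to a single $\sns$ representation, then read off the multiplicity bound from the known $\GL_2(\F_q)$ decomposition and the $\GU_2(\F_q)$ character table, exactly as \autoref{sec:unitary over field} does.

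Part~(2) has a genuine gap, and it is the one you yourself flag as ``the principal obstacle.'' Restricting $\lambda,\mu,\nu$ to $\K^{\lceil \ell/2\rceil}$ and counting triples $(A',A'',A''')$ with $A'+A''=A'''$ bounds the number of $\K^{\lceil\ell/2\rceil}$-invariants in $\lambda\otimes\mu\otimes\nu^\vee$; but since $\G(\cO_\ell)$-invariants are a subspace of the $\K^{\lceil\ell/2\rceil}$-invariants, this yields an \emph{upper} bound on $g^\nu_{\lambda\mu}$, not a lower one. Equivalently, $\langle \lambda\otimes\mu, \ind_{\K^{\lceil\ell/2\rceil}}^{\G(\cO_\ell)}\psi_{A_3}\rangle$ is a sum of $g^{\nu'}_{\lambda\mu}$ over all $\nu'$ lying above $\psi_{A_3}$ (weighted by multiplicities), so it controls $g^\nu_{\lambda\mu}$ only from above. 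A lower bound requires producing actual copies of $\nu$ inside $\lambda\otimes\mu$, which is precisely the Clifford--Mackey step you defer. The paper carries it out in \autoref{sec:results-Sigma-5}: it fixes $\lambda=\mu=\nu=\rho=\ind_{H}^{\G(\cO_\ell)}\phi$ with $H=\mathrm{N}\,\mathrm{C}_{\G(\cO_\ell)}(\tilde A)$ and one explicit extension $\phi$, applies Mackey's formula to $\rho\otimes\rho$, and for each $g$ in $\mathcal T=\{\smat{a}{0}{0}{c}\in\G(\cO_\ell)\mid a+c\in\rl^\times\}$ proves two concrete facts: $H\cap H^g=H$, and $\phi\otimes\phi^g=\phi^{h}$ for an explicit $h\in\G(\cO_\ell)$. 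These force the Mackey summand $\ind_{H\cap H^g}^{\G(\cO_\ell)}(\phi\otimes\phi^g)$ to be $\rho$ itself, so the lower bound is the number of distinct double cosets $S_A g S_A$ with $g\in\mathcal T$, which \autoref{prop:double coset rep SNS with SNS} computes to be $(q-2)q^{\lfloor\ell/2\rfloor-1}$. Your heuristic count lands on the right order of growth, but without analogues of these two facts it does not certify that any of the counted contributions lie in $g^\nu_{\lambda\mu}$. A secondary slip: for odd $\ell$ the correct imprimitivity subgroup for an $\sns$ representation is $\mathrm{N}\,\mathrm{C}_{\G(\cO_\ell)}(\tilde A)$ from \autoref{subsec:alternate const for sns}, not $\mathrm{C}_{\G(\cO_\ell)}(\tilde A)\K^{\lceil\ell/2\rceil}$; the latter has index $q$ in the former, so inducing a one-dimensional character from it gives a representation of $q$ times the required dimension.
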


\begin{corollary}
\label{cor:residue-dependence}
For $\ell \geq 2$, there exist regular irreducible representations $\lambda, \mu, \nu$ of $\G(\cO_\ell)$ such that the multiplicity $g_{\lambda \mu}^\nu$ depends on the cardinality of the residue field.
\end{corollary}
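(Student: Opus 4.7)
The plan is to derive this corollary directly from Theorem~\ref{thm:sns-theorem}(2). For any fixed $\ell \geq 2$, that theorem produces regular irreducible representations $\lambda, \mu, \nu$ of $\G(\cO_\ell)$, all of type $\sns$, satisfying
\[
g_{\lambda \mu}^\nu \;\geq\; (q-2)\, q^{\lfloor \ell/2 \rfloor - 1},
\]
where $q = |\residuefield|$ is the cardinality of the residue field of $\cO$. For $\ell \geq 2$ the right-hand side is a strictly increasing, unbounded function of $q$, so no absolute constant independent of $q$ can serve as a uniform upper bound on such multiplicities.

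To formally conclude the corollary I would compare instances of this construction across complete discrete valuation rings of the same length $\ell$ but with residue fields of increasing cardinality. This yields families of triples $(\lambda, \mu, \nu)$ of regular irreducible representations of $\G(\cO_\ell)$ whose regular multiplicities $g_{\lambda \mu}^\nu$ grow with $q$, and therefore cannot be described by any expression in which $q$ does not appear. This is exactly the statement that $g_{\lambda \mu}^\nu$ depends on $|\residuefield|$, in sharp contrast with the $\ell = 1$ case where the uniform bound of $2$ applies.

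The content of the corollary is therefore a qualitative rephrasing of the quantitative lower bound in Theorem~\ref{thm:sns-theorem}(2), and there is no separate obstacle to overcome here: the substantive work, namely the explicit construction of the $\sns$-type triples and the derivation of the $(q-2)q^{\lfloor \ell/2\rfloor - 1}$ lower bound, is deferred to the proof of that theorem.
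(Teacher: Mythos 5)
Your proposal is correct and matches the paper's derivation: the paper treats Corollary~\ref{cor:residue-dependence} as an immediate consequence of Theorem~\ref{thm:sns-theorem}(2) (equivalently, of Theorem~\ref{thm:main-theorem-2}(5)), noting that the lower bound $(q-2)q^{\lfloor \ell/2\rfloor - 1}$ is unbounded in $q$ once $\ell \geq 2$. Your elaboration -- that one compares the explicit $\sns$-triples across rings of the same length with residue fields of increasing size -- is the intended reading, and there is no genuine gap; the paper's subsequent Theorem~\ref{thm:SNS with SNS} merely gives an alternative (and, in the stated range $\lfloor \ell_1/2\rfloor \geq 2$, stronger) route to the same conclusion.
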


From the dimension formulae, it is clear that Ennola duality holds between $\GL_2(\cO_\ell)$ and $\GU_2(\cO_\ell)$, parallel to $\GL_n(\mathbb F_q)$ and $\GU_n(\mathbb F_q)$ case (see \cite{MR156900} for details on Ennola duality). However Ennola duality does not work for the tensor product decomposition for $\GL_2(\cO_\ell)$ and \autoref{thm:main-theorem} provides examples of such representations. This has already been observed for $\GL_n(\F_q)$ case in \cite{letellier2024ennoladualitydecompositiontensor}.

We now outline the ideas underlying the proof. Recall that a representation $\rho$ of $\G(\cO_\ell)$ is called a {\bf twist} of $\rho'$, if $\rho \cong \chi \otimes \rho'$ for a one dimensional representation $\chi$ of $\G(\cO_\ell)$. It is easy to note that the decomposition of a representation into irreducible constituents determines the decomposition for any of its twists. Hence, in determining the multiplicities of irreducible constituents of $\rho_1 \otimes \rho_2$, we may work with suitable twists of $\rho_1$ and $\rho_2$. We also note that for any representations $\rho_1, \rho_2, \rho_3$, we have $\langle \rho_1 \otimes \rho_2, \rho_3 \rangle =  \langle \rho_1, \rho_2^\vee \otimes \rho_3 \rangle,$ where $\rho_2^\vee$ denotes the dual representation of $\rho_2$. For any regular representation $\rho$ of $\G(\cO_\ell)$, we have $\tt(\rho) = \tt(\rho^\vee).$ This allows us to permute $(\tt(\rho_1), \tt(\rho_2), \tt(\rho_3))$ as required.

As mentioned earlier, the case of $\ell = 1$ and $\G = \GL_2$ is already settled in \cite{gupta-Hassain2025tensor}. We extend these results to $\GU_2(\cO_1)$ in \autoref{sec:unitary over field}. 

For $\ell \geq 2$, we classify the pairs of regular representations $(\rho_1, \rho_2)$ by their types as follows:

\begin{itemize}
    \item $\Xi_1 = \{(\rho_1, \rho_2) \mid \tt(\rho_1) = \ss, \tt(\rho_2) = \sns \}$
    \item $\Xi_2 = \{(\rho_1, \rho_2) \mid \tt(\rho_1) \neq \tt(\rho_2),\ \tt(\rho_1) = \cus \}$
    \item $\Xi_3 = \{(\rho_1, \rho_2) \mid \tt(\rho_1) = \tt(\rho_2) = \cus \}$
    \item $\Xi_4 = \{(\rho_1, \rho_2) \mid \tt(\rho_1) = \tt(\rho_2) = \ss \}$
    \item $\Xi_5 = \{(\rho_1, \rho_2) \mid \tt(\rho_1) = \tt(\rho_2) = \sns \}$
\end{itemize}
Since $\rho_1 \otimes \rho_2 \cong \rho_2 \otimes \rho_1$, the above five families exhaust all tensor products of regular irreducible representations of $\G(\cO_\ell)$. We use $\mathrm{Irr}(\G(\cO_\ell))$ and $\mathrm{Irr}^{\mathrm{reg}}(\G(\cO_\ell))$ to denote the set of all in-equivalent irreducible representations and the set of all regular representations of   $\G(\cO_\ell),$ respectively. We prove the following result based on the above classification of types. 

\begin{thm}
\label{thm:main-theorem-2}
 For $\ell \geq 2$, the following hold:
 \begin{enumerate}
     \item For $(\rho_1, \rho_2) \in \Xi_1,$ $\langle \rho_1\otimes \rho_2, \rho \rangle \leq 2$ for every $\rho \in \mathrm{Irr}(\G(\cO_\ell)).$ Further equality holds only if $\tt(\rho) = \ss.$ 
     \item For $(\rho_1, \rho_2) \in \Xi_2,$ $\langle \rho_1\otimes \rho_2, \rho \rangle \leq 1$ for every $\rho \in \mathrm{Irr}(\G(\cO_\ell)).$
     \item For $(\rho_1, \rho_2) \in \Xi_3,$ $\langle \rho_1\otimes \rho_2, \rho \rangle \leq 1$ for every $\rho \in \mathrm{Irr}^{\mathrm{reg}}(\G(\cO_\ell)).$
     \item For $(\rho_1, \rho_2) \in \Xi_4,$  $\langle \rho_1\otimes \rho_2, \rho \rangle \leq \ell+1$ for every $\rho \in \mathrm{Irr}^{\mathrm{reg}}(\G(\cO_\ell))$ such that $\tt(\rho) = \ss,$ 
     \item There exists $(\rho, \rho) \in \Xi_5$ such that $\langle \rho \otimes \rho, \rho \rangle \geq (q-2) q^{\lfloor \frac{\ell}{2} \rfloor-1}. $  
 \end{enumerate}
\end{thm}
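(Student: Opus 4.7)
The plan is to exploit the fact that every regular irreducible representation $\rho$ of $\G(\cO_\ell)$ can be realized as $\rho \cong \ind_{H_\rho}^{\G(\cO_\ell)} \tilde{\chi}_\rho$ from an essentially linear character of a subgroup $H_\rho$ whose structure is dictated by $\tt(\rho)$: $H_\rho$ is built from the image of $\Lri_\ell^\times$ (the non-split torus arising from the unramified quadratic extension) when $\tt(\rho) = \cus$, from the split diagonal torus when $\tt(\rho) = \ss$, and from a subgroup of the Borel containing its top-level unipotent radical when $\tt(\rho) = \sns$. Combining the adjunction $\langle \rho_1 \otimes \rho_2, \rho_3 \rangle = \langle \rho_1, \rho_2^\vee \otimes \rho_3 \rangle$ with Frobenius reciprocity and Mackey decomposition on the double coset space $H_{\rho_1} \backslash \G(\cO_\ell) / H_{\rho_3}$ reduces each multiplicity to counting characters with matching restrictions across the relevant subgroup intersections. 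Cases (2) and (3) then follow because, whenever a cuspidal factor is present, the non-split nature of $H_{\rho_j}$ forces only one double-coset orbit to contribute nontrivially, giving the bound $1$. For case (1), involving an $\ss$ and an $\sns$ factor, at most two double-coset orbits contribute, yielding the bound $2$; tracing the compatibility conditions required for both orbits to contribute simultaneously forces the split-torus data to match, so equality occurs only when $\tt(\rho_3) = \ss$.

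For case (4), with $\rho_1$ and $\rho_2$ both of type $\ss$, the inducing characters are characters of the split torus at various $\mfp$-adic depths. The multiplicity of a regular $\ss$ constituent $\rho_3$ is controlled by the number of ways the inducing character of $\rho_3$ can match a component of $\tilde{\chi}_{\rho_1} \cdot \tilde{\chi}_{\rho_2}$, up to the Weyl involution, through successive levels of the filtration of $\cO_\ell^\times$ by congruence subgroups. Each of the $\ell+1$ possible relative depth levels $\{0, 1, \dots, \ell\}$ contributes at most one to the sum, yielding the bound $\ell+1$. I expect establishing this sharp level-by-level stabilizer and centralizer count to be the main technical obstacle, since it recovers the known bound $2$ from the $\ell = 1$ case of \cite{gupta-Hassain2025tensor} as the base and then propagates it through the higher-depth pieces.

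For case (5), we exhibit explicit $\sns$-type representations witnessing the lower bound. The idea is to take $\rho = \ind_{H}^{\G(\cO_\ell)} \tilde{\chi}$, where $H$ is a suitable subgroup of the Borel and $\tilde{\chi}$ is designed so that a large family of characters of $H$ induces representations isomorphic to $\rho$; concretely, the inducing data is chosen to be stabilized at level $\lfloor \ell/2 \rfloor$ by a sizeable subgroup of $\G(\cO_{\lfloor \ell/2 \rfloor})$. Using $\langle \rho \otimes \rho, \rho \rangle = \langle \rho, \rho \otimes \rho^\vee \rangle$ and Frobenius reciprocity, one bounds the multiplicity below by a count of regular linear characters of a suitable congruence quotient of $\cO_\ell^\times$, which has size $(q-2) q^{\lfloor \ell/2 \rfloor -1}$. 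Verifying that the resulting $\rho$ is genuinely regular of type $\sns$, and that each contribution in the lower bound corresponds to a distinct copy of $\rho$ inside $\rho \otimes \rho$, is the central verification required, and the resulting family of examples simultaneously establishes \autoref{cor:residue-dependence}.
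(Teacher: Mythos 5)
Your outline captures the high-level strategy---Mackey decomposition over double cosets of the inducing subgroups, together with the $\langle\rho_1\otimes\rho_2,\rho_3\rangle = \langle\rho_1,\rho_2^\vee\otimes\rho_3\rangle$ symmetry---but the inducing subgroup is misidentified for $\ell \geq 2$, and the hard steps in each case are not supplied. A regular $\rho$ of $\G(\cO_\ell)$ is induced from the inertia group $S_A = \mathrm{C}_{\G(\cO_\ell)}(\tilde A)\,\K^{\lfloor\ell/2\rfloor}$, which contains the large congruence subgroup $\K^{\lfloor\ell/2\rfloor}$; describing $H_\rho$ as ``the image of $\Lri_\ell^\times$,'' ``the split diagonal torus,'' or ``a subgroup of the Borel'' reflects the $\ell=1$ picture and does not match the structure the argument actually needs. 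For $\Xi_3$ (cuspidal $\otimes$ cuspidal), your claim that the non-split nature of $H_\rho$ forces a single contributing double-coset orbit is not the mechanism: many double cosets contribute regular constituents, and the paper instead proves (a) that each Mackey summand $\ind_{S_{A_1}\cap S_{A_2}^g}^{\G(\cO_\ell)}(\phi_1\otimes\phi_2^g)$ is multiplicity free, via the nontrivial identity $\langle\mathbf W(\phi_1,\phi_2),\mathbf W(\phi_1,\phi_2)\rangle=q$ established through explicit character values of $\phi_i$ on the stratification of $S_{A_i}$ (Theorem~\ref{thm:SA multiplicity free}), and (b) that summands from inequivalent double cosets share no regular constituent (Proposition~\ref{prop:irr with irr}, which hinges on the matrix $D(\tilde\alpha_1,\tilde\alpha_2,g_1,g_2)$). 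None of this is visible in your sketch, and without it the bound in part (3) does not follow.

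For part (4), your level-by-level heuristic misplaces where the multiplicity concentrates: in the Mackey decomposition over $\B(\cO_\ell)\backslash\G(\cO_\ell)/\B(\cO_\ell)$ the \emph{single} torus summand $\ind_{\T(\cO_\ell)}^{\G(\cO_\ell)}(\chi_1\chi_4,\chi_2\chi_3)$ can by itself contribute up to $\ell+1$ to a given $\ss$-constituent (Proposition~\ref{prop:Induction T to G SS multiplicities}), while the remaining $\ell-1$ pieces $\delta_i$ each contribute at most one but land in character classes $S_j^k$ disjoint from the $\T$-contribution; the bound comes from this bookkeeping, not from one-per-depth-level. For part (5), the proposal asserts a construction without specifying the inducing character, verifying that the resulting $\rho$ is a regular $\sns$-representation, or showing the counted copies are genuinely distinct; the paper's proof rests on an explicit $\phi$ on $H=\mathrm{N}\,\mathrm{C}_{\G(\cO_\ell)}(\tilde A)$ for $A=\smat{0}{0}{\nonsq}{0}$ and an explicit family of double cosets in $\mathcal X_{\lfloor\ell/2\rfloor}$ for which $\ind_{H\cap H^g}^{\G(\cO_\ell)}(\phi\otimes\phi^g)\cong\rho$, and the lower bound $(q-2)q^{\lfloor\ell/2\rfloor-1}$ is precisely a count of those cosets (Proposition~\ref{prop:double coset rep SNS with SNS}).
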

We note that for $\ell \geq 2$,
\autoref{thm:main-theorem}, \autoref{cor:tensor-product-results}, and \autoref{thm:sns-theorem}  directly  follow from the above result. Hence  major part of this article will be dedicated to prove  \autoref{thm:main-theorem-2}. For this, we use the fact that every regular irreducible representation $\rho$ of $G = \G(\cO_\ell)$ is \emph{imprimitive}, i.e., there exists a proper subgroup $H \subsetneq G$ and an irreducible representation $\phi$ of $H$ such that 
\[
\rho \cong \mathrm{Ind}_H^G(\phi).
\]

To understand the tensor product $\rho_1 \otimes \rho_2$ where $\rho_i = \mathrm{Ind}_{H_i}^G(\phi_i)$, we use Mackey's formula:
\[
\mathrm{Ind}_{H_1}^G(\phi_1) \otimes \mathrm{Ind}_{H_2}^G(\phi_2) \cong \bigoplus_{g \in H_1 \backslash G / H_2} \mathrm{Ind}_{H_1 \cap H_2^g}^G\left( \phi_1 \otimes \phi_2^g \right).
\]

To compute the multiplicity of an irreducible representation $\rho$ as a constituent of $\rho_1 \otimes \rho_2$, we proceed via the following steps:
\begin{itemize}
    \item[(A)] Determine double coset representatives in $H_1 \backslash G / H_2$.
    \item[(B)] Analyze the decomposition of the induced representation 
    \[
    V(\phi_1, \phi_2^g) := \mathrm{Ind}_{H_1 \cap H_2^g}^G(\phi_1 \otimes \phi_2^g)
    \]
    for each $g \in H_1 \backslash G / H_2$.
    \item[(C)]  Understand the intertwining space
    \[
\mathrm{Hom}_G(V(\phi_1, \phi_2^g), V(\phi_1, \phi_2^h))
    \]
    for distinct double coset representatives $g, h \in H_1 \backslash G /H_2$.
\end{itemize}

We conclude this section with an outline of the article.
Basic notation used throughout is listed in \autoref{sec: notation}.
In \autoref{sec:unitary over field}, we prove \autoref{thm:main-theorem} and \autoref{thm:sns-theorem} for the case $\ell = 1$. From \autoref{subsec:construction} onward, we assume $\ell \geq 2$. For the reader’s convenience, \autoref{subsec:construction} includes a brief review of the construction of $\G(\cO_\ell)$, along with alternative constructions from the literature that we use later in the paper.

In \autoref{sec:related-results-construction}, we list several results related to this construction. While these results follow from known methods, we could not find them explicitly stated in the literature. Therefore, for completeness, we include their statements and proofs. Step (A) of our analysis for $\Xi_1, \Xi_2$ and $\Xi_3$ that is, a description of $S_{A_1} \backslash G /S_{A_2}$ is carried out in \autoref{sec:double-coset-description}.
A proof of \autoref{thm:main-theorem-2}(1)-(3) is completed in \autoref{sec:proof-of-them-s1-s3}.
The analysis for types $\Xi_4$ and $\Xi_5$ is independent of the earlier cases and is completed in \autoref{sec:proof-for-Sigma-4} and \autoref{sec:results-Sigma-5}, respectively and these sections also include a proof of \autoref{thm:main-theorem-2}(4) and \autoref{thm:main-theorem-2}(5), respectively. Finally, in \autoref{sec:further-discussion-questions}, we include further discussion and some natural questions arising from this work.

\section{Notation}
\label{sec: notation}
Recall that $\lri$ is a complete discrete valuation ring with residue field $\mathsf{k}$ of cardinality $q$ and odd characteristic~$p$.  Let~$\mfp$ be the maximal ideal and let $\pi$ be a fixed uniformizer. Let $\Lri$ be an unramified quadratic extension. It follows that there exists $\gunonsq \in \Lri$ with $\gunonsq^2 \in \lri^\times \smallsetminus (\lri^\times)^2$ such that $\Lri=\lri[\gunonsq]$. Let $\mfP=\pi \Lri$ be the maximal ideal in $\Lri$ and $\mathcal{K}=\Lri/\mfP$ the residue field, a quadratic extension of $\mathsf{k}$ generated by the image of $\gunonsq$. For $\ell \in \mathbb N$, we let $\lri_\ell=\lri/\mfp^\ell$ and  $\Lri_\ell=\Lri/\mfP^\ell$ denote the finite quotients. We denote by $x \mapsto x^\circ$ the non-trivial Galois automorphism of $\Lri/\lri$, characterised by $\gunonsq^\circ= -\gunonsq$. The image of $\gunonsq$ in $\Lri_i$ will also be denoted by $\gunonsq$ for all $i$.

\subsection{The unitary group and its Lie algebra} 
\label{subsec: unitary group const.}
In this section, we describe our unitary group and its Lie algebra. We will restrict our definitions to the group $\GU_2$.
Let $W = \smat{0}{1}{1}{0} \in \GL_2(\Lri_\ell)$ denote the permutation matrix corresponding to the  longest Weyl element.  Consider the involution on $\gl_2(\Lri_\ell)$ defined by
\begin{equation}\label{staroperation}
  (a_{i,j})^\star \coloneqq W(a_{j,i}^\circ)W^{-1},
\end{equation}
and its associated Hermitian form on $\Lri_\ell^2$ given by:
\[
  \langle (u_1, u_2),(v_1, v_2) \rangle \coloneqq v_1^\circ u_2 + v_2^\circ u_1.
\]

For $\ell \in \mathbb N \cup \{\infty\}$ the unitary group with respect to $\star$ and its Lie algebra of anti-Hermitian matrices are given by
\[
  \begin{split}
    \GU_2(\lri_\ell) &\coloneqq \left\{ A \in \GL_2(\Lri_\ell) \mid A^\star A=\mathrm{I}_2 \right\},  \\ 
    \gu_2(\lri_\ell) &\coloneqq \left\{ A \in \gl_2(\Lri_\ell) \mid A+ A^\star =0\right\}.
  \end{split}
\]

  By definition of $\gu_2(\cO_\ell)$, any $A \in \gu_2(\cO_\ell)$ is of the form $\left[ \begin{smallmatrix}
      x & \gunonsq y \\
      \gunonsq z & -x^\circ  
  \end{smallmatrix} \right],$ for $x \in \Lri_\ell$ and $y, z \in \cO_\ell.$ Observe that $A = \smat{a}{b}{c}{d} \in \GUU2\ell$ if and only if the following holds: \begin{enumerate}
    \item $a d^\circ + c b^\circ = 1$
    \item $a b^\circ + a^ \circ b = 0$
    \item $a c^\circ + a^\circ c = 0$
    \item $d b^\circ + d^\circ b = 0$
    \item $d c^\circ + d^\circ c = 0$
\end{enumerate}
We will use the above conditions as the defining conditions of the unitary group whenever needed. The elements of the sets $\{a,b\}, \{b,d\}, \{c,d\}, \{a, c\}$ are called neighbors of $A$. 
One can easily show that, whenever defined, the ratio of the squares of the neighbors of  $A$ is either zero or a non-square in $\cO_\ell$, i.e., in $\gunonsq^2 (\cO_\ell)^2$.  
Further $A= [a_{i,j}]
\in \gu_2(\lri_\ell)$ if and only if
$a_{i,j}+a_{3-j,3-i}^\circ=0$ for $i,j \in \{1,2\}$. 

\smallskip

Throughout this paper we consider  $\GL_2$ and $\GU_2$ as $\lri$-group schemes, where the $R$-points of the latter are the fixed points of $A \mapsto (A^\star)^{-1}$ for every $\lri$-algebra $R$ and $A \in \gl_2(R)$. Let $\g$ be the lie algebra scheme of $\G$. Then $\g$ is either $\gl_2$ or $\gu_2$ as $\lri$-Lie algebra schemes, the latter being the fixed points of $A \mapsto -A^\star$. The adjoint action of a group on its Lie algebra will be denoted by $\mathrm{Ad}$.
Recall $\cO_1 = \mathbb F_q.$ 
\smallskip

Define \[\rl\coloneqq\begin{cases}
    \co_\ell, & \text{ for } \G=\GL_2;\\
    \Lri_\ell, & \text{ for } \G=\GU_2.
\end{cases}\] For the uniformity in the proofs, we define
\[
\nonsq = \begin{cases} 
1, & \text{ for } \G=\GL_2;\\
    \gunonsq, & \text{ for } \G=\GU_2. 

\end{cases} 
\]
We will use these notations throughout this article.
\section{Proof of \autoref{thm:main-theorem} and \autoref{thm:sns-theorem} for $\ell = 1$}
\label{sec:unitary over field}

In this section we discuss the decomposition of the tensor product of irreducible representations of $\G(\cO_1)$. This problem for $\GL_2(\F_q)$ has already been addressed by the first two authors of this article, see \cite{gupta-Hassain2025tensor}. In this section, we will focus on the parallel results for $\GU_2(\F_q)$.   

The representation theory of the group $\GU_2(\F_q)$ is parallel to that of $\GL_2(\F_q)$. We follow   \cite{Campbell-msc-thesis} to include a few details regarding this.  Let $\alpha,\beta\in \widehat{\F_{q^2}^\times}$ and $x,y \in \F_{q^2}^\times$. Denote $x+y$ and $x-y$ by $m$ and $n$, respectively. The character table of $\mathrm{GU}_2(\F_q)$ is given in \autoref{table:Character-tableGU2} (see \cite[Page-21]{Campbell-msc-thesis}).
\begin{table} 
\begin{center}
\begin{tabular}{|c|c|c|c|c|}
\hline
& $\left[\begin{smallmatrix}
    x&0\\0&x
\end{smallmatrix}\right] $ & $ \left[\begin{smallmatrix}
    x&y\\0&x
\end{smallmatrix}\right]$ & $ \left[\begin{smallmatrix}
    x&0\\0&y
\end{smallmatrix}\right] $ & $ \left[\begin{smallmatrix}
    x&y\\y&x
\end{smallmatrix}\right] $ \\ \hline
$ \chi_\alpha^1$ & $\alpha(x)^2$ & $\alpha(x)^2 $& $\alpha(x)\alpha(y)$ & $\alpha(x^2 - y^2)$ \\ \hline
$\chi_\alpha^q$ & $q\alpha(x)^2$ & 0 & $\alpha(x)\alpha(y)$ & $-\alpha(x^2 - y^2)$ \\ \hline
$\chi_{\alpha,\beta}^{q+1}$& $(q+1)\alpha(x)\beta(x) $& $\alpha(x)\beta(x)$ & $\alpha(x)\beta(y) + \alpha(y)\beta(x) $& 0 \\ \hline
$\chi_{\alpha,\beta}^{q-1} $& $(q-1)\alpha(x)\beta(x)$ &$ -\alpha(x)\beta(x)$ & 0 & $-[\alpha(n)\beta(m)+\alpha(n)\beta(m)]$\\ \hline
\end{tabular}
\end{center}
\caption{Character table of $\GU_2(\mathbb F_q)$}
\label{table:Character-tableGU2}
\end{table} 
From now on in this section, we denote $\GU_2(\F_q)$ by $G$. Let $\U$ be the subgroup consisting of unipotent upper triangular matrices. Fix $\psi$ to be a non-trivial character of $\F
_{q^2}^+$ such that $\psi$ is non-trivial on the additive subgroup $\{t\in \F_{q^2}^+ \mid t+t^\circ=0\}\subseteq  \F_{q^2}^+$. Let $\Z$ be center of the group $G$. 
Define the following two subgroups of $G$: 
\begin{gather*}
    H_1:=\{\left[\begin{smallmatrix}
    x & 0 \\ 0 & y
\end{smallmatrix} \right] \mid x,y \in \F_{q^2}\} \cap G, \\
 H_2:=\{\left[\begin{smallmatrix}
    x & y \\ y & x
\end{smallmatrix}\right]   \mid x,y \in \F_{q^2}  \} \cap G.
\end{gather*}
For $\alpha,\beta \in \widehat{\F_{q^2}^\times},$ define characters $(\alpha,\beta)$ of $H_1$, $H_2$ and character $(\alpha, \beta)\psi$ of $\Z\U$ as follows: 
\begin{eqnarray*} 
(\alpha, \beta): H_1 \rightarrow \mathbb C^\times; \,\, (\alpha,\beta)\left(\left[\begin{smallmatrix}
    x & 0 \\ 0 & y
\end{smallmatrix}\right]\right)=\alpha(x)\beta(y),\nonumber \\ 
(\alpha, \beta): H_2 \rightarrow \mathbb C^\times; \,\, (\alpha,\beta) \left(\left[\begin{smallmatrix}
    x & y \\ y & x
\end{smallmatrix}\right]\right)=\alpha(x+y)\beta(x-y), \nonumber \\
(\alpha, \beta)\psi: \Z\U \rightarrow \mathbb C^\times; \,\, (\alpha,\beta)\psi\left(\left[\begin{smallmatrix}
    x & y \\ 0 & x
\end{smallmatrix}\right]\right)=\alpha(x)\beta(x) \psi(x^{-1}y).\nonumber
\end{eqnarray*}  
The character of $\ind_{\Z\U}^G (\alpha,\beta)\psi$ is as given below:
\begin{equation} 
\label{induced-character-GU2}
\begin{array}{l|cccc}  & \left[\begin{smallmatrix}
    x&0\\0&x
\end{smallmatrix}\right] & \left[\begin{smallmatrix}
    x&y\\0&x
\end{smallmatrix}\right] & \left[\begin{smallmatrix}
    x&0\\0&y
\end{smallmatrix}\right] & \left[\begin{smallmatrix}
    x&y\\y&x
\end{smallmatrix}\right]\\ \hline
\ind_{\Z\U}^G (\alpha,\beta)\psi & (q-1)(q+1) \alpha(x) \beta(x) & -\alpha(x) \beta(x) & 0 & 0
\end{array}
\end{equation} 
Let $\mathcal{L}\coloneqq \{x\in \F_{q^2}^\times \mid xx^\circ =1\}$. Suppose $\alpha= \beta$ as characters of $\F_q^\times$, then define $\gamma\circ \det:G\to \mathbb{C}^\times$ by $\gamma(\det(g))=\alpha(a)\beta({a^\circ}^{-1})$, where $\det(g)=a{a^\circ}^{-1}$ for some $a\in \F_{q^2}^\times$ which exists by the fact that the map $\mathcal{Q}:\F_{q^2}^\times\to\mathcal{L}$ defined by $\mathcal{Q}(x)=x{x^\circ}^{-1}$ is surjective (\cite[Section~0.0.1 (ii)]{Campbell-msc-thesis}). 
 The following result directly follows from ~\autoref{table:Character-tableGU2} and ~\autoref{induced-character-GU2}.
\begin{proposition}
\label{lem: ZU multiplicity free for GU2}
    \begin{enumerate} 
    \item The representation $ V_\psi := \ind_{\U}^{G} \psi$ is multiplicity free and every non-linear irreducible representation of $G$ is a sub-representation 
    of $V_\psi.$
    \item $\ind_{H_1}^G(\alpha,\beta)=\begin{cases}
            \ind_{\Z \U}^G (\alpha,\beta)\psi +\chi_{\alpha,\beta}^{q+1}, & \text{if } \alpha \neq \beta \text{ on } \F_q^\times;\\
             \ind_{\Z \U}^G (\alpha,\beta)\psi +\chi_{\gamma}^q +\chi_{\gamma}^1, & \text{if } \alpha = \beta \text{ on } \F_q^\times.
            
        \end{cases}$
         \item $\ind_{H_2}^G(\alpha,\beta)=\begin{cases}
            \ind_{\Z \U}^G (\alpha,\beta)\psi -\chi_{\alpha,\beta}^{q-1}, & \text{if } \alpha \neq \beta \text{ on } \F_q^\times;\\
             \ind_{\Z \U}^G (\alpha,\beta)\psi -\chi_{\gamma}^q +\chi_{\gamma}^1, & \text{if } \alpha = \beta \text{ on } \F_q^\times.
        \end{cases}$
    \end{enumerate}
\end{proposition}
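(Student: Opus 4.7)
The plan is to verify all three claims by character-theoretic computation. In each case, Frobenius reciprocity reduces the question to an inner product over one of the small subgroups $\U$, $H_1$, or $H_2$, and those inner products can be evaluated by reading off the appropriate column of \autoref{table:Character-tableGU2} together with the character of $\ind_{\Z\U}^G(\alpha,\beta)\psi$ displayed in \autoref{induced-character-GU2}.

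For part (1), for each of the four families $\chi_\alpha^1$, $\chi_\alpha^q$, $\chi_{\alpha,\beta}^{q+1}$, $\chi_{\alpha,\beta}^{q-1}$, the plan is to compute
\[
\langle \ind_\U^G \psi,\, \chi \rangle_G \;=\; \langle \psi,\, \chi|_\U \rangle_\U \;=\; \frac{1}{q}\sum_{u\in\U} \chi(u)\overline{\psi(u)},
\]
using that $\U=\{\smat{1}{y}{0}{1} \mid y+y^\circ=0\}$ has order $q$, and that $\chi|_\U$ is obtained from the column headed $\smat{x}{y}{0}{x}$ of \autoref{table:Character-tableGU2} by specializing $x=1$. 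Since the restriction of $\psi$ to $\{y \mid y+y^\circ=0\}$ is a non-trivial additive character, $\sum_{u\neq 1}\overline{\psi(u)}=-1$; plugging the entries of the table into the sum gives multiplicity $0$ for each linear $\chi_\alpha^1$ and multiplicity exactly $1$ for each of $\chi_\alpha^q$, $\chi_{\alpha,\beta}^{q+1}$, $\chi_{\alpha,\beta}^{q-1}$. A dimension count against $|G|/|\U|$ then confirms that the list is exhaustive, giving both assertions in (1).

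For parts (2) and (3) the strategy is analogous: for each irreducible $\chi$, compute $\langle (\alpha,\beta),\, \chi|_{H_i} \rangle_{H_i}$ from \autoref{table:Character-tableGU2}, and compare with the corresponding multiplicity in $\ind_{\Z\U}^G(\alpha,\beta)\psi$ obtained similarly from \autoref{induced-character-GU2}. Orthogonality of characters on $H_1\cong \F_{q^2}^\times$ and on the non-split torus $H_2$ pins down the multiplicities of $\chi_{\alpha',\beta'}^{q\pm 1}$, $\chi_\gamma^1$, and $\chi_\gamma^q$, and the stated identities then follow by matching virtual characters, with a final dimension check $q(q+1) = (q^2-1)+(q+1)$ confirming that the two sides agree. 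The main obstacle — and the only genuinely delicate point — is the case distinction on whether $\alpha=\beta$ or $\alpha\neq\beta$ on $\F_q^\times$: the former is exactly the condition under which $\gamma\circ\det$ is well-defined, via the surjectivity of $\mathcal{Q}:\F_{q^2}^\times\to\mathcal{L}$, and has central character matching $(\alpha,\beta)$. This forces $\chi_\gamma^1$ and $\chi_\gamma^q$ to enter with the combination $+\chi_\gamma^q+\chi_\gamma^1$ in $\ind_{H_1}^G(\alpha,\beta)$ and $-\chi_\gamma^q+\chi_\gamma^1$ in $\ind_{H_2}^G(\alpha,\beta)$, with the relative sign dictated by the opposite values of $\chi_\alpha^q$ on the split and non-split tori in \autoref{table:Character-tableGU2}.
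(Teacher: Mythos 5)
Your proposal is correct and follows essentially the same route as the paper, which simply asserts that the proposition "directly follows" from \autoref{table:Character-tableGU2} and \autoref{induced-character-GU2}; your Frobenius-reciprocity computations are exactly the unwritten verification behind that assertion. The only detail worth flagging is that for part (1) the dimension count is not strictly needed once you have computed $\langle \psi,\chi|_{\U}\rangle_{\U}$ for every $\chi$ listed in the table, since the table already exhausts $\mathrm{Irr}(G)$.
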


The following corollary is evident from \autoref{lem: ZU multiplicity free for GU2}.
\begin{corollary}
\label{cor:multiplicity-free-GU-2fq}
\begin{enumerate}
    \item We have $\langle \ind_{H_1}^G(\alpha,\beta), \chi_{(\alpha,\beta)}^{q+1}\rangle =2$ for $\alpha \neq \beta \text{ on } \F_q^\times$, and $\langle \ind_{H_1}^G(\alpha,\beta), \chi_{\gamma}^{q}\rangle =2 $ for $\alpha = \beta \text{ on } \F_q^\times$.
    \item The representation $\ind_{H_2}^G(\alpha,\beta)$ is multiplicity free. 
\end{enumerate}
\end{corollary}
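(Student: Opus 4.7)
The plan is to derive both assertions directly from \autoref{lem: ZU multiplicity free for GU2}, the key intermediate fact being that $\ind_{\Z\U}^G(\alpha,\beta)\psi$ is itself multiplicity free and that each of the relevant constituents $\chi_{\alpha,\beta}^{q+1}$, $\chi_{\alpha,\beta}^{q-1}$, and $\chi_\gamma^q$ appears in it with multiplicity exactly one.

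To establish this intermediate fact I would argue by induction in stages. Since $\Z$ is central and $\Z \cap \U = \{I\}$, we have $\Z\U \cong \Z \times \U$, and hence
\[
\ind_\U^{\Z\U} \psi \;\cong\; \bigoplus_{\omega \in \widehat{\Z}} (\omega \boxtimes \psi),
\]
which on inducing up to $G$ gives $V_\psi \cong \bigoplus_{\omega \in \widehat{\Z}} \ind_{\Z\U}^G(\omega \cdot \psi)$. Thus $\ind_{\Z\U}^G(\alpha,\beta)\psi$ is a direct summand of $V_\psi$, hence multiplicity free by \autoref{lem: ZU multiplicity free for GU2}(1); moreover its irreducible constituents are precisely the non-linear irreducibles of $G$ whose central character equals $(\alpha\beta)|_\Z$. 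A direct reading of \autoref{table:Character-tableGU2} shows that $\chi_{\alpha,\beta}^{q+1}$, $\chi_{\alpha,\beta}^{q-1}$, and (when $\alpha=\beta$ on $\F_q^\times$) $\chi_\gamma^q$ all have central character $(\alpha\beta)|_\Z$ and are non-linear, so each appears in $\ind_{\Z\U}^G(\alpha,\beta)\psi$ with multiplicity one.

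For part (1), I would then apply \autoref{lem: ZU multiplicity free for GU2}(2): the identity $\ind_{H_1}^G(\alpha,\beta) = \ind_{\Z\U}^G(\alpha,\beta)\psi + \chi_{\alpha,\beta}^{q+1}$ (respectively $\ind_{\Z\U}^G(\alpha,\beta)\psi + \chi_\gamma^q + \chi_\gamma^1$ when $\alpha=\beta$ on $\F_q^\times$) immediately yields multiplicity $1+1 = 2$ for the distinguished constituent. For part (2), I would use \autoref{lem: ZU multiplicity free for GU2}(3): subtracting $\chi_{\alpha,\beta}^{q-1}$ (or $\chi_\gamma^q$) from the multiplicity-free $\ind_{\Z\U}^G(\alpha,\beta)\psi$ leaves a multiplicity-free representation, and in the subcase $\alpha=\beta$ on $\F_q^\times$ the additional summand $\chi_\gamma^1$ is one-dimensional and thus not already a constituent of $V_\psi$, so multiplicity freeness is preserved. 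I do not anticipate any substantive obstacle beyond careful bookkeeping between linear and non-linear constituents and the routine central-character matching used at the end of the previous paragraph.
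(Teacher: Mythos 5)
Your proof is correct and fills in exactly the argument that the paper leaves implicit when it calls the corollary "evident from" Proposition~3.3: the key point is that $\ind_{\Z\U}^G(\alpha,\beta)\psi$ is a direct summand of the multiplicity-free $V_\psi$ (by the $\Z\times\U$ decomposition and central-character matching), so each relevant non-linear constituent appears with multiplicity exactly one, and the identities in Proposition~3.3(2)--(3) then give the claimed multiplicities directly. This is the same route the paper intends; you have simply written out the bookkeeping.
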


 \autoref{table:Character-tableGU2} and \autoref{lem: ZU multiplicity free for GU2} directly give the following result regarding the decomposition of the tensor product of the irreducible representations of $\GU_2(\mathbb F_q)$. This result is parallel to Theorem 3.1 in \cite{MR1757476}. 
\begin{proposition}
\label{prop:GU2-Fq}
 Let $\alpha,\beta,\gamma,\delta \in \widehat{\F_{q^2}^\times}$. Then
    \begin{enumerate}
			\item $\chi_\alpha^q\otimes \chi_{\beta, \gamma}^{q+1}=\ind_{H_1}^G(\alpha \circ det)(\beta,\gamma)$. 
			\item $\chi_\alpha^q\otimes \chi_{\beta,\gamma}^{q-1}=\ind_{H_2}^G(\alpha\beta, \alpha \gamma)$.
			\item $\chi_{\alpha, \beta}^{q+1}\otimes \chi_{\gamma, \delta}^{q+1}=\ind_{H_1}^G(\alpha\gamma, \beta\delta)+\chi_{\alpha\delta,\beta\gamma}^{q+1}$.
			\item $\chi_{\alpha, \beta}^{q+1}\otimes \chi_{\gamma,\delta}^{q-1}=\ind_{H_1}^G(\alpha\beta,\gamma\delta)-\chi_{\alpha\beta,\gamma\delta}^{q+1}$.
			\item $\chi_{\alpha,\beta}^{q-1}\otimes\chi_{\gamma,\delta}^{q-1}=\ind_{H_2}^G(\alpha\delta,\beta\gamma)-\chi_{\alpha\gamma,\beta\delta}^{q-1}$.
			\item$\chi_\alpha^q\otimes \chi_\beta^q=\ind_{H_2}^G(\alpha\beta,\alpha\beta)+\chi_{\alpha\beta}^q$.
		\end{enumerate}
\end{proposition}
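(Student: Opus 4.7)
The plan is to verify each of the six identities by direct character computation, matching both sides on the four conjugacy-class types of $\GU_2(\F_q)$ appearing in \autoref{table:Character-tableGU2}. For each left-hand side, one multiplies, entry by entry, the corresponding character values from the table. For each right-hand side, one uses \autoref{lem: ZU multiplicity free for GU2} to expand the induced representations $\ind_{H_1}^G(\cdot)$ and $\ind_{H_2}^G(\cdot)$ as $\ind_{\Z\U}^G(\cdot)\psi$ together with the listed irreducible summands, and then reads off the resulting character values from \autoref{induced-character-GU2} and \autoref{table:Character-tableGU2}. Equality class by class establishes the identity.

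The six identities separate into two regimes. In (1), (3), (4), one of the tensor factors is $\chi_{\alpha,\beta}^{q+1}$, which vanishes on the anisotropic torus, so only the central, unipotent, and split semisimple classes require checking; here matching reduces to symmetric identities among the values of $\alpha,\beta,\gamma,\delta$ at the two eigenvalues $x, y$. In (2), (5), (6), the factors are $\chi_\alpha^q$ or $\chi_{\alpha,\beta}^{q-1}$, and the anisotropic torus contributes nontrivially. The principal technical step in this regime is matching the value $-\alpha(x^2-y^2)$ of $\chi_\alpha^q$ with the values $-[\alpha(n)\beta(m)+\alpha(m)\beta(n)]$ coming from $\chi_{\alpha,\beta}^{q-1}$ and the relevant inductions from $H_2$; this comes down to the factorisation $x^2-y^2 = (x+y)(x-y) = mn$ together with the multiplicativity of $\alpha$ and $\beta$.

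A secondary bookkeeping task will be handling the case distinction in \autoref{lem: ZU multiplicity free for GU2}(2)--(3): when $\alpha = \beta$ on $\F_q^\times$, the decomposition of $\ind_{H_1}^G(\alpha,\beta)$ and $\ind_{H_2}^G(\alpha,\beta)$ acquires an extra linear summand $\chi_\gamma^1$. One must check, for each parameter combination $(\alpha\gamma,\beta\delta)$, $(\alpha\delta,\beta\gamma)$, $(\alpha\beta,\gamma\delta)$ arising on the right-hand sides, whether such a degeneracy occurs and confirm that the stated decomposition remains valid. The overall argument is the unitary analogue of the $\GL_2(\F_q)$ computation in \cite{MR1757476} and \cite{gupta-Hassain2025tensor}, and no substantially new input is required beyond substituting the $\GU_2$ character table and the induced-character formula.
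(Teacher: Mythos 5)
Your proposal is correct and matches the paper's approach exactly: the paper states that \autoref{table:Character-tableGU2} together with \autoref{lem: ZU multiplicity free for GU2} ``directly give'' \autoref{prop:GU2-Fq}, and you propose precisely that class-by-class character verification, including the key observation $x^2-y^2=mn$ for the anisotropic classes and the case analysis triggered by the $\alpha=\beta$ branch of the lemma. No new idea is needed and none is missing.
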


From \autoref{cor:multiplicity-free-GU-2fq} and \autoref{prop:GU2-Fq}, we obtain the following result. 
\begin{corollary}
     Let $\chi, \chi' \in \mathrm{Irr}(\GU_2(\F_q))$. Then $\chi \otimes \chi'$ is multiplicity free except for the cases $\chi_\alpha^q \otimes \chi_{\beta,\gamma}^{q+1} $ and $ \chi_{\alpha,\beta}^{q+1} \otimes \chi_{\gamma,\delta}^{q+1}$. Further, the highest multiplicity of any irreducible representation in $\chi \otimes \chi'$ is two  and it is due to $q$ or $(q + 1)$-dimensional constituents.
\end{corollary}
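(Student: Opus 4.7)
The plan is to work case-by-case through the six tensor product decompositions in \autoref{prop:GU2-Fq}, using the multiplicity information about $\ind_{H_1}^G$ and $\ind_{H_2}^G$ from \autoref{cor:multiplicity-free-GU-2fq} together with the explicit decompositions from \autoref{lem: ZU multiplicity free for GU2}. First I would dispose of the tensor products involving a linear character $\chi_\alpha^1$: these act as a twist by $\alpha \circ \det$ on the other factor, so the tensor product is irreducible and trivially multiplicity free. The remaining possibilities, up to symmetry, are exactly the six cases listed in \autoref{prop:GU2-Fq}.

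Next I would handle the three cases (2), (5), (6) that reduce to $\ind_{H_2}^G$. By \autoref{cor:multiplicity-free-GU-2fq}(2), $\ind_{H_2}^G(\alpha,\beta)$ is always multiplicity free; case (2) is this induction itself, while cases (5) and (6) subtract or add one irreducible character. To conclude multiplicity freeness in these cases it suffices to verify that the subtracted $\chi^{q-1}_{\alpha\gamma,\beta\delta}$ in (5) actually occurs in $\ind_{H_2}^G(\alpha\delta,\beta\gamma)$, and that $\chi^q_{\alpha\beta}$ does not occur in $\ind_{H_2}^G(\alpha\beta,\alpha\beta)$ in (6). Both facts are read off from \autoref{lem: ZU multiplicity free for GU2}(3) and the character table \autoref{table:Character-tableGU2}.

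The two exceptional cases are (1) and (3), which involve $\ind_{H_1}^G$. By \autoref{cor:multiplicity-free-GU-2fq}(1), the induced representation $\ind_{H_1}^G(\alpha,\beta)$ contains either $\chi^{q+1}_{\alpha,\beta}$ or $\chi_\gamma^q$ with multiplicity exactly two, with all other constituents of multiplicity one. This gives the multiplicity two phenomenon in case (1) and in the $\ind_{H_1}^G$-part of case (3), realized by a $q$- or $(q+1)$-dimensional constituent, matching the statement. For case (3) I would also check that the additional direct summand $\chi^{q+1}_{\alpha\delta,\beta\gamma}$ does not further increase the multiplicity, by comparing parameters against the constituents appearing in \autoref{lem: ZU multiplicity free for GU2}(2) for $\ind_{H_1}^G(\alpha\gamma,\beta\delta)$.

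The most delicate step will be case (4), where $\ind_{H_1}^G(\alpha\beta,\gamma\delta)$ has a multiplicity two constituent but we subtract $\chi^{q+1}_{\alpha\beta,\gamma\delta}$. Using \autoref{lem: ZU multiplicity free for GU2}(2), when $\alpha\beta\neq\gamma\delta$ on $\F_q^\times$ the subtraction leaves exactly $\ind_{\Z\U}^G(\alpha\beta,\gamma\delta)\psi$, which is multiplicity free by \autoref{lem: ZU multiplicity free for GU2}(1); the degenerate subcase $\alpha\beta=\gamma\delta$ on $\F_q^\times$ requires separately identifying $\chi^{q+1}_{\alpha\beta,\gamma\delta}$ with $\chi^1_\gamma+\chi^q_\gamma$ via the character table and then invoking the same multiplicity-free statement. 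Handling these parameter coincidences cleanly, and making sure no negative multiplicity appears in the virtual character on the way, is the main obstacle; once this is done, the three assertions of the corollary follow by aggregating the six cases.
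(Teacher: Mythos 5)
Your proposal is correct and follows essentially the same route as the paper: the paper's own ``proof'' of this corollary is a single sentence deferring to \autoref{cor:multiplicity-free-GU-2fq} and \autoref{prop:GU2-Fq}, and your case-by-case check is exactly what the reader is implicitly expected to carry out from those two ingredients. Your attention to the degenerate parameter coincidences and to the fact that the six expressions are virtual characters (so one must verify the subtracted pieces actually occur) is the right level of care and fills in the details the paper leaves unstated.
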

The parallel result for $\GL_2(\mathbb F_q)$ also holds, see \cite[Corollary~1.2]{gupta-Hassain2025tensor}. By combining these two results, we obtain a proof of  \autoref{thm:main-theorem} and \autoref{thm:sns-theorem} for $\ell = 1$. 

\section{Construction of regular representations of  $\G(\cO_\ell)$}
\label{subsec:construction}
In this section, we first give a construction of representations of $\G(\cO_\ell)$ as described in \cite[Section~3]{MR3737836}. We then present a few alternative constructions from the literature. These results will be used throughout the remainder of this paper.

For $ i \leq \ell$, let  $\rho_{\ell,i}: \cO_\ell \rightarrow \cO_i$ be the natural projection maps. The corresponding natural projection maps $\G(\cO_{\ell})  \rightarrow \G(\cO_i) $ are also denoted by $\rho_{\ell, i}$. For any matrix $A \in \g(\co_\ell)$, we denote $\rho_{\ell,1}(A)$ by $\bar{A}$. Let $\K^{i} =  \ker (\rho_{\ell,i}) $ be the $i$-th congruence subgroups of $\G(\cO_\ell).$  For $i \geq \ell/2,$  the group  $\K^i $ is isomorphic to the abelian additive subgroup $\g(\cO_{\ell-i})$ of $M_n(R_{\ell-i}).$ Let $\psi: \rl  \rightarrow \mathbb C^\times$ be a fixed primitive one dimensional representation of $\rl$. For $ \rl=\Lri_\ell,$ we assume that $\psi$ satisfies $\psi(x+\nonsq y)= \psi'(x)\psi'(y)$ for some primitive one dimensional representation $\psi'$ of $\cO_\ell$. Therefore,  $\pi^{\ell-1}\cO_\ell \not\subseteq \ker(\psi) $ by our choice of $\psi$.

 For any $i \leq \ell/2$ and $A = [a_{st}] \in \g(\cO_i),$ we will consider lifts $\tilde{A} = [\widetilde{a_{st}}] \in \g(\cO_\ell)$ of $A$ such that $\rho_{\ell,i}(\tilde{A}) = A$ with   $\widetilde{a_{st} } = \nonsq$ for $a_{st} = \nonsq, $ and  $\widetilde{a_{st}} = 0$ for $a_{st} = 0$. In this case, we say $\tilde{A}$ is a {\bf Serre lift} of $A$.  
 
 For any $i \leq \ell/2$ and  $A \in \g(\cO_i),$ let $\tilde{A} \in \g(\cO_\ell)$ be a lift of $A$. Define $\psi_A: \I+ \pi^{\ell-i} \g(\cO_\ell) \rightarrow \mathbb C^\times$ by 
 \[ 
 \psi_A(\I+ \pi^{\ell-i} B) \coloneqq \psi(\pi^{\ell-i}\bm{tr}(\tilde{A}B)),
 \]
for all $\I+ \pi^{\ell-i} B \in \K^{\ell-i}.$  Then $\psi_A$ is a well defined one dimensional representation of $\K^{\ell-i}.$ Further, the following duality for abelian groups $\K^{i}$ and $\g(\cO_{\ell-i})$ holds for $i \geq \ell/2$.
\begin{equation}
\label{eq:duality}
\g(\cO_{\ell-i}) \cong \widehat {\K^{i}}\,\,; A \mapsto \psi_A\,\, \mathrm{where}, \,\, \psi_A(\I+ \pi^{i} B) = \psi(\pi^{i}\bm{tr}(\tilde{A}B))  \,\, \forall \,\, \I+ \pi^{i} B \in \K^{i}. 
\end{equation}

We say a one dimensional representation $\psi_A \in \widehat{\K^{i}}$ for $i \geq \ell/2$ is regular if and only if $A \in \g(\cO_{\ell-i})$ is a regular matrix (that is the characteristic polynomial is equal to its minimal polynomial). In this case the stabilizer of $A$ in $\G(\cO_{\ell-i})$ under the conjugation action is $\{x \I + yA \mid x, y \in R_{\ell-i}\} \cap \G(\cO_{\ell-i}).$
 By (\cite[Lemma~2.3]{MR4399251}), for $i \geq \ell/2$ the representation $\psi_A \in \widehat{\K^{i}}$ is regular if and only if $\psi_A|_{ \K^{\ell-1}}$ is regular. An irreducible representation $\rho$ of $\G(\cO_\ell)$ is called regular if the $\mathrm{Ad}$-orbit of its restriction to $\K^{\ell-1} $ consists of one dimensional representations $\psi_A$ for regular $A$. 

The following lemma describes the orbits of $\g(\cO_\ell)$ under the  $\mathrm{Ad}$-action of $\G(\cO_\ell)$. 
\begin{lemma}
\label{lem:orbit-representatives-gol}
An exhaustive list of $\g(\cO_\ell)$ orbit representatives under the $\mathrm{Ad}$-action of $\G(\cO_\ell)$ is given by matrices $A \in \g(\cO_\ell)$ of the following form:
\begin{enumerate}
    \item[(a)] $x\I+ \pi C $ 
    \item[(b)] $ \begin{bmatrix}
            x & \nonsq \pi \beta \\
           \nonsq & x
        \end{bmatrix}$
    \item[(c)] $ 
      \begin{bmatrix}
            x &\nonsq \delta \\ 
           \nonsq  & x
        \end{bmatrix}$
        with $\delta \in \cO_\ell^\times \setminus (\cO_\ell^\times)^2$ for $\g = \gu_2$ and   $\delta \in 
(\cO_\ell^\times)^2$ for $\g = \gl_2$

        \item[(d)] 
        $ \begin{bmatrix}
        x & \nonsq \sigma \\ \nonsq  & x
        \end{bmatrix} 
        $ with $\sigma \in (\cO_\ell^\times)^2$ for $\g = \gu_2$ and   $\sigma \in 
\cO_\ell^\times \setminus (\cO_\ell^\times)^2$ for $\g = \gl_2.$  
\end{enumerate}
\end{lemma}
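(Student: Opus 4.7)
The strategy is to first reduce modulo $\pi$ to put the image $\bar A$ into a standard form in $\g(\rf)$, and then lift and further conjugate to reach one of the four listed shapes.

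\textbf{Step 1 (reduction mod $\pi$).} Since the group schemes $\GL_2$ and $\GU_2$ are smooth over $\cO$, the reduction $\G(\cO_\ell) \twoheadrightarrow \G(\rf)$ is surjective, and the $\mathrm{Ad}$-action is compatible with reduction. Hence, up to $\G(\cO_\ell)$-conjugation, it suffices to arrange that $\bar A \in \g(\rf)$ lies in a chosen family of orbit representatives for the $\G(\rf)$-action on $\g(\rf)$.

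\textbf{Step 2 (orbits over the residue field).} For $\g = \gl_2$, the orbit classification of $\gl_2(\rf)$ under $\GL_2(\rf)$ is classical and in odd characteristic is given by four families: scalar, non-scalar with repeated eigenvalue (Jordan type), split semisimple, and non-split semisimple (eigenvalues in the quadratic extension $\mathcal{K}$). For $\g = \gu_2$, I would base-change to $\mathcal{K}$, where $\gu_2(\rf) \otimes_\rf \mathcal{K} \cong \gl_2(\mathcal{K})$ and the $\GU_2(\rf)$-action becomes the restriction of the $\GL_2(\mathcal{K})$-action, then descend the resulting classification to the $\GU_2(\rf)$-orbits; alternatively, one reads it off directly from the explicit description of anti-Hermitian matrices $\smat{x}{\gunonsq y}{\gunonsq z}{-x^\circ}$ via their characteristic polynomial $X^2 - (x - x^\circ)X + (\gunonsq^2 y z - x(-x^\circ))$. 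In both cases, after picking representatives of each $\rf$-orbit to have the uniform shape appearing in (a)--(d), one checks: scalars correspond to (a) at level $\ell = 1$; Jordan type yields the reduction of (b); and split vs.\ non-split semisimple yield reductions of (c) and (d), with the square/non-square condition on $\delta$ depending on whether $\nonsq^2 \bar\delta$ is a square in $\rf$, which swaps the role of $(\rf^\times)^2$ and its complement between $\gl_2$ and $\gu_2$ because $\gunonsq^2$ is itself a non-square.

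\textbf{Step 3 (lifting).} Having conjugated $A$ so that $\bar A$ is in one of the four forms, I would treat each case:
\begin{itemize}
\item If $\bar A = \bar x \I$, simply write $A = x \I + \pi C$ with $C \in \g(\cO_\ell)$, giving (a).
\item If $\bar A = \smat{\bar x}{0}{\nonsq}{\bar x}$, then $A$ is an arbitrary lift of that shape. I would use conjugation by elements $\I + \pi M$ with $M \in \g(\cO_\ell)$ (which, at the level of $\bar A$, preserves $\bar A$) to kill off the $\pi$-corrections in the lower-left corner and on the diagonal, iteratively in $\pi$-adic depth. The trace/anti-Hermitian constraints then force the diagonal to equalize in the $\gl_2$ case (respectively, to land on an anti-Hermitian pair in the $\gu_2$ case), and what remains is form (b): $\smat{x}{\nonsq \pi \beta}{\nonsq}{x}$.
\item In the semisimple cases, a similar $\I + \pi M$ conjugation (or Hensel-type lifting of the eigenbasis for the split case) reduces the lift to $\smat{x}{\nonsq \delta}{\nonsq}{x}$, and the class of $\delta$ in $\cO_\ell^\times / (\cO_\ell^\times)^2$ is determined, via the characteristic polynomial's discriminant $4\nonsq^2 \delta$, by the split/non-split type of $\bar A$.
\end{itemize}

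\textbf{Main obstacle.} The hard part is Step~3 in the Jordan case: showing that after the first reduction, all residual perturbations of $\bar A$ can be absorbed by conjugation into a single parameter $\beta$ sitting above the diagonal, simultaneously for $\gl_2$ and $\gu_2$. I expect this to require a careful induction on the depth of the perturbation together with explicit use of the commutator relations $[\bar A, M]$ inside $\g(\rf)$ to identify the image of the linearization $M \mapsto [\bar A, M]$; the complement of this image is exactly the space of surviving off-diagonal parameters $\pi \beta$ appearing in (b). The anti-Hermitian condition must be tracked throughout for $\gu_2$, but since the linearization of conjugation preserves this condition, the computation goes through uniformly with the unifying notation $\nonsq \in \{1, \gunonsq\}$.
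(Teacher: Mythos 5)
Your proposal takes a genuinely different route from the paper: the paper's entire proof of this lemma is three lines, citing \cite[Section~2]{MR2584957} for $\gl_2$ and \cite[Lemma~3.5]{MR3471251} for $\gu_2$ (after noting that $\gu_2(\cO_\ell)$ is exactly the set of anti-Hermitian matrices), whereas you attempt a self-contained reduce-then-lift argument. That strategy --- normalise $\bar A$ in $\g(\cO_1)$ up to $\G(\cO_1)$-conjugacy, then iteratively absorb $\pi$-adic perturbations by congruence-subgroup conjugation --- is sound and is roughly what the cited sources do. Your explanation of the square/non-square swap between $\gl_2$ and $\gu_2$ is also correct: the discriminant of the characteristic polynomial of $\smat{x}{\nonsq\delta}{\nonsq}{x}$ is $4\nonsq^2\delta$, and since $\gunonsq^2$ is a non-square the residue class of $\delta$ must flip.

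There are, however, two genuine gaps. First, for $\gu_2$ the conjugating elements $\I + \pi M$ with $M \in \gu_2(\cO_\ell)$ are generally not in $\GU_2(\cO_\ell)$: using $M^\star = -M$ one computes $(\I+\pi M)^\star(\I+\pi M) = \I - \pi^2 M^2 \neq \I$. The correct group is the congruence subgroup $\K^1$, whose elements $\I + \pi X$ satisfy $X + X^\star + \pi X^\star X = 0$; since $\K^1/\K^2 \cong \gu_2(\cO_1)$, the linearised action $M \mapsto [M,\bar A]$ you want is still available, but not through the elements you wrote down, and tracking which $X$ lift at each $\pi$-adic depth is precisely the bookkeeping the lemma requires. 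Second, Step 3 --- verifying, depth by depth, that the image of $[\,\cdot\,,\bar A]$ together with the normal-form parameters exhausts all perturbations, uniformly across the three non-scalar families and both groups --- is the actual content of the statement and is left undone; you flag it yourself as ``the hard part.'' As written this is a plausible outline, not a proof. For $\gl_2$ a shortcut avoids the iteration altogether: a regular $A$ over $\cO_\ell$ has a cyclic vector (lift one from $\cO_1$ and use that a matrix congruent to an invertible one modulo $\pi$ is invertible), so $A$ is $\GL_2(\cO_\ell)$-conjugate to its companion matrix $\smat{0}{-\det(A)}{1}{\bm{tr}(A)}$, and a further conjugation by $\smat{1}{\bm{tr}(A)/2}{0}{1}$ balances the diagonal to give the form $\smat{x}{\delta}{1}{x}$ in one step; the anti-Hermitian analogue of this is what the paper's second reference supplies for $\gu_2$.
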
 
\begin{proof}
For $\GL_2$, proof follows from \cite[Section~2]{MR2584957}. For $\GU_2,$ we note that $A  \in \gu_2(\cO_\ell)$ if and only $A$ is anti-hermitian. If $A$ is a scalar modulo $\pi,$ then $A$ is of type (a). Otherwise the result follows from Lemma~\cite[Lemma~3.5]{MR3471251}. 
\end{proof}
\begin{remark}
\begin{enumerate} 
\item The exhaustive list of $\gu_2(\cO_\ell)$ orbits in the above result differs from \cite[Section 4.F,  Page-34]{Campbell-thesis} up to a translation by a scalar matrix and/or multiplication by an invertible scalar.
Therefore, the cardinalities of the inertia groups and the stabilizers  are the same for ${\it loc. cit.}$ and the above orbit representatives. We will use these computations from \cite{Campbell-thesis}, whenever required. 
\item For part (c) above, let $\delta= r^2 \nonsq^2 $ for some $r\in \co_\ell^\times$. The matrix $\smat{x+r \nonsq^2}{0}{0}{x-r \nonsq^2}$ also represents the same orbit as $\smat{x}{\nonsq\delta}{\nonsq}{x}$. We will use this form of $A$ whenever needed.
\item To describe  the construction as well the decomposition of the tensor product of irreducible representations of $\G(\cO_\ell),$ we can choose suitable twists of $A \in \g(\cO_\ell)$ that is modify $A$ upto an addition of an appropriate scalar matrix. For our case, up to these twists, we can always assume that $A \in \g(\cO_\ell)$ is chosen such that $\bm{tr}(A) = 0.$ Whenever required,  we shall work with such a choice of $A$ without specifically mentioning it.  
\end{enumerate} 
\end{remark} 
 Define $\tt: \g(\lri_\ell) \rightarrow \{\nreg ,  \sns, \ss, \cus  \}   $ by 
$ \tt(A) = \nreg\,(\sns, \ss, \cus)$ if $A$ is equivalent to a matrix given in above (a) ((b), (c), (d)). 
 Now we summarize very briefly the construction of regular representations of $\G(\cO_\ell) $ with emphasis on the statements that we require in this article.  
\subsection{Construction of regular representations of $\G(\cO_\ell)$ for $\ell$ even}
\label{E.construction}
Let $\psi_A \in \widehat{\K^{\ell/2}}$ be a regular one dimensional representation of $\K^{{\ell/2}}$ for $A \in \g(\cO_{\ell/2}).$ Then the following gives the construction in this case. Let   $ S_A = \{ g \in \G(\cO_\ell) \mid \psi_A^g \cong \psi_A  \}  $ be the inertia group of $\psi_A$ in $\G(\cO_\ell).$ 
Let $\tilde{A} \in \g(\cO_\ell)$ be a lift of $A$, and  let  $\tC_{\G(\lri_\ell)}(\tilde{A})$
denote its  stabilizer  in $\G(\co_\ell)$ under the $\mathrm{Ad}$-action. Then $ S_A  =  \tC_{\G(\lri_\ell)}(\tilde{A})       \K^{{\ell/2}} .$  
Let $\rho \in \mathrm{Irr} \left( \G(\cO_\ell)  \mid \psi_A    \right)$ be a regular representation of $\G(\cO_\ell),$ then there exists an extension $\widetilde{\psi_A}$ of $\psi_A$ to $S_A$ such that $\rho \cong \mathrm{Ind}_{S_A}^{\G(\cO_\ell)} (\widetilde{\psi_A}) .$
  Every $\rho \in \mathrm{Irr} \left( \G(\cO_\ell) \mid \psi_A \right)$ has dimension $\frac{|\G(\cO_\ell)|}{|\mathrm{C}_{\G(\cO_{\ell/2})}(A)| |\K^{\ell/2}|} .$
\subsection{Construction of regular representations of $\G(\cO_\ell)$ for $\ell $ odd}  
\label{O.construction}
Let $\ldown=\lfloor\ell /2 \rfloor$ and $\lup=\lceil \ell /2 \rceil$ and let $\psi_A \in \widehat{\K^{\lup}}$ be a regular one dimensional representation of $\K^{\lup}$ for $A \in \g(\cO_{\ldown}).$ Let $S_A=\{ g\in \G(\co_\ell) \mid \psi_A^g\cong\psi_A \}$. Let $\tilde{A} \in \g(\cO_\ell)$ be a lift of $A$.   Define the group $\mathrm{Rad}_{A} := \left(\K^{\ldown}\cap \tC_{\G(\lri_\ell)}(\tilde{A})\right) \K^{\lup}.$ The group $\mathrm{Rad}_{A}$ is the radical of the bilinear form 
\[
\mathcal{B}_{A}: \K^{\ldown}/\K^{\lup} \times \K^{\ldown}/\K^{{\lup}} \rightarrow \mathbb C^\times; \,\, \mathcal{B}_{A} (x \K^{\lup}, y \K^{\lup}) = \psi_A([x,y]). 
\]
Therefore, the one dimensional representation $\psi_A$ extends to $\mathrm{Rad}_{A}.$ Let $\widetilde{\psi_A}$ be an extension of $\psi_A$ to $\mathrm{Rad}_{A}$ and $\sigma \in \mathrm{Irr}(\K^{\ldown}\mid \psi_A) $ be the unique irreducible representation determined by $\widetilde{\psi_A}.$ Then, 
 \[\sigma|_{\mathrm{Rad}_{A}} \cong \underbrace{\widetilde{\psi_A} + \cdots + \widetilde{\psi_A} }_{q-\mathrm{times}}. 
 \]  
 Let $I_{\G(\cO_\ell)}(\sigma) = \{ g \in \G(\cO_\ell) \mid \sigma^g \cong \sigma  \}$ be the inertia groups of $\sigma\in \mathrm{Irr}(\K^{\ldown}\mid \psi_A).$
 Then  $I_{\G(\cO_\ell)}(\sigma)  = S_A = \tC_{\G(\lri_\ell)}(\tilde{A}) \K^{\ldown}.$ Every $\sigma \in \mathrm{Irr}(\K^{\ldown} \mid \psi_A)$ extends to the inertia group $I_{\G(\cO_\ell)}(\sigma) .$ In particular, every such extension induces irreducibly to $\G(\cO_\ell)$ and gives rise to a regular representation of $\G(\cO_\ell).$ Every regular $\rho \in \mathrm{Irr} \left( \G(\cO_\ell) \mid \psi_A \right)$ is obtained in this way and has dimension $\frac{q|\G(\cO_\ell)|}{|\mathrm{C}_{\G(\cO_{\ldown})}(A)| |\K^{\ldown}|}.$
  The following result can be easily obtained from the above construction and we shall use it later.
  \begin{proposition}\label{prop:MF from Subgp}
    Let $A\in \g(\cOd)$ be regular and $H$ be a subgroup of $S_A$ such that  $\K^{\ell_1}\leq H\leq S_A.$ 
    \begin{enumerate} 
   \item  Every irreducible representation of $H$ lying above $\psi_A$ has dimension $q$. 
   \item Let  $\phi$ be a representation of $H$ such that $\mathrm{Res}^H_{\K^{\ell_2}}(\phi) =m \psi_A$ for some positive integer $m.$ Then $\mathrm{Ind}_{H}^{\G(\cO_{\ell})}(\phi)$ is multiplicity free if and only if $\phi$ is multiplicity free.
\end{enumerate} 
\end{proposition}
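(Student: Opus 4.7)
The plan is to apply Clifford theory to the normal inclusion $\K^{\ell_1}\trianglelefteq H$, using the dimension-$q$ representation $\sigma \in \mathrm{Irr}(\K^{\ell_1}\mid\psi_A)$ constructed in Subsection~\ref{O.construction}, and to exploit the fact that $S_A/\K^{\ell_1}$ is abelian because the centralizer $\mathrm{C}_{\G(\co_\ell)}(\tilde A)$ of a regular element is abelian.

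For part~(1), I would first record that $\sigma$ is the unique irreducible representation of $\K^{\ell_1}$ above $\psi_A$ and that it extends to an irreducible representation $\tilde\sigma$ of $S_A$. Given any irreducible representation $\eta$ of $H$ lying above $\psi_A$, the restriction $\eta|_{\K^{\ell_1}}$ must contain $\sigma$; since $H\leq S_A$ stabilizes the isomorphism class of $\sigma$, this forces $\eta|_{\K^{\ell_1}}\cong d\sigma$ for some $d\geq 1$. Clifford theory applied with the explicit extension $\tilde\sigma|_H$ then identifies $\mathrm{Irr}(H\mid\sigma)$ with $\widehat{H/\K^{\ell_1}}$ via $\chi\mapsto \tilde\sigma|_H \otimes \chi$. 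Because $H/\K^{\ell_1}$ is a subquotient of the abelian group $S_A/\K^{\ell_1}$, every such $\chi$ is one-dimensional, and consequently $\dim\eta = \dim\tilde\sigma = q$.

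For part~(2), I would decompose $\phi = \bigoplus_i m_i \phi_i$ into isotypic components, where by part~(1) each $\phi_i = \tilde\sigma|_H\otimes\chi_i$ for distinct $\chi_i\in\widehat{H/\K^{\ell_1}}$. The projection formula yields
\[
\mathrm{Ind}_H^{S_A}(\phi_i) \cong \tilde\sigma \otimes \mathrm{Ind}_{H/\K^{\ell_1}}^{S_A/\K^{\ell_1}}(\chi_i),
\]
and the right-hand factor, being induction of a character in an abelian group, decomposes without multiplicity as $\bigoplus_{\tilde\chi\in\widehat{S_A/\K^{\ell_1}},\,\tilde\chi|_H=\chi_i} \tilde\chi$. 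Applying $\mathrm{Ind}_{S_A}^{\G(\co_\ell)}$ and invoking the construction in Subsection~\ref{O.construction} shows that each $\mathrm{Ind}_{S_A}^{\G(\co_\ell)}(\tilde\sigma\otimes\tilde\chi)$ is an irreducible representation of $\G(\co_\ell)$.

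The key step, which I expect to be the main technical obstacle, is to verify that these induced irreducibles are pairwise non-isomorphic as $\tilde\chi$ varies over $\widehat{S_A/\K^{\ell_1}}$. For this I would use that $\tilde\chi$ is trivial on $\K^{\ell_2}\leq \K^{\ell_1}$ while $\tilde\sigma|_{\K^{\ell_2}} = q\psi_A$, so $(\tilde\sigma\otimes\tilde\chi)|_{\K^{\ell_2}} = q\psi_A$ for every $\tilde\chi$. Any isomorphism between two such induced irreducibles arises from conjugation by some $g\in\G(\co_\ell)$; restricting to the normal subgroup $\K^{\ell_2}$ forces $\psi_A^g\cong \psi_A$, so $g\in S_A$, and conjugation by $g$ is then inner on $S_A$, forcing $\tilde\chi=\tilde\chi'$. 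Once this distinctness is in hand,
\[
\mathrm{Ind}_H^{\G(\co_\ell)}(\phi) \cong \bigoplus_i m_i \bigoplus_{\tilde\chi|_H=\chi_i} \mathrm{Ind}_{S_A}^{\G(\co_\ell)}(\tilde\sigma\otimes\tilde\chi)
\]
is a sum of pairwise distinct irreducibles occurring with multiplicities $m_i$, which is multiplicity-free exactly when each $m_i=1$, i.e.\ precisely when $\phi$ is multiplicity-free.
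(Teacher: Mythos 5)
The paper offers no proof of this proposition (it is stated as an immediate consequence of the construction in Subsection~\ref{O.construction}), and your overall strategy — Clifford theory over $\K^{\ell_1}\trianglelefteq H$ together with the abelianness of $S_A/\K^{\ell_1}$ — is exactly the argument the paper is alluding to. However, there is a factual error that needs to be repaired.

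Your claim that $\sigma$ is \emph{the unique} irreducible representation of $\K^{\ell_1}$ lying above $\psi_A$ is false. Since $\sigma|_{\K^{\ell_2}}=(\dim\sigma)\psi_A$ for every $\sigma\in\mathrm{Irr}(\K^{\ell_1}\mid\psi_A)$, the counting identity $\sum_{\sigma}(\dim\sigma)^2=[\K^{\ell_1}:\K^{\ell_2}]=q^4$ together with $\dim\sigma=q$ shows that there are $q^2$ such representations, one for each extension of $\psi_A$ to $\mathrm{Rad}_A$. This does not affect part~(1), because every one of these $\sigma$'s extends to $S_A=I_{\G(\co_\ell)}(\sigma)$ and the Gallagher argument you give applies to whichever $\sigma$ appears in $\eta|_{\K^{\ell_1}}$; one just cannot fix a single $\sigma$ in advance. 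It does, however, leave a gap in part~(2): the isotypic constituents $\phi_i$ of $\phi$ may restrict to \emph{different} elements $\sigma_i\in\mathrm{Irr}(\K^{\ell_1}\mid\psi_A)$, so not every $\phi_i$ has the form $\tilde\sigma|_H\otimes\chi_i$ for one fixed $\tilde\sigma$. To complete the distinctness step you therefore need, in addition to the Clifford bijection $\mathrm{Irr}(S_A\mid\sigma_i)\xrightarrow{\ \sim\ }\mathrm{Irr}(\G(\co_\ell)\mid\sigma_i)$, the observation that distinct $\sigma_i,\sigma_j\in\mathrm{Irr}(\K^{\ell_1}\mid\psi_A)$ are not $\G(\co_\ell)$-conjugate: any $g$ with $\sigma_i^g\cong\sigma_j$ would satisfy $\psi_A^g=\psi_A$ upon restricting to $\K^{\ell_2}$, hence $g\in S_A=I_{\G(\co_\ell)}(\sigma_i)$, forcing $\sigma_j\cong\sigma_i$. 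This makes $\mathrm{Irr}(\G(\co_\ell)\mid\sigma_i)$ pairwise disjoint across the different $\sigma_i$'s appearing, after which your multiplicity accounting goes through exactly as you wrote it.
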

The following lemma describes a maximal isotropic subgroup in certain special cases. 
\begin{lemma}
\label{lem: common max isotropic for diff type}
    For $i \in \{1,2\}$, let  $ A_i\in \god$ be regular matrices such that $\bar{A_1} \notin \mathrm{span}_{\mathbb F_q}\{ \I, \bar{A_2}\}.$ Define a subgroup $H$ of $\K^{\ldown}$ as
    \[
    H:=(\{\I+ \pi^{\ldown}(x\I+y\tilde{{A_1}}+z\tilde{{A_2}})\}\cap \K^{\ldown})\K^{\lup}.
    \]
     Let $\bar{H}$ be the image of $H$ in $\K^{\ldown}/\K^{\lup}$. Then $\bar{H}$ is a maximal isotropic subgroup for the antisymmetric bilinear forms $\mathcal{B}_{A_i}$ for $i \in \{1,2\}$ as defined above. 
\end{lemma}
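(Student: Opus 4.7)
The plan is to translate the lemma to a statement about the $\mathbb F_q$-vector space $V := \K^{\ldown}/\K^{\lup}$, which via the map $\I + \pi^{\ldown} B \mapsto \bar{B}$ identifies with the $4$-dimensional space $\g(\mathbb F_q)$. I will (i) re-express each $\mathcal{B}_{A_i}$ as a trace pairing on $\g(\mathbb F_q)$, (ii) use a standard trace identity to verify the isotropy of $\bar{H}$ for both forms, and (iii) perform a dimension count matching $\dim_{\mathbb F_q}\bar H$ with the maximal isotropic dimension.

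The key computation is a second-order expansion of the group commutator. For $u = \I + \pi^{\ldown}B_u$ and $v = \I + \pi^{\ldown}B_v$ in $\K^{\ldown}$, expanding $u v u^{-1} v^{-1}$ as a power series in $\pi^{\ldown}$ and using $B_u, B_v \in \g(\cO_{\lup})$ yields
\[
    [u,v] \,=\, \I + \pi^{2\ldown}[B_u, B_v] + O(\pi^{3\ldown}),
\]
where the right-hand bracket is the Lie bracket. Since $\ell \geq 3$ is odd, $3\ldown \geq \ell$, so the error term vanishes modulo $\pi^{\ell}$. Combined with $2\ldown = \ell-1$ and the definition of $\psi_{A_i}$, this gives
\[
    \mathcal{B}_{A_i}(u\K^{\lup}, v\K^{\lup}) \,=\, \psi\bigl(\pi^{\ell-1}\,\bm{tr}(\bar{A_i}\,[\bar{B_u}, \bar{B_v}])\bigr),
\]
a pairing depending only on the reductions $\bar{B_u}, \bar{B_v} \in \g(\mathbb F_q)$.

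For step (ii), when $u, v \in H$ the classes $\bar B_u, \bar B_v$ lie in $W := \mathbb F_q\I + \mathbb F_q\bar{A_1} + \mathbb F_q\bar{A_2}$. By bilinearity of the bracket together with $[\I, X] = 0$, the commutator $[\bar B_u, \bar B_v]$ is an $\mathbb F_q$-multiple of $[\bar{A_1}, \bar{A_2}]$; the cyclic invariance of the trace then gives $\bm{tr}(\bar{A_j}\,[\bar{A_1}, \bar{A_2}]) = 0$ for $j = 1, 2$, so $\bar{H}$ is totally isotropic for both $\mathcal{B}_{A_1}$ and $\mathcal{B}_{A_2}$. For step (iii), regularity of $A_i$ forces $\bar{A_i}$ to be non-scalar in $\g(\mathbb F_q)$, so by the description of $\mathrm{Rad}_{A_i}$ in \autoref{O.construction} its image in $V$ is $\mathbb F_q$-two-dimensional; hence $\mathcal{B}_{A_i}$ descends to a non-degenerate alternating form on the $2$-dimensional quotient, and maximal isotropic subspaces in $V$ have dimension $3$. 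The hypothesis $\bar{A_1} \notin \mathrm{span}_{\mathbb F_q}\{\I, \bar{A_2}\}$ together with the non-scalar nature of $\bar{A_2}$ ensures that $\{\I, \bar{A_1}, \bar{A_2}\}$ is $\mathbb F_q$-linearly independent, giving $\dim_{\mathbb F_q}\bar H = 3$ and hence maximality for both forms.

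The main subtlety to watch for is the $\GU_2$ case of step (iii): since $\I$ is Hermitian while $\tilde{A_1}, \tilde{A_2}$ are anti-Hermitian, the anti-Hermitian constraint on $x\I + y\tilde{A_1} + z\tilde{A_2}$ forces $x$ to be $\nonsq$-imaginary (lying in $\nonsq \cO_{\lup}$) and $y, z$ to be $\cO_{\lup}$-rational. A short parity check shows that each of these conditions contributes exactly one $\mathbb F_q$-parameter upon reduction modulo $\pi$, preserving the target dimension $3$ and completing the proof.
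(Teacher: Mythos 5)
Your proposal is correct and takes essentially the same route as the paper's proof, which simply asserts that the forms vanish on $\bar H$ ``by direct computations'' and then counts dimensions. Your commutator expansion, reduction to the trace pairing $\psi(\pi^{\ell-1}\bm{tr}(\bar{A_i}[\bar B_u,\bar B_v]))$, and the cyclic-trace identity are exactly the ``direct computations'' the paper leaves implicit, followed by the same dimension/cardinality count.
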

\begin{proof} 
By direct computations, we can check that the bilinear forms $\mathcal{B}_{A_i}$ for $i \in \{1,2\}$ are trivial on $\bar{H}$. By $\bar{A_1} \notin \mathrm{span}_{\mathbb F_q}\{ \I, \bar{A_2} \}$ and the cardinality of $\bar{H}$, we obtain that $H$ is a maximal isotropic subspace for $\mathcal{B}_{A_i}$ for $i \in \{1,2\}.$  
\end{proof}

\subsection{Alternate construction for split semisimple representations of $\G(\co_\ell)$}
\label{subsec:alternate const for ss}
Let $\B(\co_\ell)$ be the group of upper triangular matrices in $\G(\co_\ell)$. Let $(\chi_1,\chi_2) \in \widehat{\rl^\times} \times \widehat{\rl^\times}$. Define a character of $\B(\co_\ell)$ as follows:
\[(\chi_1,\chi_2)\left( \mat{a}{b}{0}{c} \right)
=\chi_1(a)\chi_2(c).\]
The pair $(\chi_1, \chi_2)$ is called $\ss$-pair of $\G(\cOl)$ if $\chi_1\chi_2^{-1}|_{1+ \pi^{\ell-1}\cO_\ell} \neq 1 $. The set of $\ss$-pairs will be denoted by $\mathfrak{S}$. Let $\T(\co_\ell)$ be the group of diagonal matrices in $\G(\co_\ell)$.
The following lemma characterizes the $\ss$-pairs and split semisimple representations of $\G(\cOl).$

 \begin{lemma}
 \begin{enumerate}

 \item Let $(\chi_1,\chi_2)\in \widehat{\rl^\times} \times \widehat{\rl^\times} $. If $(\chi_1, \chi_2)$ is $\ss$-pair of $\G(\cOl)$, then $\ind_{\B(\co_\ell)}^{\G(\cOl)}(\chi_1,\chi_2)$ is irreducible.
      \item A representation $\rho$ is a split semisimple regular representation of $\G(\co_\ell)$ if and only if $\rho\cong\ind_{\B(\co_\ell)}^{\G(\cOl)}(\chi_1,\chi_2)$ for some $\ss$-pair $(\chi_1, \chi_2)$ of $\G(\cOl)$. 
    \end{enumerate}
    \end{lemma}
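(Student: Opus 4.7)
The plan is to prove parts (1) and (2) separately. Part (1) will follow from Mackey's irreducibility criterion applied to $\B \backslash \G(\co_\ell) / \B$, with the $\ss$-pair condition doing exactly the work needed to rule out self-intertwiners from the nontrivial double cosets. Part (2) will combine a dimension count and an analysis of the restriction of $\ind_{\B}^{\G}(\chi_1,\chi_2)$ to $\K^{\ell-1}$ for the forward direction, and Frobenius reciprocity coupled with the construction of \autoref{E.construction}/\autoref{O.construction} for the converse.

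For part (1), I would first describe $\B \backslash \G(\co_\ell) / \B$ via the $\pi$-adic valuation of the lower-left entry, obtaining as representatives $\I$ together with an element $w_i$ for each $0 \le i \le \ell-1$ (with $w_0$ the long Weyl element and $w_i = \smat{1}{0}{\pi^i}{1}$ for $i \ge 1$). Mackey's criterion reduces irreducibility to showing that $(\chi_1,\chi_2)$ and $(\chi_1,\chi_2)^{w_i}$ disagree somewhere on $\B \cap \B^{w_i}$ for every such $i$. For $i=0$ the intersection is $\T(\co_\ell)$ and the conjugated character is $(\chi_2,\chi_1)$, so equality would force $\chi_1=\chi_2$, contradicting $\chi_1\chi_2^{-1}|_{1+\pi^{\ell-1}\co_\ell} \ne 1$. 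For $1 \le i \le \ell-1$, a routine conjugation computation identifies $\B \cap \B^{w_i}$ with the upper-triangular matrices $\smat{x}{y}{0}{t}$ satisfying $x-t \equiv y\pi^i \pmod{\pi^{\ell-i}}$, on which $(\chi_1,\chi_2)^{w_i}$ acts as $\chi_1(x-y\pi^i)\chi_2(t+y\pi^i)$; testing at $x=t=1$ with $y$ ranging over the admissible set collapses the would-be equality to triviality of $\chi_1\chi_2^{-1}$ on a subgroup that contains $1+\pi^{\ell-1}\co_\ell$, again contradicting the $\ss$-pair hypothesis. The $\GU_2$ case runs in parallel, tracking the Galois involution and the scalar $\nonsq$.

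For the forward direction of part (2), the dimension $[\G(\co_\ell):\B(\co_\ell)] = (q+1)q^{\ell-1}$ already matches the split semisimple regular dimension. To pin down the type, I would compute $\ind_{\B}^{\G}(\chi_1,\chi_2)|_{\K^{\ell-1}}$ by Mackey: since $\K^{\ell-1}$ is normal, the relevant double cosets are indexed by $\G(\rf)/\B(\rf) \cong \mathbb{P}^1(\rf)$, and a coset-by-coset analysis shows that the one-dimensional constituents of $\K^{\ell-1}$ appearing in the restriction are precisely the characters $\psi_A$ with $A$ running over the $\G(\rf)$-conjugacy class of $\mathrm{diag}(a,d)$, where $a,d\in\rf$ are read off from $\chi_j|_{1+\pi^{\ell-1}\co_\ell}$. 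The $\ss$-pair hypothesis forces $a \ne d$, so this $A$ is regular of type $\ss$, identifying $\ind_{\B}^{\G}(\chi_1,\chi_2)$ as a split semisimple regular representation. For the converse, given a split semisimple regular $\rho$, the construction of \autoref{E.construction}/\autoref{O.construction} places $\rho$ above some $\psi_A$ with $A$ conjugate to a diagonal matrix with distinct residues modulo $\pi$; since $\T(\co_\ell) \subseteq S_A$, restricting an appropriate irreducible constituent of $\rho|_{S_A}$ to $\T(\co_\ell)$ and extending trivially across the unipotent radical yields a character $(\chi_1,\chi_2)$ of $\B(\co_\ell)$ which is automatically an $\ss$-pair. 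Frobenius reciprocity then produces a $\G$-embedding $\rho \hookrightarrow \ind_{\B}^{\G}(\chi_1,\chi_2)$, and equality of dimensions combined with the irreducibility from part~(1) upgrades this to an isomorphism.

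I expect the main obstacle to be the converse half of part~(2): verifying that the character of $\T(\co_\ell)$ extracted from $\rho|_{S_A}$ really does embed into $\rho|_{\B(\co_\ell)}$ after trivial extension across the unipotent radical. This requires a compatibility check between the $\B$-Iwahori factorisation and the construction's extension of $\psi_A$, which is clean in the $\ell$-even setting of \autoref{E.construction} but more delicate in the $\ell$-odd setting of \autoref{O.construction}, where $\rho|_{\K^{\lfloor \ell/2\rfloor}}$ is already $q$-dimensional and one must pass through a Heisenberg-type extension. Should this direct approach become cumbersome, a cleaner fallback is a counting argument comparing the number of $\ss$-pairs modulo the symmetry $(\chi_1,\chi_2) \sim (\chi_2,\chi_1)$ with the total count of split semisimple regular representations, both of which are enumerable from the construction.
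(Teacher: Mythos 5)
Your proof of part (1) follows the same Mackey-criterion route as the paper, with the same double-coset representatives (the Weyl element and the lower-unipotent $g_i$'s) and the same strategy of testing the would-be intertwining against a one-parameter family of elements in $\B \cap \B^{g_i}$ to expose the $\ss$-pair hypothesis at level $1+\pi^{\ell-1}\co_\ell$; the details of the $X_b$-type test matrices match the paper's in substance. For the forward direction of part~(2) you propose to verify regularity by restricting to $\K^{\ell-1}$ and reading off the regular orbit $\psi_A$ --- the paper simply invokes the dimension count $[\G(\co_\ell):\B(\co_\ell)] = (q+1)q^{\ell-1}$ and the definition of type by dimension, which is enough because that dimension can only occur for $\ss$-regular representations; your extra check is harmless and slightly more self-contained.

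For the converse of part~(2), the obstacle you flag is real and is not dispatched by the sketch you give. Knowing that the character $(\chi_1,\chi_2)$ of $\T(\co_\ell)$ occurs in $\rho|_{\T(\co_\ell)}$ does not by itself show that its trivial extension to $\B(\co_\ell)$ occurs in $\rho|_{\B(\co_\ell)}$: Frobenius reciprocity gives $\langle \rho, \ind_\B^\G(\chi_1,\chi_2)\rangle = \langle \rho|_\B, (\chi_1,\chi_2)\rangle$, and the right-hand side requires a genuine computation on all of $\B$, not just $\T(\co_\ell)$. In the $\ell$-odd setting you would also have to thread through the Heisenberg lift of \autoref{O.construction}, since the relevant irreducible of $S_A$ is $q$-dimensional and does not restrict to a single character of $\T(\co_\ell)$. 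The paper sidesteps this entirely: for $\G = \GL_2$ it cites a known lemma that regular $\ss$-representations are exactly the parabolically induced ones, and for $\G = \GU_2$ it runs precisely the counting argument you describe as a fallback --- the number of inequivalent $\ind_\B^\G(\chi_1,\chi_2)$ with $(\chi_1,\chi_2)$ an $\ss$-pair, computed via \autoref{eqn:ss consB refined for GU2} and the equivalence $(\chi_1,1) \sim ({\chi_1^\circ}^{-1},1)$, is shown to equal the total number of split semisimple representations from the literature. So your fallback is the paper's argument, not a secondary option; I would promote it to the primary route and drop the Frobenius-embedding attempt unless you are prepared to carry out the $\B$-level restriction in detail.
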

     \begin{proof}
     Assume $ (\chi_1,\chi_2)$ and $  (\chi_1',\chi_2')$ are $\ss$-pairs. Then we have
     \begin{equation}\label{eqn:ss consB}
         \langle \ind_{\B(\co_\ell)}^{\G(\co_\ell)}(\chi_1,\chi_2), \ind_{\B(\co_\ell)}^{\G(\co_\ell)}(\chi_1',\chi_2')\rangle  =  \underset{g\in \B(\co_\ell)\backslash \G(\co_\ell)/  \B(\co_\ell) }{\sum}{\langle (\chi_1,\chi_2), (\chi_1',\chi_2')^g\rangle}_{\B(\co_\ell) \cap \B(\co_\ell)^g}.
     \end{equation}
   We also have the decomposition
\[
\G(\co_\ell)
 \;=\;
\B(\co_\ell) 
\smat{0}{1}{1}{0}
\B(\co_\ell)
\;\;\sqcup\;\; \left(
\bigsqcup_{1 \leq i \leq \ell}
\B(\co_\ell)
\smat{1}{0}{\nonsq\pi^i}{1}
\B(\co_\ell)\right).
\]
    For $ i \in [1, \ell]$, let  $g_i\coloneqq\smat{1}{0}{\nonsq\pi^i}{1}$. We claim that ${\langle (\chi_1,\chi_2), (\chi_1',\chi_2')^{g_i}\rangle}_{\B(\co_\ell) \cap \B(\co_\ell)^{g_i}}=0$ for $i \in [1, \ell-1]$. Let $i \in [1, \ell-1]$. For  $b\in \co_\ell$, define 
    \[
    X_b\coloneqq\mat{1-\nonsq^2 \pi^{\ell-1} b}{\nonsq \pi^{\ell-i-1} b}{0}{1+\nonsq^2\pi^{\ell-1} b} .
    \]
    Then it is easy to see that $X_b\in \B(\co_\ell)\cap \B(\co_\ell)^{g_i}$ for all $b\in \cO_\ell$. To prove the  claim, it is enough to prove that $(\chi_1,\chi_2)(X_b)\neq(\chi_1',\chi_2')^{g_i}(X_b)$ for some $b\in \cO_\ell$. Assume on the contrary that $(\chi_1,\chi_2)(X_b)=(\chi_1',\chi_2')^{g_i}(X_b)$ for all $b\in \cO_\ell$.  Upon simplification, we obtain $\chi_1\chi_2^{-1}(1-\nonsq^2\pi^{\ell-1} b)=1$ for all $b\in \cO_\ell$, which contradicts the assumption that $(\chi_1,\chi_2)$ is an $\ss$-pair. This proves the claim. 
    Now, for $g_\ell=\smat{1}{0}{0}{1}$, we have $\B(\co_\ell) \cap \B(\co_\ell)^{g_\ell}=\B(\co_\ell)$ and for $h=\smat{0}{1}{1}{0}$, $\B(\co_\ell)\cap \B(\co_\ell)^{h}=\T(\co_\ell)$. 
  Then \autoref{eqn:ss consB} becomes
     \begin{equation}\label{eqn:ss consB refined}
         \langle \ind_{\B(\co_\ell)}^{\G(\co_\ell)}(\chi_1,\chi_2), \ind_{\B(\co_\ell)}^{\G(\co_\ell)}(\chi_1',\chi_2')\rangle  = {\langle (\chi_1,\chi_2), (\chi_1',\chi_2')\rangle}_{\B(\co_\ell)} + {\langle (\chi_1,\chi_2), (\chi_2',\chi_1')\rangle}_{\T(\co_\ell)}.
     \end{equation}
   
    To prove (1), we need to show that if $(\chi_1,\chi_2)$ is an $\ss$-pair, then $\langle \ind_{\B(\co_\ell)}^{\G(\co_\ell)}(\chi_1,\chi_2), \ind_{\B(\co_\ell)}^{\G(\co_\ell)}(\chi_1,\chi_2)\rangle  = 1$. By \autoref{eqn:ss consB refined}, we have
       \begin{equation}\label{eqn:ss consB refined, proving irreduciblity}
         \langle \ind_{\B(\co_\ell)}^{\G(\co_\ell)}(\chi_1,\chi_2), \ind_{\B(\co_\ell)}^{\G(\co_\ell)}(\chi_1,\chi_2)\rangle  = 1 + {\langle (\chi_1,\chi_2), (\chi_2,\chi_1)\rangle}_{\T(\co_\ell)}.
     \end{equation}
If ${\langle (\chi_1,\chi_2), (\chi_2,\chi_1)\rangle}_{\T(\co_\ell)} \neq 0$, then  $(\chi_1,\chi_2)\left(\smat{a}{0}{0}{c}\right)= (\chi_2,\chi_1)\left(\smat{a}{0}{0}{c}\right)$ for all $\smat{a}{0}{0}{c}\in \T(\co_\ell)$, which simplifies to $\chi_1\chi_2^{-1}(ac^{-1})=1$ for all $\smat{a}{0}{0}{c}\in \T(\co_\ell)$. Therefore, we obtain 
$\chi_1\chi_2^{-1}|_{\cO_\ell^\times} = 1$. This contradicts the assumption that $(\chi_1,\chi_2)$ is an $\ss$-pair. Thus ${\langle (\chi_1,\chi_2), (\chi_2,\chi_1)\rangle}_{\T(\co_\ell)} = 0$. 
Substituting this in  \autoref{eqn:ss consB refined, proving irreduciblity}, we get $   \langle \ind_{\B(\co_\ell)}^{\G(\co_\ell)}(\chi_1,\chi_2), \ind_{\B(\co_\ell)}^{\G(\co_\ell)}(\chi_1,\chi_2)\rangle  = 1 $.

  To prove (2), observe that, by (1), for an $\ss$-pair $(\chi_1,\chi_2)$, the representation $\ind_{\B(\co_\ell)}^{\G(\co_\ell)}(\chi_1,\chi_2)$ is an irreducible representation of dimension  $ \frac{|\G(\co_\ell)|}{|\B(\co_\ell)|}
= (q+1)q^{\ell-1}.$ Therefore, by definition, $\ind_{\B(\co_\ell)}^{\G(\co_\ell)}(\chi_1,\chi_2)$ is an $\ss$-representation.
 For $\G=\GL_2$, the converse follows from \cite[Lemma 2.5 (3)]{MR4936485}. For $\G=\GU_2$, to prove the converse, we first count the number of inequivalent irreducible representations of the form $\ind_{\B(\co_\ell)}^{\G(\co_\ell)}(\chi_1,\chi_2)$, where $(\chi_1,\chi_2)$ is an $\ss$-pair. Observe that  for  $(\chi_1,\chi_2)\in \widehat{\Lri_\ell^\times} \times \widehat{\Lri_\ell^\times} $,  $(\chi_1,\chi_2)=(\chi_1{{\chi_2}^\circ}^{-1},1)$ as characters of  $\B(\co_\ell)$. Also, for $\ss$-pairs  $(\chi_1,1)$ and $(\chi_2,1)$, by \autoref{eqn:ss consB refined},  we have
      \begin{equation}\label{eqn:ss consB refined for GU2}
         \langle \ind_{\B(\co_\ell)}^{\G(\co_\ell)}(\chi_1,1), \ind_{\B(\co_\ell)}^{\G(\co_\ell)}(\chi_2,1)\rangle  = {\langle (\chi_1,1), (\chi_2,1)\rangle}_{\B(\co_\ell)} + {\langle (\chi_1,1), ({\chi_2^\circ}^{-1},1)\rangle}_{\T(\co_\ell)}.
     \end{equation}
  This gives  
     \[
     \ind_{\B(\co_\ell)}^{\G(\co_\ell)}(\chi_1,1)\cong \ind_{\B(\co_\ell)}^{\G(\co_\ell)}(\chi_2,1)\, \mathrm{if \, and \, only \, if}\,
(\chi_1,1) \in \{(\chi_2,1),({\chi_2^\circ}^{-1},1)\}.
\] 
Therefore, the number of inequivalent irreducible representations of the form $\ind_{\B(\co_\ell)}^{\G(\co_\ell)}(\chi_1,\chi_2)$ is equal to 
$$\frac{|\{\chi\in \widehat{\Lri_\ell^\times}  \mid 1+\pi^{\ell-1}\co_\ell \nsubseteq  \ker(\chi) \}|}{2} =\frac{(q-1)|\Lri_\ell^\times|}{2q}= \frac{q^{2\ell-3}(q-1)^2(q+1)}{2}. $$
By \cite[Table 4.3 (Page-61)]{Campbell-thesis},
this is same as the total number of split semisimple representations of $\GU_2(\cO_\ell)$. Hence the converse of (2) follows for  $\G=\GU_2$.
 \end{proof}

\subsection{Alternate construction for split non-semisimple representations of $\G(\co_\ell)$, $\ell$ odd.}
\label{subsec:alternate const for sns}
In this section, we discuss an alternate construction for split non-semisimple representations of $\G(\co_\ell)$ for odd $\ell$. For proofs of these results; see \cite[Section~3.3.3]{MR2584957} for $\G=\GL_2$ and \cite[Section 4.H.2, part 3, Page-57]{Campbell-thesis} for $\G=\GU_2$.
  Let $A=\left[\begin{smallmatrix}
      \alpha & \nonsq \pi \beta\\
      \nonsq & \alpha
  \end{smallmatrix}\right] \in \g(\cO_{\ldown})$ and the Serre lift $\ti{A}=\left[\begin{smallmatrix}
      \ti{\alpha} &\nonsq \pi \ti{\beta}\\
      \nonsq & \ti{\alpha}
  \end{smallmatrix}\right] \in \g(\co_\ell)$ and corresponding character $\psi_A$ of $\K^{\lup}$. Then $S_A = \mathrm{C}_{\G(\co_\ell)}(\ti{A})   \K^{\ldown} $  is given by 
  \begin{equation*}
      S_A =\left\{\begin{bmatrix}
          x & \pi \ti{\beta} y+\pi^{\ldown}z \\
         y & x+\pi^{\ldown}w
      \end{bmatrix} \mid x,y,z,w \in \rl \right\}\cap \G(\co_\ell).
  \end{equation*}
  Consider a normal subgroup $\mathrm{N}=\left \{\left[\begin{smallmatrix}
      1+\pi^{\ldown}x & \pi^{\lup}z\\
       \pi^{\ldown}y    & 1+\pi^{\ldown}w
\end{smallmatrix}\right] \mid x,y,z,w \in \rl \right \} \cap \G(\co_\ell)$ of $S_A$. We can extend $\psi_A$ to $\mathrm{N}$ and since $N/\K^{\lup}$ is abelian, every character in $\mathrm{Irr}(N \mid \psi_A)$ is one dimensional. Define an extension $\psi_{\ti{A}}'$ of $\psi_A$ to $N$ as follows:
\begin{equation*}
    \psi_{\ti{A}}'\left(\begin{bmatrix}
   1+\pi^{\ldown}x & \pi^{\lup}z\\
       \pi^{\ldown}y    & 1+\pi^{\ldown}w
\end{bmatrix}\right)\coloneqq\psi\left(\pi^{\ldown}\bm{tr}\left(\ti{A}\smat{x}{\pi z}{y}{w}-\frac{\pi^{\ldown}}{2}\ti{A}\smat{x}{\pi z}{y}{w}^2\right)\right). 
\end{equation*} 
We can show that the stabilizer of $\psi_{\ti{A}}'$ in $ S_A$ is $ \mathrm{N}  \mathrm{C}_{\G(\co_\ell)}(\ti{A})$. Since $\mathrm{C}_{\G(\co_\ell)}(\ti{A})$ is abelian, we 
can extend $\psi'_{\ti{A}}$ to a character $\psi_{\ti{A}}''$ of $\mathrm{N} \mathrm{C}_{\G(\co_\ell)}(\ti{A})$ and every character of $\mathrm{N} \mathrm{C}_{\G(\co_\ell)}(\ti{A})$ lying above $\psi_{\ti{A}}'$ is one dimensional. Using Clifford theory for the group $S_A$ and its normal subgroup $N$ having character $\psi_{\ti{A}}'$, we get that $\ind_{\mathrm{N} \mathrm{C}_{\G(\co_\ell)}(\ti{A})}^{S_A} \psi_{\ti{A}}''$ is an  irreducible representation of dimension $q$. Denote $\ind_{\mathrm{N} \mathrm{C}_{\G(\co_\ell)}(\ti{A})}^{S_A} \psi_{\ti{A}}''$ by $\phi$. Then $\ind_{S_A}^{\G(\co_\ell)}{{\phi}}$ is a split non-semisimple representation of $\G(\co_\ell)$ and any split non-semisimple representation of $\G(\co_\ell)$ lying above $\psi_A$ is of the form $\ind_{S_A}^{\G(\co_\ell)}{\phi} \cong \ind_{\mathrm{N}  \mathrm{C}_{\G(\co_\ell)}(\ti{A})}^{\G(\co_\ell)}{\psi_{\ti{A}}''}$ for some lift $\ti{A}$ of $A$ and some extension $\psi''_{\ti{A}}$ of $\psi'_{\ti{A}} $ to the group $\mathrm{N}  \mathrm{C}_{\G(\co_\ell)}(\ti{A}).$ 

\subsection{Alternate construction for cuspidal representations of $\G(\co_\ell)$, $\ell$ odd.}
\label{subsec:alternate const for cus} 
Let $A=\smat{0}{\nonsq \alpha}{\nonsq}{0}\in \g(\co_{\ldown})$ be a regular matrix with $\tt(A)=\cus$. Define  $ D^{\ell_i}(\ti{A})\coloneqq(\C_{\G(\cO_\ell)}(\ti{A})\cap \K^1)\K^{\ell_i} \text{ for } i \in \{1,2\}$.
The character $\psi_A$ can be extended to $\Z D^{\ell_2}(\ti{A})$, say $\widetilde{\psi_A}$. We have $\Z D^{\ell_2}(\ti{A}) \trianglelefteq \Z D^{\ell_1}(\ti{A})$ and every element of $\Z D^{\ell_1}(\ti{A})$ stabilizes $\widetilde{\psi_A}$.
By considering the bilinear form on $\Z D^{\ell_1}(\ti{A})/ \Z D^{\ell_2}(\ti{A})$ parallel to the one given in \autoref{O.construction}, we obtain a construction of irreducible representations of $\G({\co_\ell})$ lying above $\psi_A$.  The difference in this case compared to the previous one is that the current bilinear form is non-degenerate. 
 The process of construction is depicted in the following diagram: 

\begin{align*}
    &\K^{\lup} \xrightarrow{ext} \Z D^{\ell_2}(\ti{A}) \xrightarrow{ext} J \xrightarrow{ind} \Z D^{\ell_1}(\ti{A}) \xrightarrow{ext} S_A \xrightarrow{ind} \G(\co_\ell)\\
&\psi_A \hspace{27pt} \widetilde{\psi_A}  \hspace{45pt} \widetilde{\widetilde{\psi_A}} \hspace{40pt} \theta \hspace{35pt} \phi  \hspace{37pt} \rho
\end{align*}

 There exists a maximal isotropic group $J$ of the above mentioned bilinear form which is normal in $\Z D^{\ell_1}(\ti{A})$ with index $q$.
 The character $\widetilde{\psi_A}$ extends to $J$. Let $\widetilde{\widetilde{\psi_A}}$ denotes this extension, then the inertia group of $\widetilde{\widetilde{\psi_A}}$ in $\Z D^{\ell_1}(\ti{A})$ is $J$ itself. By the Heisenberg lift, $\theta=\ind_{J}^{\Z D^{\ell_1}(\ti{A})}(\widetilde{\widetilde{\psi_A}})$ is a unique irreducible character of  $\Z D^{\ell_1}(\ti{A})$ of degree $q$ lying above $\widetilde{\psi_A}$. Now $\theta$ is invariant under $S_A$ and $\frac{S_A}{\Z D^{\ell_1}(\ti{A})}$ is a cyclic group. Hence we can extend $\theta$ to a character $\phi$  of $S_A$. By Clifford theory, the representation $ \ind_{S_A}^{\G(\co_\ell)}\phi$ of $\G(\co_\ell)$ is an irreducible cuspidal representation of $\G(\cO_\ell)$ lying above $\psi_A$. Moreover, every cuspidal representation of $\G(\cO_\ell)$ lying above $\psi_A$ is of this form. For proofs see \cite[Section 3.3.2]{MR2584957} for $\GL_2$ and \cite[Section 4.H.2, Page-48]{Campbell-thesis} for $\GU_2$. The following result is directly obtained from the above construction.    
\begin{proposition}\label{prop:inner prod from Subgp} 
    Let $A\in \god$ be cuspidal and $H$ be a subgroup of  $\Z D^{\ell_2}(\tilde{A})$ such that $\K^{\ell_2}\leq H\leq \Z D^{\ell_2}(\tilde{A}).$ For  $\phi_1,\phi_2\in \mathrm{Irr}(H\mid \psi_A),$ we have  $\langle \mathrm{Ind}_H^{\G(\cO_\ell)}(\phi_1) ,\mathrm{Ind}_H^{\G(\cO_\ell)}(\phi_2)\rangle\neq 0$ if and only if $\phi_1=\phi_2.$  
\end{proposition}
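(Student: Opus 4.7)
The plan is to apply Mackey's intertwining formula and show every double coset contributes $0$ when $\phi_1 \neq \phi_2$. The key inputs from the cuspidal construction of \autoref{subsec:alternate const for cus} are: every element of $\Z D^{\ell_1}(\tilde{A})$ stabilizes the chosen one-dimensional extension $\widetilde{\psi_A}$ of $\psi_A$ to $\Z D^{\ell_2}(\tilde{A})$, and the commutator containment $[\Z D^{\ell_1}(\tilde{A}), \Z D^{\ell_2}(\tilde{A})] \subseteq \K^{\ell_2}$ (which follows because $\C_{\G(\cO_\ell)}(\tilde{A})$ is abelian, $[\K^{\ell_1}, \K^{\ell_2}] \subseteq \K^{\ell}=\{\I\}$, and $[\K^{\ell_1},\C_{\G(\cO_\ell)}(\tilde{A})\cap \K^1]\subseteq \K^{\ell_1+1}=\K^{\ell_2}$).

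Mackey's formula gives
\[
\langle \mathrm{Ind}_H^{\G(\cO_\ell)}(\phi_1), \mathrm{Ind}_H^{\G(\cO_\ell)}(\phi_2)\rangle = \sum_{g \in H \backslash \G(\cO_\ell) /H} \langle \phi_1, \phi_2^g\rangle_{H \cap H^g}.
\]
If $\phi_1 = \phi_2$, the trivial double coset alone contributes $\langle\phi_1,\phi_1\rangle_H \geq 1$, so the sum is positive. Assume $\phi_1 \neq \phi_2$. Since $\K^{\ell_2}$ is normal in $\G(\cO_\ell)$ and contained in $H$, it lies in $H\cap H^g$ for every $g$, and $\phi_i|_{\K^{\ell_2}}$ is a multiple of $\psi_A$ while $\phi_2^g|_{\K^{\ell_2}}$ is a multiple of $\psi_A^g$; regularity of $A$ forces $g \in S_A$ for a nonzero contribution.

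For $g \in \Z D^{\ell_1}(\tilde{A}) \subseteq S_A$, the containment $[\Z D^{\ell_1}(\tilde{A}),H] \subseteq \K^{\ell_2} \subseteq H$ makes $H$ normal in $\Z D^{\ell_1}(\tilde{A})$, so $H \cap H^g = H$. Write $\phi_2 = \chi \cdot \widetilde{\psi_A}|_H$ for a character $\chi$ of the abelian quotient $H/\K^{\ell_2}$. Then $\phi_2^g(h) = \chi(ghg^{-1})\widetilde{\psi_A}(ghg^{-1})$; the stabilizer property yields $\widetilde{\psi_A}(ghg^{-1}) = \widetilde{\psi_A}(h)$, and because $ghg^{-1}\cdot h^{-1} \in \K^{\ell_2} \subseteq \ker(\chi)$ we also obtain $\chi(ghg^{-1}) = \chi(h)$. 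Hence $\phi_2^g = \phi_2$ on $H$, so each such double coset contributes $\langle \phi_1,\phi_2\rangle_H = 0$.

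The main obstacle is the remaining regime $g \in S_A \setminus \Z D^{\ell_1}(\tilde{A})$. The quotient $S_A/\Z D^{\ell_1}(\tilde{A})$ is cyclic of order $q+1$ for $\GL_2$ and $q-1$ for $\GU_2$, induced from the image of $\C_{\G(\cO_\ell)}(\tilde{A})$ in the residue field. Such $g$ act nontrivially on $M := \Z D^{\ell_2}(\tilde{A})$, may fail to normalize $H$, and $H \cap H^g$ is generally a proper subgroup of $H$. One must verify that $\phi_2^g|_{H \cap H^g}$ cannot coincide with any $\phi_1|_{H \cap H^g}$ for $\phi_1 \in \mathrm{Irr}(H \mid \psi_A)\setminus\{\phi_2\}$. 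The intended tool is the explicit permutation action of $\C_{\G(\cO_\ell)}(\tilde{A})$ on one-dimensional extensions of $\psi_A$ to $M$, combined with the fact that the only agreements $\phi_2^g|_{H \cap H^g} = \phi_1|_{H\cap H^g}$ force $g \in \Z D^{\ell_1}(\tilde{A})$. Granting this, the Mackey sum collapses to $[\Z D^{\ell_1}(\tilde{A}):H]\cdot \delta_{\phi_1,\phi_2}$, which is zero precisely when $\phi_1 \neq \phi_2$, completing the proof.
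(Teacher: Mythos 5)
Your reduction via Mackey to double cosets $g\in S_A$ is sound, and your treatment of $g\in \Z D^{\ell_1}(\tilde A)$ is correct. However, you explicitly leave the case $g\in S_A\setminus\Z D^{\ell_1}(\tilde A)$ unproved (``Granting this\dots''), so the argument is incomplete, and the direction you sketch for closing the gap is wrong in two ways. First, it is not true that agreement $\phi_2^g|_{H\cap H^g}=\phi_1|_{H\cap H^g}$ forces $g\in\Z D^{\ell_1}(\tilde A)$: any $c\in\C_{\G(\cO_\ell)}(\tilde A)$, including those outside $\Z D^{\ell_1}(\tilde A)$, normalizes $H$ and satisfies $\phi_2^c=\phi_2$ on $H$, so these double cosets contribute $\delta_{\phi_1,\phi_2}$, not $0$. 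Second, as a consequence, the Mackey sum equals $[S_A:H]\,\delta_{\phi_1,\phi_2}$ rather than your claimed $[\Z D^{\ell_1}(\tilde A):H]\,\delta_{\phi_1,\phi_2}$; for the ``iff'' this distinction is harmless, but it shows the remaining cosets do not contribute $0$.

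The gap actually closes by observing that the argument you already gave for $\Z D^{\ell_1}(\tilde A)$ applies verbatim to all of $S_A$. Since $S_A=\C_{\G(\cO_\ell)}(\tilde A)\K^{\ell_1}$ and $\C_{\G(\cO_\ell)}(\tilde A)$ is abelian, the same commutator computation you cited gives $[S_A,\Z D^{\ell_2}(\tilde A)]\subseteq\K^{\ell_2}$, so $H\trianglelefteq S_A$ and $\chi$ is invariant under $S_A$-conjugation. Moreover $S_A$ stabilizes $\widetilde{\psi_A}$, not merely $\Z D^{\ell_1}(\tilde A)$: for $c\in\C_{\G(\cO_\ell)}(\tilde A)$ and $zc'k\in\Z D^{\ell_2}(\tilde A)$ with $z\in\Z$, $c'\in\C_{\G(\cO_\ell)}(\tilde A)\cap\K^1$, $k\in\K^{\ell_2}$, one has $c^{-1}(zc'k)c=zc'(c^{-1}kc)$ and $\psi_A(c^{-1}kc)=\psi_A(k)$, while $\K^{\ell_1}$ stabilizes $\widetilde{\psi_A}$ by the fact you already quoted. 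Hence $\phi_2^g=\phi_2$ for every $g\in S_A$, and every double coset contributes $\delta_{\phi_1,\phi_2}$, which finishes the proof. For the record, the paper supplies no written proof here (it records the statement as a direct consequence of the construction in \autoref{subsec:alternate const for cus}: distinct one-dimensional characters of $\Z D^{\ell_2}(\tilde A)$ above $\psi_A$ have distinct Heisenberg lifts to $\Z D^{\ell_1}(\tilde A)$, and the Clifford bijection for $S_A$ turns this into disjointness of the induced representations), so your Mackey route, once corrected as above, is a legitimate self-contained alternative.
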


\section{Results related to the construction of representations of $\G(\cO_\ell)$}
\label{sec:related-results-construction}
In this section, we list several results related to the construction as given in \autoref{subsec:construction}. While these may be well known to the experts but we could not find them explicitly stated in the literature. Therefore, for completeness, we include their statements and proofs. We use the notations of \autoref{subsec:construction} in this section. 

Throughout this section, we assume $A_1,A_2\in \g(\cO_{\ell_1})$ are regular matrices such that $A_1+A_2$ is  regular and 
    $\tt(A_1)= \tt(A_2)=\cus.$ 
    For $\phi_i \in \mathrm{Irr}(S_{A_i}\mid \psi_{A_i})_{1 \leq i \leq 2}$, let  
\[
\mathbf{W}(\phi_1,\phi_2):=\mathrm{Res}^{S_{A_1}}_{S_{A_1}\cap S_{A_2}}(\phi_1)\otimes\mathrm{Res}^{S_{A_2}}_{S_{A_1}\cap S_{A_2}}(\phi_2).
    \]
We prove the following  result in this section and this will be crucially used to prove \autoref{thm:main-theorem} for $\Xi_3$ (cuspidal tensor cuspidal case) in \autoref{sec:proof-of-them-s1-s3}.     

\begin{thm}\label{thm:SA multiplicity free}
     The representation $\mathbf{W}(\phi_1,\phi_2)$
    is multiplicity free.
\end{thm}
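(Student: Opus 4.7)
The plan is to descend to the normal subgroup $\K^{\ldown}\subseteq H\coloneqq S_{A_1}\cap S_{A_2}$, use the Heisenberg structure supplied by \autoref{O.construction}, and apply Clifford theory to reduce multiplicity freeness of $\mathbf{W}(\phi_1,\phi_2)$ to an abelian-group question.

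If $\ell$ is even, then \autoref{E.construction} makes each $\phi_i$ one-dimensional on $S_{A_i}$, so $\mathbf{W}(\phi_1,\phi_2)$ is one-dimensional on $H$ and the claim is immediate; assume $\ell$ odd from now on. First I would note two inclusions: $\K^{\ldown}\subseteq S_{A_i}$ for $i=1,2$ by \autoref{O.construction} (so $\K^{\ldown}\subseteq H$), and any element of $H$ stabilizes $\psi_{A_1}\psi_{A_2}=\psi_{A_1+A_2}$, so $H\subseteq S_{A_1+A_2}$. Since $A_1+A_2$ is regular, $S_{A_1+A_2}/\K^{\ldown}$ is abelian — being the image of the unit group of the commutative $\cO_\ell$-algebra $\cO_\ell[\widetilde{A_1+A_2}]$ — and hence so is $H/\K^{\ldown}$.

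By \autoref{O.construction} each restriction $\phi_i|_{\K^{\ldown}}$ is a $q$-dimensional irreducible Heisenberg representation $\sigma_{A_i}$ of $\K^{\ldown}$ above $\psi_{A_i}$, so $(\phi_1\otimes\phi_2)|_{\K^{\ldown}}$ has dimension $q^2$ and central character $\psi_{A_1+A_2}$ on the central subgroup $\K^{\lup}$; by regularity of $A_1+A_2$ every $\K^{\ldown}$-irreducible constituent is one of the Heisenberg representations above $\psi_{A_1+A_2}$, each of dimension $q$. Applying Clifford theory to $\K^{\ldown}\trianglelefteq H$, I would organize the decomposition of $\mathbf{W}(\phi_1,\phi_2)$ by the $H/\K^{\ldown}$-orbits on these $\K^{\ldown}$-constituents. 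For each orbit $\mathcal{O}$, choose $\sigma\in\mathcal{O}$ with stabilizer $H_\sigma\leq H$ and a fixed extension $\widetilde{\sigma}$ of $\sigma$ to $H_\sigma$ (which exists since $\sigma$ extends to $S_\sigma$ by \autoref{O.construction} and $H_\sigma\subseteq S_\sigma$); the corresponding block of $\mathbf{W}(\phi_1,\phi_2)$ is then $\mathrm{Ind}_{H_\sigma}^{H}(\widetilde{\sigma}\otimes V_\sigma)$ for some representation $V_\sigma$ of the abelian group $H_\sigma/\K^{\ldown}$. Multiplicity freeness of $\mathbf{W}$ is thus equivalent to multiplicity freeness of each $V_\sigma$, i.e., the characters of $H_\sigma/\K^{\ldown}$ appearing in $V_\sigma$ are pairwise distinct.

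The main obstacle is this last distinctness. My approach is to invoke the alternate construction of \autoref{subsec:alternate const for cus}, which presents each $\phi_i$ as an extension of $\theta_i=\mathrm{Ind}_{J_i}^{\Z D^{\ell_1}(\widetilde{A_i})}(\widetilde{\widetilde{\psi_{A_i}}})$ with $\theta_i$ supported on the normal subgroup $J_i$, and then to Mackey-decompose $\phi_1|_H\otimes\phi_2|_H$ relative to isotropic subgroups modeled on the radicals $\mathrm{Rad}_{A_i}$ from \autoref{O.construction}. The regularity of $A_1+A_2$ together with the cuspidality of each $A_i$ — which forces $\cO_\ell$-linear independence of $\{\I,\widetilde{A_1},\widetilde{A_2}\}$ outside degenerate configurations — should translate into a non-degenerate pairing identifying the characters appearing in $V_\sigma$, forcing them to be pairwise distinct. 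A parallel (and possibly cleaner) alternative is a direct computation of $\langle\mathbf{W},\mathbf{W}\rangle_H$: since $\sum m_i^2\geq\sum m_i$ for non-negative integers with equality iff each $m_i\in\{0,1\}$, it suffices to show $\langle \mathbf{W},\mathbf{W}\rangle_H$ equals the total dimension $\sum_\mathcal{O}\dim V_\sigma$, and this can be attacked by character computations using the vanishing of $\theta_i$ off $J_i$ together with the Heisenberg character formulas for the $\sigma_{A_i}$ on $\K^{\ldown}$.
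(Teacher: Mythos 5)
Your ``parallel alternative'' at the end is in fact the paper's actual strategy, and your setup correctly captures the reduction: by \autoref{prop:MF from Subgp}(1) (equivalently, your Heisenberg/Clifford argument over $\K^{\ldown}$) every irreducible constituent of $\mathbf{W}(\phi_1,\phi_2)$ has dimension $q$, so writing $\mathbf{W}=\sum m_i\theta_i$ gives $\sum m_i=q$ from $\dim\mathbf{W}=q^2$, and multiplicity freeness is equivalent to $\langle\mathbf{W},\mathbf{W}\rangle=\sum m_i^2=q$ via the elementary inequality you cite. So you have correctly identified what must be proved. Your primary plan (orbit-wise Clifford decomposition, then a non-degeneracy argument from cuspidality plus regularity of $A_1+A_2$) is a reasonable framing of the problem but you do not close it, and you acknowledge that the character distinctness in $V_\sigma$ is the ``main obstacle'' without resolving it.

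The genuine gap is that the inner-product computation is not carried out, and the ingredient that makes it work is nontrivial and is not identified in your proposal. The paper's Lemma~\ref{lem:Wq} shows $\langle\mathbf{W},\mathbf{W}\rangle=q$ by a direct character sum over $S_{A_1}\cap S_{A_2}$, stratified into the sets $\Gamma_1,\dots,\Gamma_4$ according to membership in $\Z D^{\ell_2}(\tilde{A_i})$ and $\Z D^{\ell_1}(\tilde{A_i})$. This requires two inputs you do not supply: (i) precise cardinalities of $S_{A_1}\cap S_{A_2}$ and of $\Gamma_1$, $\Gamma_4$ in terms of $t=\max\{i:\,A_1A_2\equiv A_2A_1\ \mathrm{mod}\ \pi^i\}$, established in Lemma~\ref{lem:Stab-cardinality}; and (ii) the crucial and non-obvious fact $|\chi_{\phi_i}(g)|=1$ for all $g\in S_{A_i}\setminus \Z D^{\ell_1}(\tilde{A_i})$, which is Proposition~\ref{prop:character values}(3) and is proved by passing to the adjoint action of $\phi_i$ on $M_q(\mathbb{C})$ and showing a twisted-commutator stays in $\Z D^{\ell_2}$ only for trivial coset representatives. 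Your phrase ``the vanishing of $\theta_i$ off $J_i$ together with the Heisenberg character formulas'' gives only the easy parts (items (1) and (2) of Proposition~\ref{prop:character values}) and omits (3), without which the sum does not evaluate to $q$ (the terms from $\Gamma_4$ in the $t=\ell_1$ case are controlled precisely by the $|\chi_{\phi_i}|=1$ bound). So the proposal is a correct outline of the right route, but it is missing the key technical lemma and the bookkeeping that turn it into a proof.
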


We first include a few preliminary results that we require for the proof of \autoref{thm:SA multiplicity free}. Recall, for cuspidal $A=\smat{0}{\nonsq \alpha}{\nonsq}{0}\in \g(\co_{\ldown})$, we defined $ D^{\ell_i}(\ti{A})$ by $D^{\ell_i}(\ti{A}) = (\C_{\G(\cO_\ell)}(\ti{A})\cap \K^1)\K^{\ell_i}$  for $i \in \{1,2\}$ in \autoref{subsec:alternate const for cus}.

\begin{proposition}\label{prop:character values}
Let $\ell$ be odd and $A=\smat{0}{\nonsq \alpha}{\nonsq}{0}\in \g(\cO_{\ell_1})$ be regular such that $\tt(A)=\cus.$
    For  $\phi  \in \mathrm{Irr}(S_{A}\mid \psi_A),$ the character $\chi_\phi$ of $\phi$ satisfies the following:
    \begin{enumerate}
        \item $\chi_\phi (g)=q\widetilde{\psi_A}(g)$ for all $g\in \Z D^{\ell_2}(\tilde{A}),$ where $\widetilde{\psi_A}\in \mathrm{Irr}(\Z D^{\ell_2}(\tilde{A})\mid \psi_A)$ such that \\ $\langle \mathrm{Res}^{S_A}_{\Z  D^{\ell_2}(\tilde{A})}(\phi),\widetilde{\psi_A}\rangle\neq 0.$
        \item $\chi_\phi (g)=0$ for all $g\in \Z D^{\ell_1}(\tilde{A})\setminus \Z D^{\ell_2}(\tilde{A}).$
        \item $ |\chi_\phi(g)|=1$  for all $ g \in S_A\setminus\Z D^{\ell_1}(\tilde{A}).$
    \end{enumerate}
\end{proposition}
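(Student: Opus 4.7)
The plan is to exploit the explicit construction of $\phi$ recalled in \autoref{subsec:alternate const for cus}. Recall $\phi$ extends the irreducible $q$-dimensional representation $\theta = \ind_{J}^{\Z D^{\ell_1}(\tilde A)}\bigl(\widetilde{\widetilde{\psi_A}}\bigr)$ from $\Z D^{\ell_1}(\tilde A)$ to $S_A$, where $J$ is a maximal isotropic subgroup for the non-degenerate alternating form $\mathcal{B}_A$ on $\Z D^{\ell_1}(\tilde A)/\Z D^{\ell_2}(\tilde A)$. Since $\chi_\phi$ restricted to $\Z D^{\ell_1}(\tilde A)$ equals $\chi_\theta$, parts (1) and (2) reduce to computing $\chi_\theta$ via the Frobenius induced-character formula, and part (3) requires new input.

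For part (1), I would fix $g \in \Z D^{\ell_2}(\tilde A)$ and use that $\Z D^{\ell_2}(\tilde A)/\K^{\lup}$ is the radical of $\mathcal{B}_A$: every commutator $[x,g]$ with $x \in \Z D^{\ell_1}(\tilde A)$ lies in $\K^{\lup}\cap \ker\psi_A$. Writing $xgx^{-1} = g\,[g^{-1},x^{-1}]$ and using that $\widetilde{\widetilde{\psi_A}}$ extends $\psi_A$, every $x$ satisfies $xgx^{-1} \in \Z D^{\ell_2}(\tilde A)\subseteq J$ with $\widetilde{\widetilde{\psi_A}}(xgx^{-1}) = \widetilde{\psi_A}(g)$. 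Summing collapses the Frobenius formula to $\chi_\theta(g) = [\Z D^{\ell_1}(\tilde A):J]\,\widetilde{\psi_A}(g) = q\,\widetilde{\psi_A}(g)$.

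For part (2), I would fix $g \in \Z D^{\ell_1}(\tilde A)\setminus \Z D^{\ell_2}(\tilde A)$ and use non-degeneracy of $\mathcal{B}_A$ on the quotient to produce $y \in \Z D^{\ell_1}(\tilde A)$ with $\eta := \psi_A([y,g]) \neq 1$. A direct manipulation using $ygy^{-1} = [y,g]\,g$ with $[y,g]\in \K^{\lup}$ central in $J$ gives $\chi_\theta(ygy^{-1}) = \eta\,\chi_\theta(g)$; combined with the class-function identity $\chi_\theta(ygy^{-1}) = \chi_\theta(g)$ this forces $\chi_\theta(g) = 0$.

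Part (3) is the main obstacle, because it requires information about $\phi$ outside $\Z D^{\ell_1}(\tilde A)$, where the Heisenberg formula does not directly apply. My strategy is to realize $\phi$ monomially. Since $S_A/\Z D^{\ell_1}(\tilde A)$ is cyclic (it is a quotient of the unit group of the unramified quadratic extension $\lri[\tilde A]$ that governs the cuspidal torus $\mathrm{C}_{\G(\co_\ell)}(\tilde A)$), Clifford theory applied to the $S_A$-invariant $\theta$ with underlying linear character $\widetilde{\widetilde{\psi_A}}$ on $J$ yields a subgroup $J \subseteq I \subseteq S_A$ with $[I:J] = [S_A:\Z D^{\ell_1}(\tilde A)]$ and a linear character $\widehat\eta$ of $I$ extending $\widetilde{\widetilde{\psi_A}}$ such that $\phi \cong \ind_I^{S_A}(\widehat\eta)$. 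The Frobenius formula becomes
\[
\chi_\phi(g) \;=\; \sum_{xI \in I\backslash S_A,\; xgx^{-1}\in I}\widehat\eta(xgx^{-1}),
\]
and the task reduces to showing that for $g \in S_A\setminus \Z D^{\ell_1}(\tilde A)$ the indexing set is a singleton, whence $|\chi_\phi(g)|=1$ because every surviving summand is a value of a linear character. The technical heart will be this uniqueness claim, which I plan to establish by combining (i) the transitive cyclic action of $S_A/\Z D^{\ell_1}(\tilde A)$ on the $[I:J]$ extensions of $\widetilde{\widetilde{\psi_A}}|_J$ and (ii) the commutativity of $\mathrm{C}_{\G(\co_\ell)}(\tilde A) \cong \lri[\tilde A]^\times$; as a sanity check, the resulting absolute values should match the cuspidal character tables compiled in \cite[Section~3.3.2]{MR2584957} for $\GL_2$ and \cite[Section~4.H.2]{Campbell-thesis} for $\GU_2$.
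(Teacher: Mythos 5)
Parts (1) and (2) of your argument are sound and essentially just unwind the Heisenberg construction; the paper dispatches these by citing the construction, so no issue there.

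Part (3) has a genuine gap. You claim Clifford theory yields a subgroup $I$ with $J\subseteq I\subseteq S_A$, $[I:J]=[S_A:\Z D^{\ell_1}(\tilde A)]$, and a linear character $\widehat\eta$ of $I$ extending $\widetilde{\widetilde{\psi_A}}$ with $\phi\cong\ind_I^{S_A}(\widehat\eta)$. This does not follow, because $J$ is \emph{not} normal in $S_A$ (nor is it normalized by a transversal of $\Z D^{\ell_1}(\tilde A)$ in $S_A$). Indeed, $\Z D^{\ell_1}(\tilde A)/\Z D^{\ell_2}(\tilde A)$ is a $2$-dimensional symplectic $\F_q$-space and the conjugation action of the cuspidal torus $\mathrm{C}_{\G(\co_\ell)}(\tilde A)$ on it (reduced mod $\pi$) is irreducible --- a cuspidal torus preserves no line --- so it cannot preserve the Lagrangian $J/\Z D^{\ell_2}(\tilde A)$. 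Hence no group $I$ of the shape you posit exists, and the Frobenius-formula reduction you set up for (3) is not available. Even setting this aside, the ``technical heart'' (uniqueness of the contributing coset) is left as a plan rather than a proof.

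The paper's route for (3) is different and avoids monomiality altogether: it computes $|\chi_\phi(g)|^2$ rather than $\chi_\phi(g)$, by introducing the conjugation representation $\Gamma$ of $S_A$ on $M_q(\mathbb C)$ given by $\Gamma(g)(B)=\phi(g)B\phi(g)^{-1}$, whose character is $\chi_\phi\overline{\chi_\phi}$. Since $\mathrm{Res}_{\Z D^{\ldown}(\tilde A)}^{S_A}(\phi)$ is irreducible, the matrices $\{\phi(h_j)\}$, for $h_j$ a transversal of $\Z D^{\lup}(\tilde A)$ in $\Z D^{\ldown}(\tilde A)$, form a basis of $M_q(\mathbb C)$; computing the trace of $\Gamma(g)$ in this basis reduces to counting the $j$ with $h_j^{-1}gh_jg^{-1}\in\Z D^{\lup}(\tilde A)$, and a short matrix calculation using regularity of $\tilde A$ shows this forces $h_j=\I$. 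Thus $\chi_\Gamma(g)=1$ for $g\in S_A\setminus\Z D^{\ell_1}(\tilde A)$, which is exactly (3). If you want to salvage your plan, you would need an entirely different justification for monomiality plus the uniqueness count; the $\Gamma$-trick is considerably shorter.
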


\begin{proof}
The proof of (1) and (2) follow from  \autoref{subsec:alternate const for cus}.  
For (3), the result for  $\G=\GL_2$, up to minor changes, was  obtained in \cite[Lemma 5.7]{MR2584957}. We use their ideas to prove the result  uniformly for both $\GL_2$ and $\GU_2$.   
Consider the representation $\Gamma$ of $S_A$ on the vector space $M_{q}(\mathbb{C})$ defined by  $\Gamma(g)(B)= \phi(g) B \phi(g)^{-1}$ for $g\in S_A$ and $B\in M_{q}(\mathbb{C}).$ 
By direct computations with usual basis of $M_{q}(\mathbb{C}),$  it is easy see that it's character $\chi_{\Gamma}= \chi_\phi \overline{\chi_\phi}.$  
Therefore, to show  (3), it is enough to prove that $\chi_\Gamma(g)=1$ for all 
$g\in S_A \backslash \Z D^{\ldown}(\ti{A}).$

From \autoref{subsec:alternate const for cus}, we have  $\mathrm{Res}_{\Z D^{\ldown}(\ti{A})}^{S_A}(\phi)$ is irreducible. Therefore  the $\mathbb{C}$-span of the set $\{\phi(h): h \in \Z D^{\ldown}(\ti{A})\}$ is equal to  $M_{q}(\mathbb{C}).$  Let $\{h_j \mid j \in [1, q^2]  \}\subseteq  \K^{\ldown}$ be a set of coset representatives for $\Z D^{\lup}(\ti{A})$ in $\Z D^{\ldown}(\ti{A})$. Without loss of generality, assume that $h_1=\I.$ 
We claim that for every $h \in \Z D^{\ldown}(\ti{A})$,  $\phi(h)=\widetilde{\psi_A}(h_j^{-1}h)\phi(h_j)$ where $j \in [1, q^2] $ such that $h\in h_j\Z D^{\lup}(\ti{A}).$ Note that $h=h_j(h_j^{-1}h)$ and $h_j^{-1}h\in \Z D^{\lup}(\ti{A}).$
By  (1), we have  $\phi\left(h_j^{-1}h\right)=\widetilde{\psi_A}(h_j^{-1}h)  \I.$ Therefore   $\phi(h)=\widetilde{\psi_A}(h_j^{-1}h)\phi(h_j)$ and  hence the claim follows.
Note that the claim implies that the set $\{\phi(h_j) \mid j \in [1, q^2]\}$ is a generating set of $M_{q}(\mathbb{C})$. Since  dimension of  $M_{q}(\mathbb{C})$  is $q^2,$ the set  $\{\phi(h_j)  \mid j \in [1, q^2]\}$   must form a $\mathbb{C}$-basis of $M_{q}(\mathbb{C})$.

Let $g\in S_A \backslash \Z D^{\ldown}(\ti{A}).$ Then for $j \in [1, q^2],$ we have $\Gamma(g)(\phi(h_j))=\phi(gh_jg^{-1}).$ Since $gh_jg^{-1}\in \Z D^{\ldown}(\ti{A}),$ by the claim, we must have $\Gamma(g)(\phi(h_j))=\widetilde{\psi_A}(h_{m_j}^{-1}gh_jg^{-1})\phi(h_{m_j})$ where  $m_j \in [1, q^2]$ such that $gh_jg^{-1}\in h_{m_j}\Z D^{\lup}(\ti{A}).$ Therefore 
\begin{equation}\label{eqn:zg0}
     \chi_\Gamma(g)= \sum_{j\in [1, q^2] ; m_j=j } \widetilde{\psi_A}(h_{j}^{-1}gh_jg^{-1}).
\end{equation}
We claim that  for $j \in [1, q^2],$ if $h_j^{-1}gh_jg^{-1} \in \Z D^{\lup}(\ti{A}),$ then $h_j\in \Z D^{\lup}(\ti{A})$ (i.e, $j=1$ and $h_j=\I$). By assuming the claim, from \autoref{eqn:zg0}, we obtain that $ \chi_\Gamma(g)=  \widetilde{\psi_A}(\I)=1.$ Hence (3) follows.  

To show the claim,
let $h_j=\I+\pi^{\ldown}C_j $ for some matrix $C_j\in \mathrm{M}_2(\Lri_\ell)$. Then  
$
 h_j^{-1}gh_jg^{-1}=\I+\pi^{\ldown}(gC_jg^{-1}-C_j)+\pi^{2\ldown}(C_j^2-C_jgC_jg^{-1}).
$
 Therefore, if $h_j^{-1}gh_jg^{-1} \in \Z D^{\lup}(\ti{A}),$ then 
\begin{equation}\label{eqn:zg1}
    (gC_jg^{-1}-C_j)\ti{A}=\ti{A}(gC_jg^{-1}-C_j) \mod (\pi).
\end{equation}
By multiplying both sides of  \autoref{eqn:zg1} with $g$ (from left) and 
 rearranging terms, we obtain that  $g(C_jg^{-1}\ti{A}g-\ti{A}C_j)=(C_j\ti{A}-\ti{A}C_j)g \mod (\pi).$
Since $g\in S_A \backslash \Z D^{\ldown}(\ti{A})$,  $g=x\I+y\ti{A}\mod (\pi)$ for some $x\in \rl$, $y\in  \rl^\times.$ Therefore,  we must have
\begin{equation}
\label{eqn:equaton 5.2}
    \ti{A}(C_j\ti{A}-\ti{A}C_j)=(C_j\ti{A}-\ti{A}C_j)\ti{A} \mod (\pi).
\end{equation}
Assume $C_j=\smat{a}{b}{c}{d}$. Then
\begin{align*}
  C_j\ti{A}-\ti{A}C_j&=\mat{a}{b}{c}{d} \mat{0}{\nonsq \ti{\alpha}}{\nonsq}{0} -\mat{0}{\nonsq \ti{\alpha}}{\nonsq}{0}\mat{a}{b}{c}{d}\\
  &=\nonsq\mat{b-\ti{\alpha} c}{\ti{\alpha}(a-d)}{d-a}{c\ti{\alpha}-b}.
\end{align*}
Since $\ti{A}$ is regular,  \autoref{eqn:equaton 5.2} implies that  $C_j\ti{A}-\ti{A}C_j=z\I+w\ti{A} \mod (\pi)$ for some $z,w \in \rl$. This along with $\ti{\alpha}\in \rl^\times$ gives, $b=\ti{\alpha} c \mod (\pi)$ and $a=d \mod (\pi),$ i.e, $C=a\I+c\ti{A} \mod(\pi)$.
This implies $C_j\ti{A}=\ti{A}C_j \mod (\pi),$ which is equivalent    $h_j\in \Z D^{\lup}(\ti{A})$.  Hence the claim.
\end{proof}

Define $ t := \max \{ i \in [0, \ldown] \mid A_1 A_2 = A_2 A_1 \mod (\pi^i)\}$ and 
  $\Delta\coloneqq\begin{cases}
    1, & \text{for } \G=\GU_2;\\
     -1 ,& \text{for } \G=\GL_2.
\end{cases}$
\begin{lemma}
\label{lem:Stab-cardinality} 
    \begin{enumerate}
        \item For $t <  \ell_1$,  $ S_{A_1}\cap S_{A_2}=(\{x\I+\pi^{\ell_1-t} y \tilde{A_1} \mid x,y \in \rl\}\cap\G(\cO_{\ell})) \K^{\ell_1}   $ and $|S_{A_1} \cap S_{A_2}|=(q+\Delta)q^{4\lup +\ldown +t-1}.$
    \item For $t = \ldown$, we have  $S_{A_1}\cap S_{A_2}=S_{A_1}=S_{A_2}$ and 
    $|S_{A_1}|=(q+1)(q+\Delta)q^{3\ell-1}.$
    \end{enumerate}
\end{lemma}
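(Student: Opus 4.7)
The plan is to compute $S_{A_1}\cap S_{A_2}$ by reducing modulo the common normal subgroup $\K^{\ldown}$ and analyzing the resulting intersection of centralizers in $\G(\cO_{\ldown})$. From the cuspidal construction in \autoref{subsec:alternate const for cus}, $S_{A_i}=\C_{\G(\cO_\ell)}(\tilde{A_i})\K^{\ldown}$, so $\K^{\ldown}\subseteq S_{A_1}\cap S_{A_2}$. Passing to the quotient $\G(\cO_\ell)/\K^{\ldown}\cong\G(\cO_{\ldown})$ and using that every element of $\C_{\G(\cO_{\ldown})}(A_i)$ lifts to $\C_{\G(\cO_\ell)}(\tilde{A_i})$ by regularity of $\tilde{A_i}$, we obtain $(S_{A_1}\cap S_{A_2})/\K^{\ldown}=\C_{\G(\cO_{\ldown})}(A_1)\cap\C_{\G(\cO_{\ldown})}(A_2)$ and hence $|S_{A_1}\cap S_{A_2}|=|\C_{\G(\cO_{\ldown})}(A_1)\cap\C_{\G(\cO_{\ldown})}(A_2)|\cdot q^{4\lup}$.

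For part (1), regularity of $A_1$ makes its $M_2$-centralizer the free $R_{\ldown}$-module $R_{\ldown}\cdot\I+R_{\ldown}\cdot A_1$, so every element of the intersection $\C_{\G(\cO_{\ldown})}(A_1)\cap\C_{\G(\cO_{\ldown})}(A_2)$ has the form $x\I+yA_1$. The commutator identity $[x\I+yA_1,A_2]=y[A_1,A_2]$ together with the hypothesis $[A_1,A_2]\equiv\pi^t B\pmod{\pi^{\ldown}}$ (with $B\not\equiv 0\pmod{\pi}$) forces $y\in\pi^{\ldown-t}R_{\ldown}$, and this condition is also sufficient. Lifting $A_1$ to its Serre lift $\tilde{A_1}$, this yields the claimed description $S_{A_1}\cap S_{A_2}=(\{x\I+\pi^{\ldown-t}y\tilde{A_1}\mid x,y\in R_\ell\}\cap\G(\cO_\ell))\K^{\ldown}$. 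To count $|\C_{\G(\cO_{\ldown})}(A_1)\cap\C_{\G(\cO_{\ldown})}(A_2)|$: for $\GL_2$, since $\pi^{\ldown-t}yA_1\in\pi M_2(\cO_{\ldown})$ when $t<\ldown$, the invertibility condition collapses to $x\in\cO_{\ldown}^\times$, giving $(q-1)q^{\ldown-1}\cdot q^t$. For $\GU_2$, the cuspidal normal form in \autoref{lem:orbit-representatives-gol}(d) gives $A_1^2=\nonsq^2\alpha\I$ with $\nonsq^2\alpha$ a square in $\Lri^\times$, so the algebra $\Lri_{\ldown}[A_1]$ splits as $\Lri_{\ldown}\times\Lri_{\ldown}$ via orthogonal idempotents $e_1,e_2$; writing $x\I+yA_1=\lambda_1 e_1+\lambda_2 e_2$, unitarity becomes $N(\lambda_1)=N(\lambda_2)=1$ and $y\in\pi^{\ldown-t}\Lri_{\ldown}$ translates to $\lambda_1\equiv\lambda_2\pmod{\pi^{\ldown-t}}$. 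Surjectivity of the norm map $1+\pi^{\ldown-t}\Lri_{\ldown}\twoheadrightarrow 1+\pi^{\ldown-t}\cO_{\ldown}$ yields $q^t$ valid $\lambda_2$ for each of the $(q+1)q^{\ldown-1}$ valid $\lambda_1$, giving $(q+1)q^{\ldown+t-1}$. Multiplying by $q^{4\lup}$ produces the stated formula $(q+\Delta)q^{4\lup+\ldown+t-1}$.

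For part (2), $t=\ldown$ means $A_1 A_2=A_2 A_1$ in $M_2(R_{\ldown})$, so $A_2\in R_{\ldown}[A_1]$, i.e., $A_2=x_0\I+y_0 A_1$. Cuspidality of $A_2$ means its reduction modulo $\pi$ is not scalar, which forces $y_0\in R_{\ldown}^\times$. Therefore $R_{\ldown}[A_1]=R_{\ldown}[A_2]$, the centralizers coincide, and $S_{A_1}=S_{A_2}$. The size $|S_{A_1}|=|\C_{\G(\cO_{\ldown})}(A_1)|\cdot q^{4\lup}$ then uses $|\C_{\G(\cO_{\ldown})}(A_1)|=(q+1)(q+\Delta)q^{2\ldown-2}$: for $\GL_2$ this is $|\Lri_{\ldown}^\times|$ (the unit group of the unramified quadratic extension $\cO_{\ldown}[A_1]$), and for $\GU_2$ this is the square of the norm-one subgroup of $\Lri_{\ldown}^\times$, yielding $((q+1)q^{\ldown-1})^2$ (see \cite[Table~4.3]{Campbell-thesis}). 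Substituting and simplifying the exponents gives the stated $(q+1)(q+\Delta)q^{3\ell-1}$.

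The main technical subtlety is the $\GU_2$ case of part (1), where the unitarity condition must be harmonized with the valuation constraint on $y$; the idempotent decomposition of $\Lri_{\ldown}[A_1]$ followed by surjectivity of the norm on principal units is what makes the count tractable.
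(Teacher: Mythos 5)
Your approach is genuinely different from the paper's and, modulo one arithmetic subtlety noted below, it is correct. The paper works entirely at the level of $\G(\cO_\ell)$: it first proves the set-theoretic description of $S_{A_1}\cap S_{A_2}$ by writing $g=u\I+v\tilde A_1=z\I+w\tilde A_2\pmod{\pi^{\ldown}}$ and extracting the valuation constraint on $v$, and then computes $|S_{A_1}\cap S_{A_2}|$ via the index formula $\frac{|T\cap\G(\cO_\ell)|\,|\K^{\ldown}|}{|T\cap\K^{\ldown}|}$ with $T=\{x\I+\pi^{\ldown-t}y\tilde A_1\}$, doing the $\GU_2$ count by direct parametrization of the unitarity condition. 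You instead pass to the quotient $\G(\cO_\ldown)$ (using surjectivity of the centralizer map for regular elements), count the centralizer intersection there, and multiply by $q^{4\lup}$. Your reduction is valid, and your idempotent decomposition $\Lri_{\ldown}[A_1]\cong\Lri_{\ldown}\times\Lri_{\ldown}$ together with surjectivity of the norm on principal units gives a clean and arguably more conceptual route to the $\GU_2$ count than the paper's explicit solution of the $\GU_2$-defining equations. Both methods produce the same intermediate formula $|S_{A_1}\cap S_{A_2}|=|\C_{\G(\cO_\ldown)}(A_1)\cap\C_{\G(\cO_\ldown)}(A_2)|\cdot q^{4\lup}$ (the paper's quotient of cardinalities is exactly this, just written out).

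One point to flag in part (2): you assert that $(q+1)(q+\Delta)q^{2\ldown-2}\cdot q^{4\lup}$ ``simplifies'' to $(q+1)(q+\Delta)q^{3\ell-1}$, but the exponent $2\ldown-2+4\lup$ equals $3\ell-1$ if and only if $\lup=\ldown+1$, i.e.\ $\ell$ is odd; for even $\ell$ it equals $3\ell-2$. Your method is sound and your centralizer count is correct — indeed a direct check for $\G=\GL_2$, $\ell=2$ gives $|S_A|=(q^2-1)q^4$, not $(q^2-1)q^5$ — so the discrepancy lies in the stated formula $(q+1)(q+\Delta)q^{3\ell-1}$, which as written is valid only for odd $\ell$ (the only case in which the paper actually invokes this count, e.g.\ in Lemma~\ref{lem:Wq}). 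You should not have presented the exponent simplification as unconditional; a parity-independent way to state the result is $|S_{A_1}|=(q+1)(q+\Delta)q^{2\ldown+4\lup-2}$. By contrast, in part (1) your exponent $\ldown+t-1+4\lup=4\lup+\ldown+t-1$ matches the lemma verbatim with no parity assumption, so that part is fully correct.
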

\begin{proof}
For (1), it is easy to see that $(\{x\I+\pi^{\ell_1-t} y \tilde{A_1} \mid x,y \in \rl\}\cap\G(\cO_{\ell})) \K^{\ell_1} \subseteq S_{A_1}\cap S_{A_2}$. To prove the converse, let $g\in S_{A_1}\cap S_{A_2}$. Then $g=u\I+v \tilde{A_1}=zI+w \tilde{A_2} \mod (\pi^{\ldown})$ for some $u,v,z,w\in \rl$. This gives $v \tilde{A_1} = (z-u)\I + w \tilde{A_2}  \mod (\pi^{\ldown})$. Hence $v \tilde{A_1}$ commutes with $ \tilde{A_2}$ modulo $ (\pi^{\ell_1} )$. i.e., 
\begin{equation}\label{eqn:df1}
    v(\tilde{A_1}\tilde{A_2}-\tilde{A_2}\tilde{A_1})=0 \mod(\pi^{\ldown}).
\end{equation}
Since $t< \ell_1,$ $\tilde{A_1}\tilde{A_2}-\tilde{A_2}\tilde{A_1}=\pi^{t}B$ for some $B\in M_2(\rl)$ such that $B\neq 0 \mod (\pi). $ Therefore \autoref{eqn:df1} implies  $v=\pi^{\ldown-t}v'$ for some $v'\in \rl$. Therefore $g=u\I+\pi^{\ldown-t}v' \tilde{A_1} \mod(\pi^{\ldown})$ which implies that  $g\in (\{x\I+\pi^{\ell_1-t} y \tilde{A_1} \mid x,y \in \rl\}\cap\G(\cO_{\ell})) \K^{\ell_1}.$ 
This proves that $S_{A_1}\cap S_{A_2} \subseteq (\{x\I+\pi^{\ell_1-t} y \tilde{A_1} \mid x,y \in \rl\}\cap\G(\cO_{\ell})) \K^{\ell_1}$. Next, to find $|S_{A_1}\cap S_{A_2}|,$ note that 
\begin{equation}\label{eqn:df2}
    |S_{A_1}\cap S_{A_2}|=\frac{ |\{x\I+\pi^{\ell_1-t} y \tilde{A_1} \mid x,y \in \rl\}\cap\G(\cO_{\ell})|\times|\K^{\ldown}|}{|\{x\I+\pi^{\ell_1-t} y \tilde{A_1} \mid x,y \in \rl\}\cap \K^{\ldown}|}
\end{equation}
It is easy to see that  $|\K^{\lup}|=q^{4\ell_1}.$ Using the fact that $\K^{\ldown}/\K^{\lup} \cong \g(\cO_1)$, we obtain $|\K^{\ldown}|=q^{4\ell_2}.$ 

  For $\G=\GL_2$, since $ x\I+\pi^{\ell_1-t} y \tilde{A_1} \in \GL_2(\co_\ell)$ if and only if $x\in \rl^\times $, we obtain that $|\{x\I+\pi^{\ell_1-t} y \tilde{A_1} \mid x,y \in \rl\}\cap\G(\cO_{\ell})|=(q-1)q^{\ell-1}\times q^{\ell_2+t}.$
  Similarly, since $ x\I+\pi^{\ell_1-t} y \tilde{A_1} \in \K^{\ldown}$ if and only if $x\in 1+\pi^{\ell_1}\rl $ and $ \pi^{\ell_1-t} y \in \pi^\ldown \rl $, we  obtain that $|\{x\I+\pi^{\ell_1-t} y \tilde{A_1} \mid x,y \in \rl\}\cap\K^{\ldown}|=q^{\ell_2}\times q^{\ell_2}.$ By substituting these values in \autoref{eqn:df2}, we obtain that $|S_{A_1}\cap S_{A_2}|=(q-1)q^{4\lup +\ldown +t-1}$

For $\G=\GU_2$, note that   $x\I+\pi^{\ell_1-t} y \tilde{A_1} \in \GU_2(\co_\ell)$ if and only if $x\in \rl^\times$ and  there exists $r\in \cO_\ell$ such that  $\pi^{\ell_1-t}y= \pi^{\ell_1-t} r \nonsq x $ and $xx^\circ(1-\pi^{2(\ldown-t)}\nonsq^2r^2\tilde{\alpha_1})=1.$ Therefore 
\begin{eqnarray*}
|\{x\I+\pi^{\ell_1-t} y \tilde{A_1} \mid x,y \in \rl\}\cap\G(\cO_{\ell})|&=&\left|\left\{(x,\pi^{\ell_1-t} r \nonsq x) \mid  \begin{array}{l}  r\in\cO_\ell ,\, x\in \rl^\times \text{ and }   \\  xx^\circ=(1-\pi^{2(\ldown-t)}\nonsq^2r^2\tilde{\alpha_1})^{-1} \end{array}\right\}\right|\\
    &=& |\pi^{\ell_1-t}\cO_\ell|\times |\{z\in \rl^\times : zz^\circ=1\}|\\
    &=& q^{\ell_2+t}\times (q+1)q^{\ell-1}. 
\end{eqnarray*}
Similarly, note that  $ x\I+\pi^{\ell_1-t} y \tilde{A_1} \in \K^{\ldown}$ if and only if 
$$x\I+\pi^{\ell_1-t} y \tilde{A_1}=\mat{1+\pi^\ldown z}{\pi^\ldown a (1+\pi^\ldown z) \nonsq \tilde{\alpha}}{\pi^\ldown a (1+\pi^\ldown z) \nonsq }{1+\pi^\ldown z}$$
for some $ z\in \rl$ and  $ a\in \cO_\ell$ such that  $(1+\pi^\ldown z)(1+\pi^\ldown z)^\circ = 1+\pi^{2\ldown} a^2  \nonsq^2 \tilde{\alpha}.$
Since the map $x\mapsto xx^\circ$ is a surjective map from $1+\pi^\ldown\rl$ to $1+\pi^\ldown\cO_\ell,$ for a given $a\in \co_\ell,$ we have $|\{x\in 1+\pi^\ldown  \rl  \mid  xx^\circ = 1+\pi^{2\ldown} a^2  \nonsq^2 \tilde{\alpha} \}|=|1+\pi^\ldown\rl|/|1+\pi^\ldown\cO_\ell|=q^\lup.$ Thus 
$$|\{x\I+\pi^{\ell_1-t} y \tilde{A_1} \mid x,y \in \rl\}\cap \K^{\ldown}|=|\pi^\ldown \cO_\ell|\times q^\lup=q^{2\lup}.$$
By substituting these values in \autoref{eqn:df2}, we obtain that $|S_{A_1}\cap S_{A_2}|=(q+1)q^{4\lup +\ldown +t-1}.$

For (2),   $t=\ldown$ implies   $\tilde A_1 \tilde A_2 = \tilde A_2 \tilde A_1 \mod (\pi^\ldown).$ Since $\tilde A_i$ for $i\in \{1,2\}$ are regular matrices, 
we have $S_{A_1} = \tC_{\G(\lri_\ell)}(\tilde{A_1}) \K^{\ldown}$ and $\tC_{\G(\lri_\ell)}(\tilde{A_1})=\{x\I+ y \tilde{A_1} \mid x,y \in \rl\}\cap\G(\cO_{\ell}).$  Therefore $S_{A_1}=S_{A_2}.$  See \cite[Section~3.3]{MR2584957} and \cite[Section~4.H.2, Page-48]{Campbell-thesis} for the expression of     $|S_{A_1}|.$   
\end{proof}

   Define the subsets $\Gamma_i$ for $i \in [1,4]$ of $S_{A_1} \cap S_{A_2}$ by $\Gamma_1 \coloneqq (\Z D^{\ell_2}(\tilde{A_1})) \cap (\Z D^{\ell_2}(\tilde{A_2})),$ $\Gamma_2 \coloneqq (S_{A_1} \setminus (\Z D^{\ell_1}(\tilde{A_1}))) \cap(\Z D^{\ell_2}(\tilde{A_2}))$, $\Gamma_3 \coloneqq (\Z D^{\ell_2}(\tilde{A_1}))\cap (S_{A_2} \setminus (\Z D^{\ell_1}(\tilde{A_2})))$ and $\Gamma_4 \coloneqq (S_{A_1} \setminus (\Z D^{\ell_1}(\tilde{A_1}))) \cap(S_{A_2} \setminus (\Z D^{\ell_1}(\tilde{A_2}))).$ First note that $\Gamma_2=\Gamma_3=\emptyset$. The following description of $\Gamma_1$ and $\Gamma_4$ will be useful.  
   \begin{enumerate}
         \item $\Gamma_1=(\{x\I+\pi^{\ell_2-t} y\tilde{A_1} \mid x,y\in \rl\}\cap\G(\cO_{\ell})) \K^{\ell_2}.$ 
    \item $
       \Gamma_4 =\begin{cases}
       S_{A_1}\setminus(\Z D^{\ell_1}(\tilde{A_1})), & \mathrm{if}\, t=\ell_1;\\
        \emptyset, & \mathrm{if}\, t<\ell_1. \end{cases}$
        \end{enumerate}
      By using the same ideas as the proof of \autoref{lem:Stab-cardinality}(1), we also obtain  $|\Gamma_1|=(q+\Delta)q^{4\ldown +\lup +t-1}$. Further $|\Z  D^{\ell_i}(\tilde{A_j})|=(q+\Delta) q^{4\ell -2\ell_i -2}$ for $i,j \in \{1,2\}$ are easy to prove for $\G = \GL_2$ and follow from  \cite[Section~4.H.2, Pages~53--54]{Campbell-thesis} for $\G = \GU_2$.

\begin{lemma}
\label{lem:Wq}
For odd $\ell$, 
  we have $\langle \mathbf{W}(\phi_1,\phi_2), \mathbf{W}(\phi_1,\phi_2)\rangle=q$.
\end{lemma}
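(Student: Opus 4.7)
The plan is to compute $\langle \mathbf{W}(\phi_1,\phi_2), \mathbf{W}(\phi_1,\phi_2)\rangle$ directly from the character formula
\[
\langle \mathbf{W}(\phi_1,\phi_2), \mathbf{W}(\phi_1,\phi_2)\rangle = \frac{1}{|S_{A_1}\cap S_{A_2}|} \sum_{g \in S_{A_1}\cap S_{A_2}} |\chi_{\phi_1}(g)|^2\, |\chi_{\phi_2}(g)|^2,
\]
using \autoref{prop:character values} to evaluate $|\chi_{\phi_i}(g)|^2$ on the three natural strata of $S_{A_i}$: it equals $q^2$ on $\Z D^{\ell_2}(\tilde{A_i})$, vanishes on $\Z D^{\ell_1}(\tilde{A_i}) \setminus \Z D^{\ell_2}(\tilde{A_i})$, and equals $1$ on $S_{A_i} \setminus \Z D^{\ell_1}(\tilde{A_i})$. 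Thus the $3\times 3$ grid of intersections of these strata partitions $S_{A_1} \cap S_{A_2}$ into nine subsets, but any subset meeting a vanishing stratum of either $A_i$ contributes zero.

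The four non-vanishing pieces are exactly $\Gamma_1,\Gamma_2,\Gamma_3,\Gamma_4$. Since the text records $\Gamma_2 = \Gamma_3 = \emptyset$, the sum collapses to
\[
\langle \mathbf{W}(\phi_1,\phi_2), \mathbf{W}(\phi_1,\phi_2)\rangle = \frac{|\Gamma_1|\,q^4 + |\Gamma_4|}{|S_{A_1}\cap S_{A_2}|}.
\]
Now I would split into the two cases governed by the integer $t$. When $t < \ell_1$, the description of $\Gamma_4$ shows $\Gamma_4 = \emptyset$, so only $\Gamma_1$ contributes. Plugging in $|\Gamma_1| = (q+\Delta)q^{4\ldown + \lup + t - 1}$ and $|S_{A_1}\cap S_{A_2}| = (q+\Delta)q^{4\lup + \ldown + t - 1}$ from the discussion following \autoref{lem:Stab-cardinality}, the $(q+\Delta)$ and $q^t$ factors cancel and we get $q^{3\ldown - 3\lup + 4}$; for $\ell$ odd, $\lup - \ldown = 1$, so this simplifies to $q$ as required.

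When $t = \ell_1$, \autoref{lem:Stab-cardinality}(2) gives $S_{A_1} \cap S_{A_2} = S_{A_1}$ of order $(q+1)(q+\Delta)q^{3\ell-1}$, while $\Gamma_1 = \Z D^{\ell_2}(\tilde{A_1})$ and $\Gamma_4 = S_{A_1} \setminus \Z D^{\ell_1}(\tilde{A_1})$. Using $|\Z D^{\ell_i}(\tilde{A_j})| = (q+\Delta)q^{4\ell - 2\ell_i - 2}$ and substituting $\ell_1 = (\ell-1)/2$, $\ell_2 = (\ell+1)/2$, I expect $|\Gamma_1| q^4 = (q+\Delta)q^{3\ell+1}$ and $|\Gamma_4| = (q+\Delta)q^{3\ell}$; their sum factors as $(q+1)(q+\Delta)q^{3\ell}$, and dividing by $|S_{A_1}|$ again yields $q$. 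The main obstacle is primarily bookkeeping: correctly partitioning the intersection into the nine strata and tracking the cardinalities through both cases, particularly verifying that $\Gamma_2$ and $\Gamma_3$ are indeed empty (which forces the strata of $A_1$ and $A_2$ to move together and underlies the clean answer $q$).
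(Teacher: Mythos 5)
Your proposal is correct and follows essentially the same route as the paper: stratify $S_{A_1}\cap S_{A_2}$ into the $\Gamma_i$ via \autoref{prop:character values}, observe $\Gamma_2=\Gamma_3=\emptyset$, and split on $t<\ell_1$ versus $t=\ell_1$ using the cardinalities from \autoref{lem:Stab-cardinality} and the $|\Z D^{\ell_i}(\tilde A_j)|$ formulas. The arithmetic in both cases (the exponent $3\ldown-3\lup+4=1$ for $t<\ell_1$, and the factoring $(q+\Delta)q^{3\ell+1}+(q+\Delta)q^{3\ell}=(q+1)(q+\Delta)q^{3\ell}$ for $t=\ell_1$) checks out.
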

\begin{proof}
For $i \in \{1,2\},$ by \autoref{prop:character values}, we have
$$|\chi_{\phi_i} (g)|=\begin{cases}
    q, & g\in  \Z D^{\ell_2}(\tilde{A_i});\\
    0, &  g \in (\Z D^{\ell_1}(\tilde{A_i})) \setminus (\Z D^{\ell_2}(\tilde{A_i})); \\
    1, & g \in S_{A_i} \setminus (\Z D^{\ell_1}(\tilde{A_i})).
\end{cases}$$
 Therefore 
\begin{eqnarray*}
    \langle \mathbf{W}(\phi_1,\phi_2), \mathbf{W}(\phi_1,\phi_2)\rangle&=&\frac{1}{|S_{A_1}\cap S_{A_2}|}\sum_{g\in S_{A_1}\cap S_{A_2} }|\chi_{\phi_1} (g)|^2|\chi_{\phi_2}(g)|^2 \\
     &=& \frac{1}{|S_{A_1}\cap S_{A_2}|}(q^4|\Gamma_1| + q^2(|\Gamma_2| + |\Gamma_3|)+ |\Gamma_4|). 
\end{eqnarray*}
where $\Gamma_j$ for  $j\in [1,4] $ are as defined above. For $t< \ell_1,$ using $\Gamma_2 = \Gamma_3= \Gamma_4 = \emptyset$ and 
 \autoref{lem:Stab-cardinality}(1), we obtain
$$ \langle \mathbf{W}(\phi_1,\phi_2), \mathbf{W}(\phi_1,\phi_2)\rangle=\frac{q^4 \times |\Gamma_1|}{|S_{A_1}\cap S_{A_2}|}=q.$$
For $t= \ell_1,$ 
$\Gamma_1=\Z D^{\ell_2}(\tilde{A_1})$ and $\Gamma_4=S_{A_1} \setminus (\Z D^{\ell_1}(\tilde{A_1})).$
By \autoref{lem:Stab-cardinality}(2) and using $|\Z  D^{\ell_i}(\tilde{A_j})|$ from above, we obtain  
\begin{equation*}
    \langle \mathbf{W}(\phi_1,\phi_2), \mathbf{W}(\phi_1,\phi_2)\rangle=\frac{q^4 \times |(\Z D^{\ell_2}(\tilde{A_1})) |}{|S_{A_1}|} + \frac{|S_{A_1} \setminus (\Z D^{\ell_1}(\tilde{A_1}))  |}{|S_{A_1}|}=q.
    \end{equation*}
\end{proof}

\begin{proof}[Proof of \autoref{thm:SA multiplicity free}]  For even $\ell,$ both $\phi_1$ and $\phi_2$ are one dimensional. Therefore $\mathbf{W}(\phi_1,\phi_2)$ is one dimensional and hence multiplicity free. Assume $\ell$ is odd. We first claim that each irreducible constituent  of $\mathbf{W}(\phi_1,\phi_2)$  has dimension $q.$
    Note that $\K^{\ell_{2}}\leq S_{A_1}\cap S_{A_2} \leq S_{A_1+A_2}.$
    Since $\mathrm{Res}^{S_{A_i}}_{\K^{\ell_{2}}}(\phi_i)=q \psi_{A_i}$, we obtain $\mathrm{Res}^{S_{A_1}\cap S_{A_2}}_{\K^{\ell_{2}}}(\mathbf{W}(\phi_1,\phi_2))=q^2 (\psi_{A_1}\otimes \psi_{A_2})=q^2 \psi_{A_1+A_2}.$ Therefore any irreducible constituent  of $\mathbf{W}(\phi_1,\phi_2)$ belongs to $\mathrm{Irr}(S_{A_1}\cap S_{A_2} \mid \psi_{A_1+A_2}).$ Since $A_1+A_2$ is  regular and $\K^{\ell_1}\leq S_{A_1}\cap S_{A_2}\leq S_{A_1+A_2},$ each irreducible constituent  of $\mathbf{W}(\phi_1,\phi_2)$ has dimension $q$, by \autoref{prop:MF from Subgp}(1).  

 Let $\mathbf{W}(\phi_1,\phi_2)=m_1 \theta_1 \oplus m_2\theta_2 \oplus \cdots \oplus m_r\theta_r,$  where  $\theta_i$ for $ i \in [1,r]$ 
 are the in-equivalent irreducible constitutes of $\mathbf{W}(\phi_1,\phi_2)$ with multiplicities $m_i$. Since  $\dim(\theta_i)=q$ for all $ i \in [1,r]$ 
 and $\dim(\mathbf{W}(\phi_1,\phi_2))=q^2,$ we must have $\sum_{i=1}^{r}m_i q=q^2$ and hence $\sum_{i=1}^{r}m_i =q.$ By \autoref{lem:Wq},  $\langle \mathbf{W}(\phi_1,\phi_2), \mathbf{W}(\phi_1,\phi_2)\rangle=q.$  Hence $\sum_{i=1}^{r}m_i^2 =q.$ Since 
$m_i$'s are positive integers, the equality $\sum_{i=1}^{r}m_i =q=\sum_{i=1}^{r}m_i^2$ gives 
$m_i=1$ for all $i\in [1,r]$. Hence $\mathbf{W}(\phi_1,\phi_2)$ is a multiplicity free representation. 
\end{proof}

\section{Description of $S_{A_1} \backslash G /S_{A_2}$ for $\Xi_1, \Xi_2$ and $\Xi_3$} 
\label{sec:double-coset-description}

In this section, we carry out Step (A) of our analysis for $\Xi_1, \Xi_2$ and $\Xi_3$ that is, we give various results to describe $S_{A_1} \backslash G /S_{A_2}$ for these cases. Throughout this section, we use $\tilde{A}\in \g(\cO_\ell)$ to denote a Serre lift of $A \in \g(\cO_\ldown)$. Further,  we use $\tilde{x} \in \cO_\ell$ to denote a lift of $x \in \cO_\ldown$.

For $A_1, A_2 \in \god$ and $g \in \Gol$, define the set $W_g(A_1, A_2)$ by 
	$$ W_g(A_1, A_2)\coloneqq \{S_{A_1} h S_{A_2} \mid h  \in \G(\cO_{\ell}) \text{ and } \ti{A_1}+g \ti{A_2}g^{-1} \sim \ti{A_1}+h\ti{A_2}h^{-1} \mod (\pi^{\ldown})\}.$$
    Whenever $A_1, A_2$ are clear from the context, we shall denote $W_g(A_1, A_2)$ by $W_g$ itself. In this section, our focus is on describing $|W_g (A_1, A_2)|$ for the following cases:
\begin{enumerate}
    \item $\tt(A_1) = \ss$, $\tt(A_2) = \sns$.
    \item $\tt(A_1) = \cus$ and $\tt(A_2) \in \{\ss, \sns\}.$
    \end{enumerate}

\begin{lemma}
\label{lem:ss sns g* and g'}
   Let $A_1=\smat{ a}{0}{0}{- a}\in \g(\co_{\ell_1})$ with $a \in \co_{\ldown}^\times$ 
   and $A_2=\smat{0}{ \nonsq\pi\beta}{\nonsq}{0}\in \g(\co_{\ell_1})$ be such that $\tt(A_1)=\ss$ and $\tt(A_2)=\sns$. For  $g=\left[g_{ij}\right]\in \G(\cO_\ell),$ let
        $$ g'= \mat{1}{\frac{g_{12}g_{22}-\pi \tilde{\beta} g_{11}g_{21} }{\det(g)}}{0}{1} \,\, \mathrm{and} \,\,  g^*= \mat{0}{1}{1}{\frac{\pi \tilde{\beta} g_{11}g_{21} - g_{12}g_{22} }{\det(g)}} .$$
    Then $g' , g^* \in \G(\cO_\ell)$ and
	$W_g=\{S_{A_1}h  S_{A_2} \mid h \in \{ g' , g^*\}  \}.$
\end{lemma}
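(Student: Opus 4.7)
The plan is to check that $g'$ and $g^*$ belong to $\G(\cO_\ell)$, verify that the two double cosets $S_{A_1}g'S_{A_2}$ and $S_{A_1}g^*S_{A_2}$ both lie in $W_g$, and then prove that every double coset in $W_g$ is one of these two. Throughout, write $c \coloneqq (g_{12}g_{22}-\pi\tilde\beta g_{11}g_{21})/\det(g) \in \rl$ for the common entry appearing in $g'$ and $g^*$.

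First I would verify $g',g^* \in \G(\cO_\ell)$. For $\G=\GL_2$ this is immediate from $\det(g') = 1$ and $\det(g^*) = -1$. For $\G=\GU_2$, the anti-Hermitian condition $\tilde A_2 \in \gu_2(\cO_\ell)$ forces $\tilde\beta \in \cO_\ell$ (so $\tilde\beta^\circ = \tilde\beta$), and $\det(g)^\circ=\det(g)^{-1}$. Membership of $g'$ and $g^*$ in $\GU_2(\cO_\ell)$ then reduces to the single identity $c + c^\circ = 0$, which follows by a short manipulation using the five unitary relations recorded in \autoref{subsec: unitary group const.}.

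Next I would verify $S_{A_1}g'S_{A_2}, S_{A_1}g^*S_{A_2} \in W_g$ by comparing characteristic polynomials. A direct computation gives
\[
\tilde A_1 + g'\tilde A_2 g'^{-1}=\begin{bmatrix} a+\nonsq c & \nonsq(\pi\tilde\beta - c^2) \\ \nonsq & -a-\nonsq c\end{bmatrix}, \quad \tilde A_1 + g^*\tilde A_2 g^{*-1}=\begin{bmatrix} a+\nonsq c & \nonsq \\ \nonsq(\pi\tilde\beta - c^2) & -a-\nonsq c\end{bmatrix},
\]
both of trace $0$ and determinant $-(a^2 + \nonsq^2\pi\tilde\beta + 2a\nonsq c)$. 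Using cyclicity of trace together with $\bm{tr}((\tilde A_1 + g\tilde A_2 g^{-1})^2) = \bm{tr}(\tilde A_1^2) + 2\bm{tr}(\tilde A_1\cdot g\tilde A_2 g^{-1}) + \bm{tr}(\tilde A_2^2)$, one shows that $\tilde A_1 + g\tilde A_2 g^{-1}$ has the same trace and determinant; the expression $2a\nonsq c$ in the middle term is precisely what produces the quantity $c$ appearing in $g'$ and $g^*$. The common characteristic polynomial has discriminant $\equiv 4a^2 \not\equiv 0\bmod\pi$, so all three matrices are regular and therefore pairwise $\G(\cO_{\ldown})$-conjugate modulo $\pi^{\ldown}$, placing both double cosets in $W_g$.

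For the exhaustion step, using $\K^{\ldown} \subseteq S_{A_1}\cap S_{A_2}$, I would identify $S_{A_1}\backslash\G(\cO_\ell)/S_{A_2}$ with $\bar S_{A_1}\backslash\G(\cO_{\ldown})/\bar S_{A_2}$, where $\bar S_{A_1}$ is the diagonal torus and $\bar S_{A_2}$ is the image of $\mathrm{C}_{\G(\cO_\ell)}(\tilde A_2)$. Given $h$ with $c_h\equiv c\bmod \pi^{\ldown}$, I would attempt to solve $h \equiv s_1 g' s_2$ with $s_1 = \mathrm{diag}(d_1,d_2)$ and $s_2 = x\I + y\tilde A_2$; matching entries turns this into a linear system in $(d_1,d_2,x,y)$ whose consistency is precisely the identity $c_h = (h_{12}h_{22} - \pi\tilde\beta h_{11}h_{21})/\det(h)$. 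A short computation yields $h_{22} + c_h h_{21} \equiv h_{11}h_{22}^2/\det(h) \bmod \pi$, so the $g'$-form admits a valid solution (with all required units) whenever $h_{11}h_{22}$ is a unit modulo $\pi$; in the remaining cases, either $h_{11}$ or $h_{22}$ is a non-unit, forcing $h_{12}h_{21}$ to be a unit via $\det(h) \in \rl^\times$, and the symmetric $g^*$-form then succeeds by an analogous calculation. The main obstacle is this last case analysis: one must juggle unit-checks for several expressions simultaneously, though each individual check is routine once the $g'$- versus $g^*$-form dichotomy is tied to the reduction of $h$ modulo $\pi$.
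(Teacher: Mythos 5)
Your overall strategy matches the paper's: verify $g', g^* \in \G(\cO_\ell)$, confirm both cosets lie in $W_g$, and show any coset in $W_g$ equals one of the two by multiplying $h$ on each side by stabilizer elements, with a case split on which entry of $h$ is a unit modulo $\pi$. Your explicit check that both $S_{A_1}g'S_{A_2}$ and $S_{A_1}g^*S_{A_2}$ belong to $W_g$ is a useful detail the paper leaves implicit, and your dichotomy ($h_{11}h_{22}$ unit versus $h_{12}h_{21}$ unit) is consistent with the paper's ($h_{22}$ unit gives $g'$, $h_{12}$ unit gives $g^*$), since the two forms agree in the overlap.

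Two corrections are needed. (i) The discriminant of the common characteristic polynomial is $4(a^2 + \nonsq^2\pi\tilde\beta + 2a\nonsq c)$, which modulo $\pi$ equals $4a(a + 2\nonsq c)$ and \emph{can} vanish (namely when $c \equiv -a(2\nonsq)^{-1} \bmod \pi$); your claim that it is $\equiv 4a^2 \not\equiv 0 \bmod \pi$ is false. Regularity of the three matrices follows instead because they are non-scalar modulo $\pi$ — for instance the $(2,1)$-entry of $\tilde A_1 + g'\tilde A_2 g'^{-1}$ is $\nonsq$, a unit — so they are cyclic and pairwise conjugacy mod $\pi^{\ldown}$ follows from having the same characteristic polynomial, not from separability. (ii) For $\G = \GU_2$, describing the exhaustion step as ``solving a linear system in $(d_1,d_2,x,y)$'' hides the real work: the unitary conditions determine $d_2$ from $d_1$ and impose a norm equation on $(x,y)$ for $x\I + y\tilde A_2$ to lie in $\GU_2(\cO_\ell)$. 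The paper has to choose $x$ explicitly as a solution to a norm equation such as $xx^\circ = h_{22}h_{22}^\circ/(h_{22}h_{22}^\circ + \pi\tilde\beta h_{21}h_{21}^\circ)$ before the rest of the matching goes through. This is more than a ``unit-check'' and is precisely the part of the argument that needs care; your plan as written does not account for it.
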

\begin{proof}
 For $\G=\GL_2$,
 it is clear that $g',g^* \in \GL_2(\co_\ell)$. 
 For $\G=\GU_2$, to prove $g',g^* \in \GU_2(\co_\ell)$, it is enough to show that $\frac{g_{12}g_{22}-\pi \tilde{\beta} g_{11}g_{21} }{\det(g)} \in \nonsq \co_\ell.$ 
 Since $g\in \GU_2(\co_\ell),$ we have   $\{ g_{12},g_{22}\} \cap \Lri_\ell^\times \neq \emptyset$. First assume $g_{12} \in \Lri_\ell^\times$. Using $g_{21}=(1-g_{11}g_{22}^\circ){g_{12}^\circ}^{-1}$ and $g_{12}g_{22}^\circ=-g_{12}^\circ g_{22}$, we get $\det(g)=-g_{12}{g_{12}^\circ}^{-1}$. 
 Then, using $g_{12}g_{22}^\circ=-g_{12}^\circ g_{22}$, $g_{12}g_{11}^\circ=-g_{12}^\circ g_{11}$ and $g_{11} g_{21}^\circ=-g_{11}^\circ g_{21}$, we obtain
\begin{align*}
    \frac{g_{12}g_{22}-\pi \tilde{\beta} g_{11}g_{21} }{\det(g)}+ 
    \left( \frac{g_{12}g_{22}-\pi \tilde{\beta} g_{11}g_{21} }{\det(g)}\right)^\circ&= \frac{g_{12}^\circ (\pi\tilde{\beta} g_{11}g_{21}-g_{12}g_{22})}{g_{12}}+\frac{g_{12}(\pi\tilde{\beta} g_{11}^\circ g_{21}^\circ -g_{12}^\circ g_{22}^\circ)}{g
    _{12}^\circ}\\
    &=\pi\tilde{\beta} \left(\frac{g_
    {12}^\circ g_{11}g_{21}}{g_{12}}+\frac{g_{12}g_{11}^\circ g_{21}^\circ}{g_{12}^\circ }\right)-\left(g_{12}^\circ g_{22} + g_{12}g_{22}^\circ  \right)\\
    &=\pi\tilde{\beta} \left(\frac{-g_
    {12} g_{11}^\circ g_{21}}{g_{12}}+\frac{-g_{12}^\circ g_{11} g_{21}^\circ}{g_{12}^\circ }\right)\\
    &=-\pi \tilde{\beta} (g_{11}^\circ g_{21}+ g_{11} g_{21}^\circ)=0.
\end{align*}
Therefore $\frac{g_{12}g_{22}-\pi \tilde{\beta} g_{11}g_{21} }{\det(g)} \in \nonsq \co_\ell.$
For $g_{22}\in \Lri^\times$, we can similarly prove $\frac{g_{12}g_{22}-\pi \tilde{\beta} g_{11}g_{21} }{\det(g)} \in \nonsq \co_\ell$. 

Let $h=[h_{ij}]\in \G(\cO_\ell)$ be such that $S_{A_1}h  S_{A_2}\in W_g.$  
  Then, by definition of $W_g,$ we obtain
  \begin{equation}
      \label{eqn: ss-sns}
      \det( \ti{A_1}+h\ti{A_2}h^{-1})-\det(\ti{A_1}+g\ti{A_2}g^{-1})=0 \mod (\pi^{\ldown} ).
  \end{equation}
  We show that  either $ S_{A_1} hS_{A_2}=S_{A_1} g'S_{A_2}$ or $S_{A_1} hS_{A_2}= S_{A_1} g^*S_{A_2}.$
	Since  $h\in \G(\cO_\ell),$ we must have either $h_{12}\in \rl^\times$ or  $h_{22}\in \rl^\times.$ 
    
    For $h_{12}\in \rl^\times,$ choose $x\in \rl^\times$, $y=-h_{11}x(h_{12}\nonsq)^{-1}$ and $B=\smat{ h_{12} x^{-1}(h_{12}^2-h_{11}^2 \pi \tilde{\beta})^{-1}}{0}{0}{-h_{12}x^{-1} \det(h)^{-1}}
    $. 
    Then,  by direct computation,
    \[
    B h (x\mathrm{I}+y\ti{A_2})-g^*=\mat{0}{0}{0}{\frac{\det(\ti{A_1}+h\ti{A_2}h^{-1})-\det(\ti{A_1}+g\ti{A_2}g^{-1})}{2a\nonsq}}.
    \]
   For $\G=\GL_2$, it is clear that $B\in \mathrm{C}_{\GL_2(\cO_{\ell})}(\tilde{{A_1}})$ and $(x\mathrm{I}+y\ti{A_2})\in \mathrm{C}_{\GL_2(\cO_{\ell})}(\tilde{{A_2}})$ for any $x\in \rl^\times$. For $\G=\GU_2$ we choose $x$ to be a solution of the equation $xx^\circ=\frac{h_{12} h_{12}^\circ}{h_{12} h_{12}^\circ +\pi \tilde{\beta} h_{11} h_{11}^\circ}$. Using this choice of $x$ and the fact that $h\in \GU_2(\co_\ell)$ with $\det(h)=-h_{12} {h_{12}^\circ}^{-1}$, we can easily show that $B\in \mathrm{C}_{\GU_2(\cO_{\ell})}(\tilde{{A_1}})$ and $(x\mathrm{I}+y\ti{A_2})\in \mathrm{C}_{\GU_2(\cO_{\ell})}(\tilde{{A_2}}).$ Using \autoref{eqn: ss-sns}, we get $B h (x\mathrm{I}+y\ti{A_2})-g^*=0 \mod (\pi^{\ldown} ).$ 
	Hence, we obtain $S_{A_1} hS_{A_2}=S_{A_1} g^*S_{A_2}.$

    For $h_{22}\in \rl^\times$,  choose $x\in \rl^\times$, $y=-h_{21}x(h_{22}\nonsq)^{-1}$ and $B=\smat{h_{22} x^{-1} \det(h)^{-1}}{0}{0}{h_{22}x^{-1} (h_{22}^2-h_{21}^2 \pi \tilde{\beta})^{-1}}$. 
    Then,  by direct computation,
    \[
    B h (x\mathrm{I}+y\ti{A_2})-g'=\mat{0}{\frac{\det(\ti{A_1}+g\ti{A_2}g^{-1})-\det(\ti{A_1}+h\ti{A_2}h^{-1})}{2a\nonsq}}{0}{0} .
    \]
 Now, for $\G=\GU_2$, we choose $x$ to be a solution of the equation $xx^\circ=\frac{h_{22} h_{22}^\circ}{h_{22} h_{22}^\circ +\pi \tilde{\beta} h_{21} h_{21}^\circ}$.  The rest of the argument then follows similarly to the previous case, and we obtain $B h (x\mathrm{I}+y\ti{A_2})-g'=0 \mod (\pi^{\ldown} )$, which implies $S_{A_1} hS_{A_2}=S_{A_1} g' S_{A_2}.$
\end{proof}

\begin{thm}\label{thm:distinct non irred mAB(C)}
	 Let $A_1=\smat{ a}{0}{0}{- a}\in \g(\co_{\ell_1})$ with $a \in \co_{\ldown}^\times$ 
   and $A_2=\smat{0}{ \nonsq\pi\beta}{\nonsq}{0}\in \g(\co_{\ell_1})$ be such that $\tt(A_1)=\ss$ and $\tt(A_2)=\sns$. For $g=[g_{ij}]\in \G(\cO_{\ell}), $    
	$$	|W_g| =\begin{cases}
		1, & \text{if }\, g_{12},\,g_{22}\in \rl^\times; \\
		2, & \text{otherwise.}
	\end{cases}$$ 
\end{thm}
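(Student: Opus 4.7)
The plan is to invoke \autoref{lem:ss sns g* and g'}, which already shows $W_g \subseteq \{S_{A_1}g'S_{A_2},\, S_{A_1}g^*S_{A_2}\}$; the remaining task is to decide when these two double cosets actually coincide. Writing
\[
c := \frac{g_{12}g_{22}-\pi\tilde{\beta}\, g_{11}g_{21}}{\det(g)},
\]
so that $g' = \smat{1}{c}{0}{1}$ and $g^* = \smat{0}{1}{1}{-c}$, a short analysis using that $\det(g) \in \rl^\times$ (which forces every column of $g$ to contain a unit, ruling out the case of both $g_{12}$ and $g_{22}$ being non-units) shows that $c \in \rl^\times$ precisely when both $g_{12}, g_{22} \in \rl^\times$; in the only other possibilities exactly one of $g_{12}, g_{22}$ fails to be a unit and then $c \in \pi\rl$. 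So the theorem reduces to proving $|W_g|=1 \iff c\in\rl^\times$.

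For the easier direction $c\in\pi\rl \Rightarrow |W_g|=2$, I reduce modulo $\pi$. Since $S_{A_i}=\C_{\G(\cO_\ell)}(\tilde{A_i})\K^{\ldown}$ and $\K^{\ldown}$ has trivial image in $\G(\cO_1)$, the image $\overline{S_{A_1}}$ is the centralizer of $\bar{A_1}=\smat{a}{0}{0}{-a}$ (the split diagonal torus), while $\overline{S_{A_2}}$ is the centralizer of $\bar{A_2}=\smat{0}{0}{\nonsq}{0}$, which consists of lower triangular matrices with equal diagonal entries. When $c\in\pi\rl$, $g'\equiv \I$ and $g^*\equiv \smat{0}{1}{1}{0} \pmod{\pi}$, so $\overline{S_{A_1}g'S_{A_2}}=\overline{S_{A_1}}\cdot\overline{S_{A_2}}$ is contained in the lower triangular subgroup of $\G(\cO_1)$; since $\bar{g^*}$ has a nonzero upper right entry, $g^*\notin S_{A_1}g'S_{A_2}$.

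For the converse $c\in\rl^\times \Rightarrow |W_g|=1$, I construct explicit $s_1\in S_{A_1}$ and $s_2\in S_{A_2}$ satisfying $s_1 g' s_2 = g^*$. Take $s_1=\smat{\alpha}{0}{0}{\delta} \in \C_{\G(\cO_\ell)}(\tilde{A_1})$ and $s_2 = x\I+y\tilde{A_2}=\smat{x}{\nonsq\pi\tilde{\beta} y}{\nonsq y}{x} \in \C_{\G(\cO_\ell)}(\tilde{A_2})$. Matching entries in $s_1 g' s_2 = g^*$ yields $x=-c\nonsq y$, $\delta=1/(\nonsq y)$, $\alpha=-1/(\nonsq y D)$, where $D:=c^2-\pi\tilde{\beta}$ is a unit because $c\in\rl^\times$ and $\pi\tilde{\beta}\in\pi\rl$. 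For $\G=\GL_2$ any unit $y$ suffices. For $\G=\GU_2$, \autoref{lem:ss sns g* and g'} forces $c\in\nonsq\co_\ell$, so $D\in\co_\ell$; the unitarity constraints on $s_1$ and $s_2$ then both reduce to the single equation $yy^\circ = 1/(\nonsq^2 D) \in\co_\ell^\times$, which is solvable for $y \in \Lri_\ell^\times$ because the norm map $\Lri_\ell^\times\to\co_\ell^\times$ is surjective (the extension is unramified).

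The main obstacle will be the unitarity bookkeeping in the $\GU_2$ case: verifying that the putative $s_1,s_2$ satisfy every defining identity of $\GU_2$, that $c\nonsq$ and $D$ lie in $\co_\ell$ so the relevant norm equation makes sense, and that one can always solve it inside $\Lri_\ell^\times$. Once norm surjectivity is invoked the remaining verifications are routine matrix computations.
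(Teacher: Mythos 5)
Your proof is correct and follows essentially the same strategy as the paper: both start from \autoref{lem:ss sns g* and g'} and reduce to deciding when $S_{A_1}g'S_{A_2}=S_{A_1}g^*S_{A_2}$, and the construction of the explicit witness $s_1 g' s_2 = g^*$ (including the norm-equation trick for $\GU_2$, exploiting $c^\circ=-c$ so that $D=c^2-\pi\tilde\beta\in\cO_\ell^\times$) mirrors the paper's $Xg^*(x\I+y\tilde{A_2})=g'$ computation. The only variation is in the inequality direction, where you reduce all the way to $\G(\cO_1)$ (lower-triangular versus antidiagonal images) instead of the paper's argument that equating $(1,2)$-entries of $Xg^*(x\I+y\tilde{A_2})\equiv g'\bmod\pi^{\ell_1}$ forces $x\in\pi\rl$; this is a minor, equally valid simplification of the same idea.
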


\begin{proof}
	 By \autoref{lem:ss sns g* and g'}, the result follows if we show the following:
	\begin{enumerate}
		\item For $ g_{12},\,g_{22}\in \rl^\times ,$  $S_{A_1} g' S_{A_2} =S_{A_1}g^*S_{A_2}.$
		\item If either $ g_{12}\in \pi \rl $ or $ g_{22}\in \pi \rl ,$ then $S_{A_1} g' S_{A_2} \neq S_{A_1}g^*S_{A_2}.$
	\end{enumerate}
	Recall $g'= \smat{1}{\frac{g_{12}g_{22}-\pi \tilde{\beta} g_{11}g_{21} }{\det(g)}}{0}{1}$ and $g^*= \smat{0}{1}{1}{\frac{\pi \tilde{\beta} g_{11}g_{21} - g_{12}g_{22} }{\det(g)}}.$  
	Assume $ g_{12},\,g_{22}\in \rl^\times .$ Define $\lambda:=\frac{g_{12}g_{22}-\pi \tilde{\beta} g_{11}g_{21} }{\det(g)}$. 
    Choose $x\in \rl^\times$, $y=\frac{x }{\nonsq\lambda }$ and 
   $ X=\smat{\lambda x^{-1} }{0}{0}{\lambda x^{-1}(\pi \tilde{\beta} -\lambda^2)^{-1}}.$
  Then,  by direct computation,
    $$X g^* (x\mathrm{I}+y\tilde{{A_2}}) = g' .$$
   For $\G=\GL_2$, it is clear that $X\in \mathrm{C}_{\GL_2(\cO_{\ell})}(\tilde{{A_1}})$ and $(x\mathrm{I}+y\ti{A_2})\in \mathrm{C}_{\GL_2(\cO_{\ell})}(\tilde{{A_2}})$ for any $x\in \rl^\times$. For $\G=\GU_2$, we choose $x$ to be a solution of the equation $xx^\circ=\frac{\lambda\lambda^\circ}{\lambda\lambda^\circ+\pi \tilde{\beta} }$. Using this choice of $x$ and the relation $\lambda^\circ=-\lambda$, which follows from $g'= \smat{1}{\lambda}{0}{1} \in  \GU_2(\co_\ell)$, we can easily show that $X\in \mathrm{C}_{\GU_2(\cO_{\ell})}(\tilde{{A_1}})$ and $(x\mathrm{I}+y\ti{A_2})\in \mathrm{C}_{\GU_2(\cO_{\ell})}(\tilde{{A_2}}).$ Therefore, $S_{A_1} g^* S_{A_2}=S_{A_1} g' S_{A_2}$.

 Next we assume that either $ g_{12}\in \pi \rl $ or $ g_{22}\in \pi \rl .$ Then we have  $ g_{12} g_{22}\in \pi \rl.$ 
If $S_{A_1} g'S_{A_2}= S_{A_1} g^*S_{A_2},$ then 
    there exist $X= \smat{c}{0}{0}{d} \in \C_{\G(\cO_{\ell})}(\tilde{{A_1}})$  and $(x\mathrm{I}+y\tilde{{A_2}})\in \C_{\G(\cO_{\ell})}(\tilde{{A_2}})$ such that $Xg^* (x\mathrm{I}+y\tilde{{A_2}})=g' \mod (\pi^{\ell_1} ).$ 
	By equating $(1,2)^{th}$ entries of both sides, we obtain $c x=\frac{g_{12}g_{22}-\pi \tilde{\beta} g_{11}g_{21}}{\det(g)}  \mod (\pi^{\ell_1} ).$
	Since $ g_{12} g_{22}\in \pi \rl$ and $c\in \rl^\times,$ we  obtain $x \in \pi \rl.$ Hence $  (x\mathrm{I}+y\tilde{{A_2}})\notin \G(\cO_{\ell}).$ It is contradiction to the fact that $(x\mathrm{I}+y\tilde{{A_2}})\in \C_{\G(\cO_{\ell})}(\tilde{{A_2}}) \subseteq \G(\cO_{\ell}).$ Therefore we must have $S_{A_1} g'S_{A_2}\neq S_{A_1} g^*S_{A_2}.$
\end{proof}

\begin{lemma}\label{lem:xI+yA inv for A irr}
	Let $A\in \gl_2(\co_\ell)$ be such that $\tt(A)=\cus$. Then $(x\mathrm{I}+ y A) \in \GL_2(\co_\ell) $ for all $x,y \in \co_\ell$ such that $\{x,y\}\cap \co_\ell^\times \neq \emptyset.$
\end{lemma}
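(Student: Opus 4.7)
The plan is to reduce everything to a statement about the reduction of $A$ modulo the maximal ideal $\mfp$. Since $M \in \GL_2(\co_\ell)$ if and only if $\det M \in \co_\ell^\times$, equivalently $\bar M \in \GL_2(\mathbb F_q)$, it suffices to prove that $\bar x\mathrm{I} + \bar y \bar A$ is invertible in $\GL_2(\mathbb F_q)$. The hypothesis $\{x,y\}\cap \co_\ell^\times \ne \emptyset$ translates to $(\bar x, \bar y) \ne (0,0)$ in $\mathbb F_q^2$.

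Next I would use the classification of $\GL_2(\co_\ell)$-orbits in \autoref{lem:orbit-representatives-gol} to replace $A$ by its canonical form $\smat{x_0}{\sigma}{1}{x_0}$ with $\sigma \in \co_\ell^\times \setminus (\co_\ell^\times)^2$; this is legitimate because if $A = g(c\mathrm{I}+A_0)g^{-1}$ then $x\mathrm{I}+yA = g((x+yc)\mathrm{I} + y A_0) g^{-1}$, and the hypothesis $\{x,y\} \cap \co_\ell^\times \ne \emptyset$ is preserved under $x \mapsto x+yc$ (a unit plus a non-unit is still a unit, while changing $y$ leaves it alone). Reducing the canonical form mod $\mfp$, the characteristic polynomial of $\bar A$ becomes $(T-\bar{x_0})^2 - \bar\sigma$, which is irreducible over $\mathbb F_q$: indeed, since the squaring map is surjective on $1+\mfp\co_\ell$ in odd residue characteristic, $\sigma \in \co_\ell^\times \setminus (\co_\ell^\times)^2$ forces $\bar\sigma$ to be a non-square in $\mathbb F_q^\times$.

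Finally I would split into two cases based on $\bar y$. If $\bar y = 0$, then $\bar x \ne 0$ and $\bar x\mathrm{I} + \bar y\bar A = \bar x\mathrm{I}$ is clearly invertible. If $\bar y \ne 0$, I compute
\[
\det(\bar x\mathrm{I}+\bar y \bar A) = \bar y^2 \det\bigl((\bar x/\bar y)\mathrm{I} + \bar A\bigr) = \bar y^2 \, p_{\bar A}(-\bar x/\bar y),
\]
where $p_{\bar A}(T) = \det(T\mathrm{I}-\bar A)$ is the characteristic polynomial. Since $p_{\bar A}$ is irreducible over $\mathbb F_q$, it has no root in $\mathbb F_q$, so $p_{\bar A}(-\bar x/\bar y) \ne 0$ and the reduction is invertible. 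Pulling back yields $x\mathrm{I}+yA \in \GL_2(\co_\ell)$.

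The argument is essentially a single observation dressed up, and there is no serious obstacle; the only point that needs care is the reduction to canonical form, since one must verify that the shift $x \mapsto x + yc$ preserves the unit-condition on $\{x,y\}$. The conceptual content is simply that ``cuspidal'' for $\g=\gl_2$ is equivalent to the characteristic polynomial of $\bar A$ being irreducible over $\mathbb F_q$, which immediately forces every nontrivial $\mathbb F_q$-linear combination of $\mathrm{I}$ and $\bar A$ to be invertible.
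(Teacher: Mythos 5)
Your proof is correct and takes essentially the paper's route: reduce modulo $\pi$ and note that, for $\bar y\neq 0$, $\det(\bar x\mathrm{I}+\bar y\bar A)=\bar y^2\, p_{\bar A}(-\bar x/\bar y)$ is a nonzero value of the characteristic polynomial of $\bar A$, which is irreducible over $\mathbb F_q$ because $A$ is cuspidal. The only difference is your detour through the canonical form of \autoref{lem:orbit-representatives-gol}; the paper avoids this by computing $\det(x\mathrm{I}+yA)=x^2+\bm{tr}(A)\,xy+\det(A)\,y^2$ directly, so the conjugation-invariance of $\bm{tr}$ and $\det$ makes the normalization unnecessary.
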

\begin{proof}
	Let $x,y \in \cO_{\ell}$ such that $\{x,y\}\cap \cO_{\ell}^\times \neq \emptyset.$ By direct calculations, we obtain that $ \det(x\mathrm{I}+ y A )= x^2 + \bm{tr}(A)\,xy+\det(A)\,y^2.$  If $y \notin \cO_{\ell}^\times,$ then $x\in \cO_{\ell}^\times$ and  $\det(x\mathrm{I}+ y A )= x^2   \mod (\pi).$ Therefore   $\det(x\mathrm{I}+ y A )\in  \cO_{\ell}^\times $, which gives   $(x\mathrm{I}+ y A) \in \GL_2(\cO_{\ell}) .$ If $ y \in \cO_{\ell}^\times,$ then  $ \det(x\mathrm{I}+ y A )= y^2\left ((\frac{x}{y})^2 + \bm{tr}(A)(\frac{x}{y})+\det(A)\right).$ Since $\tt(A)=\cus$, we must have $(\frac{x}{y})^2 + \bm{tr}(A)(\frac{x}{y})+\det(A)\neq 0 \mod (\pi).$ Therefore  $\det(x\mathrm{I}+ y A )\in  \cO_{\ell}^\times$,   which gives  $(x\mathrm{I}+ y A) \in \GL_2(\cO_{\ell}) .$
\end{proof}
Recall that the residue field  is of odd characteristic. Therefore    $\alpha \in (\cO_\ell^\times)^2$ if and only if $\bar{\alpha} \in (\cO_1^\times)^2. $ We will use this fact without specifically mentioning it. 

\begin{lemma}
\label{lem: 21 entry of coset is zero with cuspidal }
    For $i \in  \{1,2\}$, let  $ A_i\in \g(\cO_{\ldown})$	
  be regular matrices such that $A_1=\smat{0}{\nonsq\alpha }{\nonsq}{0}$ and $\tt(A_1)=\cus$. Then for any $g \in  \G(\co_\ell),$ there exists an element $h \in \G(\co_\ell)$ such that $h_{21}=0$ and $S_{A_1}gS_{A_2}=S_{A_1}{h}S_{A_2}$. 
\end{lemma}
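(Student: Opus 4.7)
The plan is to find $s_1$ in the centralizer $\C_{\G(\cO_\ell)}(\tilde{A_1}) \subseteq S_{A_1}$ such that $h := s_1 g$ satisfies $h_{21} = 0$; since $s_1^{-1} \in S_{A_1}$, this immediately gives $S_{A_1} g S_{A_2} = S_{A_1} h S_{A_2}$, and no $S_{A_2}$ factor is needed. The candidate I will try is
\[
s_1 := \lambda\bigl(\nonsq g_{11}\,\I - g_{21}\,\tilde{A_1}\bigr) = \lambda\smat{\nonsq g_{11}}{-g_{21}\nonsq\tilde\alpha}{-g_{21}\nonsq}{\nonsq g_{11}},
\]
with a scaling factor $\lambda \in \rl^\times$ to be chosen so that $s_1 \in \G(\cO_\ell)$. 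A direct row-column multiplication gives $(s_1 g)_{21} = \lambda(-g_{21}\nonsq \cdot g_{11} + \nonsq g_{11}\cdot g_{21}) = 0$ regardless of $\lambda$, so the only remaining question is whether $\lambda$ can be chosen to land in $\G(\cO_\ell)$.

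For $\G = \GL_2$, $\nonsq = 1$ and taking $\lambda = 1$ works: $\det s_1 = g_{11}^2 - g_{21}^2\tilde\alpha$ reduces modulo $\pi$ to $\bar g_{11}^2 - \bar g_{21}^2\bar\alpha$, which is non-zero because the first column of $g$ is primitive (from $\bar g \in \GL_2(\mathsf{k})$) and $\bar\alpha$ is a non-square in $\mathsf{k}$ by the cuspidal hypothesis $\tt(A_1) = \cus$; hence $s_1 \in \GL_2(\cO_\ell) \cap \C_{\GL_2(\cO_\ell)}(\tilde{A_1})$.

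For $\G = \GU_2$, requiring $s_1 \in \GU_2(\cO_\ell)$ imposes on $(c,d) = (\lambda\gunonsq g_{11},\,-\lambda g_{21})$ the defining conditions of the unitary group, which collapse to the two equations $c^\circ d = cd^\circ$ and $c^\circ c - d^\circ d\,\gunonsq^2\tilde\alpha = 1$. A short calculation reduces the first to $g_{11}^\circ g_{21} + g_{11}g_{21}^\circ = 0$, which is automatic because it is condition (3) of $\GU_2$ applied to $g$ (the isotropy of the first column). The second becomes the norm equation $\lambda^\circ\lambda = -\gunonsq^{-2}\bigl(g_{11}^\circ g_{11} + g_{21}^\circ g_{21}\tilde\alpha\bigr)^{-1}$, solvable for $\lambda \in \Lri_\ell^\times$ provided the right-hand side is a unit, by the surjectivity of the norm map $N:\Lri_\ell^\times \to \cO_\ell^\times$ for the unramified quadratic extension. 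The key unit step, and what I expect to be the main obstacle, is to show $g_{11}^\circ g_{11} + g_{21}^\circ g_{21}\tilde\alpha \in \cO_\ell^\times$. When $\bar g_{11} \neq 0$, the isotropy of the first column can be rewritten as $\bar g_{21} = \bar g_{11}\bar\gunonsq k$ for some $k \in \mathsf{k}$, and the sum then factors modulo $\pi$ as $N(\bar g_{11})\bigl(1 - \bar\gunonsq^2 k^2\bar\alpha\bigr)$; the second factor cannot vanish, for vanishing would force $\bar\gunonsq^2\bar\alpha = k^{-2}$ to be a square in $\mathsf{k}$, contradicting that $\bar\gunonsq^2$ is a non-square while the cuspidal hypothesis for $\GU_2$ forces $\bar\alpha$ to be a square. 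The remaining case $\bar g_{11} = 0$ is immediate since $\bar g_{21}$ is then a unit and the sum reduces to $N(\bar g_{21})\bar\alpha \neq 0$.
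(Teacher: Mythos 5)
Your proof is correct and takes essentially the same approach as the paper: you left-multiply $g$ by a suitably scaled element $c\I+d\tilde{A_1}$ of $\C_{\G(\cO_\ell)}(\tilde{A_1})\subseteq S_{A_1}$ with $(c,d)$ proportional to $(\nonsq g_{11},-g_{21})$ so the $(2,1)$ entry of the product vanishes, and then check the scalar can be chosen to land in $\G(\cO_\ell)$ (determinant non-vanishing for $\GL_2$; isotropy of the first column plus a norm equation for $\GU_2$, with the unit verification via the square/non-square dichotomy of $\bar\alpha$ and $\bar\gunonsq^2$). The paper's proof does the same thing, only splitting on $g_{11}\in\Lri_\ell^\times$ versus $g_{21}\in\Lri_\ell^\times$ a little earlier and folding the scalar into the choice of $a$ or $b$ directly.
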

\begin{proof} 
    Let $g=\smat{x}{y}{z}{w} \in \G(\co_\ell)$. Then   $\{x,z\}\cap \rl^\times\neq\emptyset $. 
    We first consider $\G=\GL_2$.  For $x\in \co_\ell^\times$, choose $a=1$ and $b=-zx^{-1}$;  for $ z\in \co_\ell^\times$, choose $a=-xz^{-1}$ and  $b=1$. By \autoref{lem:xI+yA inv for A irr}, we have $(a\mathrm{I}+b\ti{A_1})\in S_{A_1}$. Take $ h= (a\mathrm{I}+b\ti{A_1})g $. Then $S_{A_1} g S_{A_2} = S_{A_1} h S_{A_2}$ and by direct calculation, we obtain $h_{21}=0$. This proves the result for $\G=\GL_2$. 
    
We now assume that $\G=\GU_2$.  For $x\in \Lri_\ell^\times$, the relation $xz^\circ+x^\circ z=0$ gives $zx^{-1} \in \nonsq \cO_\ell$ and   $1+zz^\circ (xx^\circ)^{-1}\tilde{\alpha}=1-(zx^{-1})^2\tilde{\alpha} \in 1-\nonsq^2(\cO_\ell)^2\subseteq \cO_\ell^\times,$ where   $\tilde{\alpha}\in (\co_\ell^\times)^2 $ because $\tt(A_1) = \cus$.
Choose $a$  to be a solution of the equation $aa^\circ=(1+zz^\circ (xx^\circ)^{-1}\tilde{\alpha})^{-1}$ and  $b=-az(\nonsq x)^{-1}$. 
For $x\notin \Lri_\ell^\times$, we have $z\in \Lri_\ell^\times$ and $\tilde{\alpha}+ xx^\circ (zz^\circ)^{-1} \in \cO_\ell^\times.$
Choose $b$  to be a solution of the equation $bb^\circ=-\nonsq^{-2}(\tilde{\alpha}+ xx^\circ (zz^\circ)^{-1})^{-1}$ and  $a=-\nonsq bxz^{-1}$. Then,  using the the relation $xz^\circ+x^\circ z=0,$ we can easily show that $(a\mathrm{I}+b\ti{A_1}) \in S_{A_1}$ in  both the cases  $x\in \Lri_\ell^\times$ and $x\notin \Lri_\ell^\times$. 
Take $ h= (a\mathrm{I}+b\ti{A_1})g $. Then $S_{A_1} g S_{A_2} = S_{A_1} h S_{A_2}$ and by direct calculation, we obtain $h_{21}=0$. This proves the result for $\G=\GU_2$. 
  \end{proof}

For $i \in \{1,2\}$, let $A_i=\left[\begin{smallmatrix}
    0 & \nonsq\alpha_i\\ \nonsq & 0
\end{smallmatrix}\right] \in \g(\co_{\ldown}) \text{ and } g_i=\smat{a_i}{b_i}{0}{c_i} \in \G(\co_\ell)$. 
Here $\ti{A_i}=\smat{0}{\nonsq\ti{\alpha_i}}{\nonsq}{0}\in \g(\co_\ell)$ are Serre lifts of $A_i$.  Define $\dtag$ by 
\[
\dtag  := \mat{a_2 c_1-a_1 c_2}{ \nonsq(a_2 b_1+a_1 b_2 )}{b_2 c_1-b_1 c_2}{\nonsq(b_1 b_2+a_1 a_2 \tilde{\alpha}_2- c_1 c_2\tilde{\alpha}_1)}. 
\]
We will denote $\dtag$ by $D$ whenever the meaning is clear from the context. We now list some of the properties of $\dtag.$
\begin{lemma}
\label{lem-detD-computation}
We have $ \mathrm{det}(\ti{A_1}+g_1\ti{A_2}g_1^{-1})-\mathrm{det}(\ti{A_1}+g_2\ti{A_2}g_2^{-1})=\frac{\nonsq }{a_1a_2c_1c_2}\times \det(\dtag).$ 
\end{lemma}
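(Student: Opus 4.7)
The plan is to exploit the fact that $\tilde{A_1}, \tilde{A_2}$ (and hence any conjugate $g_i\tilde{A_2}g_i^{-1}$) have trace zero by construction, so $M_i := \tilde{A_1}+g_i\tilde{A_2}g_i^{-1}$ is trace zero as well. For any trace-zero $2\times 2$ matrix $M$ we have $\det(M) = -\tfrac{1}{2}\bm{tr}(M^2)$, so
\[
\det(M_i) = -\tfrac{1}{2}\bm{tr}(\tilde{A_1}^2) - \bm{tr}(\tilde{A_1}\,g_i\tilde{A_2}g_i^{-1}) - \tfrac{1}{2}\bm{tr}(\tilde{A_2}^2).
\]
Since the first and third terms are independent of $i$, they cancel in the difference and we are reduced to showing
\[
\bm{tr}(\tilde{A_1}\,g_2\tilde{A_2}g_2^{-1}) - \bm{tr}(\tilde{A_1}\,g_1\tilde{A_2}g_1^{-1}) \;=\; \frac{\nonsq}{a_1 a_2 c_1 c_2}\,\det\!\bigl(D(\tilde{\alpha}_1,\tilde{\alpha}_2,g_1,g_2)\bigr).
\]

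Next I would compute $\bm{tr}(\tilde{A_1}\,g_i\tilde{A_2}g_i^{-1})$ explicitly. Using the upper-triangular form $g_i=\smat{a_i}{b_i}{0}{c_i}$ with $g_i^{-1}=\frac{1}{a_ic_i}\smat{c_i}{-b_i}{0}{a_i}$, a short computation gives
\[
g_i\tilde{A_2}g_i^{-1} \;=\; \frac{\nonsq}{a_ic_i}\mat{b_ic_i}{a_i^2\tilde{\alpha}_2 - b_i^2}{c_i^2}{-b_ic_i},
\]
and multiplying on the left by $\tilde{A_1}=\smat{0}{\nonsq\tilde{\alpha}_1}{\nonsq}{0}$ and taking the trace yields
\[
\bm{tr}(\tilde{A_1}\,g_i\tilde{A_2}g_i^{-1}) \;=\; \frac{\nonsq^2}{a_ic_i}\bigl(\tilde{\alpha}_1 c_i^2 + \tilde{\alpha}_2 a_i^2 - b_i^2\bigr).
\]
Subtracting the $i=1$ term from the $i=2$ term, putting the result over the common denominator $a_1a_2c_1c_2$, and collecting, the difference equals
\[
\frac{\nonsq^2}{a_1a_2c_1c_2}\Bigl[\tilde{\alpha}_1 c_1c_2(a_1c_2-a_2c_1) + \tilde{\alpha}_2 a_1a_2(a_2c_1-a_1c_2) - (a_1c_1 b_2^2 - a_2c_2 b_1^2)\Bigr].
\]

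Finally, I would expand $\det(D)$ directly from the definition of $D$ as a $2\times 2$ determinant,
\[
\det(D) = (a_2c_1-a_1c_2)\nonsq(b_1b_2 + a_1a_2\tilde{\alpha}_2 - c_1c_2\tilde{\alpha}_1) - \nonsq(a_2b_1+a_1b_2)(b_2c_1-b_1c_2),
\]
and observe, after expansion, that the mixed $a_2b_1b_2c_1$ and $a_1b_1b_2c_2$ terms cancel, leaving precisely $\nonsq$ times the bracketed expression above. Multiplying by $\frac{\nonsq}{a_1a_2c_1c_2}$ then matches the formula for the trace difference, completing the proof. The only real work is the bookkeeping in the last two expansions; no step is conceptually delicate, so I expect the main (minor) obstacle to be ensuring that the $b_1b_2$ cross-terms in $\det(D)$ cancel cleanly, which they do because each appears once with coefficient $+a_2c_1$ and once with coefficient $-a_2c_1$ (and analogously with $a_1c_2$).
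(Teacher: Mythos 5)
Your proof is correct and proves the statement via the same kind of direct computation the paper uses (the paper simply states the expanded numerator and asserts it "directly gives the result"). Your organizing trick — reducing the difference of determinants to a difference of traces via $\det(M)=-\tfrac12\bm{tr}(M^2)$ for trace-zero $M$ — streamlines the bookkeeping but is not a conceptually different route; all the intermediate formulas (for $g_i\tilde A_2 g_i^{-1}$, for the traces, for $\det(D)$, and the cancellation of the $b_1b_2$ cross-terms) check out.
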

\begin{proof}
By direct computations,
     \begin{equation*}
    \label{eqn: deteminant condition for diff. type}
      \mathrm{det}(\ti{A_1}+g_1\ti{A_2}g_1^{-1})-\mathrm{det}(\ti{A_1}+g_2\ti{A_2}g_2^{-1})=\frac{\nonsq^2(a_2c_1-a_1c_2)(a_1a_2\ti{\alpha_2} - c_1c_2\ti{\alpha_1})+\nonsq^2 a_2b_1^2 c_2-\nonsq^2 a_1b_2^2c_1}{a_1a_2c_1c_2}  .
    \end{equation*}
    This directly gives the result. 
\end{proof}

\begin{lemma}\label{lem:D=0 implies} 
     Suppose $\dtag =0\mod (\pi^{k} )$ for some $ k \in [1, \ell_1].$ Then the following hold.
    \begin{enumerate}
        \item $a_1^{-1} c_1=a_2^{-1} c_2\mod (\pi^{k}) $ and $b_2=c_1^{-1}c_2b_1 \mod (\pi^{k}).$
        \item $b_1=b_2=0\mod (\pi^{k}) .$
        \item  $ \tilde{\alpha}_2 =  a_1^{-2} c_1^{2} \tilde{\alpha}_1 \mod (\pi^k ).$
        \item For $i\in \{1,2\},$ if  $\ti{A_1} +g_i \ti{A_2}g_i^{-1} $ are regular, then $1+a_i^{-1}c_i \in \rl^\times $.
    \end{enumerate}
\end{lemma}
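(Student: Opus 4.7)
The plan is to unpack the entries of $D = \dtag$ and use the hypothesis $D \equiv 0 \pmod{\pi^k}$ entry-by-entry. Since $g_i = \smat{a_i}{b_i}{0}{c_i} \in \G(\cO_\ell)$ is upper triangular with unit determinant, the entries $a_i$ and $c_i$ lie in $\rl^\times$, and since the residue characteristic is odd, $2 \in \rl^\times$ as well. I will use these facts freely throughout.

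For (1) and (2), the $(1,1)$-entry gives $a_2 c_1 \equiv a_1 c_2 \pmod{\pi^k}$, which upon dividing by $a_1 a_2$ (both units) yields the first identity $a_1^{-1} c_1 \equiv a_2^{-1} c_2 \pmod{\pi^k}$; similarly the $(2,1)$-entry $b_2 c_1 - b_1 c_2 \equiv 0$ gives $b_2 \equiv c_1^{-1} c_2 b_1 \pmod{\pi^k}$ since $c_1$ is a unit. For (2), substitute the expression for $b_2$ from (1) into the $(1,2)$-entry condition $a_2 b_1 + a_1 b_2 \equiv 0 \pmod{\pi^k}$ to obtain $b_1 c_1^{-1}(a_2 c_1 + a_1 c_2) \equiv 0 \pmod{\pi^k}$. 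Combining $a_2 c_1 \equiv a_1 c_2$ with this shows $2 a_1 c_2 b_1 c_1^{-1} \equiv 0 \pmod{\pi^k}$, and since $2, a_1, c_1, c_2$ are all units, $b_1 \equiv 0 \pmod{\pi^k}$; then $b_2 \equiv 0 \pmod{\pi^k}$ follows from (1).

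For (3), insert $b_1, b_2 \equiv 0 \pmod{\pi^k}$ (so $b_1 b_2 \equiv 0 \pmod{\pi^{2k}}$ in particular) into the $(2,2)$-entry condition to obtain $a_1 a_2 \tilde{\alpha}_2 \equiv c_1 c_2 \tilde{\alpha}_1 \pmod{\pi^k}$. Using $c_2 \equiv a_1^{-1} a_2 c_1 \pmod{\pi^k}$ from (1) and dividing by $a_1 a_2$ gives $\tilde{\alpha}_2 \equiv a_1^{-2} c_1^2 \tilde{\alpha}_1 \pmod{\pi^k}$, as required.

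For (4), the strategy is contrapositive. I first compute
\[
M_i := \ti{A_1} + g_i \ti{A_2} g_i^{-1} = \mat{\nonsq a_i^{-1} b_i}{\nonsq(\tilde{\alpha}_1 - b_i^2 a_i^{-1} c_i^{-1} + a_i c_i^{-1} \tilde{\alpha}_2)}{\nonsq(1 + a_i^{-1} c_i)}{-\nonsq a_i^{-1} b_i},
\]
which has trace zero. A trace-zero $2 \times 2$ matrix is regular (in the sense of having coincident characteristic and minimal polynomials) precisely when it is nonzero modulo $\pi$. Suppose $1 + a_i^{-1} c_i \notin \rl^\times$, i.e., $a_i^{-1} c_i \equiv -1 \pmod{\pi}$. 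By (1), this forces $a_j^{-1} c_j \equiv -1 \pmod{\pi}$ for the other index $j$ as well, and by (3), $\tilde{\alpha}_2 \equiv \tilde{\alpha}_1 \pmod{\pi}$ (since $a_i^{-2} c_i^2 \equiv 1$). Combined with $b_i \equiv 0 \pmod{\pi}$ from (2), every entry of $M_i$ vanishes modulo $\pi$: the diagonal and $(2,1)$-entries are immediate, while the $(1,2)$-entry reduces to $\nonsq(\tilde{\alpha}_1 + a_i c_i^{-1} \tilde{\alpha}_2) \equiv \nonsq(\tilde{\alpha}_1 - \tilde{\alpha}_1) \equiv 0 \pmod{\pi}$. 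Hence $M_i \equiv 0 \pmod{\pi}$, contradicting regularity. I expect no serious obstacle; the only care needed is in tracking the hypothesis $k \geq 1$, which is what allows all the congruences in (1)--(3) to descend modulo $\pi$ for the argument in (4).
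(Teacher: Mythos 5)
Your proof is correct and follows essentially the same route as the paper's: both extract (1)--(3) by reading off congruences from the entries of $\dtag$ (the paper packages (2) into a single linear identity in the $D_{ij}$'s, while you derive the congruences sequentially, but the content is the same), and both establish (4) by writing out $\ti{A_1}+g_i\ti{A_2}g_i^{-1}$ and using (1)--(3) to reduce it modulo $\pi$, concluding via regularity. Your contrapositive phrasing of (4) is equivalent to the paper's direct reduction to $\nonsq(1+a_i^{-1}c_i)\smat{0}{\tilde\alpha_1}{1}{0}\bmod\pi^k$.
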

\begin{proof}
    Note that (1)-(3) directly follows from $D =0\mod (\pi^{k} )$ 
 and the fact that 
     $$b_1=\frac{c_1 \left(a_2 b_1+a_1 b_2\right)-a_1 \left(b_2 c_1-b_1 c_2\right)+b_1 \left(a_2 c_1-a_1 c_2\right)}{2a_2c_1}=\frac{c_1 \nonsq^{-1}D_{12}-a_1 D_{21}+b_1 D_{11}}{2a_2c_1}.$$
     To show (4), observe that
      \begin{eqnarray*}
 \ti{A_1} +g_i \ti{A_2}g_i^{-1}&=&\nonsq\mat{a_i^{-1}b_i}{\tilde{\alpha}_1+ a_i c_i^{-1}\tilde{\alpha}_2 -(a_ic_i)^{-1}b_i^2}{1+a_i^{-1}c_i}{-a_i^{-1}b_i}\\
 &=&
    \nonsq \mat{0}{\tilde{\alpha}_1+ a_i c_i^{-1}( a_1^{-2} c_1^{2} \tilde{\alpha}_1)}{1+a_i^{-1}c_i}{0}\mod (\pi ^k)
 \\
&=&\nonsq(1+a_i^{-1}c_i)\mat{0}{\tilde{\alpha}_1 }{1}{0}\mod (\pi^k ),
\end{eqnarray*}
where the last equality follows because $a_1^{-1} c_1=a_2^{-1} c_2\mod (\pi ^k).$ Therefore, since $ \ti{A_1} +g_i \ti{A_2}g_i^{-1}$ is regular, we must have $1+a_i^{-1}c_i\in \rl^\times.$
\end{proof}

\begin{lemma}
\label{lem: D has some invertible element}
            For $i \in  \{1,2\}$, let  $ A_i\in \g(\co_{\ldown})$	
  be regular matrices such that $A_1=\smat{0}{\nonsq\alpha_1}{\nonsq}{0}$ with $\tt(A_1)=\cus$, and $A_2=\smat{0}{\nonsq\alpha_2}{\nonsq}{0}$ with $\tt(A_2)\in \{\ss,\sns\} $. Let $g_i=\smat{a_i}{b_i}{0}{c_i} \in \G(\co_\ell)$ for $i \in \{1,2\}$. Then there exists $i,j \in \{1,2\} $ such that $\dtag_{ij} \neq 0 \mod (\pi) $.
\end{lemma}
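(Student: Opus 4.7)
The plan is to argue by contradiction: assume that every entry of $D = \dtag$ vanishes modulo $\pi$ and derive a contradiction. The main tool is \autoref{lem:D=0 implies} applied with $k=1$, which yields in particular the key identity
\[
\tilde{\alpha}_2 \equiv a_1^{-2} c_1^{2}\,\tilde{\alpha}_1 \pmod{\pi}.
\]
Before using it I would record that, since $g_1 = \smat{a_1}{b_1}{0}{c_1} \in \G(\cO_\ell)$ is upper triangular, the diagonal entries $a_1, c_1$ are forced to be units in $\rl$; for $\G = \GU_2$ one even gets the sharper relation $c_1 = (a_1^\circ)^{-1}$ from $g_1^\star g_1 = \mathrm{I}$.

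The first case is $\tt(A_2) = \sns$. By the classification in \autoref{lem:orbit-representatives-gol}, $A_2$ is similar to $\smat{x}{\nonsq \pi \beta}{\nonsq}{x}$, so $\alpha_2 \in \pi\cO_{\ldown}$ and hence $\tilde{\alpha}_2 \equiv 0 \pmod{\pi}$. On the other hand $\tilde{\alpha}_1 \in \cO_\ell^\times$ since $\tt(A_1) = \cus$, and $a_1, c_1 \in \rl^\times$, so the right hand side of the key identity is a unit modulo $\pi$. This is the desired contradiction.

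The second case is $\tt(A_2) = \ss$. Here both $\alpha_1$ and $\alpha_2$ are units in $\cO_{\ldown}^\times$, and I would rewrite the identity as $\alpha_2/\alpha_1 \equiv (c_1/a_1)^{2} \pmod{\pi}$ and derive a contradiction via a square-versus-non-square comparison in $\mathbb{F}_q^\times$. For $\G = \GL_2$, the classification forces $\alpha_1$ to be a non-square and $\alpha_2$ a square in $\cO^\times$, so $\alpha_2/\alpha_1$ reduces to a non-square in $\mathbb{F}_q^\times$, whereas $(c_1/a_1)^{2}$ is plainly a square since $a_1, c_1 \in \cO_\ell^\times$. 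For $\G = \GU_2$ the square parities of $\alpha_1$ and $\alpha_2$ are reversed, but the crucial observation is that $c_1/a_1 = 1/(a_1 a_1^\circ) = 1/N(a_1)$ actually lies in $\cO_\ell^\times$, so $(c_1/a_1)^{2}$ reduces to a square in $\mathbb{F}_q^\times$ while $\alpha_2/\alpha_1$ reduces to a non-square. In either subcase the odd-characteristic hypothesis on the residue field forbids a square from equalling a non-square in $\mathbb{F}_q^\times$, giving the contradiction.

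I do not anticipate a serious obstacle, since the computational content is already packaged in \autoref{lem:D=0 implies}. The only slightly delicate point is verifying in the $\GU_2$ case that the a priori element $c_1/a_1 \in \Lri_\ell$ in fact lies in $\cO_\ell^\times$, so that its square is genuinely a square in the subfield $\mathbb{F}_q$ rather than in $\mathbb{F}_{q^2}$; this is precisely what makes the parity dichotomy give a contradiction rather than a vacuous statement.
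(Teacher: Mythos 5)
Your proposal is correct, and the two subcases where you could go wrong (that $\alpha_2 \equiv 0 \bmod \pi$ in the $\sns$ case, and that $c_1/a_1 = N(a_1)^{-1} \in \cO_\ell^\times$ in the $\GU_2$ case) are both handled properly. The overall route — assume $D \equiv 0 \bmod \pi$, derive a relation between $\tilde\alpha_1$ and $\tilde\alpha_2$, and contradict the type classification of \autoref{lem:orbit-representatives-gol} — is the same as the paper's, but there are two small differences worth noting. First, you invoke \autoref{lem:D=0 implies}(3) with $k=1$ to get the relation $\tilde\alpha_2 \equiv a_1^{-2}c_1^2\tilde\alpha_1 \bmod \pi$ in both subcases; the paper proves that lemma just beforehand but then, somewhat curiously, re-derives the relation by hand in the $\ss$ subcase rather than citing it, so your version is the cleaner one. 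Second, in the $\sns$ subcase the paper only assumes $D_{21}\equiv D_{22}\equiv 0 \bmod \pi$, derives $\tilde\alpha_1 \equiv (b_1/c_1)^2 \bmod \pi$, and contradicts by comparing the square class of $\tilde\alpha_1$ with the square class of the ratio $(b_1/c_1)^2$ of neighbours; this proves the slightly sharper statement $\{D_{21},D_{22}\}\cap \rl^\times\neq\emptyset$. You instead use all four entries (via \autoref{lem:D=0 implies}) and obtain the simpler unit-versus-nonunit contradiction $0 \equiv \tilde\alpha_2 \equiv a_1^{-2}c_1^2\tilde\alpha_1 \in \rl^\times$. That sharper conclusion is not needed anywhere else, so your argument is perfectly adequate and arguably more streamlined.
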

\begin{proof}  
 We consider $\tt(A_2)=\ss$ and $\tt(A_2)=\sns$ cases separately.

  For $\tt(A_2)=\sns$, we show that $\{D_{21},D_{22}\}\cap \rl^\times \neq \emptyset$. Note that $\alpha_2=0 \mod (\pi)$ in this case. Assume on the contrary that $D_{21}=0 \mod (\pi)$ and $D_{22}=0 \mod (\pi)$. Then we obtain $\tilde{\alpha_1} =b_1^2/c_1^2 \mod (\pi)$, which is a contradiction  both when $\G=\GL_2 $ (since $\tilde{\alpha_1}$ is a non-square unit) and when $\G=\GU_2$ (since $\tilde{\alpha_1} \in (\cO_\ell^\times)^2$ and the fact that the ratio of the squares of neighbours of $\smat{a_1}{b_1}{0}{c_1}$ is  in $\nonsq^2 (\cO_\ell)^2$).

For $\tt(A_2)=\ss$, assume on the contrary that $D_{ij}=0 \mod(\pi)$ 
for all $ i,j \in \{1,2\} $. 
By substituting the value of $c_2$  from $D_{11}=0 \mod(\pi)$, i.e. $c_2=a_1^{-1}a_2c_1\mod(\pi)$, in $D_{21}=0 \mod (\pi)$, we get $b_2=a_1^{-1}a_2b_1 \mod (\pi)$.
Then using $D_{12}=0 \mod(\pi)$, we obtain $b_1=b_2=0 \mod (\pi)$.
Therefore,   $D_{22}=0 \mod(\pi)$ and $c_2=a_1^{-1}a_2c_1\mod(\pi)$ imply
    \[
    \tilde{\alpha_1} -\frac{a_1^2}{c_1^2}\tilde{\alpha_2}=0 \mod (\pi).
    \]
    This is a contradiction to the fact that $\tilde{\alpha_2} $ is a square (respectively a non-square) and $\tilde{\alpha_1} $ is a non-square  (respectively a square ) in $\co_\ell^\times$ for $\G=\GL_2$ (resp. $\G=\GU_2$).
\end{proof}

\begin{thm}\label{thm:D theorem}
 	For $i \in \{ 1, 2\}$, let 
    $A_i=\smat{0}{\nonsq\alpha_i}{\nonsq}{0} \in  \g(\cO_{\ell_1}) \, $  such that $\tt(A_1)=\cus$ and $A_2$ is any regular matrix.
    For $i \in \{1,2  \},$ let $g_i=\smat{a_i}{b_i}{0}{c_i} \in \G(\cO_{\ell})$.  The following are equivalent.
 		\begin{enumerate}
 			\item $S_{A_1} g_1 S_{A_2}=S_{A_1} g_2 S_{A_2}.$
     		\item  There exist $x,y\in \rl$  such that $\{x,y\}\cap \rl^\times \neq \emptyset$ and $\dtag \cmat{x}{y}=\cmat{0}{0} \mod (\pi^{\ell_1} ).$
 					\end{enumerate}
 \end{thm}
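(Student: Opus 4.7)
The plan is to prove both implications by translating the double-coset identity into a matrix congruence modulo $\pi^{\ell_1}$ and connecting the two via \autoref{lem-detD-computation}.

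For $(1)\Rightarrow(2)$, I would first write $g_2 = s_1 g_1 s_2$ with $s_i \in S_{A_i}$. Since $S_{A_i} = \C_{\G(\co_\ell)}(\tilde A_i)\,\K^{\ell_1}$ and $\K^{\ell_1}$ reduces to the identity modulo $\pi^{\ell_1}$, this becomes
\[
g_2 \equiv (u_1 I + v_1 \tilde A_1)\, g_1\, (u_2 I + v_2 \tilde A_2) \pmod{\pi^{\ell_1}}
\]
for suitable $u_i, v_i \in R_\ell$ with $(u_i I + v_i \tilde A_i) \in \G(\co_\ell)$. Since the right factor commutes with $\tilde A_2$,
\[
\tilde A_1 + g_2 \tilde A_2 g_2^{-1} \equiv (u_1 I + v_1 \tilde A_1)\bigl(\tilde A_1 + g_1 \tilde A_2 g_1^{-1}\bigr)(u_1 I + v_1 \tilde A_1)^{-1} \pmod{\pi^{\ell_1}},
\]
so the two determinants agree modulo $\pi^{\ell_1}$; by \autoref{lem-detD-computation} this forces $\det \dtag \equiv 0 \pmod{\pi^{\ell_1}}$. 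Because \autoref{lem: D has some invertible element} guarantees that $D$ has a unit entry, a routine linear-algebra argument in $R_\ell$ then exhibits a primitive pair $(x,y) \in R_\ell^2$, i.e.\ one satisfying $\{x,y\} \cap R_\ell^\times \neq \emptyset$, with $D\bigl[\begin{smallmatrix}x\\y\end{smallmatrix}\bigr] \equiv \bigl[\begin{smallmatrix}0\\0\end{smallmatrix}\bigr] \pmod{\pi^{\ell_1}}$.

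For $(2)\Rightarrow(1)$, I would construct explicit $s_i \in \C_{\G(\co_\ell)}(\tilde A_i)$ with $s_1 g_1 s_2 \equiv g_2 \pmod{\pi^{\ell_1}}$; the residual then lies in $\K^{\ell_1} \subseteq S_{A_2}$ and is absorbed. Parametrise
\[
s_2 = x\, I + (y/\nonsq)\, \tilde A_2, \qquad s_1 = \lambda\, I + (\mu/\nonsq)\, \tilde A_1,
\]
and view $s_1 g_1 s_2 \equiv g_2$ as four entrywise relations in $R_\ell/\pi^{\ell_1}$. The $(1,1)$- and $(2,1)$-entries form a $2\times 2$ linear system in $(\lambda,\mu)$ whose determinant is $(a_1 x + b_1 y)^2 - \tilde{\alpha_1} c_1^2 y^2$, a unit by cuspidality of $\tilde A_1$ (via \autoref{lem:xI+yA inv for A irr}) together with primitivity of $(x,y)$. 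Solving uniquely for $(\lambda,\mu)$ and substituting, the remaining $(1,2)$- and $(2,2)$-entries simplify via the identity of \autoref{lem-detD-computation} to precisely the two rows of $D\bigl[\begin{smallmatrix}x\\y\end{smallmatrix}\bigr] \equiv 0 \pmod{\pi^{\ell_1}}$, which hold by hypothesis.

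The final step is to verify $s_i \in \G(\co_\ell)$. For $s_1$ this amounts to $\det s_1 = \lambda^2 - \tilde{\alpha_1} \mu^2 \in R_\ell^\times$, which follows from cuspidality of $\tilde A_1$ by \autoref{lem:xI+yA inv for A irr} and primitivity of $(\lambda,\mu)$; for $s_2$ the condition $\det s_2 = x^2 - \tilde{\alpha_2} y^2 \in R_\ell^\times$ is handled case-by-case in $\tt(A_2)$ and on which of $x,y$ is a unit. In the unitary case, an additional scaling of $(x,y)$ by a unit solving a norm equation (in the spirit of \autoref{lem:ss sns g* and g'}) is needed to land in $\GU_2(\co_\ell)$ without disturbing the kernel relation, while the degenerate subcases where $D$ vanishes to a high power of $\pi$ are controlled by \autoref{lem:D=0 implies}. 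The main obstacle will be precisely this group-membership verification in $(2)\Rightarrow(1)$, especially when $\tt(A_2) = \sns$ (where $\tilde A_2$ degenerates modulo $\pi$) and when $\G = \GU_2$, where the extra unitary constraint must be enforced while preserving the kernel condition on $(x,y)$.
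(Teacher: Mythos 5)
The critical gap is in your $(1)\Rightarrow(2)$ argument. You pass through \autoref{lem-detD-computation} to get $\det\dtag\equiv 0\pmod{\pi^{\ell_1}}$ and then invoke \autoref{lem: D has some invertible element} to find a unit entry of $D$ and hence a primitive kernel vector. But \autoref{lem: D has some invertible element} is stated only for $\tt(A_2)\in\{\ss,\sns\}$, whereas \autoref{thm:D theorem} allows $A_2$ to be any regular matrix, in particular cuspidal — and this case is essential, since the paper applies the theorem in \autoref{prop:irr with irr} with $\tt(A_1)=\tt(A_2)=\cus$. When $D$ has no unit entry, $\det D\equiv 0\pmod{\pi^{\ell_1}}$ does \emph{not} force a primitive $(x,y)$ with $D\cmat{x}{y}\equiv 0\pmod{\pi^{\ell_1}}$: write $D=\pi^k D'$ with $D'$ primitive; then $\det D=\pi^{2k}\det D'$ only gives $\det D'\equiv 0\pmod{\pi^{\ell_1-2k}}$, a primitive kernel vector for $D'$ annihilates it only modulo $\pi^{\ell_1-2k}$, and hence annihilates $D$ only modulo $\pi^{\ell_1-k}$, not $\pi^{\ell_1}$. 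This is exactly why the paper, in \autoref{prop:irr with irr}, has to prove the stronger vanishing $\det D\equiv 0\pmod{\pi^{\ell_1+k}}$ under the additional representation-theoretic hypothesis there — that strengthening is not available at the level of \autoref{thm:D theorem} itself.

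The paper's own proof of $(1)\Rightarrow(2)$ never touches the determinant. It writes the double-coset identity as $(x_1\mathrm{I}+y_1\tilde A_1)g_1 - g_2(x_2\mathrm{I}+y_2\tilde A_2)\equiv 0\pmod{\pi^{\ell_1}}$, solves the second-row equations for $x_2,y_2$, and substitutes into the first row, whereupon $D\cmat{x_1}{y_1}\equiv 0\pmod{\pi^{\ell_1}}$ drops out directly; primitivity of $(x_1,y_1)$ is automatic because $x_1\mathrm{I}+y_1\tilde A_1$ is an invertible element of $S_{A_1}$. Note also that this shows the kernel vector of $D$ should parametrize the $\C_{\G(\co_\ell)}(\tilde A_1)$-factor, not $\C_{\G(\co_\ell)}(\tilde A_2)$ as in your $(2)\Rightarrow(1)$ setup — the paper takes $X=x\mathrm{I}+y\tilde A_1$ with $(x,y)$ the given kernel vector and computes $Y\in\C_{\G(\co_\ell)}(\tilde A_2)\pmod{\pi^{\ell_1}}$ explicitly, rather than solving a linear system for both factors. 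Your group-membership concerns in the unitary case are genuine, but the paper handles them via \autoref{prop:solution for D} (rescaling the kernel vector to land in $\GU_2$) rather than by a norm-equation adjustment after the fact.
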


To prove \autoref{thm:D theorem}, we need the following result.
\begin{proposition}\label{prop:solution for D}
    Let $T=\smat{a}{ b}{\nonsq c}{\nonsq d}\in M_2(\Lri_\ell)$ with $a,b,c,d\in \cO_\ell$ such that $T\neq 0 \mod (\pi).$
     Let $A=\left[\begin{smallmatrix}
       0 & \nonsq\beta \\ \nonsq & 0
   \end{smallmatrix}\right] \in \gu_2(\co_{\ell})$ such that $\tt(A)=\cus.$ For $i\in [1, \ell],$
   if  there exist $x,y\in \Lri_\ell$ such that $\{x,y\}\cap \Lri_\ell^\times\neq \emptyset$ and $T \cmat{x}{y}=\cmat{0}{0}\mod (\pi^{i} ),$ then 
   there exist $x',y'\in \Lri_\ell$ such that  $x'\mathrm{I}+y'A \in \GU_2(\co_\ell)$ and 
	$$T \left[ \begin{matrix}
	x'  \\  y' \end{matrix}\right]=\left[ \begin{matrix}
	0  \\  0 \end{matrix}\right]  \mod (\pi^{i} ) .$$
\end{proposition}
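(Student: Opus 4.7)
My plan is to reduce the problem to $\cO_\ell$-linear algebra via the decomposition $\Lri_\ell = \cO_\ell \oplus \nonsq\,\cO_\ell$, then use a scaling ansatz to enforce the $\GU_2$-conditions, and finally invoke surjectivity of the norm map $\Lri_\ell^\times \to \cO_\ell^\times$.

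First, write $x = u + \nonsq v$ and $y = s + \nonsq t$ with $u,v,s,t \in \cO_\ell$, and set $T' := \smat{a}{b}{c}{d} \in \mathrm{M}_2(\cO_\ell)$. Separating $\{1,\nonsq\}$-components in $T\cmat{x}{y} \equiv \cmat{0}{0} \pmod{\pi^i}$ yields the equivalent pair of $\cO_\ell$-systems $T'\cmat{u}{s} \equiv T'\cmat{v}{t} \equiv \cmat{0}{0} \pmod{\pi^i}$. The hypothesis $\{x,y\}\cap \Lri_\ell^\times \neq \emptyset$ is the same as $\{u,v,s,t\}\cap \cO_\ell^\times \neq \emptyset$, so at least one of the two $\cO_\ell$-vectors $(u,s)$ and $(v,t)$ lies in $\ker(T') \pmod{\pi^i}$ and has a unit entry. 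Fix such a vector and denote it $(u_0, s_0)$.

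Next, try the ansatz $x' := \lambda u_0$ and $y' := \lambda s_0$ for a unit $\lambda \in \Lri_\ell^\times$ still to be chosen. Since $T$ is $\cO_\ell$-linear, $T\cmat{x'}{y'} = \lambda\,T\cmat{u_0}{s_0} \equiv \cmat{0}{0} \pmod{\pi^i}$, and $\{x',y'\}\cap \Lri_\ell^\times \neq \emptyset$ because $\lambda$ is a unit. For $x'\I + y'A = \smat{x'}{\nonsq\beta y'}{\nonsq y'}{x'}$, the off-diagonal defining equations (2)--(5) of $\GU_2(\cO_\ell)$ from \autoref{subsec: unitary group const.} all collapse to the single identity $(x')^\circ y' = x'(y')^\circ$, which is automatic because $u_0, s_0$ are Galois-fixed. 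The only remaining unitarity condition becomes
\[
x'(x')^\circ - \nonsq^2 \beta\,y'(y')^\circ \;=\; \lambda\lambda^\circ\bigl(u_0^2 - \nonsq^2 \beta\,s_0^2\bigr) \;=\; 1,
\]
so it suffices to exhibit $\lambda \in \Lri_\ell^\times$ with $\lambda\lambda^\circ = (u_0^2 - \nonsq^2 \beta\, s_0^2)^{-1}$.

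The crux is checking $u_0^2 - \nonsq^2 \beta\, s_0^2 \in \cO_\ell^\times$. Since $\tt(A) = \cus$ for $\G = \GU_2$, \autoref{lem:orbit-representatives-gol}(d) forces $\beta \in (\cO_\ell^\times)^2$, while $\nonsq^2 \in \lri^\times \smallsetminus (\lri^\times)^2$; hence $\nonsq^2\beta$ is a non-square unit and its residue $\overline{\nonsq^2 \beta}$ is a non-square in $\mathsf{k}^\times$. If exactly one of $u_0, s_0$ is a unit, the claim is immediate modulo $\pi$; if both are units, then $u_0^2 \equiv \nonsq^2 \beta\, s_0^2 \pmod{\pi}$ would exhibit $\overline{\nonsq^2 \beta}$ as the square $(\bar u_0/\bar s_0)^2$ in $\mathsf{k}^\times$, a contradiction. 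The required $\lambda$ then exists by surjectivity of the norm map $\Lri_\ell^\times \to \cO_\ell^\times$ for the unramified quadratic extension $\Lri/\lri$ (Hilbert~90 over $\mathsf{k}$ combined with Hensel lifting). The main obstacle is really this unit-check; it is where the cuspidal hypothesis on $A$ is essentially used, while the translation to $\cO_\ell$, the scaling ansatz, and the norm-surjectivity appeal are all formal.
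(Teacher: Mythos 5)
Your proof is correct, and it takes a cleaner, genuinely different route from the paper's. The paper proceeds by picking whichever of $a,b,c,d$ is a unit (say $a$), using the corresponding row of $T\cmat{x}{y}\equiv 0$ to solve $x \equiv -a^{-1}by \pmod{\pi^i}$, setting $(x',y') = (-a^{-1}byz,\, yz)$, and then choosing $z$ with $zz^\circ = \bigl(yy^\circ(b^2a^{-2} - \nonsq^2\beta)\bigr)^{-1}$; the other three cases are declared ``along the same lines.'' You instead split $\Lri_\ell = \cO_\ell \oplus \nonsq\cO_\ell$ and observe that $T\cmat{x}{y}\equiv 0 \pmod{\pi^i}$ is equivalent to \emph{both} Galois components $(u,s)$ and $(v,t)$ lying in $\ker T' \pmod{\pi^i}$ (with $T' = \smat{a}{b}{c}{d}\in \mathrm{M}_2(\cO_\ell)$), so you can pick a Galois-fixed kernel vector $(u_0,s_0)$ with a unit entry outright. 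This eliminates the fourfold case analysis on $T$ and makes the anti-Hermitian conditions (2)--(5) trivially collapse, since $(u_0,s_0)$ is fixed under $\circ$. Both proofs then converge on the same crux: the cuspidal hypothesis forces $\nonsq^2\beta$ to be a nonsquare unit, so the quadratic form $u_0^2 - \nonsq^2\beta s_0^2$ is a unit whenever $(u_0,s_0)$ has a unit entry, and norm-surjectivity of $\Lri_\ell^\times \to \cO_\ell^\times$ supplies the scalar. Your ``$T$ is $\cO_\ell$-linear'' step deserves a half-sentence justifying that $T\cmat{u_0}{s_0}\equiv 0$ follows from $T'\cmat{u_0}{s_0}\equiv 0$ (the second row differs by the unit $\nonsq$), but this is immediate. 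Net effect: the translation to a single $\cO_\ell$-linear kernel condition buys you a uniform argument where the paper silently repeats itself four times, at the modest cost of the initial Galois decomposition.
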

\begin{proof}
Since $T\neq 0 \mod (\pi),$ we have 
    $\{a,b,c,d\}\cap\cO_{\ell}^\times \neq \emptyset.$ 
   We prove the result for  $a\in \cO_{\ell}^\times$. The proof for the remaining cases follow along the same lines. Let $a\in \cO_{\ell}^\times.$
   Since  $\{x,y\}\cap \Lri_\ell^\times\neq \emptyset$ and $a x+  b y=0 \mod (\pi^i),$ we must have $y\in \Lri_\ell^\times$ and $x=- ba^{-1}y\mod (\pi^i).$   Choose $x'=- ba^{-1}yz$ and $y'=yz$ for some $z\in \Lri_\ell^\times$. Then we have the following:
    \begin{eqnarray}
        x'(\nonsq y^{\prime})^\circ+x^{\prime\circ}(\nonsq y^{\prime}) =yy^\circ zz^\circ(\nonsq b a^{-1}-\nonsq b a^{-1})=0.\label{eqn:bnv1}\\
         x'x^{\prime\circ}+(\nonsq y^{\prime})(\nonsq y^{\prime}\beta)^\circ=yy^\circ zz^\circ( b^2 a^{-2}-\nonsq^2\beta).\label{eqn:bnv2}
    \end{eqnarray}
Since $\tt(A)=\cus,$ we have $\beta\in (\cO_\ell^\times)^2 $ and hence $( b^2 a^{-2}-\nonsq^2\beta)\in \cO_\ell^\times.$ Now choose $z\in \Lri_\ell^\times$ such that 
$$zz^\circ=\frac{1}{yy^\circ ( b^2 a^{-2}-\nonsq^2\beta)}.$$
For this choice of $z,$ by \autoref{eqn:bnv1} and \autoref{eqn:bnv2}, we have $x'\mathrm{I}+y'A =\smat{x'}{\nonsq y' \beta}{\nonsq y'}{x'}\in \GU_2(\co_\ell).$ Also, we have
$$T \left[ \begin{matrix}
	x'  \\  y' \end{matrix}\right]=zT \left[ \begin{matrix}
	x  \\  y \end{matrix}\right]=\left[ \begin{matrix}
	0  \\  0 \end{matrix}\right]  \mod (\pi^i ) .$$
This completes the proof. 
\end{proof}

\begin{proof}[Proof of \autoref{thm:D theorem}] 
	 Let  $S_{A_1} g_1 S_{A_2}=S_{A_1} g_2 S_{A_2}.$ 	Then  there exist  $x_1,x_2,y_1,y_2\in \rl$ such that $x_1\mathrm{I}+y_1 \tilde{A}_1  \in S_{A_1}$, $ x_2\mathrm{I}+y_2 \tilde{A}_2 \in S_{A_2}$ and 
\begin{equation}\label{eqn:hg}
    (x_1\mathrm{I}+y_1 \tilde{A}_1)g_1 -g_2(x_2\mathrm{I}+y_2 \tilde{A}_2)  =0 \mod (\pi^{\ell_1} ).
\end{equation}
By direct computation, we have 
$$(x_1\mathrm{I}+y_1 \tilde{A}_1)g_1 -g_2(x_2\mathrm{I}+y_2 \tilde{A}_2)=\mat{a_1 x_1-a_2 x_2- \nonsq b_2  y_2}{b_1
   x_1-b_2 x_2+\nonsq \tilde{\alpha}_1 c_1   y_1 -\nonsq  a_2 \tilde{\alpha}_2  y_2}{\nonsq  \left(a_1 y_1-c_2 y_2\right)}{\nonsq b_1  y_1+c_1
   x_1-c_2 x_2}.$$
Equating the second rows in both sides of \autoref{eqn:hg}, 
we obtain that  $y_2=c_2^{-1} a_1 y_1  \mod (\pi^{\ell_1} )$ and $x_2=c_2^{-1}(\nonsq b_1 y_1+c_1 x_1) \mod (\pi^{\ell_1} ).$
On substituting these values into the first row on the left-hand side of \autoref{eqn:hg} and simplifying, we obtain
 $D\cmat{x_1}{y_1}=\cmat{0}{0} \mod (\pi^{\ell_1} ),$ where $D = \dtag.$
Since $x_1\mathrm{I}+y_1 \tilde{A}_1  \in S_{A_1},$ we must have $\{x,y\}\cap \rl^\times \neq \emptyset.$
  This gives that (1) implies (2).
  
  To show (2) implies (1), let $x,y\in \rl$ such that 
  $\{x,y\}\cap \rl^\times \neq \emptyset$
  and $D \cmat{x}{y}=\cmat{0}{0}\mod (\pi^{\ell_1} ).$ 
  We first claim that we can further assume that $(x\mathrm{I}+y\tilde{A_1}) \in \G(\co_\ell).$
  For $\G=\GL_2 ,$ by \autoref{lem:xI+yA inv for A irr}, we have $(x\mathrm{I}+y\tilde{A_1}) \in \G(\co_\ell).$ 
  For $\G=\GU_2$, by using the fact that $g_i \in \GU_2(\cO_\ell),$ we obtain $c_i={a_i^\circ}^{-1}$ and $b_i=\nonsq a_i t_i$ for some $t_i\in \cO_\ell$.
   Using these in the expression of $D$, we obtain 
\begin{equation}\label{eqn:as1}
    D=\frac{1}{a_1^\circ a_2^\circ}\mat{a_2a_2^\circ - a_1a_1^\circ}{\nonsq^2 a_1a_1^\circ a_2a_2^\circ(  t_1 + t_2) }{\nonsq ( a_2a_2^\circ  t_2 -a_1a_1^\circ t_1)}{\nonsq a_1a_1^\circ a_2a_2^\circ( \nonsq^2 t_1 t_2 + \tilde{\alpha}_2)-\nonsq \tilde{\alpha}_1}=\frac{1}{a_1^\circ a_2^\circ}\mat{d_1}{ d_2}{\nonsq d_3}{\nonsq d_4}
\end{equation}
for some $d_j \in \cO_\ell$  with $j\in [1,4].$ 
If $D= 0  \mod (\pi^{\ell_1}),$ then we choose $x=1$ and $y=0,$ which satisfy 
 $D \cmat{x}{y}=\cmat{0}{0} \mod (\pi^{\ell_1} )$ and $x\mathrm{I}+y\tilde{A_1} =  \mathrm{I} \in \G(\co_\ell).$ 
If $D\neq 0  \mod (\pi^{\ell_1}),$
let $0\leq k < \ell_1$  be such that $D=\pi^k D' $ for some $D'\in M_2(\rl)$ with $D'\neq 0  \mod (\pi ).$ By \autoref{eqn:as1}, we can make sure that $D'=\frac{1}{a_1^\circ a_2^\circ}\smat{d'_1}{ d'_2}{\nonsq d'_3}{\nonsq d'_4}$ for some  $d'_j\in \cO_\ell$ with $ j\in [1,4]$. Since $D \cmat{x}{y}=\cmat{0}{0}\mod (\pi^{\ell_1} ),$ we have $\smat{d'_1}{ d'_2}{\nonsq d'_3}{\nonsq d'_4}\cmat{x}{y}=\cmat{0}{0}\mod (\pi^{\ell_1-k} ).$ Therefore by \autoref{prop:solution for D},
there exist  $x',y'\in \rl$ such that  $x'\mathrm{I}+y'\tilde{A_1} \in \G(\co_\ell)$ and $\smat{d'_1}{ d'_2}{\nonsq d'_3}{\nonsq d'_4}\cmat{x'}{y'}=\cmat{0}{0}\mod (\pi^{\ell_1-k} ).$ Now  choose $x=x'$ and $y=y'$ and hence we obtain that $(x\mathrm{I}+y\tilde{A_1})  \in \G(\co_\ell)$ and  
 $D \cmat{x}{y}=\frac{\pi^k}{a_1^\circ a_2^\circ}\smat{d'_1}{\nonsq d'_2}{\nonsq d'_3}{d'_4}\cmat{x'}{y'} = \cmat{0}{0} \mod (\pi^{\ell_1} ).$ Hence the claim.

  Let $X=(x\mathrm{I} +  y \tilde{A}_1)$ and $Y=c_2^{-1}(\nonsq b_1 y+c_1 x)\mathrm{I}+c_2^{-1} a_1 y \tilde{A}_2  .$ 
  By direct calculation, we have 
  \begin{eqnarray*}
Xg_1-g_2Y&=&\mat{\frac{(a_1 c_2-a_2 c_1)x-\nonsq( a_2 b_1+a_1 b_2)y}{c_2}}{\frac{(b_1 c_2-b_2 c_1)x-\nonsq(b_1 b_2+a_1 a_2 \tilde{\alpha}_2- c_1 c_2\tilde{\alpha}_1)y}{c_2}}{0}{0}\\
&=&-c_2^{-1}\mat{x}{y}{0}{0}D^t=0 \mod (\pi^{\ell_1} ).
  \end{eqnarray*}
  Since $(x\mathrm{I}+y\tilde{A}_1) \in \G(\co_\ell)$,  we have  $X\in S_{A_1}$. Therefore $g_2^{-1}Xg_1\in \G(\cO_\ell).$  
  Since $Y=g_2^{-1}Xg_1\mod (\pi^{\ell_1} )$ and the map $\rho_{\ell, \ldown}:\G(\cO_\ell)\rightarrow \G(\cO_i)$ is a projection,  there exists $Z\in M_2(\rl)$ such that $Y+\pi^{\ell_1}Z\in \G(\cO_\ell).$
Note that $Y+\pi^{\ell_1}Z\in S_{A_2}$ and $Xg_1=g_2(Y+\pi^{\ell_1}Z)\mod (\pi^{\ell_1} ).$ Therefore $S_{A_1} g_1 S_{A_2}=S_{A_1} g_2 S_{A_2}.$ 
    This gives (2) implies (1) and hence  completes the proof.
\end{proof}

\begin{thm}\label{prop: cus with other case |W_g|}
        Let  $ A_1 =\smat{0}{\nonsq\alpha_1 }{\nonsq}{0}$ and  ${A_2}=\smat{0}{\nonsq\alpha_2}{\nonsq}{0}$ be in $\g(\cO_{\ldown})$ with $\tt(A_1)=\cus$ and   $\tt({A_2})\in \{\ss,\sns\}$. For $g\in \G(\cO_{\ell}), $   
  \[
  |W_g| = 1. 
  \]
\end{thm}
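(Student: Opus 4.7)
The plan is to reduce to upper-triangular representatives via \autoref{lem: 21 entry of coset is zero with cuspidal}, translate the similarity condition into a determinant condition via \autoref{lem-detD-computation}, and then invoke the criterion of \autoref{thm:D theorem}. Throughout, the input from the cuspidal hypothesis on $A_1$ shows up in two places: it is needed for the reduction to upper-triangular form, and it underlies the non-vanishing statement \autoref{lem: D has some invertible element}.

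Concretely, suppose $S_{A_1} g_1 S_{A_2}, S_{A_1} g_2 S_{A_2} \in W_g$. First I would use \autoref{lem: 21 entry of coset is zero with cuspidal} to replace each $g_i$ by a representative $\smat{a_i}{b_i}{0}{c_i}\in \G(\cO_\ell)$ (so $a_i, c_i \in \rl^\times$). The similarity condition in the definition of $W_g$ is transitive, so $\tilde{A_1}+g_1\tilde{A_2}g_1^{-1} \sim \tilde{A_1}+g_2\tilde{A_2}g_2^{-1} \bmod (\pi^{\ldown})$. Since similar matrices have equal characteristic polynomials, their determinants agree modulo $\pi^{\ldown}$. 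Applying \autoref{lem-detD-computation} and using that the prefactor $\nonsq/(a_1a_2c_1c_2)$ is a unit, I conclude
\[
\det\bigl( \dtag \bigr)\equiv 0 \mod (\pi^{\ldown}).
\]

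Next I would produce a vector $\cmat{x}{y}$ with $\{x,y\}\cap \rl^\times \neq \emptyset$ in $\ker(D) \bmod \pi^{\ldown}$. By \autoref{lem: D has some invertible element}, $D\not\equiv 0 \bmod (\pi)$, so at least one entry of $D$, say $D_{ij}$, is a unit in $\rl$. Vanishing of the determinant mod $\pi^{\ldown}$ combined with a unit entry forces the two rows (or the two columns) to be $\rl$-proportional mod $\pi^{\ldown}$, so one can solve $D\cmat{x}{y}\equiv 0 \bmod (\pi^{\ldown})$ with a unit coordinate by a direct back-substitution (e.g.\ if $D_{11}\in \rl^\times$, take $y=1$ and $x=-D_{11}^{-1}D_{12}$; the second equation then becomes $D_{11}^{-1}\det(D)\equiv 0$, automatic). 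Then by the implication $(2)\Rightarrow(1)$ of \autoref{thm:D theorem}, $S_{A_1}g_1 S_{A_2}=S_{A_1}g_2 S_{A_2}$, which is exactly $|W_g|=1$.

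The only delicate step is producing the kernel vector with a unit coordinate, and that is handled cleanly by \autoref{lem: D has some invertible element}: without it, $D$ could a priori be $\pi$-times a matrix whose determinant is only small mod $\pi^{\ldown-2}$, and the existence of the required $(x,y)$ would fail. For the unitary case one should double-check that the chosen $(x,y)$ yields an element $x\mathrm I+y\tilde{A_1}$ lying in $\G(\cO_\ell)$, but this is exactly the content of the passage from the raw solution to a ``good'' solution in the proof of \autoref{thm:D theorem} (via \autoref{prop:solution for D}), so \autoref{thm:D theorem} already absorbs this verification and no additional work is needed.
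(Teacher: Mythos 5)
Your argument is correct and follows essentially the same route as the paper's proof of \autoref{prop: cus with other case |W_g|}: reduce to upper-triangular representatives via \autoref{lem: 21 entry of coset is zero with cuspidal }, pass to the determinant condition via \autoref{lem-detD-computation}, invoke \autoref{lem: D has some invertible element} to get a unit entry in $D$, and back-substitute to produce a vector in the kernel of $D$ mod $\pi^{\ell_1}$ with a unit coordinate before applying \autoref{thm:D theorem}. The only cosmetic difference is the explicit kernel vector: the paper uses the uniform choice $(x,y)=(D_{i2},-D_{i1})$ for a row $i$ containing a unit entry, which avoids your case split on which entry is invertible, but both constructions are valid.
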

\begin{proof}
Let $g_1,g_2\in \G(\cO_{\ell}) $ be such that $S_{A_1}g_1S_{A_2}, S_{A_1}g_2S_{A_2}\in W_g.$ 
    By \autoref{lem: 21 entry of coset is zero with cuspidal }, we can assume that $g_i=\smat{a_i}{b_i}{0}{c_i} \in \G(\co_\ell)$ for $i \in \{1,2 \} $.

    By the definition of $ W_g$, we obtain $\mathrm{det}(\ti{A_1}+g_1\ti{A_2}g_1^{-1})-\mathrm{det}(\ti{A_1}+g_2\ti{A_2}g_2^{-1})=0 \mod (\pi^{\ldown})$.  
     To prove \autoref{prop: cus with other case |W_g|}, we have to show  $S_{A_1}g_1S_{A_2}= S_{A_1}g_2S_{A_2}.$ By \autoref{thm:D theorem}, this is equivalent to showing that  there exist $x,y\in \rl$  such that $\{x,y\}\cap \rl^\times\neq \emptyset$ and 
    \begin{equation*}
    \label{eqn: system of eqn in x,y for diff type }
\dtag
\begin{bmatrix}
    x\\
    y
\end{bmatrix}=\begin{bmatrix}
    0\\0
\end{bmatrix} \mod (\pi^{\ldown}).
    \end{equation*}
For $D = \dtag,$ by \autoref{lem-detD-computation}, we have 
       $ \frac{\nonsq \, \det(D)}{a_1a_2c_1c_2}=\mathrm{det}(\ti{A_1}+g_1\ti{A_2}g_1^{-1})-\mathrm{det}(\ti{A_1}+g_2\ti{A_2}g_2^{-1}).$ 
       Therefore $\mathrm{det}(D)=0\mod (\pi^{\ldown})$. 
    Since $\tt(A_1)=\cus$ and  $\tt(A_2)\in \{\ss,\sns\}$, 
    by \autoref{lem: D has some invertible element}, there exists $ i,j \in \{1,2\} $ such that $D_{ij} \neq 0 \mod (\pi) $.
     Choose $x=D_{i2}$ and $y=-D_{i1}.$ 
      For this choice, we have  $\{x,y\}\cap \rl^\times\neq \emptyset$ and 
$$D\left[ \begin{matrix} x  \\  y \end{matrix}\right] = D\left[ \begin{matrix} D_{i2} \\  -D_{i1} \end{matrix}\right]=
\begin{cases}
    \cmat{0}{-\det(D)} ,& \text{if } i=1;\\
    \cmat{\det(D)}{0}, & \text{if } i=2.
\end{cases}$$
Hence the result follows because $\mathrm{det}(D)=0\mod (\pi^{\ldown})$.
\end{proof}

\section{Proof of  \autoref{thm:main-theorem-2}(1)-(3)}
\label{sec:proof-of-them-s1-s3}

Any regular representation is of the form $\ind_{S_A}^{\G(\cO_\ell)}(\phi)$ for some regular matrix $A$ and an irreducible representation $\phi$ of $S_A$ lying above $\psi_A$. For $\rho_i = \ind_{S_{A_i}}^{\Gol}(\phi_i),$ to determine the multiplicity of a regular representation in the tensor product $\rho_1 \otimes \rho_2,$ we observe that
\begin{eqnarray} 
\label{eqn:decomposition-of-regular-tensor}
\rho_1 \otimes \rho_2 \cong \oplus_{g \in S_{A_1} \backslash \G(\cO_\ell) / S_{A_2}} \ind_{S_{A_1} \cap S_{A_2}^g}^{\G(\cO_\ell)} (\phi_1 \otimes \phi_2^g).
\end{eqnarray}
\subsection{Proof of \autoref{thm:main-theorem-2}(1)-(2)}
Let  $ A_1, A_2 \in \g(\cO_{\ell_1})$ be regular matrices with  $\tt(A_1) \neq \tt(A_2).$ For $i\in \{1,2\}$, let $\phi_i \in \mathrm{Irr}(S_{A_i} \mid \psi_{A_i})$ and $\chi_i\in \mathrm{Irr}(\Z)$ such that  
$\langle \phi_i,\chi_i \rangle _{\Z } \neq 0.$ 
Recall $V(\phi_1, \phi_2) = \mathrm{Ind}_{S_{A_1} \cap S_{A_2}}^{\G(\cO_\ell)}(\phi_1 \otimes \phi_2)$. 
By definition, $\ind_{S_{A_1} \cap S_{A_2}^g}^{\G(\cO_\ell)} (\phi_1 \otimes \phi_2^g) \cong V(\phi_1, \phi_2^g)$ for every $g \in S_{A_1} \backslash \G(\cO_\ell) / S_{A_2}$. We note that $S_{A_1} \cap S_{A_2}^g = \Z \K^{\ldown}$ for every $g \in \G(\cO_\ell),$ for otherwise $\bar{A_1} \in \C_{\G(\cO_1)}(\bar{A_2^g})$ and that is not possible because $\tt(A_1) \neq \tt(A_2^g)$. Since $A_1 + A_2^g$ is regular for $g \in \G(\cO_\ell)$,  every irreducible constituent of $V(\phi_1 \otimes \phi_2^g)$ is a regular representation.

\begin{proposition}\label{prop:different types}
    \begin{enumerate}
    \item An irreducible representation $\rho$ of $\G(\cO_\ell)$ is a sub-representation of $V(\phi_1, \phi_2)$ if and only if $\langle \rho , \psi_{A_1 + A_2} \rangle_{ \K^{\ell_2}} \neq 0$ and  $\langle \rho , \chi_1. \chi_2 \rangle_{\Z } \neq 0.$
    \item $V(\phi_1, \phi_2)$ is a multiplicity free representation of $\G(\cO_\ell)$.
    \item For $g, h \in S_{A_1}\backslash \G(\co_\ell)/S_{A_2}$, one of the following holds:
    \begin{enumerate} 
     \item $V(\phi_1, \phi_2^g) \cong V(\phi_1, \phi_2^h).$
     \item $\mathrm{Hom}_{\G(\co_\ell)}(V(\phi_1, \phi_2^g), V(\phi_1, \phi_2^h)) = 0.$
    \end{enumerate} 
    \end{enumerate}
\end{proposition}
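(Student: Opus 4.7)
The plan is to pivot everything around the subgroup $H := S_{A_1}\cap S_{A_2} = \Z\K^{\ldown}$ and the character $\theta$ of $\Z\K^{\lup}$ uniquely determined by $\theta|_{\Z}=\chi_1\chi_2$ and $\theta|_{\K^{\lup}}=\psi_{A_1+A_2}$. Compatibility of this pair on $\Z\cap \K^{\lup}$ is automatic, because $\phi_i|_{\Z\cap \K^{\lup}}$ must equal both $\chi_i|_{\Z\cap \K^{\lup}}$ and $\psi_{A_i}|_{\Z\cap \K^{\lup}}$; uniqueness follows from $\Z\K^{\lup}/\K^{\lup}\cong \Z/(\Z\cap \K^{\lup})$. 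Consequently $(\phi_1\otimes\phi_2)|_{\Z\K^{\lup}}$ is a positive multiple of $\theta$. For part~(1), Frobenius reciprocity gives that $\rho$ is a subrepresentation of $V(\phi_1,\phi_2)$ if and only if $\rho|_H$ and $(\phi_1\otimes \phi_2)|_H$ share an irreducible constituent. The forward direction is immediate since any such shared irreducible restricts to $\psi_{A_1+A_2}$ on $\K^{\lup}$ and to $\chi_1\chi_2$ on $\Z$. For the reverse, the two hypotheses jointly force $\rho|_{\Z\K^{\lup}}\supseteq \theta$, so $\rho|_H$ contains some irreducible $\eta$ above $\theta$; the observation that every such $\eta$ occurs in $(\phi_1\otimes\phi_2)|_H$ will follow from the multiplicity-free decomposition established in part~(2) together with a count showing that there are exactly $q$ irreducibles of $H$ above $\theta$ (via the Heisenberg-type structure on $H/\Z\K^{\lup}$), matching the $q$ distinct constituents of $(\phi_1\otimes\phi_2)|_H$.

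For part (2), the plan is to invoke \autoref{prop:MF from Subgp}(2) for the chain $\K^{\ldown}\leq H\leq S_{A_1+A_2}$ with $\phi=\phi_1\otimes\phi_2$, reducing the statement to the claim that $\phi_1\otimes \phi_2$ is multiplicity free as an $H$-representation. The case $\ell$ even is immediate since each $\phi_i$ is one-dimensional. For $\ell$ odd, I will use $\chi_{\phi_i}(zk)=\chi_i(z)\chi_{\sigma_i}(k)$ for $z\in \Z$ and $k\in \K^{\ldown}$, where $\sigma_i:=\phi_i|_{\K^{\ldown}}$ has the Heisenberg-type character $\chi_{\sigma_i}(k)=q\tilde\psi_i(k)$ on $\mathrm{Rad}_{A_i}$ and $0$ elsewhere. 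The central input from the hypothesis $\tt(A_1)\neq \tt(A_2)$ is that $\bar{A_1}\notin \mathbb{F}_q\I+\mathbb{F}_q\bar{A_2}$ in $\g(\cO_1)$, so the common centralizer of $\bar{A_1},\bar{A_2}$ reduces to the scalars, giving $\mathrm{Rad}_{A_1}\cap \mathrm{Rad}_{A_2}=(\Z\cap \K^{\ldown})\K^{\lup}$. A direct computation then yields $\langle \phi_1\otimes\phi_2,\phi_1\otimes\phi_2\rangle_H=q$. Since $\dim(\phi_1\otimes\phi_2)=q^2$ and each irreducible $H$-constituent has dimension $q$ by \autoref{prop:MF from Subgp}(1), the multiplicities $m_i$ satisfy $\sum m_i=q=\sum m_i^2$, which forces $m_i=1$ for all $i$.

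For part (3), by part~(1) the constituent set of $V(\phi_1,\phi_2^g)$ depends only on the $\G(\cO_\ell)$-orbit of $\psi_{A_1+A_2^g}\in \widehat{\K^{\lup}}$ together with the central character $\chi_1\chi_2$, which is $g$-independent since $\chi_2$ is a character of $\Z$. Hence if the orbits of $\psi_{A_1+A_2^g}$ and $\psi_{A_1+A_2^h}$ are distinct, the constituent sets are disjoint and $\mathrm{Hom}_{\G(\cO_\ell)}(V(\phi_1,\phi_2^g), V(\phi_1,\phi_2^h))=0$; if they agree, the constituent sets coincide, and since both $V$'s are multiplicity free by part~(2) and have the same dimension $[\G(\cO_\ell):H]\cdot \dim(\phi_1\otimes \phi_2)$, they must be isomorphic. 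The principal technical obstacle I foresee is the identification $\mathrm{Rad}_{A_1}\cap \mathrm{Rad}_{A_2}=(\Z\cap \K^{\ldown})\K^{\lup}$ and the ensuing inner-product calculation in part~(2); this will require a case-by-case verification, against the type-pairings appearing in $\Xi_1\cup \Xi_2$, of the implication $\tt(A_1)\neq \tt(A_2)\Rightarrow \bar{A_1}\notin \mathrm{span}_{\mathbb{F}_q}(\I,\bar{A_2})$, after which the intersection of the two radicals is read off from the normal forms of \autoref{lem:orbit-representatives-gol}.
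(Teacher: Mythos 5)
Your proposal is correct and reaches all three conclusions, but it handles the crucial step differently from the paper. Both arguments hinge on showing that $(\phi_1\otimes\phi_2)|_{\Z\K^\ldown}$ is multiplicity free; the paper does this by constructing a subgroup $H$ of $\K^\ldown$ whose image is simultaneously maximal isotropic for $\mathcal{B}_{A_1}$ and $\mathcal{B}_{A_2}$ (\autoref{lem: common max isotropic for diff type}), writing $\phi_i|_{\K^\ldown}=\ind_H^{\K^\ldown}f_i$, applying Mackey, and checking that the resulting characters of $\mathrm{Rad}_{A_1+A_2}$ are pairwise distinct — this feeds directly into the displayed decomposition of $(\phi_1\otimes\phi_2)|_{\Z\K^\ldown}$ as a sum of all irreducibles above $\chi_1\chi_2\psi_{A_1+A_2}$, from which (1) follows and then (2) via \autoref{prop:MF from Subgp}(2). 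You instead compute the norm $\langle\phi_1\otimes\phi_2,\phi_1\otimes\phi_2\rangle_{\Z\K^\ldown}=q$ directly from the Heisenberg character formula ($\chi_{\sigma_i}$ supported on $\mathrm{Rad}_{A_i}$ with modulus $q$ there) and the identification $\mathrm{Rad}_{A_1}\cap\mathrm{Rad}_{A_2}=(\Z\cap\K^\ldown)\K^\lup$, then close with the $\sum m_i=q=\sum m_i^2$ argument — this is exactly the technique the paper reserves for the cuspidal–cuspidal case (\autoref{lem:Wq}, \autoref{thm:SA multiplicity free}), so your route unifies the $\Xi_1,\Xi_2$ treatment with the $\Xi_3$ one. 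You also reverse the logical order, proving (2) first and deducing (1) from it plus a count of the $q$ irreducibles of $\Z\K^\ldown$ above $\theta$; both orderings are fine because your proof of (2) does not presuppose (1). One small simplification you could make: the implication $\tt(A_1)\neq\tt(A_2)\Rightarrow\bar{A_1}\notin\mathrm{span}_{\F_q}\{\I,\bar{A_2}\}$ needs no case analysis against the type list — if $\bar{A_1}=x\I+y\bar{A_2}$ with $\bar{A_1}$ non-scalar then $y\neq 0$, so $\bar{A_1}$ and $\bar{A_2}$ are conjugate up to a scalar translate and hence share a type, a contradiction.
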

\begin{proof} 
For even $\ell$, this result follows immediately from the construction of the regular representations of $\G(\cO_\ell)$. Hence we will now assume that $\ell $ is odd. 
For (1), if $\rho \in V(\phi_1,\phi_2)$ then  $\langle \rho , \psi_{A_1 + A_2} \rangle_{ \K^{\ell_2}} \neq 0$ and  $\langle \rho , \chi_1. \chi_2 \rangle_{\Z } \neq 0$. For the converse, we first prove that the representation $(\phi_1 \otimes \phi_2)|_{\K^\ldown}$ is a multiplicity free representation of $\K^{\ell_1}.$  Let $H$ be as in \autoref{lem: common max isotropic for diff type}. Then $\bar{H}$ is a maximal isotropic  for $\mathcal{B}_A$ and therefore by the construction of regular representations of $\G(\co_\ell)$, we have $\phi_i|_{\K^\ldown} \cong \ind_{H}^{\K^{\ldown}}f_i$ for some  $f_i\in \mathrm{Irr}(H \mid \psi_{A_i})$. Using the fact that $H$ is a normal subgroup of $\K^{\ldown}$, we have 
\[
 (\phi_1\otimes \phi_2)|_{\K^\ldown} \cong \ind_{H}^{\K^{\ldown}}f_1
 \otimes \ind_{H}^{\K^{\ldown}}f_2 \cong \oplus_{g \in \K^{\ldown}/H} \ind_{H}^{\K^{\ldown}}(f_1\otimes f_2^g).
\] 
Note that $f_1 \otimes f_2^g \in \mathrm{Irr}(H \mid \psi_{A_1+A_2})$ for every $g \in \K^{\ldown}$ and $\mathrm{Rad}_{A_1 + A_2} \subseteq H$. Now to show that $(\phi_1\otimes \phi_2)|_{\K^\ldown}$ is multiplicity free it is enough to show that 
 \[
 f_1 \otimes f_2^g|_{\mathrm{Rad}_{A_1 + A_2}} \neq f_1 \otimes f_2^h|_{\mathrm{Rad}_{A_1 + A_2}}  
 \]
for $g h^{-1} \notin H$. Assume on the contrary that $f_2^g = f_2^h$ for $gh^{-1} \notin H$. Therefore  $\psi_{A_2}(hg^{-1}xg h^{-1}x^{-1})=1$ for all $x \in {\mathrm{Rad}_{A_1 + A_2}}.$ By the definition of $\mathrm{Rad}_{A_2},$ we also have $\psi_{A_2}(hg^{-1}yg h^{-1}y^{-1})=1$ for all $y \in \mathrm{Rad}_{A_2}.$ Since $H$ is generated by $\mathrm{Rad}_{A_2}$ and $\mathrm{Rad}_{A_1 + A_2},$ we obtain \[
\psi_{A_2}(hg^{-1}zg h^{-1}z^{-1})=1
\]
for all $z \in H$. Since $\bar{H}$ is maximal isotropic, we obtain $gh^{-1} \in H$. This is a contradiction to $gh^{-1} \notin H.$ Hence $(\phi_1 \otimes \phi_2)|_{\K^\ldown}$ is a multiplicity free representation of $\Kd$. We note that $(\phi_i)|_{\Z  \K^{\ell_2}}=q \chi_i \psi_{A_i}.$ Therefore, by the general theory of Heisenberg lifts for the construction of $\Z \K^\ldown$ representations, we have
\begin{eqnarray}
\label{eq-decomposition-of-phi1-tensor-phi2}
(\phi_1 \otimes \phi_2)|_{\Z  \K^{\ell_1}}\cong \bigoplus_{W\in\mathrm{Irr}(\Z  \K^{\ell_1}\mid \chi_1\chi_2 \psi_{A_1+A_2})}W.
\end{eqnarray} 
Hence (1) follows. Next, (2) 
 follows from \autoref{prop:MF from Subgp}(2) and \autoref{eq-decomposition-of-phi1-tensor-phi2} and (3) follows from (1) and (2).  
\end{proof}

 For  $ \Xi_1$, there exists at most one double coset representative $ h \in S_{A_1} \backslash \ G /S_{A_2}$
  distinct from $g$ such that $\langle V(\phi_1, \phi_2^g) , V(\phi_1, \phi_2^h)\rangle \neq 0$, by \autoref{lem:ss sns g* and g'}. We also note that in \autoref{thm:distinct non irred mAB(C)},  $|W_g| = 2$ occurs only for the case where $\tt( A_1+gA_2g^{-1})=\ss$.  Further, 
 $V(\phi_1, \phi_2^g)$ is multiplicity free by \autoref{prop:different types}. This combined with \autoref{eqn:decomposition-of-regular-tensor} gives us the proof of \autoref{thm:main-theorem-2}(1).

 Similarly,  by \autoref{prop: cus with other case |W_g|}, \autoref{prop:different types} and \autoref{eqn:decomposition-of-regular-tensor}, we obtain a  proof of \autoref{thm:main-theorem-2}(2). 
 
\subsection{Proof of \autoref{thm:main-theorem-2}(3) }

\begin{proposition}
\label{prop:multiplicity-free-component-cuspidal}    Let $A_1,A_2\in \g(\cO_{\ell_1})$ be  regular matrices such that $\tt(A_1)=\tt(A_2)=\cus $. Suppose $\phi_1\in \mathrm{Irr}(S_{A_1}\mid \psi_{A_1})$ and $\phi_2\in \mathrm{Irr}(S_{A_2}\mid \psi_{A_2}).$  Let $g\in\G(\cO_{\ell})$ be such that the representation $V(\phi_1,\phi_2^g) = \mathrm{Ind}_{S_{A_1}\cap S_{A_2}^g}^{\G(\cO_{\ell})}(\phi_1 \otimes \phi_2^g)$ contains a regular irreducible representation as a constituent.  Then $V(\phi_1,\phi_2^g)$ is a multiplicity free representation of $\G(\cO_\ell)$.
\end{proposition}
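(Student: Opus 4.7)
The plan is to deduce \autoref{prop:multiplicity-free-component-cuspidal} from \autoref{thm:SA multiplicity free}, combined with \autoref{prop:MF from Subgp}(2), which promotes multiplicity-freeness through the induction to $\G(\cO_\ell)$. The first step is to observe that the hypothesis on $V(\phi_1, \phi_2^g)$ containing a regular constituent is exactly what forces $A_1 + A_2^g \in \g(\cO_{\ldown})$ to be regular. Indeed, the construction in \autoref{subsec:alternate const for cus} gives $\mathrm{Res}^{S_{A_i}}_{\K^{\lup}}(\phi_i) = q\,\psi_{A_i}$ (with the one-dimensional analogue in the even case), so $(\phi_1 \otimes \phi_2^g)|_{\K^{\lup}}$ is a positive multiple of $\psi_{A_1 + A_2^g}$. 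Every constituent of $V(\phi_1, \phi_2^g)$ therefore lies above $\psi_{A_1 + A_2^g}$, and regularity of such a constituent forces $A_1 + A_2^g$ to be regular.

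Next I would set $H = S_{A_1} \cap S_{A_2}^g$ and check the chain $\K^{\ldown} \leq H \leq S_{A_1 + A_2^g}$. The lower inclusion is immediate from the description $S_B = \C_{\G(\cO_\ell)}(\tilde{B})\,\K^{\ldown}$ recorded in \autoref{O.construction}, while the upper inclusion holds because any element stabilizing $\psi_{A_1}$ and $\psi_{A_2}^g$ separately on $\K^{\lup}$ also stabilizes their product $\psi_{A_1 + A_2^g}$. This checks that $\phi_1 \otimes \phi_2^g$ is a representation of a subgroup $H$ meeting the hypotheses of \autoref{prop:MF from Subgp}(2) applied to $A = A_1 + A_2^g$, with $\mathrm{Res}_{\K^{\lup}}(\phi_1 \otimes \phi_2^g) = q^2\,\psi_{A_1 + A_2^g}$ in the odd case.

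For odd $\ell$, I would then apply \autoref{thm:SA multiplicity free} to the pair $(A_1, A_2^g)$ --- whose sum is regular and both of which are of type $\cus$ --- to conclude that $\mathbf{W}(\phi_1, \phi_2^g) = \phi_1 \otimes \phi_2^g$, viewed as a representation of $H$, is multiplicity free. The promotion statement \autoref{prop:MF from Subgp}(2) then immediately gives the multiplicity-freeness of $V(\phi_1, \phi_2^g) = \mathrm{Ind}_{H}^{\G(\cO_\ell)}(\phi_1 \otimes \phi_2^g)$. For even $\ell$, each $\phi_i$ is one-dimensional and the claim is simpler: factor the induction through $S_{A_1 + A_2^g}$, observe that the inner induction decomposes into distinct one-dimensional extensions of $\psi_{A_1 + A_2^g}$ (each appearing with multiplicity at most one by Frobenius reciprocity, since $S_{A_1 + A_2^g}/\K^{\ell/2}$ is abelian), and appeal to Clifford theory to conclude that these extensions induce to distinct irreducible regular representations of $\G(\cO_\ell)$.

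The real content sits inside \autoref{thm:SA multiplicity free}; the present proposition is then essentially a Mackey/Frobenius bookkeeping exercise. The main obstacle to watch out for is verifying the chain $\K^{\ldown} \leq H \leq S_{A_1 + A_2^g}$ cleanly, so that \autoref{prop:MF from Subgp}(2) applies exactly as stated; the rest of the argument then follows with no further hidden difficulties.
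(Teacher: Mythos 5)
Your proof follows the paper's argument line for line: deduce regularity of $A_1+gA_2g^{-1}$ from the existence of a regular constituent, invoke \autoref{thm:SA multiplicity free} to get multiplicity-freeness of $\phi_1\otimes\phi_2^g$ on $S_{A_1}\cap S_{A_2}^g$, check the sandwich $\K^{\ldown}\leq S_{A_1}\cap S_{A_2}^g\leq S_{A_1+gA_2g^{-1}}$, and promote via \autoref{prop:MF from Subgp}(2). The only cosmetic divergence is that you spell out the even-$\ell$ case via Clifford theory for one-dimensional extensions, whereas the paper applies \autoref{prop:MF from Subgp}(2) uniformly to both parities; this is harmless and arguably a useful clarification since that proposition is stated in the odd-$\ell$ subsection.
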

\begin{proof} 
Since $V(\phi_1,\phi_2^g)$ contains a regular representation,
the matrix $\ti{A_1}+g\ti{A_2}g^{-1}$ must be regular.
Therefore, by \autoref{thm:SA multiplicity free},  $\phi_1 \otimes \phi_2^g$ is multiplicity free as a representation of $S_{A_1}\cap S_{A_2}^g.$
We note that $\K^{\ell_1}\leq S_{A_1}\cap S_{A_2}^g \leq S_{A_1+gA_2g^{-1}}$ and 
$$\mathrm{Res}^{S_{A_1}\cap S_{A_2}^g}_{\K^{\ell_2}}(\phi_1 \otimes \phi_2^g)= \begin{cases}
\psi_{A_1+gA_2g^{-1}}, & \mathrm{for} \, \mathrm{even} \, \ell ;\\
    q^2\psi_{A_1+gA_2g^{-1}}, & \mathrm{for} \, \mathrm{odd} \, \ell.
\end{cases}$$
 By \autoref{prop:MF from Subgp}(2), we obtain that 
$V(\phi_1,\phi_2^g)$ is a multiplicity free representation.
\end{proof}
Let $A_1,A_2\in \g(\cO_{\ell_1})$ be  regular matrices
   such that  $\tt(A_1)=\tt(A_2)=\cus$. Suppose $g_1,g_2\in\G(\cO_\ell)$ such that both  $V(\phi_1, \phi_2^{g_1})$ and  $V(\phi_1, \phi_2^{g_2})$  
   contain  regular representations. By \autoref{lem:orbit-representatives-gol}, \autoref{lem: 21 entry of coset is zero with cuspidal } and up to a twist by a linear character, we may assume the following choices of matrices: 
$$A_i=\mat{0}{\nonsq\alpha_i}{\nonsq}{0},\, \tilde{A_i}=\mat{0}{\nonsq\tilde{\alpha_i}}{\nonsq}{0},\, g_i=\mat{a_i}{b_i}{0}{c_i},$$
for $i\in \{1,2\}$. We will use these notations for the rest of this section. 

\begin{lemma}
    \label{lem:trace-computation}
 Let $h\in \G(\cO_\ell)$ and $1\leq k < \ell_1$ be such that 
 $h^{-1}\tilde A_1 h = \tilde A_1  \mod(\pi^{k})$  and $g_1^{-1}\tilde A_1 g_1 = g_2^{-1}\tilde A_1 g_2= \frac{a_1}{c_1}\tilde A_2 \mod (\pi^{k}).$ For $w\in \cO_\ell,$ let $Z_w=\mathrm{I}+ \pi ^{\ell_2-k} w \tilde A_1$,  $X_w=Z_wh^{-1}Z_w^{-1}h$ and $Y_w=g_1^{-1}Z_w^{-1}g_1g_2^{-1}h^{-1}Z_whg_2.$ 
 Then the following hold. 
 \begin{enumerate}
     \item $ \bm{tr}\left(\tilde A_1(X_w-\mathrm{I})\right)
    =\frac{\pi ^{\ell_2-k} w}{\det(Z_w)}\,\bm{tr}\left((h \tilde A_1h^{-1}-\tilde A_1)\tilde A_1\right)$. 
    \item $\bm{tr}\left(\tilde A_2(Y_w-\mathrm{I})\right)=\frac{\pi ^{\ell_2-k} w}{\det(Z_w)}\,\bm{tr}\left(\left(hg_2\tilde A_2  g_2^{-1}h^{-1} - g_1\tilde A_2g_1^{-1}\right)\tilde A_1\right)$. 
 \end{enumerate}

  \end{lemma}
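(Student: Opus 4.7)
The plan is a direct matrix computation in $\G(\cO_\ell)$. The three structural facts I shall use repeatedly are: cyclic invariance of trace; the scalar identity $\tilde{A_i}^2 = \nonsq^2\tilde{\alpha_i}\mathrm{I}$ (which is scalar because the trace-zero normalisation of the remark after \autoref{lem:orbit-representatives-gol} forces $\bm{tr}(\tilde{A_i})=0$); and the adjugate formula $\mathrm{adj}(Z_w) = \mathrm{I} - \pi^{\ell_2-k}w\tilde{A_1}$ (since $\bm{tr}(Z_w)=2$), which yields $Z_w^{-1} = \det(Z_w)^{-1}(\mathrm{I} - \pi^{\ell_2-k}w\tilde{A_1})$ and $\det(Z_w) = 1 - \pi^{2(\ell_2-k)}w^2\nonsq^2\tilde{\alpha_1}$. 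The conjugates $T_1 := g_1^{-1}Z_w g_1$ and $T_2 := (hg_2)^{-1}Z_w(hg_2)$ share this same determinant and are of the same shape.

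For part (1), I expand $\det(Z_w)X_w = Z_w h^{-1}\mathrm{adj}(Z_w)h$ and telescope via $1 - \det(Z_w) = \pi^{2(\ell_2-k)}w^2\,\tilde{A_1}^2$ to the closed form
\[
\det(Z_w)(X_w - \mathrm{I}) \;=\; \pi^{\ell_2-k}w\,Z_w\bigl(\tilde{A_1} - h^{-1}\tilde{A_1} h\bigr).
\]
Left-multiplication by $\tilde{A_1}$ turns $\tilde{A_1}Z_w$ into $\tilde{A_1}+\pi^{\ell_2-k}w\,\nonsq^2\tilde{\alpha_1}\mathrm{I}$; taking trace and using $\bm{tr}(\tilde{A_1} - h^{-1}\tilde{A_1} h)=0$ kills the scalar contribution, and cyclic trace converts the remaining piece $\bm{tr}\bigl(\tilde{A_1}(\tilde{A_1} - h^{-1}\tilde{A_1} h)\bigr) = 2\nonsq^2\tilde{\alpha_1} - \bm{tr}(h\tilde{A_1}h^{-1}\tilde{A_1})$ into $\pm\bm{tr}\bigl((h\tilde{A_1}h^{-1} - \tilde{A_1})\tilde{A_1}\bigr)$, yielding the claimed identity.

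For part (2), set $M := g_1^{-1}\tilde{A_1} g_1$ and $N := (hg_2)^{-1}\tilde{A_1}(hg_2)$. Writing $Y_w = T_1^{-1}T_2$ and using $M^2 = \tilde{A_1}^2$ gives the analogous telescoping
\[
\det(Z_w)(Y_w - \mathrm{I}) \;=\; \pi^{\ell_2-k}w\,\bigl(\mathrm{I} - \pi^{\ell_2-k}w\,M\bigr)(N - M).
\]
Applying $\bm{tr}(\tilde{A_2}\,\cdot\,)$ and using cyclic invariance to recast $\bm{tr}(\tilde{A_2}(N-M))$ as $\bm{tr}\bigl((hg_2\tilde{A_2} g_2^{-1}h^{-1} - g_1\tilde{A_2} g_1^{-1})\tilde{A_1}\bigr)$ produces the desired leading term. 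The main obstacle is to show that the cross-term $-\pi^{2(\ell_2-k)}w^2\bm{tr}(\tilde{A_2} M(N - M))/\det(Z_w)$ vanishes in $\cO_\ell$. Using $M \equiv N \equiv \tfrac{a_1}{c_1}\tilde{A_2} \pmod{\pi^k}$, set $M = \tfrac{a_1}{c_1}\tilde{A_2} + \pi^k E$ and $N = \tfrac{a_1}{c_1}\tilde{A_2} + \pi^k E'$; then $\bm{tr}(\tilde{A_2}M^2) = \nonsq^2\tilde{\alpha_1}\bm{tr}(\tilde{A_2}) = 0$ together with $\pi^k\bm{tr}(E) = \pi^k\bm{tr}(E') = 0$ collapse $\bm{tr}(\tilde{A_2}M(N - M))$ to $\pi^{2k}\bm{tr}(\tilde{A_2}EE')$, so the cross-term carries an overall factor of $\pi^{2\ell_2}$. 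This is $0$ in $\cO_\ell$ when $\ell$ is even (since $2\ell_2 = \ell$); for odd $\ell$, I would use the identity $MN = \tfrac{1}{2}\bigl(\bm{tr}(MN)\mathrm{I} + [M,N]\bigr)$, valid for trace-zero $2\times 2$ matrices, combined with the upper-triangular form of $g_1, g_2$ from \autoref{lem: 21 entry of coset is zero with cuspidal } and the hypothesis $h^{-1}\tilde{A_1}h \equiv \tilde{A_1}\pmod{\pi^k}$, to supply the final factor of $\pi$ needed to reach $\pi^\ell = 0$.
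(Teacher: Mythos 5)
Your part (1) is the paper's argument almost verbatim: adjugate formula for $Z_w^{-1}$, telescoping $X_w-\mathrm{I}=Z_w(h^{-1}Z_w^{-1}h-Z_w^{-1})$, the scalar identity $\tilde A_1^2=\nonsq^2\tilde\alpha_1\mathrm{I}$, and $\bm{tr}(\tilde A_1-h^{-1}\tilde A_1h)=0$. Your ``$\pm$'' hedge is well placed: both your derivation and the paper's own proof land on $\bm{tr}\bigl((\tilde A_1-h\tilde A_1h^{-1})\tilde A_1\bigr)$, which is the \emph{negative} of what the lemma statement displays --- a small sign typo in the paper, harmless in the downstream application inside $\psi(\cdot)$.

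For part (2) you take a genuinely different route. The paper never writes down the quadratic cross-term at all: it notes that $\pi^{\ell_2-k}w(N-M)\equiv 0\pmod{\pi^{\ell_2}}$ (because $N\equiv M\pmod{\pi^k}$), so the left factor $g_1^{-1}Z_w^{-1}g_1=\tfrac{1}{\det(Z_w)}(\mathrm{I}-\pi^{\ell_2-k}wM)$ may be replaced by its reduction $\tfrac{1}{\det(Z_w)}(\mathrm{I}-\tfrac{\pi^{\ell_2-k}wa_1}{c_1}\tilde A_2)$ mod $\pi^{\ell_2}$, the total error being a multiple of $\pi^{2\ell_2}=0$; after this substitution the offending factor is the \emph{scalar} $\tilde A_2^2$, whose trace against the trace-zero $N-M$ vanishes. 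You instead keep the exact telescoped expression and compute the cross-term $\bm{tr}(\tilde A_2 M(N-M))$ head-on. Your computation is correct: writing $M=\tfrac{a_1}{c_1}\tilde A_2+\pi^kE$, $N=\tfrac{a_1}{c_1}\tilde A_2+\pi^kE'$, using $\bm{tr}(\tilde A_2M^2)=\nonsq^2\tilde\alpha_1\bm{tr}(\tilde A_2)=0$ and $\pi^k\bm{tr}(E)=\pi^k\bm{tr}(E')=0$ does leave an overall factor $\pi^{2k}$, so the full cross-term carries $\pi^{2(\ell_2-k)+2k}=\pi^{2\ell_2}$. This is the same modular vanishing the paper exploits, just reached by explicit expansion rather than early substitution.

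Where you go wrong is in assessing that last power. In this paper $\ell_2=\lup=\lceil\ell/2\rceil$, not $\lfloor\ell/2\rfloor$ (see the macro definitions and \autoref{O.construction}). Hence $2\ell_2\geq\ell$ for every $\ell$ --- equal to $\ell$ when $\ell$ is even, equal to $\ell+1$ when $\ell$ is odd --- so $\pi^{2\ell_2}=0$ in $\cO_\ell$ unconditionally. Your final sentence, with its case split and the sketch invoking $MN=\tfrac12(\bm{tr}(MN)\mathrm{I}+[M,N])$, the upper-triangular shape of $g_1,g_2$, and the congruence on $h$, is therefore unnecessary; it is also far from a complete argument and would not close the gap on its own if the gap were real. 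Delete it and simply record $2\ell_2\geq\ell$, and your proof of (2) is complete. The net comparison: your route trades the paper's one-line modular replacement for a slightly longer explicit computation of the quadratic term; both are valid and of comparable length once the parity confusion is removed.
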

\begin{proof}
By direct calculation, we have $Z_w^{-1}=\frac{1}{\det(Z_w)}\mathrm{I} - \frac{\pi ^{\ell_2-k} w }{\det(Z_w)}\tilde A_1 $.
    For (1), note that $\tilde A_1(X_w-\mathrm{I})=\tilde A_1Z_w(h^{-1}Z_w^{-1}h-Z_w^{-1})=\frac{\pi ^{\ell_2-k} w }{\det(Z_w)} \tilde A_1Z_w(\tilde A_1- h^{-1}\tilde A_1h).$ Since $\tilde A_1 ^2=(\nonsq^2\tilde{\alpha_1})\mathrm{I},$ we have  $\tilde A_1Z_w=\tilde A_1 + \pi ^{\ell_2-k} w \nonsq^2 \tilde{\alpha_1}\mathrm{I}$. Therefore, by using the fact that
 $\bm{tr}(\tilde A_1-h^{-1}\tilde A_1h)=0,$ 
we obtain
\begin{eqnarray*}
    \bm{tr}\left(\tilde A_1(X_w-\mathrm{I})\right)
    &=&\frac{\pi ^{\ell_2-k} w}{\det(Z_w)}\,\bm{tr}\left(\tilde A_1(\tilde A_1 -h^{-1}\tilde A_1h)\right)\\
    &=&\frac{\pi ^{\ell_2-k} w}{\det(Z_w)}\,\bm{tr}\left(\tilde A_1^2 -h\tilde A_1h^{-1}\tilde A_1\right)\\
    &=&\frac{\pi ^{\ell_2-k} w}{\det(Z_w)}\,\bm{tr}\left((\tilde A_1- h \tilde A_1h^{-1})\tilde A_1\right).
\end{eqnarray*}

For (2), note that 
 \begin{eqnarray*}
     \tilde A_2(Y_w-\mathrm{I})&=&\tilde A_2g_1^{-1}Z_w^{-1}g_1\left(g_2^{-1}h^{-1}Z_whg_2-g_1^{-1}Z_wg_1\right)\\
     &=&\tilde A_2g_1^{-1}Z_w^{-1}g_1\pi ^{\ell_2-k} w \left(g_2^{-1}h^{-1}\tilde A_1hg_2-g_1^{-1}\tilde A_1g_1\right).
 \end{eqnarray*}
Since $g_2^{-1}h^{-1}\tilde A_1hg_2=g_1^{-1}\tilde A_1g_1 \mod (\pi^{k})$ and  $g_1^{-1}\tilde A_1 g_1= \frac{a_1}{c_1}\tilde A_2 \mod (\pi^{k}),$ we have $\pi ^{\ell_2-k} w (g_2^{-1}h^{-1}\tilde A_1hg_2-g_1^{-1}\tilde A_1g_1)=0\mod (\pi^{\ell_2})$ and $ g_1^{-1}Z_w^{-1}g_1=\frac{1}{\det(Z_w)}\mathrm{I} - \frac{\pi ^{\ell_2-k} w }{\det(Z_w)}g_1^{-1}\tilde A_1g_1=\frac{1}{\det(Z_w)}\mathrm{I} - \frac{\pi ^{\ell_2-k} w a_1}{\det(Z_w)c_1}\tilde A_2
\mod (\pi^{\ell_2}).$ Therefore we can replace $g_1^{-1}Z_w^{-1}g_1$ by $\frac{1}{\det(Z_w)}\mathrm{I} - \frac{\pi ^{\ell_2-k} w a_1}{\det(Z_w)c_1}\tilde A_2$ in the last equation. Hence, we get 
\begin{eqnarray*}
    \tilde A_2(Y_w-\mathrm{I})&=&\tilde A_2\left(\frac{1}{\det(Z_w)}\mathrm{I} - \frac{\pi ^{\ell_2-k} w a_1}{\det(Z_w)c_1}\tilde A_2\right)\pi ^{\ell_2-k} w \left(g_2^{-1}h^{-1}\tilde A_1hg_2-g_1^{-1}\tilde A_1g_1\right)\\
    &=&\frac{\pi ^{\ell_2-k}w}{\det(Z_w)}  \times
     \left(\tilde A_2 - \frac{\pi ^{\ell_2-k} w a_1}{c_1}\tilde A_2^2\right) \left(g_2^{-1}h^{-1}\tilde A_1hg_2-g_1^{-1}\tilde A_1g_1\right)
\end{eqnarray*}
Since $\tilde A_2^2=(\nonsq^2\tilde{\alpha_2})\mathrm{I}$ and $\bm{tr} (g_2^{-1}h^{-1}\tilde A_1hg_2-g_1^{-1}\tilde A_1g_1)=0,$ we obtain
\begin{eqnarray*}
    \bm{tr}(\tilde A_2(Y_w-\mathrm{I}))
    &=&\frac{\pi ^{\ell_2-k} w}{\det(Z_w)}\,\bm{tr}\left(\tilde A_2  \left(g_2^{-1}h^{-1}\tilde A_1hg_2-g_1^{-1}\tilde A_1g_1\right)\right) \\
    &=&\frac{\pi ^{\ell_2-k} w}{\det(Z_w)}\,\bm{tr}\left(hg_2\tilde A_2  g_2^{-1}h^{-1}\tilde A_1 - g_1\tilde A_2g_1^{-1}\tilde A_1\right)\\
    &=&\frac{\pi ^{\ell_2-k} w}{\det(Z_w)}\,\bm{tr}\left(\left(hg_2\tilde A_2  g_2^{-1}h^{-1} - g_1\tilde A_2g_1^{-1}\right)\tilde A_1\right).
\end{eqnarray*}
\end{proof}

\begin{proposition}
    \label{prop:irr with irr H lemma}
    Suppose $\tilde{A}_1+g_i\tilde{A}_2g_i^{-1}$ for $i\in\{1,2\}$ are regular matrices. 
Let $h\in \G(\cO_\ell)$ and $1\leq k < \ell_1$ be as in \autoref{lem:trace-computation}.  Further assume that 
$h(\tilde{A}_1+g_2\tilde{A}_2g_2^{-1})h^{-1}=\tilde{A}_1+g_1\tilde{A}_2g_1^{-1} \mod (\pi^{\ell_1}).$
     If $\langle V(\phi_1, \phi_2^{g_1}),V(\phi_1, \phi_2^{g_2})\rangle\neq 0,$  then 
      we have     
$$\psi\left(\pi ^{\ell_2-k} w \, \bm{tr}\left( \left( h (\tilde A_1+g_2\tilde A_2  g_2^{-1} )h^{-1}-(\tilde A_1 + g_1\tilde A_2g_1^{-1})\right) \tilde A_1\right)
\right)=1 \text{ for all } w \in\cO_\ell.$$
\end{proposition}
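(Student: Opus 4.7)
The plan is to apply Mackey's formula to reduce the global intertwining hypothesis to a local character equality, evaluate both sides at the element $Z_w = \mathrm{I} + \pi^{\ell_2-k}w\,\tilde A_1$, and then unpack this equality via \autoref{prop:character values} together with \autoref{lem:trace-computation}.

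Set $H_i := S_{A_1} \cap S_{A_2}^{g_i}$ and $B_i := \tilde A_1 + g_i \tilde A_2 g_i^{-1}$. By Mackey's formula, the hypothesis $\langle V(\phi_1,\phi_2^{g_1}), V(\phi_1,\phi_2^{g_2})\rangle \neq 0$ yields a double coset representative $s \in H_1 \backslash \G(\cO_\ell) / H_2$ satisfying
\[
\langle \phi_1 \otimes \phi_2^{g_1},\, {}^s(\phi_1 \otimes \phi_2^{g_2})\rangle_{H_1 \cap s H_2 s^{-1}} \neq 0.
\]
Restricting to $\K^{\ell_2}$, which lies in this intersection, both factors become multiples of the characters $\psi_{B_1}$ and $\psi_{sB_2 s^{-1}}$ respectively; a shared irreducible constituent then forces $sB_2 s^{-1} \equiv B_1 \pmod{\pi^{\ell_1}}$. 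Since $h$ satisfies exactly this congruence by hypothesis, an adjustment within the double coset allows us to take $s = h$.

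Using the conditions $h^{-1}\tilde A_1 h \equiv \tilde A_1 \pmod{\pi^k}$ and $g_i^{-1}\tilde A_1 g_i \equiv \frac{a_1}{c_1}\tilde A_2 \pmod{\pi^k}$, one verifies $Z_w \in \Z D^{\ell_2}(\tilde A_1) \subseteq H_1 \cap h H_2 h^{-1}$, and that both $g_1^{-1} Z_w g_1$ and $g_2^{-1} h^{-1} Z_w h g_2$ lie in $\Z D^{\ell_2}(\tilde A_2)$. By \autoref{prop:character values}(1), $\phi_i$ acts on $\Z D^{\ell_2}(\tilde A_i)$ as the scalar $\widetilde{\psi_{A_i}}(\cdot)\mathrm{I}$. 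Hence both $(\phi_1 \otimes \phi_2^{g_1})(Z_w)$ and ${}^h(\phi_1 \otimes \phi_2^{g_2})(Z_w)$ are scalar operators on $\mathbb{C}^{q^2}$, and a shared irreducible constituent forces the two scalars to coincide:
\[
\widetilde{\psi_{A_1}}(Z_w)\,\widetilde{\psi_{A_2}}(g_1^{-1}Z_w g_1) = \widetilde{\psi_{A_1}}(h^{-1}Z_w h)\,\widetilde{\psi_{A_2}}(g_2^{-1}h^{-1}Z_w h g_2).
\]
Rearranging yields $\widetilde{\psi_{A_1}}(X_w)\,\widetilde{\psi_{A_2}}(Y_w^{-1}) = 1$ with $X_w, Y_w$ exactly as in \autoref{lem:trace-computation}. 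A direct check shows $X_w, Y_w \in \K^{\ell_2}$, so the extensions reduce to $\psi_{A_i}$, and the duality $\psi_A(\mathrm{I} + \pi^{\ell_2}B) = \psi(\pi^{\ell_2}\bm{tr}(\tilde A B))$ converts the identity to $\psi\bigl(\bm{tr}(\tilde A_1(X_w - \mathrm{I})) - \bm{tr}(\tilde A_2(Y_w - \mathrm{I}))\bigr) = 1$. Substituting the formulas of \autoref{lem:trace-computation} and combining by linearity of trace, the argument of $\psi$ collapses, up to the unit factor $-1/\det(Z_w)$, to $\pi^{\ell_2-k} w\,\bm{tr}((hB_2 h^{-1} - B_1)\tilde A_1)$. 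Since $\det(Z_w) = 1 - \pi^{2(\ell_2-k)}w^2 \nonsq^2 \tilde\alpha_1 \equiv 1 \bmod \pi$ is a unit and $w \mapsto -w/\det(Z_w)$ is a bijection of $\cO_\ell$ (its injectivity comes from $(w_1 - w_2)(1 + \pi^{2(\ell_2-k)}\nonsq^2\tilde\alpha_1 w_1 w_2) = 0$ in $\cO_\ell$), the required identity for all $w \in \cO_\ell$ follows.

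The main obstacles will be: (i) justifying that the Mackey double coset representative can be taken to be $h$, given that several cosets may a priori satisfy $sB_2 s^{-1} \equiv B_1 \pmod{\pi^{\ell_1}}$; this requires either showing $h$ itself lies in a contributing coset or exploiting that cosets sharing this congruence contribute the same trace identity modulo the structure group, and (ii) for $\G = \GU_2$, the element $Z_w$ as written is not unitary for general $w$, and one must instead work with $\hat Z_w = x_w \mathrm{I} + \pi^{\ell_2-k} w\, \tilde A_1 \in \GU_2(\cO_\ell)$, where $x_w \in \cO_\ell$ solves $x_w^2 = 1 + \pi^{2(\ell_2-k)} w^2 \nonsq^2 \tilde\alpha_1$ (which exists by Hensel's lemma since the residue characteristic is odd), then verify that the scalar correction $x_w - 1 \in \pi^{2(\ell_2-k)}\cO_\ell$ does not affect the trace formulas of \autoref{lem:trace-computation} modulo $\pi^\ell$.
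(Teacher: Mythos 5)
Your overall plan correctly identifies the structure — reduce to a character equality, evaluate on $Z_w$, and plug into \autoref{lem:trace-computation} — and your final trace manipulation and bijection argument agree with the paper's. However, the route you take to the character equality has a genuine gap that the paper avoids by a different device.

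You invoke Mackey's formula for $\langle V(\phi_1,\phi_2^{g_1}),V(\phi_1,\phi_2^{g_2})\rangle$ to extract a double coset representative $s_0\in H_1\backslash \G(\cO_\ell)/H_2$ with a nonzero contribution, note that restriction to $\K^{\ell_2}$ forces $s_0 B_2 s_0^{-1}\equiv B_1\pmod{\pi^{\ell_1}}$, and then claim this allows you to take $s_0=h$. But the congruence only puts $s_0\in h\cdot S_{B_2}$, and while both $h^{-1}H_1h$ and $H_2$ sit inside $S_{B_2}$, there is no reason for the product $(h^{-1}H_1h)H_2$ to exhaust $S_{B_2}$. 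So the conclusion $H_1 s_0 H_2=H_1 h H_2$ does not follow, and your evaluation at $Z_w$ is carried out on the wrong coset. You flag this yourself as obstacle (i), but it is not a technicality — without it, the scalar comparison $\widetilde{\psi_{A_1}}(Z_w)\widetilde{\psi_{A_2}}(g_1^{-1}Z_wg_1)=\widetilde{\psi_{A_1}}(h^{-1}Z_wh)\widetilde{\psi_{A_2}}(g_2^{-1}h^{-1}Z_whg_2)$ has not been established.

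The paper does something different and cleaner. It first conjugates the second induced representation by $h$ (a harmless isomorphism of $\G(\cO_\ell)$-representations), so that $V(\phi_1,\phi_2^{g_2})\cong \mathrm{Ind}_{(S_{A_1}\cap S_{A_2}^{g_2})^h}^{\G(\cO_\ell)}((\phi_1\otimes\phi_2^{g_2})^h)$. It then introduces the small group $H=\Z(\K^{\ell_2-k}\cap\C_{\G(\cO_\ell)}(\tilde A_1))\K^{\ell_2}$, verifies (using both congruence hypotheses mod $\pi^k$) that $H$ is contained in both $S_{A_1}\cap S_{A_2}^{g_1}$ and $(S_{A_1}\cap S_{A_2}^{g_2})^h$, and hence that both $V$'s are subrepresentations of inductions from $H$ of one-dimensional characters. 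At this point \autoref{prop:inner prod from Subgp} applies directly: since $B_1=B_2$ is regular, nonzero intertwining forces the two characters of $H$ to agree, with no Mackey double-coset analysis at all. The character equality at $Z_w$ (or at $\lambda_w Z_w$ in the unitary case, where $\lambda_w$ is a scalar so that the $h$-conjugation commutator kills it) then drops out. Note also that your stronger containment $\Z D^{\ell_2}(\tilde A_1)\subseteq H_1\cap hH_2 h^{-1}$ is not what holds — only the smaller $H$ is contained there — though the weaker fact $Z_w\in H_1\cap hH_2 h^{-1}$ you actually use is fine. Your obstacle (ii) is a minor bookkeeping issue: writing $\hat Z_w=x_w\I+\pi^{\ell_2-k}w\tilde A_1=x_wZ_{w/x_w}$ (with $x_w\in\cO_\ell^\times$) shows the scalar cancels and $w\mapsto w/x_w$ is a bijection of $\cO_\ell$, so the trace formulas of \autoref{lem:trace-computation} can be used unchanged. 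The essential missing ingredient is thus the identification of the contributing double coset, which the paper's subgroup-$H$ argument and \autoref{prop:inner prod from Subgp} are precisely designed to replace.
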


\begin{proof}
To prove this, we prove 
\[
\psi\left(\pi ^{\ell_2-k} w \, \bm{tr}\left( \left( \tilde A_1 - h \tilde A_1 h^{-1} \right) \tilde A_1
\right) \right)= \psi\left(\pi ^{\ell_2-k} w \, \bm{tr}\left( \left( h g_2\tilde A_2  g_2^{-1}h^{-1} -g_1\tilde A_2g_1^{-1}   \right) \tilde A_1\right)\right)
\]
for all $w \in \cO_\ell.$ 
Let $H=\Z  \left(\K^{\ell_2-k} \cap \mathrm{C}_{\G(\cO_\ell)}(\tilde{A_1}) \right)\K^{\ell_2}.$
Since the elements of $H$ are of the form $x\mathrm{I}+ \pi ^{\ell_2-k} y \tilde A_1+ \pi ^{\ell_2}B $ for some $x,y\in \rl$ and $B\in M_2(\rl),$ 
by given conditions, we obtain 
$$H\leq \Z  D^{\ell_2}(\tilde{A_1})\cap \Z  D^{\ell_2}(g_1\tilde{A_2}g_1^{-1})\cap \Z D^{\ell_2}(h\tilde{A_1}h^{-1})\cap \Z D^{\ell_2}(hg_2 \tilde{A_2}g_2^{-1}h^{-1}).$$
In particular, 
$H\leq S_{A_1}\cap S_{A_2}^{g_1} $ and $H\leq (S_{A_1}\cap S_{A_2}^{g_2})^{h}.$
Note that $V(\phi_1, \phi_2^{g_1})=\mathrm{Ind}_{S_{A_1}\cap S_{A_2}^{g_1}}^{\G(\cO_{\ell})}(\phi_1 \otimes \phi_2^{g_1})$ and 
$V(\phi_1, \phi_2^{g_2})=\mathrm{Ind}_{S_{A_1}\cap S_{A_2}^{g_2}}^{\G(\cO_{\ell})}(\phi_1 \otimes \phi_2^{g_2})\cong \mathrm{Ind}_{(S_{A_1}\cap S_{A_2}^{g_2})^{h}}^{\G(\cO_{\ell})}(\phi_1 \otimes \phi_2^{g_2})^{h}.$
Therefore 
 $V(\phi_1, \phi_2^{g_1})$ and  $V(\phi_1, \phi_2^{g_2})$ are  subrepresentations of  $\mathrm{Ind}_{H}^{\G(\cO_{\ell})}(\mathrm{Res}_{H}^{S_{A_1}\cap S_{A_2}^{g_1}}(\phi_1\otimes \phi_2^{g_1}))$ and  $\mathrm{Ind}_{H}^{\G(\cO_{\ell})}(\mathrm{Res}_{H}^{(S_{A_1}\cap S_{A_2}^{g_2})^h}(\phi_1\otimes \phi_2^{g_2})^h)$ respectively. Hence, our assumption
 $\langle V(\phi_1, \phi_2^{g_1}),V(\phi_1, \phi_2^{g_2})\rangle\neq 0$ implies
 \begin{equation}\label{eqn:nan}
 \langle \mathrm{Ind}_{H}^{\G(\cO_{\ell})}(\mathrm{Res}_{H}^{S_{A_1}\cap S_{A_2}^{g_1}}(\phi_1\otimes \phi_2^{g_1})),\mathrm{Ind}_{H}^{\G(\cO_{\ell})}(\mathrm{Res}_{H}^{(S_{A_1}\cap S_{A_2}^{g_2})^h}(\phi_1\otimes \phi_2^{g_2})^h)\rangle\neq 0.
 \end{equation}
 Let $\eta_i\in\mathrm{Irr}(\Z D^{\ell_2}(\tilde{A}_i)\mid \psi_{A_i})$ be such that $\mathrm{Res}_{\Z D^{\ell_2}(\tilde{A}_i)}^{S_{A_i}}(\phi_i)=q \eta_i$, see \autoref{subsec:alternate const for cus}.
We have  $\phi_1\otimes \phi_2^{g_1}=q^2\, ( \eta_1\otimes \eta_2^{g_1}) $ on $H$ and $(\phi_1\otimes \phi_2^{g_2})^h=q^2 \,(\eta_1^h\otimes\eta_2^{hg_2})$  on $H.$  Therefore \autoref{eqn:nan} implies 
 \begin{equation*}\label{eqn:nan1}
\langle \mathrm{Ind}_{H}^{\G(\cO_{\ell})}( \mathrm{Res}_{H}(\eta_1\otimes \eta_2^{g_1})),\mathrm{Ind}_{H}^{\G(\cO_{\ell})}(\mathrm{Res}_{H} (\eta_1^h\otimes \eta_2^{hg_2}))\rangle\neq 0.
\end{equation*}
Since $\eta_i $ for $i\in\{1,2\}$ are one-dimensional representations, we  have $\mathrm{Res}_{H}(\eta_1\otimes \eta_2^{g_1})\in \mathrm{Irr}(H\mid \psi_{B_1})$ and $\mathrm{Res}_{H} (\eta_1^h\otimes \eta_2^{hg_2})\in \mathrm{Irr}(H\mid \psi_{B_2}),$ where $B_1,B_2\in \g(\cO_{\ell_1})$ such that $B_1=\tilde{A}_1+g_1\tilde{A}_2g_1^{-1} \mod (\pi^\ldown)$ and $B_2=h(\tilde{A}_1+g_2\tilde{A}_2g_2^{-1})h^{-1} \mod (\pi^\ldown).$ 
Since $h(\tilde{A}_1+g_2\tilde{A}_2g_2^{-1})h^{-1}=\tilde{A}_1+g_1\tilde{A}_2g_1^{-1} \mod (\pi^{\ell_1})$ and  $\tilde{A}_1+g_i\tilde{A}_2g_i^{-1}$ for $i\in\{1,2\}$ are regular matrices, we have $B_1=B_2$ and $B_1$ is regular. Therefore,
 by \autoref{prop:inner prod from Subgp}, \autoref{eqn:nan} implies 
 $\mathrm{Res}_{H}(\eta_1\otimes \eta_2^{g_1})=\mathrm{Res}_{H} (\eta_1^h\otimes \eta_2^{hg_2}).$
Therefore 
\begin{equation}\label{eqn:sd0}
\eta_1(Zh^{-1}Z^{-1}h)=\eta_2(g_1^{-1}Z^{-1}g_1g_2^{-1}h^{-1}Zhg_2), \text{ for all } Z\in H.
\end{equation}
For $w\in \cO_\ell,$ let $Z_w=\mathrm{I}+ \pi ^{\ell_2-k} w \tilde A_1.$ We claim that 
\begin{equation}\label{eqn:sd}
\eta_1(Z_wh^{-1}Z_w^{-1}h)=\eta_2(g_1^{-1}Z_w^{-1}g_1g_2^{-1}h^{-1}Z_whg_2) \text{ for all }w\in \cO_\ell.
\end{equation}
For $\G=\GL_2,$ since $Z_w\in H,$ the claim  directly follows from
\autoref{eqn:sd0}.
For $\G=\GU_2,$ choose $\lambda_w \in \rl, $ such that $\lambda_w\lambda_w^\circ =(1-\pi^{2\ell_2-2k}\nonsq^2w^2 \tilde{\alpha_1})^{-1}.$ Then it is easy to sea that $\lambda_w Z_w \in \GU_2(\cO_\ell) $ and hence $\lambda_w Z_w \in H.$ Therefore the claim follows by substituting $Z=\lambda_w Z_w $ in \autoref{eqn:sd0}.

Let $X_w=Z_wh^{-1}Z_w^{-1}h$ and $Y_w=g_1^{-1}Z_w^{-1}g_1g_2^{-1}h^{-1}Z_whg_2.$ 
Since $h^{-1}\tilde A_1 h = \tilde A_1  \mod (\pi^{k})$ and  $g_1^{-1}\tilde A_1 g_1 = g_2^{-1}\tilde A_1 g_2 \mod (\pi^{k}),$ we must have $X_w, Y_w\in \K^{\ell_2}.$
 Therefore \autoref{eqn:sd} implies $\psi_{A_1}(X_w)=\psi_{A_2}(Y_w),$ which is equivalent to 
\begin{equation*}
   \psi\left(\bm{tr}\left(\tilde A_1(X_w-\mathrm{I})\right)\right)=\psi\left(\bm{tr}\left(\tilde A_2(Y_w-\mathrm{I})\right)\right).
\end{equation*}
Hence by \autoref{lem:trace-computation},
 we obtain
\begin{equation*}
   \psi\left(\frac{\pi ^{\ell_2-k} w}{\det(Z_w)}\,\bm{tr}\left((\tilde A_1- h \tilde A_1h^{-1})\tilde A_1\right)\right)=\psi\left(\frac{\pi ^{\ell_2-k} w}{\det(Z_w)}\,\bm{tr}\left(\left(hg_2\tilde A_2  g_2^{-1}h^{-1} - g_1\tilde A_2g_1^{-1}\right)\tilde A_1\right)\right).
\end{equation*}
Note that by Hensel's lemma, we have  $\{\frac{ w }{\det(Z_w)}=\frac{ w}{1-\pi^{2\ell_2-2k}w^2 \tilde{\alpha}_1} \mid w \in\cO_\ell\}=\cO_\ell$. Therefore we must have 
$$ \psi\left(\pi ^{\ell_2-k} w\,\bm{tr}\left((\tilde A_1- h \tilde A_1h^{-1})\tilde A_1\right)\right)=\psi\left(\pi ^{\ell_2-k} w\,\bm{tr}\left(\left(hg_2\tilde A_2  g_2^{-1}h^{-1} - g_1\tilde A_2g_1^{-1}\right)\tilde A_1\right)\right)$$
for all $w \in\cO_\ell.$
\end{proof}

\begin{proposition}
    \label{prop:irr with irr}
   Suppose that both  $V(\phi_1, \phi_2^{g_1})$ and  $V(\phi_1, \phi_2^{g_2})$   contain  regular representations.
   If $\langle V(\phi_1, \phi_2^{g_1}),V(\phi_1, \phi_2^{g_2})\rangle\neq 0,$  then we must have $S_{A_1}g_1S_{A_2}=S_{A_1}g_2S_{A_2}.$
\end{proposition}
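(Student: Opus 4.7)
By Mackey's decomposition of $\mathrm{Hom}_{\G(\cO_\ell)}(V(\phi_1,\phi_2^{g_1}),V(\phi_1,\phi_2^{g_2}))$ as a sum over $(S_{A_1}\cap S_{A_2}^{g_1},\, S_{A_1}\cap S_{A_2}^{g_2})$-double cosets, the non-vanishing hypothesis furnishes an element $h\in\G(\cO_\ell)$ for which
\[
\langle \phi_1\otimes\phi_2^{g_1},\, (\phi_1\otimes\phi_2^{g_2})^h\rangle_{(S_{A_1}\cap S_{A_2}^{g_1})\cap(S_{A_1}\cap S_{A_2}^{g_2})^h}\neq 0.
\]
Since $\K^{\ell_1}$ is normal in $\G(\cO_\ell)$ and contained in every $S_{A}$, both representations restrict to $\K^{\ell_1}$ as scalar multiples of $\psi_{A_1+g_1A_2g_1^{-1}}$ and $\psi_{h^{-1}(A_1+g_2A_2g_2^{-1})h}$ respectively; equating these central characters yields
\[
h\bigl(\tilde A_1+g_2\tilde A_2 g_2^{-1}\bigr)h^{-1}\equiv\tilde A_1+g_1\tilde A_2 g_1^{-1}\pmod{\pi^{\ell_1}}.
\]
By \autoref{lem: 21 entry of coset is zero with cuspidal }, the plan is to normalise $g_i=\smat{a_i}{b_i}{0}{c_i}$, so that proving $S_{A_1}g_1 S_{A_2}=S_{A_1}g_2 S_{A_2}$ reduces, via \autoref{thm:D theorem}, to exhibiting a vector $\cmat{x}{y}\in\rl^2$ with $\{x,y\}\cap\rl^\times\neq\emptyset$ that lies in the kernel of $D:=D(\ti{\alpha}_1,\ti{\alpha}_2,g_1,g_2)$ modulo $\pi^{\ell_1}$.

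Next, let $k$ be the largest integer for which both $h^{-1}\tilde A_1 h\equiv\tilde A_1\pmod{\pi^k}$ and $g_1^{-1}\tilde A_1 g_1\equiv g_2^{-1}\tilde A_1 g_2\pmod{\pi^k}$ hold, adjusting $h$ within its Mackey double coset (using the freedom in the double-coset sum) to make $k$ maximal. A direct coordinate expansion with the upper-triangular form of the $g_i$ shows that the second of these congruences is precisely equivalent to the vanishing of the first column of $D$ modulo $\pi^k$. Applying \autoref{prop:irr with irr H lemma} with this pair $(h,k)$ then delivers
\[
\psi\!\left(\pi^{\ell_2-k}w\,\bm{tr}(M\tilde A_1)\right)=1 \quad\text{for every }w\in\cO_\ell,
\]
where $M:=h(\tilde A_1+g_2\tilde A_2 g_2^{-1})h^{-1}-(\tilde A_1+g_1\tilde A_2 g_1^{-1})$ is already known to lie in $\pi^{\ell_1}M_2(\rl)$. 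Primitivity of $\psi$ upgrades this to $\bm{tr}(M\tilde A_1)\equiv 0\pmod{\pi^{\ell_1+k}}$, and a routine expansion of this trace (substituting the upper-triangular form) identifies it, up to a unit factor, with an entry of $D$ lying in its second column.

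Finally, combining the mod-$\pi^k$ vanishing of the first column of $D$ with this refined second-column information should produce a kernel vector of $D$ modulo $\pi^{\ell_1}$ having a unit coordinate; \autoref{thm:D theorem} would then conclude $S_{A_1}g_1 S_{A_2}=S_{A_1}g_2 S_{A_2}$. The principal obstacle is this final synthesis: one must verify, for every admissible $k$, that the vanishing orders supplied by the two independent sources, namely conjugation at level $k$ and the trace refinement at level $\ell_1+k$, always combine to force the existence of the desired unit kernel vector. The boundary cases $k=0$ (where the trace identity alone provides no refinement beyond the mod-$\pi^{\ell_1}$ information coming from the restriction to $\K^{\ell_1}$) and $k=\ell_1$ (where $h$ genuinely centralises $\tilde A_1$, reducing the problem to a finite-level computation with the explicit centraliser structure) will likely require separate direct arguments.
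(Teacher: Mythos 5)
The central gap lies in your definition of $k$ and the subsequent application of \autoref{prop:irr with irr H lemma}. That proposition inherits the hypotheses of \autoref{lem:trace-computation}, which demands not merely $g_1^{-1}\tilde A_1 g_1\equiv g_2^{-1}\tilde A_1 g_2\pmod{\pi^k}$ but the stronger chain $g_1^{-1}\tilde A_1 g_1\equiv g_2^{-1}\tilde A_1 g_2\equiv\frac{a_1}{c_1}\tilde A_2\pmod{\pi^k}$. The second equality encodes (via parts (2) and (3) of \autoref{lem:D=0 implies}) the vanishing of $b_1,b_2$ and the relation $\tilde\alpha_2\equiv a_1^{-2}c_1^2\tilde\alpha_1\pmod{\pi^k}$, and these only follow from $D\equiv 0\pmod{\pi^k}$ as a \emph{full matrix} -- in particular from the vanishing of $D_{12}$ and $D_{22}$. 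Your $k$ is calibrated to the first column of $D$ only, so if the second column vanishes to lower order (which is possible), the hypotheses of \autoref{prop:irr with irr H lemma} fail for your $(h,k)$. The paper sidesteps this by setting $k$ equal to the vanishing order of the \emph{entire} matrix $D$, so that \autoref{lem:D=0 implies} delivers every condition that \autoref{lem:trace-computation} needs.

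A second, related issue: the $h$ in the paper's proof is not a Mackey double-coset representative at all. It is an explicit upper-triangular matrix depending only on $g_1,g_2$, chosen so that the conjugation defect $M = h(\tilde A_1+g_2\tilde A_2 g_2^{-1})h^{-1}-(\tilde A_1+g_1\tilde A_2 g_1^{-1})$ collapses exactly to $\frac{\det(D)}{(1+a_1^{-1}c_1)a_1a_2c_1c_2}\smat{0}{1}{0}{0}$; hence $\bm{tr}(M\tilde A_1)$ is a unit times $\det(D)$, not "an entry of $D$" as you assert, and primitivity of $\psi$ then gives $\det(D)\equiv 0\pmod{\pi^{\ell_1+k}}$. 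With a generic Mackey $h$ neither the clean form of $M$ nor your trace identification is available, and it is unclear that adjusting $h$ in its double coset could recover them. Finally, your own final paragraph concedes that the synthesis producing the kernel vector of $D$ is unverified; in the paper this is immediate once one has $\det(D)\equiv 0\pmod{\pi^{\ell_1+k}}$ and a unit entry of $D'=\pi^{-k}D$, via the explicit choice $(x,y)=(D'_{m2},-D'_{m1})$. Your route does not produce the $\pi^{\ell_1+k}$-vanishing of $\det(D)$, so this last step cannot be completed as written.
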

\begin{proof}
We will use \autoref{thm:D theorem} to prove our result. Both  $V(\phi_1, \phi_2^{g_1})$ and  $V(\phi_1, \phi_2^{g_2})$ contain  regular representations, therefore both $\tilde A_1+g_1\tilde A_2g_1^{-1}$ and $\tilde A_1+g_2\tilde A_2g_2^{-1}$ 
are regular. Since $\langle V(\phi_1, \phi_2^{g_1}),V(\phi_1, \phi_2^{g_2})\rangle\neq 0,$ both $\tilde A_1+g_1\tilde A_2g_1^{-1}$ and $\tilde A_1+g_2\tilde A_2g_2^{-1}$ 
are conjugate  modulo $ (\pi^{\ell_1} )$. Therefore $\det(\tilde{A_1}+g_1\tilde{A}_2g_1^{-1})-\det(\tilde{A_1}+g_2\tilde{A}_2g_2^{-1})\in \pi^{\ell_1}\rl.$ 
For $D = \dtag$, 
 by \autoref{lem-detD-computation}, we have $\det(\tilde{A_1}+g_1\tilde{A}_2g_1^{-1})-\det(\tilde{A_1}+g_2\tilde{A}_2g_2^{-1}) = \frac{\nonsq}{a_1 a_2c_1 c_2}\times \det(D)$ and hence $\det(D)=0\mod (\pi^{\ell_1} )$.  

If $D= 0  \mod (\pi^{\ell_1}),$ then 
 $D \cmat{1}{0}=\cmat{0}{0} \mod (\pi^{\ell_1} )$ and hence,  by \autoref{thm:D theorem},  $S_{A_1} g_1 S_{A_2}=S_{A_1} g_2 S_{A_2}.$
Assume $D\neq 0  \mod (\pi^{\ell_1}).$
Let $0\leq k < \ell_1$  be such that $D=\pi^k D' $ for some $D'\in M_2(\rl)$ with $D'\neq 0  \mod (\pi ).$ We first claim that 
$\det(D)=0  \mod (\pi^{\ell_1+k} ).$
 If $k=0,$ since $\det(D)=0\mod (\pi^{\ell_1} ),$ the claim follows trivially.
 Assume $1\leq k <{\ell_1}.$
Let 
$$h'= \mat{1+a_2^{-1}c_2}{a_1^{-1}b_1-a_2^{-1}b_2}{0}{1+a_1^{-1}c_1}.$$
 Since $D= 0  \mod (\pi^k )$ and  $\tilde{A}_1 +g_i \tilde{A}_2g_i^{-1}$ for  $i\in \{1,2\}$ are regular, by \autoref{lem:D=0 implies}, we have $1+a_i^{-1}c_i \in \rl^\times.$ Hence $h'$ is an invertible matrix.
 For $\G=\GL_2,$ let $h=h'.$
 For $\G=\GU_2$, by using $g_i \in \GU_2$ and the relations for $c_i$ and $b_i$, we get
 $h'=\smat{1+c_2^\circ c_2}{\nonsq  (t_1-t_2)}{0}{1+c_1^\circ c_1}.$
  Choose $d\in \rl$ such that $dd^\circ=(1+c_1^{\circ}c_1)(1+c_2^{\circ}c_2).$  Let $h=d^{-1}h'$. Note that $h\in \GU_2(\cO_\ell).$
By direct calculation, we obtain the following. 
\begin{equation}\label{eqn:det12}
    h(\tilde{A}_1 + g_2 \tilde{A}_2g_2^{-1})h^{-1}- (\tilde{A}_1 +g_1 \tilde{A}_2g_1^{-1})= \frac{\det(D)}{(1+a_1^{-1}c_1)a_1a_2c_1c_2} \mat{0}{1}{0}{0}.
\end{equation}
Therefore, $h(\tilde{A}_1 + g_2 \tilde{A}_2g_2^{-1})h^{-1}=\tilde{A}_1 +g_1 \tilde{A}_2g_1^{-1}\mod (\pi^{\ell_1} ).$ 
Since $D= 0  \mod (\pi^k ),$ by \autoref{lem:D=0 implies}, we obtain that $h'=(1+a_2^{-1}c_2)\mathrm{I}\mod (\pi^{k}),$ $g_2= \frac{c_2}{c_1}g_1 \mod (\pi^{k}),$ $g_1=\smat{a_1}{0}{0}{c_1}\mod (\pi^{k} )$ and $ \tilde{\alpha}_2 =  a_1^{-2} c_1^{2} \tilde{\alpha}_1 \mod (\pi^k ).$ 
Therefore  $ h^{-1}\tilde A_1 h=h^{\prime-1}\tilde A_1 h'=\tilde A_1\mod (\pi^k )$ and 
$$ g_2^{-1}\tilde A_1 g_2=g_1^{-1}\tilde A_1 g_1=\frac{a_1}{c_1}\mat{0}{\nonsq a_1^{-2} c_1^{2} \tilde{\alpha}_1}{\nonsq}{0}=\frac{a_1}{c_1}\tilde A_2\mod (\pi^k ).$$
Hence by \autoref{prop:irr with irr H lemma}, 
$$\psi\left(\pi ^{\ell_2-k} w\, \bm{tr}\left( \left( h (\tilde A_1+g_2\tilde A_2  g_2^{-1} )h^{-1}-(\tilde A_1 + g_1\tilde A_2g_1^{-1})\right) \tilde A_1\right)
\right)=1 \text{ for all } w \in\cO_\ell.$$ 
Therefore 
by substituting \autoref{eqn:det12}, we obtain 
\begin{equation}\label{eqn:lkjh}
    \psi\left( \frac{\pi ^{\ell_2-k} w  \,\nonsq \, \det(D)
}{(1+a_1^{-1}c_1)a_1a_2c_1c_2} \right)=1  \,\,\,\text{for all } w\in \cO_\ell.
\end{equation}
 Recall that $\pi^{\ell-1}\cO_\ell \not\subseteq \ker(\psi) .$ Therefore, for $\G=\GL_2,$ 
 since $(1+a_1^{-1}c_1)a_1a_2c_1c_2 \in \cO_\ell^\times,$ \autoref{eqn:lkjh} implies  $\pi^{\ell_2-k}\det(D)=0,$ which is equivalent to   $\det(D)=0\mod (\pi^{\ell_1+k} ).$ 
 For $\G=\GU_2,$ since $g_i \in \GU_2(\cO_\ell),$ we have $c_i=(a_i^\circ)^{-1}$. By \autoref{eqn:as1}, we obtain that $\det(D)= (a_1^\circ a_2^\circ)^{-2} \nonsq \lambda$ for some $\lambda\in \cO_\ell.$ Therefore $\frac{\nonsq \, \det(D)
}{(1+a_1^{-1}c_1)a_1a_2c_1c_2} =\frac{\nonsq^2 \, \lambda
}{(1+(a_1a_1^\circ)^{-1})a_1a_2a_1^\circ a_2^\circ}\in \cO_\ell.$ Hence \autoref{eqn:lkjh} implies 
$\pi^{\ell_2-k}\det(D)=0,$ which is equivalent to   $\det(D)=0\mod (\pi^{\ell_1+k} ).$ This proves the claim.

 We now proceed to show that there exist  $x,y\in \rl$  such that $\{x,y\}\cap \rl^\times \neq \emptyset$ and $D\cmat{x}{y}=0  \mod (\pi^{\ell_1} )$. 
Since $D'\neq 0  \mod (\pi ),$ there exists 
 $m\in \{1,2\}$ such that  $\{D'_{m1},D'_{m2}\}\cap \rl^\times\neq \emptyset.$ Choose $x=D_{m2}'$ and $y=-D_{m1}'.$ For this choice, we have  $\{x,y\}\cap \rl^\times\neq \emptyset$ and 
$$D\left[ \begin{matrix} x  \\  y \end{matrix}\right] =\pi^k D'\left[ \begin{matrix} D_{m2}' \\  -D_{m1}' \end{matrix}\right]=
\begin{cases}
    \pi^k\cmat{0}{-\det(D')} ,& \text{if } m=1;\\
    \pi^k\cmat{\det(D')}{0}, & \text{if } m=2.
\end{cases}$$
 Since $\pi^{2k}\det(D')=\det(D)=0  \mod (\pi^{\ell_1+k} ),$ we must have $\pi^{k}\det(D')=0  \mod (\pi^{\ell_1} ).$ Therefore $D\cmat{x}{y}=0  \mod (\pi^{\ell_1} ).$ Hence the result follows from \autoref{thm:D theorem}. 
\end{proof}
The proof of \autoref{thm:main-theorem-2}(3) follows from \autoref{eqn:decomposition-of-regular-tensor}, \autoref{prop:multiplicity-free-component-cuspidal} and \autoref{prop:irr with irr}. 

\section{Proof of \autoref{thm:main-theorem-2}(4)} 
\label{sec:proof-for-Sigma-4}
In this section, we prove that for any three split semisimple regular representations $\rho_1, \rho_2, \rho_3$ of $\Gol$, we have $\langle \rho_1 \otimes \rho_2, \rho_3 \rangle \leq \ell + 1$.  
Recall from \autoref{subsec:alternate const for ss}, a pair $(\chi_1, \chi_2)$ of characters of $\rl^\times$ is called a $\ss$-pair of $\G(\cOl)$ if and only if $ \chi_1\chi_2^{-1}|_{1+ \pi^{\ell-1}\cO_\ell} \neq 1 $ and $\mathfrak{S}$ denotes the set of all $\ss$-pairs.
Further, a representation $\rho$ of $\G(\cO_\ell)$ is a split semisimple regular representation if and only if $\rho\cong\ind_{\B(\co_\ell)}^{\G(\cOl)}(\chi_1,\chi_2)$ for some $\ss$-pair $(\chi_1, \chi_2)$ of $\G(\cOl)$.   

Now onward, we fix $\ss$-pairs $(\chi_1, \chi_2)$ and $(\chi_3, \chi_4)$ and representations $\rho_1 \cong \ind_{\B(\co_\ell)}^{\G(\co_\ell)} (\chi_1,\chi_2) $ and $\rho_2 \cong \ind_{\B(\co_\ell)}^{\G(\co_\ell)} (\chi_3,\chi_4) $. 
    We have
    \[
     \rho_1 \otimes \rho_2 \cong \underset{g\in {\B(\co_\ell)}\backslash {\G(\co_\ell)} / {\B(\co_\ell)}}{\oplus} \ind_{{\B(\co_\ell)} \cap {\B(\co_\ell)}^{g}}^{\G(\co_\ell)} (\chi_1,\chi_2)\otimes{(\chi_3,\chi_4)}^g.
    \]
 It is well known that the double cosets representatives of $\B(\co_\ell)$ in $\G(\co_\ell)$  are given by the set
 $$\left\{\left[\begin{smallmatrix}
                  0&1\\
                  1&0
        \end{smallmatrix}\right],
\left[\begin{smallmatrix}
 1&0\\
 \nonsq\pi^i&1
  \end{smallmatrix}\right]
  ;1\leq i\leq \ell \right\}.$$ For $i \in [ 1,\ell-1]$, we denote $\smat{1}{0}{\nonsq \pi^i}{1}
 $ by $g_i$ and $\B(\cOl) \cap \B(\cOl)^{g_i}$ by $\B^i$. By direct computation, we have
   \begin{equation*}
        \B^i=\left\{\left[\begin{smallmatrix}
         a-\nonsq \pi^i b & b\\
         0 & c+ \nonsq \pi^i b 
\end{smallmatrix}\right]\mid \left[\begin{smallmatrix}
      a & b\\
      0& c
\end{smallmatrix}\right] \in \G(\co_\ell), \,\ a=c+\nonsq \pi^i b \mod(\pi^{\ell-i})  \right\}.
   \end{equation*}

 We denote $\ind_{\B^i}^{\B(\co_\ell)} (\chi_1,\chi_2)\otimes {(\chi_3,\chi_4)}^{g_i}$ by $\delta_i$ and  the group of diagonal matrices in $\G(\co_\ell)$  by ${\T(\co_\ell)}$.   
     Then we have,
        \begin{equation}
        \label{eqn: ind B tensor ind B}
            \rho_1 \otimes \rho_2 \cong \ind_{\B(\co_\ell)}^{\G(\co_\ell)}(\chi_1\chi_3,\chi_2\chi_4) \oplus \ind_{\T(\co_\ell)}^{\G(\co_\ell)}(\chi_1\chi_4,\chi_2\chi_3)\oplus \left(
            \underset{1\leq i \leq \ell-1}{\oplus} \ind_{\B(\co_\ell)}^{\G(\co_\ell)}\delta_i\right).
        \end{equation}
     To understand the multiplicity of a split semisimple irreducible representation in $\rho_1 \otimes \rho_2,$ we understand its multiplicities in the above constituents of $\rho_1 \otimes \rho_2$. We shall carry this out in the next few lemmas before proceeding to the proof of our main result.  

\begin{lemma}
\label{lem: form change of b cap bg rep}
 The representations $\delta_i$  are irreducible for every $i \in [1, \ell-1]$.
 \end{lemma}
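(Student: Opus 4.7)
The approach is Mackey's irreducibility criterion. First I would observe that $\B^i$ is a \emph{normal} subgroup of $\B(\co_\ell)$: it is the kernel of the homomorphism $\B(\co_\ell)\to \rl^\times/(1+\pi^{\ell-i}\rl)$ sending $\smat{u}{b}{0}{v}\mapsto uv^{-1}$, since the description of $\B^i$ is exactly the congruence of the two diagonal entries modulo $\pi^{\ell-i}$. Since $\psi_i := (\chi_1,\chi_2)\otimes(\chi_3,\chi_4)^{g_i}$ is one-dimensional, Clifford theory over the normal subgroup $\B^i$ gives that $\delta_i = \ind_{\B^i}^{\B(\co_\ell)}\psi_i$ is irreducible if and only if $\mathrm{Stab}_{\B(\co_\ell)}(\psi_i) = \B^i$.

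Next I would compute conjugation explicitly: for $t=\smat{\alpha}{\gamma}{0}{\beta}\in \B(\co_\ell)$ and $x=\smat{u}{b}{0}{v}\in\B^i$ one has $txt^{-1} = \smat{u}{(\alpha b + \gamma(v-u))/\beta}{0}{v}$. Since $v-u\in \pi^{\ell-i}\rl$, the $\gamma$-contribution dies after multiplication by $\nonsq\pi^i$ (landing in $\pi^\ell\rl=0$), and after cancelling the $\chi_1,\chi_2$ pieces the condition $\psi_i^t = \psi_i$ becomes independent of $\gamma$ and, with $\lambda:=\alpha/\beta$, reduces to
\[
\eta(\lambda b) = \eta(b)\quad\text{for all } b\in \rl,
\]
where $\eta(b):=\chi_3(1+\nonsq\pi^i b)\chi_4(1-\nonsq\pi^i b)$, viewed as a function on $\rl/\pi^{\ell-i}\rl$.

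The heart of the argument is a single probe. Suppose for contradiction that $\lambda-1=\pi^k\mu$ with $\mu\in\rl^\times$ and $k<\ell-i$, and test at $b := \pi^{\ell-i-1-k}c$ for $c\in\rl$. Setting $\theta:=\chi_3\chi_4^{-1}$ and factoring
\[
\frac{\eta(\lambda b)}{\eta(b)} \;=\; \theta\!\left(\frac{1+\nonsq\pi^i\lambda b}{1+\nonsq\pi^i b}\right)\chi_4\!\left(\frac{1-\nonsq^2\pi^{2i}\lambda^2 b^2}{1-\nonsq^2\pi^{2i}b^2}\right),
\]
one checks that $\nonsq^2\pi^{2i}(\lambda^2-1)b^2 = \nonsq^2\pi^{2\ell-2-2k}(\lambda^2-1)c^2$ has valuation at least $2\ell-2-k\geq\ell$ (using $k\leq\ell-2$ for $i\geq 1$ and $\ell\geq 2$), so the $\chi_4$-factor is $1$, and a parallel expansion reduces the $\theta$-factor modulo $\pi^\ell$ to $\theta(1+\nonsq\pi^{\ell-1}\mu c)$. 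Hence the stabilizer condition forces $\theta(1+\nonsq\pi^{\ell-1}\mu c)=1$ for all $c\in\rl$, i.e.\ $\theta$ trivial on $1+\pi^{\ell-1}\rl$; this contradicts the $\ss$-pair hypothesis $\theta|_{1+\pi^{\ell-1}\co_\ell}\neq 1$ (using $1+\pi^{\ell-1}\co_\ell\subseteq 1+\pi^{\ell-1}\rl$). Therefore $k\geq\ell-i$, so $\lambda\in 1+\pi^{\ell-i}\rl$ and $t\in\B^i$.

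The main technical subtlety is the clean vanishing of the $\chi_4$-correction at the chosen probe, which comes down to the numerical inequality $2\ell-2-k\geq\ell$ for every $k\in[0,\ell-i-1]$; pleasantly, a single probe level suffices rather than an induction over levels. For $\G=\GU_2$ the Borel parametrization forces $\beta=(\alpha^\circ)^{-1}$ so $\lambda=\alpha\alpha^\circ\in\co_\ell^\times$, and the conclusion $\lambda\in (1+\pi^{\ell-i}\rl)\cap\co_\ell^\times = 1+\pi^{\ell-i}\co_\ell$ recovers exactly the condition $t\in\B^i$; no further argument is needed.
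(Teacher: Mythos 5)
There is a genuine gap at the very first step: $\B^i$ is \emph{not} a normal subgroup of $\B(\co_\ell)$ for $i\in[1,\ell-1]$, so the reduction to a stabilizer computation via Clifford theory is invalid. The error comes from a misreading of the description of $\B^i$. For $X=\smat{u}{b}{0}{v}\in\B(\co_\ell)$ the condition $X\in\B^i=\B(\co_\ell)\cap\B(\co_\ell)^{g_i}$ is that the $(2,1)$-entry $\nonsq\pi^i(u-v-\nonsq\pi^i b)$ (or with the opposite sign, depending on the convention for $\B(\co_\ell)^{g_i}$) vanishes, i.e.\ $u-v\equiv\pm\nonsq\pi^i b\pmod{\pi^{\ell-i}}$. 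This is exactly what the change of variables $u=a-\nonsq\pi^i b$, $v=c+\nonsq\pi^i b$ in the paper's display produces; it is \emph{not} the congruence $u\equiv v\pmod{\pi^{\ell-i}}$. A concrete counterexample to normality: take $\G=\GL_2$, $\ell=3$, $i=1$; then $\smat{1}{1}{0}{1+\pi}\in\B^1$, but conjugating by $\smat{2}{0}{0}{1}$ yields $\smat{1}{2}{0}{1+\pi}$, which requires $2\pi\equiv\pi\pmod{\pi^2}$ and hence lies outside $\B^1$.

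This misreading also propagates to the computation: the assertion ``$v-u\in\pi^{\ell-i}\rl$'' is false for general $X\in\B^i$ (when $b$ is a unit, $v-u$ has valuation only $i$), so the $\gamma$-contribution to $(\alpha b+\gamma(v-u))/\beta$ does \emph{not} die after multiplying by $\nonsq\pi^i$ when $2i<\ell$, and the reduction to $\eta(\lambda b)=\eta(b)$ with $\lambda=\alpha/\beta$ does not follow. Finally, even granting the reduction, the probe $b=\pi^{\ell-i-1-k}c$ paired with $u=v$ need not give an element of $\B^i$ at all: for $u=v$ the $\B^i$-constraint forces $b\in\pi^{\ell-2i}\rl$, which fails for the full range $k\in[0,\ell-i-1]$ once $i<\ell/2$.

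The correct route (the one the paper takes) is Mackey's irreducibility criterion for a \emph{non-normal} subgroup: show that for every nontrivial $h\in\B^i\backslash\B(\co_\ell)/\B^i$, $\psi_i^h\neq\psi_i$ on $\B^i\cap(\B^i)^h$. One first shows that $h$ may be taken diagonal, $h=\smat{x}{0}{0}{y}$ with $x\not\equiv y\pmod{\pi^{\ell-i}}$ and $1-xy^{-1}=\pi^k u$ with $k\in[0,\ell-i-1]$, which kills the $\gamma$-dependence for the right reason. Then one constructs a probe $X_b$ whose \emph{diagonal} entries are also adjusted (not equal!) so that $X_b$ genuinely lies in $\B^i\cap(\B^i)^h$, and the character comparison reduces to $\chi_3\chi_4^{-1}$ being nontrivial on $1+\pi^{\ell-1}\co_\ell$. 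Your single-probe idea and the final contradiction with the $\ss$-pair hypothesis are in the right spirit, but the setup needs to be rebuilt on the actual structure of $\B^i$ and on Mackey rather than Clifford.
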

\begin{proof}
To prove this, we need to show that 
     ${\langle \delta_i, \delta_i \rangle}_{\B(\co_\ell)} = 1.$  If not, then there exists a non-trivial double coset representative $h$ of $\B^i \backslash \B(\cO_\ell) / \B^i$ such that    
\begin{equation*}
{(\chi_1,\chi_2)\otimes(\chi_3,\chi_4)^{g_i}=((\chi_1,\chi_2)\otimes(\chi_3,\chi_4)^{g_i})^h } \text{ on }{\B^i\cap (\B^i)^h}. 
\end{equation*}
Since $(\chi_1,\chi_2)^h=(\chi_1,\chi_2),$ we obtain that 
\begin{equation}\label{eqn:ss-ss ind B to G irreducible eqn}
{(\chi_3,\chi_4)^{g_i}=(\chi_3,\chi_4)^{hg_i}} \text{ on }{\B^i\cap (\B^i)^h}. 
\end{equation}
Note that for $g\in  \B(\co_\ell)  $,  there exists $\smat{x}{0}{0}{y} \in \B(\co_\ell)$ such that $\B^i g \B^i=\B^i \smat{x}{0}{0}{y} \B^i$. Hence, we assume that $h=\smat{x}{0}{0}{y} \in \B(\co_\ell)$.
Since $h$ is a non-trivial double coset representative, we have 
$x\neq y \mod (\pi^{\ell-i})$. Let $1- xy^{-1}= \pi^k u$ for some $k\in [0,\ell-i-1]$ and $u\in \cO_\ell^\times.$
For $b\in \cO_\ell,$ let 
$$X_b=
    \mat{\lambda}{\pi^{\ell-i-k-1} xy^{-1}\nonsq \lambda b }{0}{\lambda+ \pi^{\ell-k-1}\nonsq^2 \lambda b},
$$
where $\lambda=1$ for $\G=\GL_2,$ and $\lambda\in \Lri_\ell^\times$ be such that $\lambda^\circ \lambda=(1+\pi^{\ell-k-1}\nonsq^2 b)^{-1}$ for $\G=\GU_2$. Using $ \pi^{\ell-k-1}\nonsq^2 \lambda b-\pi^{\ell-k-1} xy^{-1}\nonsq^2 \lambda b =\pi^{\ell-1}\nonsq^2 \lambda b u $, one can easily show that $X_b\in \B^i \cap (\B^i)^h$ for all $b\in \cO_\ell$.
Therefore,  \autoref{eqn:ss-ss ind B to G irreducible eqn} implies that $(\chi_3,\chi_4)(g_i^{-1}X_bg_i)=(\chi_3,\chi_4)(g_i^{-1}h^{-1}X_b hg_i)$ for all $b\in \cO_\ell$. Upon simplification, we get 
$$\chi_3(\lambda+\pi^{\ell-k-1} xy^{-1} \nonsq^2 \lambda b)\chi_4(\lambda + \pi^{\ell-k-1}  \nonsq^2 \lambda b(1-xy^{-1})) = \chi_3(\lambda+\pi^{\ell-k-1} \nonsq^2 \lambda b)\chi_4(\lambda ).$$
Substituting $xy^{-1}=1-\pi^k u$ and then dividing both sides by $\chi_3(\lambda+\pi^{\ell-k-1}  \nonsq^2 \lambda b)\chi_4(\lambda),$ we obtain 
$$\chi_3(1+\pi^{\ell-1}   \nonsq^2  b u)\chi_4(1 + \pi^{\ell-1}  \nonsq^2  b u ) = 1.$$
Since $(1+\pi^{\ell-1}   \nonsq^2  b u)^2=1,$ this gives $\chi_3\chi_4^{-1}(1+\pi^{\ell-1}   \nonsq^2  b u)=1$ for all $b\in \cO_\ell$. This contradicts the fact that $(\chi_3,\chi_4)$ is  a  $\ss$-pair. Therefore ${\langle \delta_i, \delta_i \rangle}_{\B(\co_\ell)} = 1.$
 \end{proof}
For any subgroup $H$ of $\B(\cOl)$, we denote the restriction of $(\chi_1, \chi_2)$ to $H$ by $(\chi_1, \chi_2)$ itself. 
Let $\U(\cOl)$ be the subgroup of $\G(\cOl)$ consisting of upper triangular matrices with diagonal entries equal to $1$. For $t  \in [0, \ell],$ let $\psi_t$ denote a character of $\U(\cOl)$ defined by:
\[
\psi_t\left(\mat{1}{\nonsq x}{0}{1}\right):=\psi(\pi^{\ell-t} \nonsq x).
\]
For $t  \in [0, \ell],$ let $\Z_{t}(\co_\ell)$ be the subgroup $\{\smat{a}{0}{0}{a+\pi^{t}d} \mid a, d \in \rl\} \cap \G(\co_\ell)$ of $\G(\co_\ell)$. Note that $\Z_{0}(\co_\ell)=\T(\co_\ell)$. For $\chi,\chi' \in \widehat{\rl^\times}$, define a character $(\chi,\chi',\psi_{t}) $ of the group $\Z_{t}(\co_\ell)\U(\co_\ell)$ as follows:
\[
(\chi,\chi',\psi_{t})\left(\smat{a}{x}{0}{a+\pi^{t}d}\right)=\chi(a)\chi'(a+\pi^t d)\psi_t\left(\smat{1}{a^{-1}x}{0}{1}\right).
\]
The representation $\delta_i$ is an irreducible representation of $\B(\co_\ell)$ of dimension $q^{\ell-i}-q^{\ell-i-1}$. By a description of all irreducible representations of $\B(\cO_\ell)$ using little group method, $\delta_i$ is isomorphic to $\ind_{\Z_{\ell-i}(\co_\ell)\U(\co_\ell)}^{\B(\co_\ell)}(\chi,\chi',\psi_{\ell-i})$ for some $\chi, \chi'\in \widehat{\rl^\times}$. The next lemma gives a necessary condition for this isomorphism.

\begin{lemma}
\label{lem: borel rep isomorphism}
For $i \in [1, \ell-1]$, let $\delta_i$ be as above. Then
    $ \delta_i \cong \ind_{\Z_{\ell-i}(\co_\ell)\U(\co_\ell)}^{\B(\co_\ell)}(\chi,\chi',\psi_{\ell-i})$ 
    for some $\chi, \chi'\in \widehat{\rl^\times}$ gives
    $( \chi_1\chi_3,\chi_2\chi_4)|_{\Z_{\ell-i}(\co_\ell)} = (\chi,\chi')|_{\Z_{\ell-i}(\co_\ell)}$.
\end{lemma}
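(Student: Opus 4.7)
The plan is to show that the subgroup $\Z_{\ell-i}(\co_\ell)$ acts as scalars on both representations and to identify the scalar character in each case. Although $\Z_{\ell-i}(\co_\ell)$ is not normal in $\B(\co_\ell)$, all the matrix-level error terms that appear in conjugation land modulo $\pi^\ell$, so they vanish in $\cO_\ell$; this is what lets the central-character argument go through even without normality.

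For the induced side $\ind_{\Z_{\ell-i}(\co_\ell)\U(\co_\ell)}^{\B(\co_\ell)}(\chi,\chi',\psi_{\ell-i})$, I would apply Mackey's restriction formula to the subgroup $\Z_{\ell-i}(\co_\ell)$. Since $\U(\co_\ell)$ is normal in $\B(\co_\ell)$ with quotient $\T(\co_\ell)$, double coset representatives for $\Z_{\ell-i}(\co_\ell)\backslash \B(\co_\ell)/\Z_{\ell-i}(\co_\ell)\U(\co_\ell)$ can be chosen inside $\T(\co_\ell)$. For any such $g \in \T(\co_\ell)$, one has $(\Z_{\ell-i}(\co_\ell)\U(\co_\ell))^g = \Z_{\ell-i}(\co_\ell)\U(\co_\ell)$, and $(\chi,\chi',\psi_{\ell-i})^g = (\chi,\chi',\psi_{\ell-i})$ on $\Z_{\ell-i}(\co_\ell)$ since $\T(\co_\ell)$ is abelian. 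Hence the restriction is a direct sum of $[\T(\co_\ell):\Z_{\ell-i}(\co_\ell)]$ copies of $(\chi,\chi')|_{\Z_{\ell-i}(\co_\ell)}$, so $\Z_{\ell-i}(\co_\ell)$ acts on this induced representation through the character $(\chi,\chi')|_{\Z_{\ell-i}(\co_\ell)}$.

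For $\delta_i = \ind_{\B^i}^{\B(\co_\ell)}\bigl((\chi_1,\chi_2)\otimes(\chi_3,\chi_4)^{g_i}\bigr)$, the key claim is that $g^{-1}\Z_{\ell-i}(\co_\ell)\, g \subseteq \B^i$ for every $g \in \B(\co_\ell)$. For $h = \smat{a_0}{0}{0}{a_0+\pi^{\ell-i}d_0}$ and $g = \smat{x}{y}{0}{z}$, a direct computation yields $g^{-1}hg = \smat{a_0}{-y\pi^{\ell-i}d_0 x^{-1}}{0}{a_0+\pi^{\ell-i}d_0}$, whose off-diagonal entry lies in $\pi^{\ell-i}\co_\ell$ and whose diagonals differ by $\pi^{\ell-i}d_0$, so the $\B^i$ congruence modulo $\pi^{\ell-i}$ is trivially satisfied. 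The character formula for induction therefore reduces to a sum over \emph{every} coset in $\B(\co_\ell)/\B^i$. A second computation, using the explicit formula for $(\chi_3,\chi_4)^{g_i}$ on $\B^i$ via $g_i^{-1}(g^{-1}hg)g_i$, shows that all terms proportional to $\pi^\ell$ drop out and
\[
\bigl((\chi_1,\chi_2)\otimes(\chi_3,\chi_4)^{g_i}\bigr)(g^{-1}hg) = (\chi_1\chi_3)(a_0)\,(\chi_2\chi_4)(a_0+\pi^{\ell-i}d_0)
\]
independently of $g$. Summing yields $\chi_{\delta_i}(h) = \dim(\delta_i)\cdot(\chi_1\chi_3,\chi_2\chi_4)(h)$, so $\Z_{\ell-i}(\co_\ell)$ acts on $\delta_i$ through $(\chi_1\chi_3,\chi_2\chi_4)|_{\Z_{\ell-i}(\co_\ell)}$.

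Under the assumed isomorphism, these two scalar characters of $\Z_{\ell-i}(\co_\ell)$ must coincide, which is exactly the conclusion of the lemma. The main obstacle is the pair of matrix computations above; both are elementary, but what is really doing the work is the simultaneous verification that $g^{-1}\Z_{\ell-i}(\co_\ell) g \subseteq \B^i$ \emph{and} that the resulting character value is $g$-independent. The argument is uniform for $\G = \GL_2$ and $\G = \GU_2$, since only matrix multiplication modulo $\pi^\ell$ and the defining congruence of $\B^i$ enter.
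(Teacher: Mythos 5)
Your proof is correct, but it takes a different route from the paper's. The paper argues directly from the hypothesis via Mackey's intertwining criterion: since $\langle \delta_i,\ \ind_{\Z_{\ell-i}(\co_\ell)\U(\co_\ell)}^{\B(\co_\ell)}(\chi,\chi',\psi_{\ell-i})\rangle =1$, the two inducing characters must agree on $\B^i\cap(\Z_{\ell-i}(\co_\ell)\U(\co_\ell))^h$ for a single double coset representative $h$, which one checks can be taken diagonal; since $\Z_{\ell-i}(\co_\ell)$ lies in that intersection and is fixed pointwise by conjugation by $h$ and by $g_i$, the restriction to $\Z_{\ell-i}(\co_\ell)$ gives the claim. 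You instead compute the $\Z_{\ell-i}(\co_\ell)$-``central character'' of each side globally: the key containment $g^{-1}\Z_{\ell-i}(\co_\ell)g\subseteq\B^i$ for all $g\in\B(\co_\ell)$ checks out (the off-diagonal entry of $g^{-1}hg$ carries a factor $\pi^{\ell-i}$, so the defining congruence of $\B^i$ is satisfied, and $g_i^{\pm1}(g^{-1}hg)g_i^{\mp1}=g^{-1}hg$ since all correction terms carry $\pi^{\ell}$), so the induced character formula collapses to $\chi_{\delta_i}(h)=\dim(\delta_i)(\chi_1\chi_3,\chi_2\chi_4)(h)$; the Mackey restriction on the other side is likewise a multiple of $(\chi,\chi')|_{\Z_{\ell-i}(\co_\ell)}$. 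What the paper's argument buys is brevity — one double coset suffices; what yours buys is that you never need to normalize the double coset representative, and you obtain as a byproduct the slightly stronger fact that $\Z_{\ell-i}(\co_\ell)$ acts by the scalar character $(\chi_1\chi_3,\chi_2\chi_4)$ on all of $\delta_i$. Both arguments are uniform in $\G=\GL_2$ and $\G=\GU_2$.
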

\begin{proof}
By definition of $\delta_i$ and the hypothesis, we have
\[
\langle  \ind_{\B^i}^{\B(\co_\ell)}((\chi_1,\chi_2)\otimes(\chi_3,\chi_4)^{g_i}), \ind_{\Z_{\ell-i}(\co_\ell)\U(\co_\ell)}^{\B(\co_\ell)}(\chi,\chi',\psi_{\ell-i})\rangle=1.
\]
This implies, $(\chi_1,\chi_2)\otimes(\chi_3,\chi_4)^{g_i}=(\chi,\chi',\psi_{\ell-i})^h$ on $\B^i \cap (\Z_{\ell-i}(\co_\ell)\U(\co_\ell))^h$ for some $h\in \B^i \backslash \B(\co_\ell) / \Z_{\ell-i}(\co_\ell)\U(\co_\ell)$. It is easy to see that we can take  $h=\smat{z}{0}{0}{w}$ for some $z,w \in \rl^\times$. This gives $\Z_{\ell-i}(\cO_\ell) \subseteq
\B^i \cap (\Z_{\ell-i}(\co_\ell)\U(\co_\ell))^h$. Therefore 
\begin{equation}\label{eqn:lk}
\left((\chi_1,\chi_2)\otimes(\chi_3,\chi_4)^{g_i}\right)\mid_{\Z_{\ell-i}(\cO_\ell)}=(\chi,\chi',\psi_{\ell-i})^h\mid_{\Z_{\ell-i}(\cO_\ell)}.
\end{equation}
Since $g_i^{-1}Xg_i=X$ and  $h^{-1}Xh=X$ for all $X\in \Z_{\ell-i}(\cO_\ell)$, the result directly follows from \autoref{eqn:lk}.
\end{proof}
 
\begin{lemma}
\label{lem:info about B_G_U}
Let $k\in [1,\ell]$ and 
 $ \Omega  =  \{\left[\begin{smallmatrix}
       0 & 1\\
       1 & 0
\end{smallmatrix}\right], \left[\begin{smallmatrix}
       1 & 0\\
      \nonsq \pi^jz & 1
\end{smallmatrix}\right]; j \in [1,\ell] \text{ and }  z \in \co_\ell^\times \}$.
\begin{enumerate}
    \item For $g\in \G(\co_\ell)$, there exists $g'\in \Omega$ such that 
    $\B(\co_\ell) g \Z_k(\co_\ell) \U(\co_\ell)=\B(\co_\ell) g' \Z_k(\co_\ell) \U(\co_\ell)$.
   \item For $j\in [1,\ell]$ and $ z,z'\in \co_\ell^\times$ such that  $z=z' \mod (\pi^{j})$, we have 
    $$\B(\co_\ell) \left[ \begin{smallmatrix}
            1&0\\
          \nonsq  \pi^jz&1
        \end{smallmatrix} \right] \Z_k(\co_\ell) \U(\co_\ell)=\B(\co_\ell) \left[ \begin{smallmatrix}
            1&0\\
          \nonsq  \pi^jz'&1
        \end{smallmatrix} \right] \Z_k(\co_\ell) \U(\co_\ell).$$

\end{enumerate} 
   
\end{lemma}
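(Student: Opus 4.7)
The proof splits naturally into the two parts. For (1), my starting point would be the Bruhat-type decomposition of $\G(\co_\ell)$ recorded earlier in the paper:
\[
\G(\co_\ell) \;=\; \B(\co_\ell) \smat{0}{1}{1}{0}\B(\co_\ell) \;\sqcup\; \bigsqcup_{1\leq i\leq \ell} \B(\co_\ell)\smat{1}{0}{\nonsq\pi^i}{1}\B(\co_\ell).
\]
Writing any $g=b_1 w b_2$ with $w$ in this list of representatives, we have $\B(\co_\ell) g \Z_k(\co_\ell)\U(\co_\ell) = \B(\co_\ell) w b_2 \Z_k(\co_\ell)\U(\co_\ell)$. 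Factoring $b_2=tu$ with $t\in \T(\co_\ell)$ and $u\in \U(\co_\ell)\subseteq \Z_k(\co_\ell)\U(\co_\ell)$, and using that $\Z_k(\co_\ell)\U(\co_\ell)$ is a group, we may absorb $u$ on the right and reduce to $\B(\co_\ell) w t \Z_k(\co_\ell)\U(\co_\ell)$.

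Two cases remain. If $w=\smat{0}{1}{1}{0}$, then $wtw^{-1}\in \T(\co_\ell)\subseteq \B(\co_\ell)$, so $wt=(wtw^{-1})w$ and the double coset equals $\B(\co_\ell) w \Z_k(\co_\ell)\U(\co_\ell)$ with $w\in\Omega$. If $w=\smat{1}{0}{\nonsq\pi^i}{1}$ and $t=\smat{a}{0}{0}{c}$ with $a,c\in\rl^\times$, then a direct $2\times 2$ computation gives $wt = t \smat{1}{0}{\nonsq\pi^i (a/c)}{1}$. The ratio $a/c$ lies in $\co_\ell^\times$: in $\GL_2$ this is immediate, while in $\GU_2$ the relation $c=(a^\circ)^{-1}$ yields $a/c=a a^\circ\in \co_\ell^\times$. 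Thus $g':=\smat{1}{0}{\nonsq\pi^i (a/c)}{1}\in \Omega$ represents the same double coset.

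For (2), writing $z=z'+\pi^j s$ with $s\in\co_\ell$, my plan is to exhibit the explicit factorization
\[
\smat{1}{0}{\nonsq\pi^j z}{1} \;=\; \smat{z'/z}{\,s/(zz'\nonsq)\,}{0}{z/z'}\cdot \smat{1}{0}{\nonsq\pi^j z'}{1}\cdot \smat{1}{\,-s/(zz'\nonsq)\,}{0}{1},
\]
which is a routine matrix multiplication that works uniformly in both group cases and uses only $\B(\co_\ell)$ and $\U(\co_\ell)\subseteq \Z_k(\co_\ell)\U(\co_\ell)$ on the two sides (the $\Z_k$-part being taken to be $\mathrm{I}$). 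One then checks the first factor lies in $\B(\co_\ell)$ and the third in $\U(\co_\ell)$. The only non-trivial verification is for $\G=\GU_2$, where one needs $\beta:=s/(zz'\nonsq)$ to satisfy $\beta^\circ+\beta=0$; this is true because $\nonsq^{-1}=\nonsq/\nonsq^2\in \nonsq\co_\ell$, so $\beta\in \nonsq\co_\ell$, and the analogous check on the diagonal uses $(z'/z)^\circ=z'/z$ together with $\gamma=z/z'=1/(z'/z)$.

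The main obstacle in this lemma is essentially bookkeeping of the $\GU_2$ constraints: one must make sure each matrix produced in the Bruhat reduction and in the explicit factorization of part (2) actually sits in $\G(\co_\ell)$ rather than merely in $\gl_2(\rl)$. The defining conditions $ad^\circ+cb^\circ=1$, $ab^\circ+a^\circ b=0$, etc.\ recalled in Section~\ref{sec: notation} handle this cleanly, and no deeper input is needed beyond the Bruhat decomposition and the direct calculation.
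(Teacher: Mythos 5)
Your proposal is correct and matches the paper's argument in spirit. For part (2), the explicit factorization you write down is the same as the one the paper records (up to the sign convention $z' = z + \pi^j u$ versus $z = z' + \pi^j s$, which just swaps the roles of left/right). For part (1), the paper dismisses it as "direct computations"; your Bruhat-decomposition reduction — writing $g=b_1 w b_2$, absorbing the unipotent part of $b_2$ into $\Z_k(\co_\ell)\U(\co_\ell)$, and conjugating the remaining torus element through $w$ (using $a/c \in \co_\ell^\times$, with the $\GU_2$ relation $c=(a^\circ)^{-1}$ giving $a/c = aa^\circ$) — is precisely the computation the paper leaves implicit, and it is sound, including the unitary-group membership checks.
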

\begin{proof} 
Note that (1) follows from direct computations.  
For (2), let $u\in \cO_\ell$ be such that $z'=z+\pi^j u$.  Then we have,
        $$\mat{\frac{z}{z'}}{ \frac{u}{\nonsq z z'}}{0}{\frac{z'}{z}}
            \mat{1}{0}{\nonsq \pi^jz}{1}
            =\mat{1}{ \frac{u}{\nonsq zz'}}{\nonsq\pi^j z'}{\frac{z'}{z}}
            = \mat{1}{0}{\nonsq\pi^j z'}{1}
        \mat{1}{ \frac{u}{\nonsq z z'}}{0}{1}.$$
        This proves (2).
\end{proof}

 For $k \in [0, \ell-1] $, define the sets 
\begin{eqnarray*}
       S_1^k&:=&\{  (\omega_1,\omega_2) \in \mathfrak{S} \mid     (\omega_1,\omega_2)|_{\Z_{k}(\co_\ell)}  =(\chi_1\chi_3,\chi_2\chi_4)|_{\Z_{k}(\co_\ell)} \},\\
       S_2^k&:=&\{  (\omega_1,\omega_2) \in \mathfrak{S} \mid     (\omega_1,\omega_2)|_{\Z_{k}(\co_\ell)}  =(\chi_2\chi_4,\chi_1\chi_3)|_{\Z_{k}(\co_\ell)} \},\\
        S_3^k&:=&\{  (\omega_1,\omega_2) \in \mathfrak{S} \mid      (\omega_1,\omega_2)|_{\Z_{k}(\co_\ell)}  =(\chi_1\chi_4,\chi_2\chi_3)|_{\Z_{k}(\co_\ell)} \},\\
       S_4^k&:=&\{  (\omega_1,\omega_2) \in \mathfrak{S} \mid     (\omega_1,\omega_2)|_{\Z_{k}(\co_\ell)}  =(\chi_2\chi_3,\chi_1\chi_4)|_{\Z_{k}(\co_\ell)} \},\\
    S_0&:=&\{  (\omega_1,\omega_2) \in \mathfrak{S} \mid    (\omega_1,\omega_2)|_{\Z(\co_\ell)}  =(\chi_1\chi_3,\chi_2\chi_4)|_{\Z(\co_\ell)} \}.
\end{eqnarray*}
Note that for $j\in [1,4]$, we have $S_j^0 \subseteq S_j^1  \subseteq \cdots  \subseteq S_j^{\ell-1} \subseteq S_0$. Also, it is easy to show that if  $j,j'\in [1,4]$ with $j\neq j'$, then $S_j^k \cap  S_{j'}^{k'}=\emptyset $ for all $k,k' \in [0, \ell-1] $.

\begin{proposition}
    \label{prop:condition for ss in phi i}
    For any $i \in [1, \ell-1]$ and $\ss$-pair $(\omega_1,\omega_2),$ we have
    \[
\langle\ind_{\B(\co_\ell)}^{\G(\co_\ell)}\delta_i,\ind_{\B(\co_\ell)}^{\G(\co_\ell)}(\omega_1,\omega_2)\rangle \leq 1\] 
 and equality holds  if and only if  either $(\omega_1,\omega_2)\in S_1^{\ell-i}$ or $(\omega_1,\omega_2)\in S_2^{\ell-i}$.
\end{proposition}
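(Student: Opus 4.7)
The plan is to compute the intertwining number by combining Frobenius reciprocity with two applications of Mackey's formula, using the alternative description of $\delta_i$ as $\delta_i \cong \ind_{\Z_{\ell-i}\U}^{\B}(\chi,\chi',\psi_{\ell-i})$ with $(\chi,\chi')|_{\Z_{\ell-i}}=(\chi_1\chi_3,\chi_2\chi_4)|_{\Z_{\ell-i}}$ (\autoref{lem: borel rep isomorphism}). First I would apply Frobenius reciprocity to rewrite
\[
\langle\ind_{\B}^{G}\delta_i,\ind_{\B}^{G}(\omega_1,\omega_2)\rangle_G=\langle\delta_i,\mathrm{Res}^G_\B\ind_{\B}^{G}(\omega_1,\omega_2)\rangle_\B,
\]
then Mackey's formula with the Bruhat decomposition of $G$ already recorded in the excerpt gives
\[
\mathrm{Res}^G_\B\ind_{\B}^{G}(\omega_1,\omega_2)\cong (\omega_1,\omega_2)\oplus\ind_\T^\B(\omega_2,\omega_1)\oplus\bigoplus_{j=1}^{\ell-1}\ind_{\B^j}^\B\bigl((\omega_1,\omega_2)^{g_j}\bigr).
\]
Since $\dim\delta_i=q^{\ell-i-1}(q-1)>1$, the contribution of the first summand is zero, so the task reduces to controlling the two families $m_w:=\langle\delta_i,\ind_\T^\B(\omega_2,\omega_1)\rangle_\B$ and $m_j:=\langle\delta_i,\ind_{\B^j}^\B((\omega_1,\omega_2)^{g_j})\rangle_\B$.

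For $m_w$, the calculation is clean: since $\B=\T\U$ and $\Z_{\ell-i}\U$ is already $\U$-saturated, there is a single double coset in $\T\backslash\B/\Z_{\ell-i}\U$, so one more Frobenius reduction gives $m_w=\langle(\chi,\chi')|_{\Z_{\ell-i}},(\omega_2,\omega_1)|_{\Z_{\ell-i}}\rangle_{\Z_{\ell-i}}$. Using \autoref{lem: borel rep isomorphism} this is $1$ exactly when $(\omega_1,\omega_2)\in S_2^{\ell-i}$ and $0$ otherwise. For each $m_j$, I would again use the alternative description, giving $m_j=\langle(\chi,\chi',\psi_{\ell-i}),\mathrm{Res}^\B_{\Z_{\ell-i}\U}\ind_{\B^j}^\B((\omega_1,\omega_2)^{g_j})\rangle_{\Z_{\ell-i}\U}$, and then expand the restriction by Mackey over $\Z_{\ell-i}\U\backslash\B/\B^j$. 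The key combinatorial fact is that $(\omega_1,\omega_2)^{g_j}$ is trivial on the unipotent radical of ${}^{g_j}\B$, whereas $\psi_{\ell-i}$ is a non-trivial character of $\U$; consequently every term in the Mackey sum vanishes unless $\psi_{\ell-i}$ restricts trivially to $\U\cap{}^s\B^j$ for the relevant double coset representative $s$. A direct computation of $\U\cap{}^{g_j}\B^j$ (analogous to the computations carried out in \autoref{sec:double-coset-description}) shows that this triviality, combined with the $\ss$-pair hypothesis, forces the surviving index to be exactly $j=i$ with a single surviving coset, on which the matching of characters reduces to the equality $(\omega_1,\omega_2)|_{\Z_{\ell-i}}=(\chi_1\chi_3,\chi_2\chi_4)|_{\Z_{\ell-i}}$, i.e.\ to $(\omega_1,\omega_2)\in S_1^{\ell-i}$.

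Finally, I would check the disjointness $S_1^{\ell-i}\cap S_2^{\ell-i}=\emptyset$ among $\ss$-pairs: membership in both forces $\chi_1\chi_2^{-1}\chi_3\chi_4^{-1}$ to be trivial on $1+\pi^{\ell-i}\rl\supseteq 1+\pi^{\ell-1}\cO_\ell$, which together with the relations $\omega_1\omega_2^{-1}=\chi_1\chi_3\chi_2^{-1}\chi_4^{-1}$ on the $(1+\pi^{\ell-1}\cO_\ell)$-part (coming from both side conditions) forces $\omega_1\omega_2^{-1}|_{1+\pi^{\ell-1}\cO_\ell}=1$, violating the $\ss$-pair hypothesis on $(\omega_1,\omega_2)$. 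Hence at most one of $m_w,m_j$ can be non-zero, the total is $\leq 1$, and equality characterises the union $S_1^{\ell-i}\cup S_2^{\ell-i}$.

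The main obstacle is the detailed Mackey analysis for the $m_j$ terms: identifying the right double coset representatives in $\Z_{\ell-i}\U\backslash\B/\B^j$, computing $\U\cap{}^s\B^j$ uniformly for $\GL_2$ and $\GU_2$, and isolating the fact that triviality of $\psi_{\ell-i}$ on this unipotent intersection forces $j=i$ so that $m_j=0$ for $j\ne i$. This requires conjugation bookkeeping analogous to the computations in the proof of \autoref{lem: form change of b cap bg rep}, together with the primitivity of $\psi$ used there.
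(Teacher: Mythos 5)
Your approach and the paper's are close cousins: both exploit the alternative description $\ind_{\B}^{\G}\delta_i \cong \ind_{\Z_{\ell-i}\U}^{\G}(\chi,\chi',\psi_{\ell-i})$ (via \autoref{lem: borel rep isomorphism}) and then run Mackey. The difference is organizational: you peel off the Mackey sum in two stages ($\B\backslash\G/\B$, then $\Z_{\ell-i}\U\backslash\B/\B^j$), while the paper does it in one shot over $\B\backslash\G/\Z_{\ell-i}\U$ using the explicit representatives in \autoref{lem:info about B_G_U}. Your $m_w$ computation is correct and matches the paper's case (1), and your disjointness argument for $S_1^{\ell-i}\cap S_2^{\ell-i}$ is essentially what the paper asserts.

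However, your sketch for the $m_j$ terms contains a genuine error that would invalidate the argument as written. You claim that $(\omega_1,\omega_2)^{g_j}$ is trivial on the relevant unipotent intersection, so that a nonzero Mackey term would force $\psi_{\ell-i}$ to restrict trivially to $\U\cap{}^s\B^j$. This is not true. The subgroup $\U\cap\B^j$ consists of matrices $\smat{1}{b}{0}{1}$ with $b\in\pi^{\max(0,\ell-2j)}\nonsq\cO_\ell$, and conjugating such an element by $g_j$ into $\B$ produces $\smat{1-\nonsq\pi^j b}{b}{0}{1+\nonsq\pi^j b}$, which has a \emph{nontrivial diagonal}. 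Hence $(\omega_1,\omega_2)^{g_j}\bigl(\smat{1}{b}{0}{1}\bigr)=\omega_1\omega_2^{-1}(1-\nonsq\pi^j b)$, a nontrivial additive-type character in $b$ once $(\omega_1,\omega_2)$ is an $\ss$-pair. So the correct condition for a surviving Mackey term is a \emph{matching} of two nontrivial characters — your $\psi_{\ell-i}$ on one side and this twisted $\omega_1\omega_2^{-1}$ on the other — not a triviality of $\psi_{\ell-i}$. This matching is exactly what the paper uses to isolate the single surviving lower-triangular double coset, whose representative $\smat{1}{0}{-\nonsq\pi^i(\eta\nonsq^2)^{-1}}{1}$ depends on $(\omega_1,\omega_2)$ through $\eta$, and to show all other representatives $\smat{1}{0}{\nonsq\pi^j z}{1}$ contribute zero. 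Your plan does flag these computations as the main obstacle, but the reason sketched for why only $j=i$ survives is not the right mechanism; you would need to replace the triviality argument with the $\eta$-dependent character matching.
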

\begin{proof}
Fix $\chi,\chi' \in \rl^\times$ such that  $\ind_{\B(\co_\ell)}^{\G(\co_\ell)}\delta_i \cong \ind_{\Z_{\ell-i}(\co_\ell)\U(\co_\ell)}^{\G(\co_\ell)}(\chi,\chi',\psi_{\ell-i})$. Therefore, 
 \[
\langle\ind_{\B(\co_\ell)}^{\G(\co_\ell)}\delta_i,\ind_{\B(\co_\ell)}^{\G(\co_\ell)}(\omega_1,\omega_2)\rangle= 
\underset{g\in {\B(\co_\ell)}\backslash {\G(\co_\ell)} / {\Z_{\ell-i}(\co_\ell)\U(\co_\ell)}}{\sum} 
\langle (\chi,\chi',\psi_{\ell-i})^g, (\omega_1,\omega_2)  \rangle_{\B(\co_\ell) \cap \left(\Z_{\ell-i}(\co_\ell)\U(\co_\ell)\right)^g}.
\] 
Let $\eta\in \cO_\ell^{\times} $ be such that $\omega_1\omega_2^{-1}(1+\pi^{\lup}b)=\psi({\pi^{\lup}\eta b})$ for all $b\in \cO_\ell$.
Next, we prove the following statements (1)-(3). The result then follows by \autoref{lem:info about B_G_U} and the fact that $S_1^{\ell-i} \cap S_2^{\ell-i} =\emptyset$.  
\begin{enumerate}
    \item For $g=\smat{0}{1}{1}{0}$,  $ (\chi,\chi',\psi_{\ell-i})^g= (\omega_1,\omega_2)$ on $\B(\co_\ell) \cap \left(\Z_{\ell-i}(\co_\ell)\U(\co_\ell)\right)^g$ if and only if $(\omega_1,\omega_2)\in S_2^{\ell-i}$.
    \item For $g=\smat{1}{0}{-\nonsq \pi^i (\eta\nonsq^{2})^{-1}}{1}$,  $ (\chi,\chi',\psi_{\ell-i})^g= (\omega_1,\omega_2)$ on $\B(\co_\ell) \cap \left(\Z_{\ell-i}(\co_\ell)\U(\co_\ell)\right)^g$ if and only if  $(\omega_1,\omega_2)\in S_1^{\ell-i}$.
    \item Let $j\in [1,\ell]$ and  $z\in \cO_\ell^\times$ be such that  $\pi^j z\neq - \pi^i (\eta\nonsq^{2})^{-1} \mod (\pi^{2i})$, and let  $g=\smat{1}{0}{\nonsq \pi^j z}{1}$. For any $\ss$-pair $(\omega_1,\omega_2)$, we have  $ (\chi,\chi',\psi_{\ell-i})^g\neq  (\omega_1,\omega_2)$ on $\B(\co_\ell) \cap \left(\Z_{\ell-i}(\co_\ell)\U(\co_\ell)\right)^g$.
\end{enumerate}

To prove (1), let $g=\smat{0}{1}{1}{0}$. By direct computation, we have $\B(\co_\ell) \cap \left(\Z_{\ell-i}(\co_\ell)\U(\co_\ell)\right)^g=\Z_{\ell-i}(\co_\ell)$.
Also, $(\chi,\chi',\psi_{\ell-i})^g\mid_{\Z_{\ell-i}(\co_\ell)}=(\chi',\chi)\mid_{\Z_{\ell-i}(\co_\ell)}.$ Therefore, by \autoref{lem: borel rep isomorphism}, we obtain 
$$(\chi,\chi',\psi_{\ell-i})^g\mid_{\Z_{\ell-i}(\co_\ell)}=(\chi',\chi)\mid_{\Z_{\ell-i}(\co_\ell)}=(\chi_2\chi_4 , \chi_1\chi_3)|_{\Z_{\ell-i}(\co_\ell)}.$$
This directly gives (1). 

To prove (2), let $g=\smat{1}{0}{-\nonsq \pi^i (\eta\nonsq^{2})^{-1}}{1}$. 
Note that $gXg^{-1}=X$ for all $X\in \Z_{\ell-i}(\co_\ell)$. Therefore
$\B(\co_\ell) \cap \left(\Z_{\ell-i}(\co_\ell)\U(\co_\ell)\right)^g=\Z_{\ell-i}(\co_\ell) \left(\B(\co_\ell) \cap\U(\co_\ell)^g\right)$. Hence, if we show 
$ (\chi,\chi',\psi_{\ell-i})^g= (\omega_1,\omega_2)$ on $\B(\co_\ell) \cap  \U(\co_\ell)^g $, then (2) follows from $(\chi,\chi',\psi_{\ell-i})^g\mid_{\Z_{\ell-i}(\co_\ell)}=(\chi,\chi')\mid_{\Z_{\ell-i}(\co_\ell)}$ and   \autoref{lem: borel rep isomorphism}. For $b\in \cO_\ell$, we have  $g\smat{1}{\nonsq b}{0}{1}g^{-1}=\smat{1+\pi^i b \eta^{-1}}{\nonsq b}{-\pi^{2i}b\nonsq^{-1}\eta^{-2}}{1-\pi^i b \eta^{-1}}$. Therefore 
$$\B(\co_\ell) \cap  \U(\co_\ell)^g=\left\{ \mat{1+\pi^i b \eta^{-1}}{\nonsq b}{0}{1-\pi^i b \eta^{-1}} \mid b\in \cO_\ell \text{ with } \pi^{2i}b=0\right\}. $$
For $X_b:= \smat{1+\pi^i b \eta^{-1}}{\nonsq b}{0}{1-\pi^i b \eta^{-1}}\in \B(\co_\ell) \cap  \U(\co_\ell)^g,$ since $(1+\pi^i b \eta^{-1})^{-1}=1-\pi^i b \eta^{-1}$ and  $\psi(\nonsq x)=\psi( x)$ for all $x\in \cO_\ell$, we obtain that  
$$(\chi,\chi',\psi_{\ell-i})^g(X_b)=\psi(\pi^i \nonsq b)= \psi(\pi^i b)=\omega_1\omega_2^{-1}(1+\pi^i b \eta^{-1})=(\omega_1,\omega_2)(X_b).$$
Therefore $ (\chi,\chi',\psi_{\ell-i})^g= (\omega_1,\omega_2)$ on $\B(\co_\ell) \cap  \U(\co_\ell)^g $.

To prove (3), let  $j\in [1,\ell]$ and  $z\in \cO_\ell^\times$ be such that  $\pi^j z\neq -\pi^i (\eta\nonsq^{2})^{-1} \mod (\pi^{2i})$, and let  $g=\smat{1}{0}{\nonsq \pi^j z}{1}$. By the given conditions,  $\pi^j z+ \pi^i (\eta\nonsq^{2})^{-1} = \pi^k u $  for some $k \in [\mathrm{min}\{i,j\} , \mathrm{min}\{2i-1,\ell-1\}]$ and $u\in \cO_\ell^\times$. This gives 
$k = j$ for $j < i$ and $k \leq 2j-1$ for $j > i$. 
   Therefore  $\ell+2j-k-1\geq \ell$.

For $b\in \cO_\ell$, let 
   $$ Y_b:= g\mat{1}{\pi^{\ell-k-1}\nonsq b}{0}{1}g^{-1} = \mat{1-\pi^{\ell+j-k-1}\nonsq^2 bz}{\pi^{\ell-k-1}\nonsq b}{-\pi^{\ell+2j-k-1}b \nonsq^3 z^2}{1+\pi^{\ell+j-k-1}\nonsq^2 bz} \in \B(\co_\ell) \cap \left(\Z_{\ell-i}(\co_\ell)\U(\co_\ell)\right)^g. $$
 
  For a $\ss$-pair $(\omega_1,\omega_2)$, we show that   $ (\chi,\chi',\psi_{\ell-i})^g(Y_b)\neq  (\omega_1,\omega_2)(Y_b)$ for some $b\in \cO_\ell$. Assume on the contrary that $ (\chi,\chi',\psi_{\ell-i})^g(Y_b)= (\omega_1,\omega_2)(Y_b)$ for all $b\in \cO_\ell$. Then,  using the fact that $2(\ell+j-k-1)=\ell+(\ell-k-1)+(2j-k-1)\geq \ell$, we obtain 
  \begin{equation}\label{eqn:poi}
      \psi(\pi^{\ell+i-k-1}\nonsq b)=\omega_1\omega_2^{-1}(1-\pi^{\ell+j-k-1}\nonsq^2 bz)=\psi(-\eta\pi^{\ell+j-k-1}\nonsq^2 bz) \text{ for all } b\in \cO_\ell.
  \end{equation}
  Since $ \psi(\pi^{\ell+i-k-1}\nonsq b)= \psi(\pi^{\ell+i-k-1} b)$  for all $b\in \cO_\ell$,  \autoref{eqn:poi} gives 
 $\psi(\pi^{\ell-k-1} b (\pi^i +\pi^j \eta\nonsq^2 z))=1$  for all 
 $b\in \cO_\ell$.
 Since $\pi^j z+ \pi^i (\eta\nonsq^{2})^{-1} = \pi^k u $, we obtain that $\psi(\pi^{\ell-1} b \eta\nonsq^2)=1$  for all 
 $b\in \cO_\ell$, which contradicts the fact that $\pi^{\ell-1}\cO_\ell \nsubseteq \ker(\psi)$. Thus  there exists $b\in \cO_\ell$ such that  $ (\chi,\chi',\psi_{\ell-i})^g(Y_b)\neq  (\omega_1,\omega_2)(Y_b)$.  This proves (3).
\end{proof}
 
For $j\in\{3,4\}$ and   $(\omega_1,\omega_2)\in S_j^{\ell-1}$, define 
$$n_j(\omega_1,\omega_2):=
    \mathrm{min}\{ k \in [0,\ell-1]\mid (\omega_1,\omega_2)\in S_j^k\}.$$

\begin{proposition}
    \label{prop:Induction T to G SS multiplicities}
     For any  $\ss$-pair $(\omega_1,\omega_2)$, we have
    \[
    \langle \ind_{\T(\co_\ell)}^{\G(\co_\ell)} (\chi_1\chi_4,\chi_2\chi_3) , \ind_{\B(\co_\ell)}^{\G(\co_\ell)}(\omega_1,\omega_2)\rangle=\begin{cases}
         \ell-n_3(\omega_1,\omega_2)+1, & \text{if } (\omega_1,\omega_2)\in  S_3^{\ell-1} ;\\
         \ell-n_4(\omega_1,\omega_2)+1, & \text{if } (\omega_1,\omega_2)\in  S_4^{\ell-1} ;\\
         1, & \text{if } (\omega_1,\omega_2)\in S_0\setminus ( S_3^{\ell-1} \cup S_4^{\ell-1} );\\
        0, & \text{ otherwise. }
       
    \end{cases}
    \] 
\end{proposition}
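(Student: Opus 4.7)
The plan is to reduce the inner product by induction transitivity $\ind_{\T(\co_\ell)}^{\G(\co_\ell)} = \ind_{\B(\co_\ell)}^{\G(\co_\ell)}\circ\ind_{\T(\co_\ell)}^{\B(\co_\ell)}$ and Frobenius reciprocity to
\[
\langle \ind_{\T(\co_\ell)}^{\B(\co_\ell)}(\chi_1\chi_4,\chi_2\chi_3),\,\mathrm{Res}_{\B(\co_\ell)}^{\G(\co_\ell)}\ind_{\B(\co_\ell)}^{\G(\co_\ell)}(\omega_1,\omega_2)\rangle_{\B(\co_\ell)}.
\]
A Mackey decomposition over $\B(\co_\ell)\backslash \G(\co_\ell)/\B(\co_\ell)$, using the same representatives as in the proof of \autoref{eqn: ind B tensor ind B}, then yields
\[
\mathrm{Res}_{\B(\co_\ell)}^{\G(\co_\ell)}\ind_{\B(\co_\ell)}^{\G(\co_\ell)}(\omega_1,\omega_2) \cong (\omega_1,\omega_2) \,\oplus\, \ind_{\T(\co_\ell)}^{\B(\co_\ell)}(\omega_2,\omega_1) \,\oplus\, \bigoplus_{i=1}^{\ell-1}\delta_i',
\]
where $\delta_i' := \ind_{\B^i}^{\B(\co_\ell)}\!\bigl((\omega_1,\omega_2)^{g_i}\bigr)$ is irreducible by the argument of \autoref{lem: form change of b cap bg rep} and, by the analogue of \autoref{lem: borel rep isomorphism}, is isomorphic to $\ind_{\Z_{\ell-i}(\co_\ell)\U(\co_\ell)}^{\B(\co_\ell)}(\chi,\chi',\psi_{\ell-i})$ for some characters with $(\chi,\chi')|_{\Z_{\ell-i}(\co_\ell)} = (\omega_1,\omega_2)|_{\Z_{\ell-i}(\co_\ell)}$.

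The second ingredient, and the heart of the proof, is a Clifford-theoretic decomposition of $\ind_{\T(\co_\ell)}^{\B(\co_\ell)}\eta$ for any character $\eta$ of $\T(\co_\ell)$. The $\T(\co_\ell)$-action on $\U(\co_\ell)$ preserves the level $t$ of $\psi_t$ with full stabilizer $\Z_t(\co_\ell)$ (verifiable uniformly for $\GL_2$ and $\GU_2$: in the unitary case the action $x\mapsto aa^\circ x$ has stabilizer $\{a\in\Lri_\ell^\times:aa^\circ\in 1+\pi^t\co_\ell\}$ which coincides with $\Z_t(\co_\ell)$). A direct analysis of the $\psi_t$-eigenspace inside $\ind_{\T(\co_\ell)}^{\B(\co_\ell)}\eta$ shows it is one-dimensional with $\Z_t(\co_\ell)$-action given by $\eta|_{\Z_t(\co_\ell)}$, so that
\[
\ind_{\T(\co_\ell)}^{\B(\co_\ell)}\eta \;\cong\; \eta \,\oplus\, \bigoplus_{t=1}^{\ell}\ind_{\Z_t(\co_\ell)\U(\co_\ell)}^{\B(\co_\ell)}\bigl(\eta|_{\Z_t(\co_\ell)},\psi_t\bigr),
\]
each summand being irreducible by the standard little-group argument and the dimension check $1+\sum_{t=1}^{\ell}(q-1)q^{t-1}=q^\ell=|\U(\co_\ell)|$ ensuring that nothing is missed.

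Applying this decomposition to both $\eta=(\chi_1\chi_4,\chi_2\chi_3)$ and $\eta=(\omega_2,\omega_1)$, the inner product splits into three families of one-by-one pairings between irreducibles of $\B(\co_\ell)$. Pairing with $(\omega_1,\omega_2)$ contributes $[(\omega_1,\omega_2)\in S_3^0]$, while pairing with $\delta_i'$ (at level $t=\ell-i$) contributes $[(\omega_1,\omega_2)\in S_3^{\ell-i}]$; summing these gives $\sum_{k=0}^{\ell-1}[(\omega_1,\omega_2)\in S_3^k]$. Pairing $\ind_{\T(\co_\ell)}^{\B(\co_\ell)}(\omega_2,\omega_1)$ level-by-level against $\ind_{\T(\co_\ell)}^{\B(\co_\ell)}(\chi_1\chi_4,\chi_2\chi_3)$ contributes $[(\omega_2,\omega_1)|_{\Z_t(\co_\ell)}=(\chi_1\chi_4,\chi_2\chi_3)|_{\Z_t(\co_\ell)}]$ at each $t\in[0,\ell]$; the involution of $\Z_t(\co_\ell)$ swapping the two diagonal entries converts this to $[(\omega_1,\omega_2)\in S_4^t]$ for $t\in[0,\ell-1]$, and the $t=\ell$ case (where $\Z_\ell(\co_\ell)=\Z$) degenerates to $[(\omega_1,\omega_2)\in S_0]$. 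Adding all contributions and invoking $S_3^k\cap S_4^{k'}=\emptyset$ together with $S_3^k,S_4^k\subseteq S_0$ produces precisely the four cases in the statement: $\ell-n_3+1$, $\ell-n_4+1$, $1$, or $0$. The main obstacle is the Clifford decomposition in the second paragraph: it is the $\ell$ distinct level-$t$ summands that account for the linearly growing multiplicity $\ell-n_3+1$, and hence for the $\ell+1$ bound in \autoref{thm:main-theorem-2}(4); the combinatorial identification of each level's contribution with the correct condition $S_j^k$ then has to be carried out carefully to yield the final piecewise formula.
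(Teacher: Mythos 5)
Your argument is correct, but the route is genuinely different from the paper's. The paper's proof is a single direct Mackey computation over the $2\ell+1$ double cosets $\T(\co_\ell)\backslash\G(\co_\ell)/\B(\co_\ell)$ with representatives $\smat{\nonsq}{1}{1}{0}$, $\smat{\nonsq\pi^i}{1}{1}{0}$, $\smat{1}{0}{\nonsq\pi^i}{1}$ for $1\le i\le\ell$; after computing each intersection $\T(\co_\ell)\cap\B(\co_\ell)^g$ (which is $\Z(\co_\ell)$ or $\Z_{\ell-i}(\co_\ell)$) it reads off, coset by coset, the conditions $(\omega_1,\omega_2)\in S_0$, $S_4^{\ell-i}$, $S_3^{\ell-i}$. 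You instead push the inner product down to $\B(\co_\ell)$ by Frobenius reciprocity and transitivity, Mackey-decompose $\mathrm{Res}_{\B}\ind_{\B}^{\G}(\omega_1,\omega_2)$ over $\B\backslash\G/\B$, and pair the resulting pieces against a complete Clifford decomposition $\ind_{\T}^{\B}\eta\cong\eta\oplus\bigoplus_{t=1}^{\ell}\ind_{\Z_t(\co_\ell)\U(\co_\ell)}^{\B(\co_\ell)}(\eta|_{\Z_t(\co_\ell)},\psi_t)$. In your ledger the $S_3^t$ indicators arise from the $\delta_i'$ pieces, the $S_4^t$ indicators from the big cell $W$, and the residual $S_0$ from the level-$\ell$ term, so the final bookkeeping with the nesting $S_j^0\subseteq\cdots\subseteq S_j^{\ell-1}\subseteq S_0$ and disjointness $S_3^k\cap S_4^{k'}=\emptyset$ is the same. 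What your approach buys is a reusable structural fact (the full decomposition of $\ind_{\T}^{\B}\eta$) that makes it transparent why the multiplicity grows linearly, one contribution per level $t\ge n_j(\omega_1,\omega_2)$. The cost is that you lean more heavily on the little-group parametrization: you need, and silently use, that an irreducible of $\B(\co_\ell)$ of dimension $(q-1)q^{t-1}$ is determined up to isomorphism by its $\Z_t(\co_\ell)$-character, i.e.\ the \emph{converse} of the one-directional \autoref{lem: borel rep isomorphism}. This is true (since $\T(\co_\ell)$ is abelian and acts transitively on level-$t$ characters of $\U(\co_\ell)$ with kernel $\Z_t(\co_\ell)$, so the little-group data reduces to a character of $\Z_t(\co_\ell)$ alone), and the paper implicitly relies on the same parametrization when it fixes $\chi,\chi'$ in the proof of \autoref{prop:condition for ss in phi i}, but you should record the biconditional explicitly. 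The paper's direct computation is shorter and self-contained by comparison.
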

\begin{proof}
We have
\begin{equation}\label{eqn:khtf}
    \langle \ind_{\T(\co_\ell)}^{\G(\co_\ell)} (\chi_1\chi_4,\chi_2\chi_3), \ind_{\B(\co_\ell)}^{\G(\co_\ell)}(\omega_1,\omega_2)\rangle=\underset{g\in {\T(\co_\ell)} \backslash {\G(\co_\ell)} /{\B(\co_\ell)}}{\sum}{\langle(\chi_1\chi_4,\chi_2\chi_3),{(\omega_1,\omega_2)}^g\rangle}_{{\T(\co_\ell)} \cap {\B(\co_\ell)}^g}.
\end{equation}
It is easy to verify that the set 
$\Omega:=\{ \left[\begin{smallmatrix}
                  \nonsq &1\\
                  1&0
    \end{smallmatrix}\right], \left[\begin{smallmatrix}
                 \nonsq \pi^i&1\\
                  1&0
  \end{smallmatrix}\right], \left[\begin{smallmatrix}
                 1&0\\
                \nonsq  \pi^i&1
  \end{smallmatrix}\right]; 1\leq i \leq \ell  \}$ 
 forms a complete set of double coset representatives for
$\T(\co_\ell)\backslash {\G(\co_\ell)} /\B(\co_\ell).$ By direct computations, we get  
$${\T(\co_\ell)} \cap {\B(\co_\ell)}^g=\begin{cases}
 \Z(\co_\ell), & \text{if } g=\left[\begin{smallmatrix}
                 \nonsq &1\\
                  1&0
  \end{smallmatrix}\right];\\
    \Z_{\ell-i}(\co_\ell), & \text{if } g\in \{\left[\begin{smallmatrix}
                 \nonsq \pi^i&1\\
                  1&0
  \end{smallmatrix}\right], \left[\begin{smallmatrix}
                 1&0\\
                \nonsq  \pi^i&1
  \end{smallmatrix}\right]\}\text{ with }i\in [1,\ell].
\end{cases}$$
Now we obtain the following necessary and sufficient conditions for  $\ss$-pair $(\omega_1,\omega_2)$ such that $(\chi_1\chi_4,\chi_2\chi_3)={(\omega_1,\omega_2)}^g$ on ${\T(\co_\ell)} \cap {\B(\co_\ell)}^g$ for different choices of $g\in \Omega$.
\begin{enumerate}
    \item For $g=\left[\begin{smallmatrix}
                  \nonsq &1\\
                  1&0
    \end{smallmatrix}\right], (\omega_1,\omega_2)\in S_0$.
    \item For $g=\left[\begin{smallmatrix}
                 \nonsq \pi^i&1\\
                  1&0
  \end{smallmatrix}\right]\text{ with }i\in [1,\ell], \,(\omega_1,\omega_2)\in S_4^{\ell-i}$.
  \item For $g=\left[\begin{smallmatrix}
                 1&0\\
                \nonsq  \pi^i&1
  \end{smallmatrix}\right]\text{ with }i\in [1,\ell], \, (\omega_1,\omega_2)\in S_3^{\ell-i}$.
\end{enumerate}
Therefore the result follows from \autoref{eqn:khtf} and the facts that  $S_j^0 \subseteq S_j^1  \subseteq \cdots  \subseteq S_j^{\ell-1} \subseteq S_0$ for $j\in\{3,4\}$, and $S_3^k \cap  S_{4}^{k'}=\emptyset $ for  $k,k' \in [0, \ell-1] $.
\end{proof}

\begin{lemma}\label{lem:b to G ind}
    \begin{enumerate}
        \item If $(\chi_1\chi_3,\chi_2\chi_4)$ is not a $\ss$-pair, then $\langle \ind_{\B(\co_\ell)}^{\G(\co_\ell)}(\chi_1\chi_3,\chi_2\chi_4), \ind_{\B(\co_\ell)}^{\G(\co_\ell)}(\omega_1,\omega_2)\rangle=0$ for every  $\ss$-pair  $(\omega_1,\omega_2)$.
        \item If $(\chi_1\chi_3,\chi_2\chi_4)$ is  a $\ss$-pair, then $(\chi_1\chi_3,\chi_2\chi_4) \in S_1^k$ for all $k\in [0,\ell-1]$.
    \end{enumerate}
\end{lemma}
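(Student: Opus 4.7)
Part (2) is immediate: the pair $(\chi_1\chi_3,\chi_2\chi_4)$ trivially satisfies the restriction condition defining $S_1^k$ with itself for every $k \in [0,\ell-1]$; once it is assumed to be a $\ss$-pair, it therefore belongs to every $S_1^k$.

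For Part (1), the plan is to apply Mackey's formula exactly as in the $\ss$-pair irreducibility proof of \autoref{subsec:alternate const for ss}, writing
\[
\langle \ind_{\B(\cO_\ell)}^{\G(\cO_\ell)}(\chi_1\chi_3,\chi_2\chi_4),\, \ind_{\B(\cO_\ell)}^{\G(\cO_\ell)}(\omega_1,\omega_2)\rangle \;=\; \sum_{g}\, \langle (\chi_1\chi_3,\chi_2\chi_4),\, (\omega_1,\omega_2)^{g}\rangle_{\B(\cO_\ell)\cap\B(\cO_\ell)^{g}},
\]
where $g$ runs over the double coset representatives $w=\smat{0}{1}{1}{0}$, $\mathrm{I}$, and $g_i=\smat{1}{0}{\nonsq\pi^i}{1}$ for $i\in[1,\ell-1]$, and I then argue that every summand vanishes.

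For $g\in\{\mathrm{I},w\}$ the intersections are $\B(\cO_\ell)$ and $\T(\cO_\ell)$ respectively, so a nonzero contribution would force $(\chi_1\chi_3,\chi_2\chi_4)$ to agree with either $(\omega_1,\omega_2)$ or $(\omega_2,\omega_1)$ on $\rl^\times$. Restricting to $1+\pi^{\ell-1}\cO_\ell$, this would force $\omega_1\omega_2^{-1}|_{1+\pi^{\ell-1}\cO_\ell}$ to equal $\chi_1\chi_3(\chi_2\chi_4)^{-1}|_{1+\pi^{\ell-1}\cO_\ell}$ or its inverse; both are trivial by the non-$\ss$-pair hypothesis, contradicting $(\omega_1,\omega_2)\in\mathfrak{S}$. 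For $g=g_i$, I will test the equality of the two characters on $\B^i=\B(\cO_\ell)\cap\B(\cO_\ell)^{g_i}$ using the diagonal family
\[
Y_s \;:=\; \mathrm{diag}(1+\pi^{\ell-1}s,\,1-\pi^{\ell-1}s),\qquad s\in\cO_\ell.
\]
Each $Y_s$ lies in $\B^i$ (the defining congruence $a''\equiv c''-\nonsq\pi^i b''\pmod{\pi^{\ell-i}}$ is automatic since $b''=0$ and $\pi^{\ell-1}\cO_\ell\subseteq\pi^{\ell-i}\cO_\ell$) and in $\G(\cO_\ell)$ (the unitary relation reduces to $1-\pi^{2(\ell-1)}s^2=1$, valid for $\ell\geq 2$). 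A direct matrix computation yields $g_i^{-1}Y_s g_i=Y_s$, because the only potential $(2,1)$-entry is the multiple $-2\nonsq\pi^{\ell+i-1}s$, which vanishes for $i\geq 1$. Evaluating both characters on $Y_s$ and varying $s\in\cO_\ell$ forces
\[
\chi_1\chi_3(\chi_2\chi_4)^{-1}\big|_{1+\pi^{\ell-1}\cO_\ell} \;=\; \omega_1\omega_2^{-1}\big|_{1+\pi^{\ell-1}\cO_\ell};
\]
combined with the non-$\ss$-pair hypothesis this gives $\omega_1\omega_2^{-1}|_{1+\pi^{\ell-1}\cO_\ell}=1$, again contradicting $(\omega_1,\omega_2)\in\mathfrak{S}$.

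No step is expected to present a genuine obstacle beyond identifying the right test family for the $g_i$ case; the argument is a mild variant of the earlier Mackey-theoretic computation for two $\ss$-pairs, with the roles of the two pairs swapped so that the non-$\ss$-pair hypothesis on the first pair is used to rule out matching characters against the $\ss$-pair hypothesis on the second pair.
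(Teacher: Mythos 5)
Your proof is correct. Part~(2) is indeed a tautology once one unwinds the definition of $S_1^k$. For part~(1) you run the same Mackey double-coset computation that underlies the paper's earlier irreducibility proof for $\ind_{\B}^{\G}$ of an $\ss$-pair, and the key choice is the diagonal test family $Y_s=\mathrm{diag}(1+\pi^{\ell-1}s,\,1-\pi^{\ell-1}s)$: this is exactly what is needed here, because the paper's earlier test family $X_b$ detects failure of the $\ss$-condition on the \emph{first} pair (and that is precisely the hypothesis you have lost), whereas $Y_s$ is fixed by $g_i$-conjugation and detects the $\ss$-condition on the \emph{second} pair, which is still in force. Your computation of $g_i^{-1}Y_sg_i=Y_s$ and the membership $Y_s\in\B^i\cap\G(\cO_\ell)$ (including the unitary case, using $s=s^\circ$ and $\ell\ge 2$) check out. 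The paper gives essentially no proof beyond ``follows from the characterisation of $\ss$-pairs,'' so one cannot say with certainty which route the authors intended; a slightly slicker alternative is to restrict both induced representations to $\K^{\ell-1}$ and observe that the $\mathrm{Ad}$-orbits of one-dimensional characters $\psi_B$ appearing are disjoint --- for the non-$\ss$-pair $(\chi_1\chi_3,\chi_2\chi_4)$ the restriction to $\B\cap\K^{\ell-1}$ factors through the trace, forcing the $B$'s to be scalar or $\sns$-type, while for the $\ss$-pair $(\omega_1,\omega_2)$ only regular $\ss$-type $B$'s occur --- but your argument is equally valid and closer in spirit to the surrounding computations in Section~\ref{sec:proof-for-Sigma-4}.
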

\begin{proof}
This follows from the characterisation of $\ss$-pairs.
\end{proof}

The proof of \autoref{thm:main-theorem-2}(4) follows from \autoref{eqn: ind B tensor ind B}, \autoref{prop:condition for ss in phi i}, \autoref{prop:Induction T to G SS multiplicities},  \autoref{lem:b to G ind} and the fact that  $S_j^k \cap  S_{j'}^{k'}=\emptyset $ for all $k,k' \in [0, \ell-1] $ and  $j,j'\in [1,4]$ such that  $j\neq j'$.

\begin{remark}
  The multiplicity $\ell+1$ is always achieved by a split semisimple representation in $\ind_{\B(\co_\ell)}^{\G(\co_\ell)} (\chi_1,\chi_2) \otimes \ind_{\B(\co_\ell)}^{\G(\co_\ell)} (\chi_3,\chi_4)$. For proving this we note that for odd $p$, either $(\chi_1\chi_3,\chi_2\chi_4)$ or $(\chi_1\chi_4,\chi_2\chi_3)$ is $\ss$-pair. Hence, using \autoref{prop:condition for ss in phi i},  either
  \[\langle \ind_{\B(\co_\ell)}^{\G(\co_\ell)} (\chi_1,\chi_2) \otimes \ind_{\B(\co_\ell)}^{\G(\co_\ell)} (\chi_3,\chi_4), \ind_{\B(\co_\ell)}^{\G(\co_\ell)}(\chi_1\chi_3,\chi_2\chi_4)\rangle = \ell+1,\] or
  \[\langle \ind_{\B(\co_\ell)}^{\G(\co_\ell)} (\chi_1,\chi_2) \otimes \ind_{\B(\co_\ell)}^{\G(\co_\ell)} (\chi_3,\chi_4), \ind_{\B(\co_\ell)}^{{\G(\co_\ell)}(\co_\ell)}(\chi_1\chi_4,\chi_2\chi_3)\rangle = \ell+1.\]
\end{remark}

\section{Proof of \autoref{thm:main-theorem-2}(5)}
\label{sec:results-Sigma-5}

In this section, we  will prove \autoref{thm:main-theorem-2}(5) by giving an example of split non-semisimple irreducible representation $\rho$ of $\G(\cO_\ell)$  such that $\langle \rho \otimes \rho, \rho \rangle \geq (q-2)q^{\lfloor \frac{\ell}{2} \rfloor-1}.$ We will also give slightly more general results for the case $\lfloor \frac{\ldown}{2} \rfloor\geq 2.$
 
 Let $A = \smat00\nonsq0 \in \g(\cO_\ldown)$.
  For $ i \in [\lceil \ell_1/2 \rceil, \ell_1]$, 
 let 
 $$\mathcal{X}_i =
 \left\{\mat{a}{\pi^i b}{0}{c} \in \G(\cO_{\ell}) \mid a,b,c \in R_\ell^\times , a+c\in R_\ell^\times \right\} .$$
 
\begin{proposition}\label{prop:double coset rep SNS with SNS}
Let $ i,j \in [\lceil \ell_1/2 \rceil , \ell_1].$
    \begin{enumerate}
        \item If $i\neq j,$ then $\{S_{A}gS_{A}\mid g\in \mathcal{X}_{i} \}\cap \{S_{A}hS_{A}\mid h\in \mathcal{X}_{j} \}=\emptyset.$
        \item For $k \in \{1,2\}$, let $g_k=\smat{a_k}{\pi^ib_k}{0}{c_k} \in \mathcal{X}_i.$ Then $S_{A}g_1S_{A}=S_{A}g_2S_{A}$ if and only if $\pi^ia_1^{-1}b_1=\pi^ia_2^{-1}b_2   \mod (\pi^{\ell_1})$ and $a_1^{-1}c_1=a_2^{-1}c_2  \mod (\pi^i).$
  \item  $|\{S_{A}gS_{A}\mid g\in \mathcal{X}_{i} \}|=\begin{cases}
            (q-1) (q-2)q^{\ell_1-2}_{\,,}& \mathrm{if}\, i<\ell_1;\\
            (q-2)q^{\ell_1-1}_{\,,}& \mathrm{if}\, i=\ell_1.
        \end{cases}$
           \end{enumerate}
\end{proposition}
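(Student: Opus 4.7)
The plan is to classify the double cosets meeting $\mathcal{X}_i$ by the conjugacy invariant of $g \tilde A g^{-1}$ modulo $\pi^{\ell_1}$. The key reduction is: $S_A g_1 S_A = S_A g_2 S_A$ if and only if there exists $s \in S_A$ with $g_1 \tilde A g_1^{-1} \equiv s\, (g_2 \tilde A g_2^{-1})\, s^{-1} \pmod{\pi^{\ell_1}}$. The forward direction is immediate from $s \tilde A s^{-1} \equiv \tilde A \pmod{\pi^{\ell_1}}$ for every $s \in S_A$; for the converse, setting $h := g_1^{-1} s^{-1} g_2 \in \G(\cO_\ell)$ yields $h \tilde A h^{-1} \equiv \tilde A \pmod{\pi^{\ell_1}}$, so $h \in S_A$ and $g_2 = s g_1 h \in S_A g_1 S_A$. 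Thus the double cosets meeting $\mathcal{X}_i$ correspond bijectively to the $\mathrm{Ad}(S_A)$-orbits on $\{g \tilde A g^{-1} \pmod{\pi^{\ell_1}} : g \in \mathcal{X}_i\}$.

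A direct matrix computation for $g = \smat{a}{\pi^i b}{0}{c} \in \mathcal{X}_i$ gives
\[
g \tilde A g^{-1} = \mat{\pi^i \nonsq a^{-1} b}{-\pi^{2i} \nonsq b^2 (ac)^{-1}}{\nonsq a^{-1} c}{-\pi^i \nonsq a^{-1} b}.
\]
Since $i \geq \lceil \ell_1/2 \rceil$ forces $2i \geq \ell_1$, the $(1,2)$ entry vanishes modulo $\pi^{\ell_1}$, and $g \tilde A g^{-1}$ reduces to $\smat{\alpha}{0}{\gamma}{-\alpha}$ with $\alpha = \pi^i \nonsq a^{-1} b$ and $\gamma = \nonsq a^{-1} c$. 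The image of $S_A$ in $\G(\cO_{\ell_1})$ is the centraliser $\C_{\G(\cO_{\ell_1})}(A) = \{\smat{x}{0}{y}{x}\} \cap \G(\cO_{\ell_1})$, and a direct conjugation computation sends $\smat{\alpha}{0}{\gamma}{-\alpha}$ to $\smat{\alpha}{0}{\gamma + 2\alpha y x^{-1}}{-\alpha}$. For $\GL_2$ the parameter $y$ ranges freely in $\cO_{\ell_1}$; for $\GU_2$ the unitary constraint on $(x,y)$ forces $y x^{-1} \in \nonsq \cO_{\ell_1}$, but $\nonsq$ is a unit and so the orbit has the same shape. Either way, the orbit of $\gamma$ is $\gamma + \alpha \cO_{\ell_1}$ (using that $2$, $b$ and $\nonsq$ are units), so a complete set of invariants of the orbit is the pair $(\pi^i \nonsq a^{-1} b \pmod{\pi^{\ell_1}},\; \nonsq a^{-1} c \pmod{\pi^i})$.

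Parts (1) and (2) now follow immediately: when $i < \ell_1$, the $\pi$-adic valuation of $\alpha$ is exactly $i$, while for $i = \ell_1$ the invariant $\alpha$ vanishes, so distinct values of $i$ produce orbits with invariants of distinct $\pi$-adic valuation; this gives (1). Statement (2) is the direct translation of the equality of invariants after cancelling the common unit $\nonsq$. For (3), the two invariants are independent: the first takes $(q-1) q^{\ell_1 - i - 1}$ values for $i < \ell_1$ (parameterised by $(\cO_{\ell_1 - i})^\times$) and a single value for $i = \ell_1$; the second takes $(q-2) q^{i-1}$ values, since $a^{-1} c$ ranges over $(\cO_i)^\times$ subject to $a^{-1} c \not\equiv -1 \pmod{\pi}$, which is precisely the translation of the constraint $a + c \in \cO_\ell^\times$ (in the $\GU_2$ case this uses surjectivity of the norm map $\Lri_\ell^\times \to \cO_\ell^\times$). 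Multiplying gives the claimed cardinalities. The main obstacle is the $\GU_2$ bookkeeping: the unitary relations $c = (a^\circ)^{-1}$ and $a^\circ b + a b^\circ = 0$ restrict the allowable $(a,b,x,y)$, and one must verify that the invariants are only rescaled by units (such as $\nonsq$ and $N(a) = a a^\circ$) so that neither the orbit structure nor the counts are affected.
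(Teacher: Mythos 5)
Your proof is correct, and it takes a genuinely different and more unified route than the paper's. The paper proves (1), (2), (3) by largely separate direct manipulations: (1) by tracking the $(1,2)$-entry of elements of $S_A g S_A$; (2) by expanding $s_1 g_1 \equiv g_2 s_2 \pmod{\pi^{\ell_1}}$ with $s_1, s_2$ reduced to the form $\smat{x}{0}{z}{x}$, equating entries, and then exhibiting explicit conjugating elements $X$, $Y$ for the converse; and (3) by counting the resulting normal form $(\pi^i a^{-1}b \bmod \pi^{\ell_1},\, a^{-1}c \bmod \pi^i)$. You instead note at the outset that $S_A=\{g : g\tilde{A} g^{-1}\equiv \tilde{A} \pmod{\pi^{\ell_1}}\}$, so that double cosets $S_A g S_A$ biject with $\mathrm{Ad}(S_A)$-orbits on $\{g\tilde{A} g^{-1}\bmod\pi^{\ell_1}\}$, and then the whole proposition drops out of a single computation of these orbits under the centraliser $\C_{\G(\cO_{\ell_1})}(A)$. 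This yields exactly the invariant the paper arrives at, but derived conceptually and in one stroke; it is also closer in spirit to the cuspidal-case double-coset analysis of \autoref{sec:double-coset-description}, where the matrix $D$ plays the role of a conjugacy invariant. Two small slips, neither affecting correctness: in the converse of your key reduction you should take $h:=g_1^{-1}sg_2$ (so $g_2=s^{-1}g_1h$) rather than $h:=g_1^{-1}s^{-1}g_2$, since only the former directly gives $h\tilde{A}h^{-1}\equiv\tilde{A}$; and for $\GU_2$ the orbit of $\gamma$ is $\gamma+2\alpha\nonsq\cO_{\ell_1}=\gamma+\pi^i\nonsq\cO_{\ell_1}$, not literally $\gamma+\alpha\cO_{\ell_1}$ (a different subset of $\Lri_{\ell_1}$), though after dividing by $\nonsq$ — which is a unit of $\Lri_{\ell_1}$ but does not lie in $\cO_{\ell_1}$ — this does produce the stated invariant $a^{-1}c\bmod\pi^i$, so your parenthetical caveat about the $\GU_2$ bookkeeping is exactly where the care is needed.
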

\begin{proof}
Note that $S_A=\left\{ \smat{x}{\pi^{\ldown} y}{z}{x+\pi^{\ldown} w} \mid x,y,z,w \in \rl\right\}\cap \G(\cO_\ell). $
Therefore, for $g\in \mathcal{X}_{i},$ it is easy to see that $(1,2)^\mathrm{th}$ entry of  any $X\in  S_A gS_A$ is in $\pi^i \rl^\times + \pi^{\ldown}\rl.$ This implies (1).

    To show (2), let $S_{A}g_1S_{A}=S_{A}g_2S_{A}$. Then there exist $x_1,x_2\in \rl^\times$ and $z_1,z_2\in \rl$ such that $\smat{x_1}{0}{z_1}{x_1}g_1=g_2  \smat{x_2}{0}{z_2}{x_2} \mod (\pi^\ldown)$.
    This gives
    \begin{equation}\label{eqn:adww1}
        \mat{a_1 x_1-a_2 x_2- \pi ^ib_2  z_2}{\pi ^i \left(b_1 x_1-b_2
   x_2\right) }{
 a_1 z_1-c_2 z_2 }{c_1 x_1-c_2 x_2 + \pi ^i b_1  z_1 }=0  \mod (\pi^\ldown)
    \end{equation}
    Equating the $(1,1)^\mathrm{th}$ entry on both sides of \autoref{eqn:adww1}, 
we obtain that  $x_1=a_1^{-1}a_2 x_2  \mod (\pi^i ).$  
Since $2i\geq \ldown$,
by substituting this value of $x_1$ into the second column on the left-hand side of \autoref{eqn:adww1} and simplifying, we obtain
 $\pi^ia_1^{-1}b_1=\pi^ia_2^{-1}b_2   \mod (\pi^{\ell_1})$ and $a_1^{-1}c_1=a_2^{-1}c_2  \mod (\pi^i).$ To prove the converse, let 
$\pi^ia_1^{-1}b_1=\pi^ia_2^{-1}b_2   \mod (\pi^{\ell_1})$ and $a_1^{-1}c_1=a_2^{-1}c_2  \mod (\pi^i).$
If $i=\ldown$, then $g_1\smat{a_1^{-1}a_2}{0}{0}{c_1^{-1}c_2}=g_2 \mod (\pi^\ldown)$. It is straightforward to see that $\smat{a_1^{-1}a_2}{0}{0}{c_1^{-1}c_2}\in S_A$. Therefore $S_{A}g_1S_{A}=S_{A}g_2S_{A}$ for $i=\ldown$. Let $i<\ldown$. Then we have $a_1^{-1}b_1=a_2^{-1}b_2   \mod (\pi^{\ell_1-i})$, and hence $a_1^{-1}b_1+a_2^{-1}b_2\in \rl^\times.$ For $i\in \{1,2\}$, since $g_i\in \G(\cO_\ell)$, we have $a_i^{-1}c_i\in \cO_\ell$ and $ a_i^{-1}b_i \in \nonsq\cO_\ell$. Therefore 
$\frac{a_2^{-1}c_2-a_1^{-1}c_1}{a_1^{-1}b_1+a_2^{-1}b_2}=\pi^i \nonsq d$ for some $d\in \cO_\ell$.
Let $X=\smat{1}{0}{\nonsq d}{1}$ and $Y=\smat{\frac{a_1}{a_2}-\frac{\pi ^i \nonsq d a_1 b_2   }{a_2 c_2}}{0}{\frac{a_1 d \nonsq }{c_2}}{\frac{a_1}{a_2}-\frac{\pi ^i \nonsq d a_1 b_2   }{a_2 c_2}}  .$ 
  By direct calculation, we have 
  \begin{equation*}
Xg_1-g_2Y=\mat{0}{\pi ^i \left(b_1-\frac{a_1 b_2}{a_2}\right)}{0}{
 c_1 -\frac{a_1
   c_2}{a_2}+ \pi ^i \nonsq  d  \left(\frac{a_1 b_2}{a_2}+b_1\right)}=0 \mod (\pi^{\ell_1} ).
  \end{equation*}
  Note that  $X \in \G(\co_\ell)$, and hence   $g_2^{-1}Xg_1\in \G(\cO_\ell).$  
  Since $Y=g_2^{-1}Xg_1\mod (\pi^{\ell_1} )$ and the map $\rho_{\ell, \ldown}:\G(\cO_\ell)\rightarrow \G(\cO_\ldown)$ is a projection,  there exists $Z\in M_2(\rl)$ such that $Y+\pi^{\ell_1}Z\in \G(\cO_\ell).$
Note that $Y+\pi^{\ell_1}Z\in S_{A}$ and $Xg_1=g_2(Y+\pi^{\ell_1}Z)\mod (\pi^{\ell_1} ).$ Therefore $S_{A} g_1 S_{A}=S_{A} g_2 S_{A}.$ 
  
To show (3),
    let $$\mathcal{D}_i=\{(\mathrm{Proj}_{\ell_1}(\pi^ia^{-1}b),\mathrm{Proj}_{i}(a^{-1}c))\in R_{\ell_1}\times R_i \mid \smat{a}{\pi^i b}{0}{c} \in \mathcal{X}_{i}\},$$ where $\mathrm{Proj}_{\ell_1}:R_\ell \rightarrow R_{\ell_1}$ and $\mathrm{Proj}_{i}:R_\ell \rightarrow R_{i}$ are canonical projections.  From (2), we obtain 
    $|\{S_{A}gS_{A}\mid g\in \mathcal{X}_{i} \}|=|\mathcal{D}_i|.$ For $\G=\GL_2,$ we have 
    $\mathcal{D}_i=\{(\mathrm{Proj}_{\ell_1}(\pi^i d),\mathrm{Proj}_{i}(e))
    \mid d\in \cO_\ell^\times, e \in \cO_\ell^\times \setminus (-1+\pi\cO_\ell)\}.$
    Therefore, for $\G = \GL_2$, we obtain  that
    $$|\{S_{A}gS_{A}\mid g\in \mathcal{X}_{i} \}|=\begin{cases}
            (q-1) (q-2)q^{\ell_1-2}_{\,,}& \mathrm{if}\, i<\ell_1;\\
            (q-2)q^{\ell_1-1}_{\,,}& \mathrm{if}\, i=\ell_1.
        \end{cases}$$
        We next consider  $\G=\GU_2.$ For this case, 
      $\smat a {\pi^ib}0c \in \mathcal{X}_i$ if and only if $a,b,c \in \Lri_\ell^\times$ with $a^{-1}=c^\circ$, $a^{-1}b\in\nonsq\cO_\ell^\times$ and $a+c\in \Lri_\ell^\times$. We also have $\{c^\circ c\mid c \in \Lri_\ell^\times\, \mathrm{with} \, c^\circ c+1\in \Lri_\ell^\times\}=\cO^\times_\ell\setminus (-1+\pi\cO_\ell).$
      Therefore $$\mathcal{D}_i=\{(\mathrm{Proj}_{\ell_1}(\pi^i d),\mathrm{Proj}_{i}(e))
    \mid d\in \nonsq\cO_\ell^\times, e \in \cO_\ell^\times \setminus (-1+\pi\cO_\ell)\}.$$
   Using  $|\{S_{A}gS_{A}\mid g\in \mathcal{X}_{i} \}|=|\mathcal{D}_i|,$ the result follows for $\G = \GU_2$ also. 
\end{proof} 
\begin{proof}[Proof of \autoref{thm:main-theorem-2}(5)]
Recall the construction of split non-semisimple regular representations from \autoref{E.construction} for even $\ell$ and from   \autoref{subsec:alternate const for sns} for odd $\ell$. Fix a  Serre lift $\ti{A} = \smat00\nonsq0 \in \g(\cO_\ell)$ of $A$. 
Recall that $\mathrm{N}=\left \{\left[\begin{smallmatrix}
      1+\pi^{\ldown}x & \pi^{\lup}z\\
       \pi^{\ldown}y    & 1+\pi^{\ldown}w
\end{smallmatrix}\right] \mid x,y,z,w \in \rl \right \} \cap \G(\co_\ell),$ and let  $H:=\mathrm{N} \mathrm{C}_{\G(\co_\ell)}(\ti{A})$. Note that for even $\ell$, we have $H=S_A$.
Consider the extension $\psi_{\tilde{A}}$ of $\psi_A$ to $\mathrm{N}$ defined by $\psi_{\tilde{A}}(\I + \pi^\ldown B)=\psi(\pi^{\ldown}\bm{tr}(\tilde{A}B))$ for $\I + \pi^\ldown B \in \mathrm{N}$.
Let $\phi$ be the  character  of $H$ such that $\phi|_{N} = \psi_{\tilde{A}}$ and $\phi|_{\mathrm{C}_{\G(\co_\ell)}(\ti{A})}=1$.
Define $\rho = \ind_{H}^{\G(\co_\ell)}{\phi}$. Then $\rho$ is a split non-semisimple irreducible representation of $\G(\cO_\ell)$. We will prove that  $\langle \rho \otimes \rho, \rho \rangle \geq (q-2)q^{\ldown-1}.$   Note that
    \begin{equation}\label{eqn:tre1}
       \rho \otimes \rho \cong \ind_{H}^{\G(\co_\ell)}{\phi} \otimes \ind_{H}^{\G(\co_\ell)}{\phi} \cong \underset{g\in H \backslash 
        \G(\co_\ell) / H }{\oplus} \ind_{H\cap H^g}^{\G(\co_\ell)}(\phi\otimes \phi^g).
    \end{equation}
We claim that for  $g\in  \mathcal{T}:=\left\{\smat{a}{0}{0}{c}\in \G(\cO_\ell)| \, a+c\in \rl^\times\right\}$, $H\cap H^g=H$ and $\ind_{H\cap H^g}^{\G(\co_\ell)}(\phi\otimes \phi^g)\cong \rho$. By assuming the claim, from \autoref{eqn:tre1} we obtain 
\begin{equation}\label{eqn:tre2}
    \langle \rho \otimes \rho, \rho \rangle \geq |\{HgH\mid g\in \mathcal{T}\}| \geq |\{S_AgS_A\mid g\in \mathcal{T} \}|.
\end{equation}
    Note that for $\smat{a}{\pi^\ldown b}{0}{c} \in \mathcal{X}_\ldown$, we have $\smat{a}{\pi^\ldown b}{0}{c} =\smat{a}{0}{0}{c}\smat{1}{\pi^\ldown a^{-1}b}{0}{1}\in \smat{a}{0}{0}{c} S_A.$ Therefore 
$|\{S_AgS_A\mid g\in \mathcal{T}\}|= |\{S_AgS_A\mid g\in \mathcal{X}_\ldown\}|$.  Now the result directly follows from \autoref{eqn:tre2} and  \autoref{prop:double coset rep SNS with SNS}(3).

    To show the claim, let $g=\smat{a}{0}{0}{c}\in \mathcal{T}$. By direct computations, it is straightforward  that $H\cap H^g=H$.
   To show $\ind_{H\cap H^g}^{\G(\co_\ell)}(\phi\otimes \phi^g)\cong \rho$, it is enough to show that $\phi\otimes \phi^g=\phi^h$ for some $h\in \G(\cO_\ell)$.
Let $ h= \smat{d}{0}{0}{d(1+a^{-1}c)}$, where $d=1$ for $\G=\GL_2$ and $d\in \Lri_\ell$ such that $d^\circ d=(1+a^{-1}c)^{-1}$ for $\G=\GU_2$.
Then $h\in \G(\cO_\ell)$. Note that $\mathrm{C}_{\G(\co_\ell)}(\ti{A})^g=\mathrm{C}_{\G(\co_\ell)}(\ti{A})$ and $\mathrm{C}_{\G(\co_\ell)}(\ti{A})^h=\mathrm{C}_{\G(\co_\ell)}(\ti{A})$. Therefore we have
\begin{equation}\label{eqn:uyt}
    \phi(X)\phi(g^{-1}Xg)=1=\phi(h^{-1}Xh) \, \text{ for all }\, X\in \mathrm{C}_{\G(\co_\ell)}(\ti{A}).
\end{equation}
For $ Y=\I+\pi^\ldown\smat{ x}{\pi^{\lup-\ldown} y}{ z}{ w}\in \mathrm{N}$, we have $g^{-1}Yg=\I+\pi^\ldown\smat{ x}{\pi^{\lup-\ldown} a^{-1}c y}{ c^{-1}a z}{ w}$ and $h^{-1}Yh= \I+\pi^\ldown\smat{ x}{\pi^{\lup-\ldown} (1+ a^{-1}c ) y}{ (1+ a^{-1}c )^{-1}z}{ w}$, and hence we obtain that 
$$\phi(Y)\phi(g^{-1}Yg)= \psi(\pi^\lup \nonsq y)\psi(\pi^\lup \nonsq \, a^{-1}c y)= \psi(\pi^\lup \nonsq  (1+ a^{-1}c )y)=\phi(h^{-1}Yh). $$
This, together with \autoref{eqn:uyt}, implies that $\phi\otimes \phi^g=\phi^h$. Hence, the claim holds.
\end{proof}

 We are also able to prove the following stronger result for $\ell \geq 2$. For $\ell$ such that $\lfloor \frac{\ldown}{2} \rfloor \geq 2,$ this result also proves \autoref{cor:residue-dependence}. 

\begin{thm}\label{thm:SNS with SNS} 
 	Let $A=\smat{0}{0}{\nonsq}{0}\in \g(\cO_{\ell_1}).$  For any $\rho_1,\rho_2\in \mathrm{Irr}(\G(\cO_\ell)\mid \psi_{A}),$  there exists  $\rho\in \mathrm{Irr}(\G(\cO_\ell)\mid \psi_{A}) $ such that  
 		$$	    \langle \rho_1 \otimes \rho_2 , \rho \rangle \geq
         (q-2)q^{\lfloor \ell_1/2 \rfloor+\ell_1-\ell_2-1}. $$
 \end{thm}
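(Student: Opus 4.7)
The plan is to extend the proof strategy of \autoref{thm:main-theorem-2}(5) to arbitrary $\rho_1,\rho_2 \in \mathrm{Irr}(\G(\cO_\ell)\mid \psi_A)$. Writing $\rho_j = \mathrm{Ind}_{S_A}^{\G(\cO_\ell)}(\phi_j)$ with $\phi_j \in \mathrm{Irr}(S_A\mid \psi_A)$ for $j\in\{1,2\}$, Mackey's formula gives
\[
\rho_1 \otimes \rho_2 \cong \bigoplus_{g \in S_A \backslash \G(\cO_\ell) / S_A} \mathrm{Ind}_{S_A \cap S_A^g}^{\G(\cO_\ell)}(\phi_1 \otimes \phi_2^g),
\]
and for any target $\rho = \mathrm{Ind}_{S_A}^{\G(\cO_\ell)}(\phi) \in \mathrm{Irr}(\G(\cO_\ell) \mid \psi_A)$, Frobenius reciprocity reduces the multiplicity to
\[
\langle \rho_1 \otimes \rho_2, \rho\rangle \;=\; \sum_{g \in S_A \backslash \G(\cO_\ell) / S_A} \langle \phi_1 \otimes \phi_2^g,\ \rho|_{S_A \cap S_A^g}\rangle.
\]

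Next I would focus the analysis on double cosets drawn from $\mathcal{X}_{\ell_1}$. For $g = \smat{a}{\pi^{\ell_1} b}{0}{c} \in \mathcal{X}_{\ell_1}$, a direct computation (entirely parallel to the one in the proof of \autoref{thm:main-theorem-2}(5)) shows that $\tilde A + g\tilde A g^{-1} \equiv (1 + ca^{-1})\tilde A \pmod{\pi^{\ell_1}}$, which is $\mathrm{Ad}$-equivalent to $\tilde A$ in $\g(\cO_{\ell_1})$ since $a + c \in \rl^\times$. Consequently, after post-composing $g$ with a suitable element of $\mathrm{C}_{\G(\cO_\ell)}(\tilde A)$, we may assume $\phi_1 \otimes \phi_2^g$ restricts to $\psi_A$ on $\K^{\lup}$. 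By \autoref{prop:double coset rep SNS with SNS}(3), this produces $(q-2)q^{\ell_1-1}$ distinct double cosets, each of whose Mackey contribution lies above $\psi_A$ and so contributes to some $\rho \in \mathrm{Irr}(\G(\cO_\ell) \mid \psi_A)$.

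To extract the stated lower bound, I would then group these $(q-2)q^{\ell_1-1}$ double cosets by the $\G(\cO_\ell)$-conjugacy class of the representation data $(S_A \cap S_A^g,\ \phi_1 \otimes \phi_2^g)$, so that all $g$ in a single class contribute copies of the same target $\rho$. The number of such equivalence classes is governed by the number of extensions of $\psi_A$ to $\mathrm{C}_{\G(\cO_\ell)}(\tilde A)$ that can be distinguished from the tensor data; this is bounded above by a power of $q$ whose exponent equals $\lceil \ell_1/2 \rceil + (\ell_2 - \ell_1)$. Applying the pigeonhole principle to the $(q-2)q^{\ell_1-1}$ double cosets distributed among these classes yields at least one class—and hence a target $\rho$—receiving multiplicity at least $(q-2)q^{\lfloor \ell_1/2\rfloor + \ell_1 - \ell_2 - 1}$, as claimed.

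The main obstacle is in the grouping/counting step of the previous paragraph: one must identify the $\G(\cO_\ell)$-conjugacy invariants of $\phi_1 \otimes \phi_2^g$ as a representation of $S_A \cap S_A^g$, and verify that the equivalence classes have the expected size. For even $\ell$ this reduces to a character computation on the abelian quotient $\mathrm{C}_{\G(\cO_\ell)}(\tilde A)/(\mathrm{C}_{\G(\cO_\ell)}(\tilde A) \cap \K^{\ell_1})$, where diagonal conjugation by $g \in \mathcal{X}_{\ell_1}$ acts explicitly. For odd $\ell$, the $q$-dimensional $\phi_j$ are built via the Heisenberg lift of \autoref{subsec:alternate const for sns}, and one must additionally match the lift data across different $g$'s while remaining compatible with the normalization of $g$ modulo $S_A$ described in \autoref{prop:double coset rep SNS with SNS}(2). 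The exponent $\lfloor \ell_1/2\rfloor + \ell_1 - \ell_2 - 1$ records precisely the loss coming from these matching constraints.
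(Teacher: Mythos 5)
Your proposal diverges from the paper in two substantive ways, and the divergence hides a genuine gap.

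First, you restrict attention to the double cosets coming from $\mathcal{X}_{\ell_1}$ alone. The paper's proof of this theorem builds the subrepresentation $\Theta$ out of \emph{all} levels $\mathcal{X}_i$ for $\lceil \ell_1/2\rceil \le i \le \ell_1$, and the exponent $\lfloor \ell_1/2\rfloor$ in the final bound arises precisely from the telescoping sum
\[
\sum_{\lceil \ell_1/2\rceil \le i \le \ell_1-1}\frac{q-1}{q^i}+\frac{q}{q^{\ell_1}}=\frac{1}{q^{\lceil \ell_1/2\rceil-1}}.
\]
If one feeds only the $i=\ell_1$ piece into the paper's dimension argument (Lemma~\ref{lem:multiplicity geq}), one obtains $(q-2)q^{\ell_1-\ell_2-1}$, which is short of the claim by the factor $q^{\lfloor \ell_1/2\rfloor}$ you are trying to recover elsewhere.

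Second, to recover that missing factor you replace the paper's pigeonhole with a different one: you partition the $(q-2)q^{\ell_1-1}$ double cosets into bins according to $\G(\cO_\ell)$-conjugacy of the Mackey data $(S_A\cap S_A^g,\ \phi_1\otimes\phi_2^g)$, and you assert that the number of bins is at most $q^{\lceil \ell_1/2\rceil+\ell_2-\ell_1}$. This assertion is the entire content of the proof, and it is neither proved nor, as far as I can tell, easy to establish. For instance, a short computation shows that for $g=\smat{a}{\pi^{\ell_1}b}{0}{c}\in\mathcal{X}_{\ell_1}$ one has $(\phi_1\otimes\phi_2^g)|_{\K^{\ell_1}}=\psi_{(1+a/c)A}$, and Proposition~\ref{prop:double coset rep SNS with SNS}(2) shows that distinct double cosets in $\mathcal{X}_{\ell_1}$ give pairwise distinct values of $a^{-1}c$ modulo $\pi^{\ell_1}$; thus your $(q-2)q^{\ell_1-1}$ characters $\phi_1\otimes\phi_2^g$ of $S_A$ are already pairwise distinct before any conjugation is allowed. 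Reducing to $q^{\lceil \ell_1/2\rceil+\ell_2-\ell_1}$ conjugacy classes therefore requires a very precise description of which of these characters become conjugate under $N_{\G(\cO_\ell)}(S_A)$, and for odd $\ell$ one further has to show that all double cosets in a bin genuinely contribute the \emph{same} irreducible constituent after the Heisenberg lift, not merely isomorphic Mackey pieces. You acknowledge this step is ``the main obstacle,'' but the paper's Lemma~\ref{lem:multiplicity geq} was introduced exactly to bypass it: it converts the problem into a dimension count that never needs to know how the double cosets distribute among irreducibles. Without the bin bound your argument does not close, and I do not see how to supply it with an effort comparable to the paper's.
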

For its proof, we require the following general result. 

\begin{lemma}\label{lem:multiplicity geq}
    Let $H$ be a subgroup of a finite group $G.$ Suppose $\theta$ and $\chi$ are representations of $G$ and $H$ respectively such that  
    $\{\rho\in \mathrm{Irr}(G)\mid \langle 
     \rho,\theta\rangle\neq 0 \}\subseteq\mathrm{Irr}(G\mid \chi).$ Then there exists a representation $\rho\in \mathrm{Irr}(G\mid \chi)$ such that $\langle \rho,\theta\rangle \geq \frac{\dim(\theta)}{\dim(\mathrm{Ind}_{H}^{G}(\chi))}.$
\end{lemma}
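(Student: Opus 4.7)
The plan is to decompose $\theta$ into irreducibles, use the hypothesis to locate all constituents inside $\mathrm{Ind}_H^G(\chi)$, and then run a pigeonhole argument comparing dimensions.

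First I would write $\theta = \bigoplus_{i} m_i \rho_i$ with the $\rho_i \in \mathrm{Irr}(G)$ distinct and $m_i \geq 1$, so $\dim(\theta) = \sum_i m_i \dim(\rho_i)$. By hypothesis every $\rho_i$ lies in $\mathrm{Irr}(G \mid \chi)$, which by Frobenius reciprocity is equivalent to saying that each $\rho_i$ appears as an irreducible constituent of $\mathrm{Ind}_H^G(\chi)$ with multiplicity $n_i := \langle \rho_i, \mathrm{Ind}_H^G(\chi)\rangle_G \geq 1$.

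Next I would bound $\sum_i \dim(\rho_i)$ by $\dim(\mathrm{Ind}_H^G(\chi))$. Indeed
\[
\dim(\mathrm{Ind}_H^G(\chi)) \;=\; \sum_{\rho \in \mathrm{Irr}(G\mid \chi)} \langle \rho, \mathrm{Ind}_H^G(\chi)\rangle \dim(\rho) \;\geq\; \sum_i n_i \dim(\rho_i) \;\geq\; \sum_i \dim(\rho_i),
\]
since $\{\rho_i\} \subseteq \mathrm{Irr}(G\mid \chi)$ and each $n_i \geq 1$.

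Finally, setting $M = \max_i m_i$, the elementary estimate
\[
\dim(\theta) \;=\; \sum_i m_i \dim(\rho_i) \;\leq\; M \sum_i \dim(\rho_i) \;\leq\; M \cdot \dim(\mathrm{Ind}_H^G(\chi))
\]
gives $M \geq \dim(\theta)/\dim(\mathrm{Ind}_H^G(\chi))$, so any $\rho_i$ achieving $m_i = M$ is the required $\rho$. There is no real obstacle here; the only thing to be careful about is the interpretation of $\mathrm{Irr}(G \mid \chi)$ (as the set of irreducibles appearing in $\mathrm{Ind}_H^G(\chi)$, equivalently whose restriction to $H$ contains $\chi$), which makes the Frobenius reciprocity step legitimate even when $\chi$ is not irreducible.
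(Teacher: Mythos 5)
Your proof is correct and follows essentially the same pigeonhole-on-dimensions argument as the paper: you decompose $\theta$ into irreducibles, observe that by hypothesis every constituent lies in $\mathrm{Irr}(G\mid\chi)$, bound $\sum\dim(\rho_i)$ by $\dim(\mathrm{Ind}_H^G(\chi))$, and extract the maximal multiplicity. The only cosmetic difference is that the paper indexes its sum over all of $\mathrm{Irr}(G\mid\chi)$ (allowing some $m_k=0$) while you index only over the actual constituents of $\theta$; the dimension inequalities are identical.
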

\begin{proof}
    Let $\mathrm{Irr}(G\mid \chi)=\{\rho_1,\rho_2,...,\rho_t\}$ and $m_k=\langle \theta,\rho_k\rangle$ for $k \in [1, t]$. Note that $\sum_{1\leq k\leq t}\dim(\rho_k)\leq \dim(\mathrm{Ind}_{H}^{G}(\chi)).$ Since  $\{\rho\in \mathrm{Irr}(G)\mid \langle 
     \rho,\theta\rangle\neq 0 \}\subseteq\mathrm{Irr}(G\mid \chi),$ we also have  $\dim(\theta)=\sum_{1\leq k\leq t}m_k\dim(\rho_k).$
     To show the result, it is enough to prove that $m^*:=\mathrm{max}\{m_k \mid  k \in  [1, t]\}$ satisfies $m^* \geq \frac{\dim(\theta)}{\dim(\mathrm{Ind}_{H}^{G}(\chi))}. $ This directly follows from the following:
     $$m^* \dim(\mathrm{Ind}_{H}^{G}(\chi))\geq m^*  \sum_{1\leq k\leq t}\dim(\rho_k)\geq  \sum_{1\leq k\leq t}m_k\dim(\rho_k)=\dim(\theta). $$
This completes the proof. 
\end{proof}

\begin{proof}[Proof of \autoref{thm:SNS with SNS}]
For $k\in \{1,2\}$,
let $\phi_k \in \mathrm{Irr}(S_A \mid \psi_A)$ such that $\rho_k \cong \mathrm{ind}_{S_A}^{\G(\cO_\ell)}(\phi_k).$ For $i\in [\lceil \ell_1/2 \rceil , \ell_1]$, denote $|\{S_{A}gS_{A}\mid g\in \mathcal{X}_{i} \}|$ by $n_i$, and 
        let $\{g_{i,j}\mid  \, 1\leq j \leq n_i\}\subseteq \mathcal{X}_{i}$ be a set of distinct double coset representatives of $S_A\backslash \G(\cO_\ell)/S_A$ in  $\mathcal{X}_{i}$.
    Consider the sub-representation 
$$ \Theta:=\oplus_{\lceil \ell_1/2 \rceil \leq i\leq \ell_1} \left( \oplus_{1 \leq j\leq n_i}\mathrm{Ind}_{S_{A}\cap S_{A}^{g_{i,j}}}^{\G(\cO_{\ell})}(\phi_1 \otimes \phi_2^{g_{i,j}})\right)$$
of $\mathrm{Ind}_{S_{A}}^{\G(\cO_{\ell})}(\phi_1)\otimes\mathrm{Ind}_{S_{A}}^{\G(\cO_{\ell})}(\phi_2).$
For $k\in \{1,2\}$, let  $\chi_k\in \mathrm{Irr}(\Z \K^{\ell_2} )$  be such that $\langle \phi_k ,\chi_k \rangle_{\Z\K^{\lup}} \neq 0.$ 
Note that $\chi_1|_{\K^{\ell_2}}=\chi_2|_{\K^{\ell_2}}=\psi_A.$
For any $g\in \bigcup_{\lceil \ell_1/2 \rceil \leq i \leq \ell_1}\mathcal{X}_i ,$ we have $A + g Ag^{-1}$ is conjugate to $2A$. This gives 
     $$\{\rho\in \mathrm{Irr}(\G(\cO_{\ell}))\mid \langle 
     \rho,\mathrm{Ind}_{S_{A}\cap S_{A}^g}^{\G(\cO_{\ell})}(\phi_1 \otimes \phi_2^g)
     \rangle\neq 0 \}\subseteq\mathrm{Irr}(\G(\cO_{\ell})\mid \chi_1\otimes\chi_2).$$
Therefore $\{\rho\in \mathrm{Irr}(\G(\cO_{\ell}))\mid \langle \rho,\Theta\rangle\neq 0 \}\subseteq\mathrm{Irr}(\G(\cO_{\ell})\mid \chi_1\otimes\chi_2).$
By \autoref{lem:multiplicity geq}, there exists a representation $\rho\in\mathrm{Irr}(\G(\cO_{\ell})\mid \chi_1\otimes\chi_2)$ such that $$\langle \Theta,\rho\rangle \geq \frac{\dim(\Theta)}{\dim(\mathrm{Ind}_{\Z \K^{\ell_2}}^{\G(\cO_{\ell})}(\chi_1\otimes\chi_2))}=\frac{\dim(\Theta) |\Z \K^{\ell_2}|}{|\G(\cO_{\ell})|}.$$ Since $\Theta$ is a sub-representation of $\mathrm{Ind}_{S_{A}}^{\G(\cO_{\ell})}(\phi_1)\otimes\mathrm{Ind}_{S_{A}}^{\G(\cO_{\ell})}(\phi_2)$ and $\mathrm{Irr}(\G(\cO_{\ell})\mid \chi_1\otimes\chi_2)\subseteq \mathrm{Irr}(\G(\cO_{\ell})\mid \psi_{2A})=\mathrm{Irr}(\G(\cO_{\ell})\mid \psi_A),$ to prove \autoref{thm:SNS with SNS}, it is enough to show that $\frac{\dim(\Theta)|\Z \K^{\ell_2}|}{|\G(\cO_{\ell})|}\geq \frac{q-2}{q^2}q^{\lfloor \ell_1/2 \rfloor}$.

To calculate $\dim(\Theta),$  note that for $g_{i,j}=\smat{a}{\pi^i b}{0}{c}\in \mathcal{X}_{i},$ we have 
$$g_{i,j}Ag_{i,j}^{-1}=\nonsq \mat{\pi^i a^{-1}b}{-\pi^{2i} a^{-1}c^{-1}b^2}{a^{-1}c}{-\pi^i a^{-1}b}.$$ 
By the definition of $S_A$, we obtain that $S_{A} \cap S_{A}^{g_{i,j}}=(\{x\mathrm{I}+y \tilde{A}\mid x\in \cO_{\ell}^\times, y\in \pi^{\ell_1-i}\cO_{\ell} \}\K^{\ell_1}) \cap \G(\cO_\ell).$ By direct computations, $|S_{A} \cap S_{A}^{g_{i,j}}|=(q+\Delta)q^{4\ell_2+\ell_1+i-1}.$ We also have $\dim(\phi_1 \otimes \phi_2^{g_{i,j}})=q^{2(\ell_2-\ell_1)}.$ 
By using \autoref{prop:double coset rep SNS with SNS}(3), we have   
\begin{eqnarray*}
    \dim(\Theta)
    &=&\sum_{\lceil \ell_1/2 \rceil \leq i\leq \ell_1} 
     \frac{n_i|\G(\cO_{\ell})|q^{2(\ell_2-\ell_1)}}{(q+\Delta)q^{4\ell_2+\ell_1+i-1}} \\
   &=&\frac{(q-2)|\G(\cO_{\ell})|}{(q+\Delta)q^{2\ell_2+2\ell_1+1}} \left[ \left( \sum_{\lceil \ell_1/2 \rceil \leq i\leq \ell_1-1}  \frac{(q-1)}{q^i} \right) +   \frac{q}{q^{\ell_1}} \right]\\
   &=&\frac{(q-2)|\G(\cO_{\ell})|}{(q+\Delta)q^{2\ell_2+2\ell_1+1}} \left[  \frac{1}{q^{\lceil \ell_1/2 \rceil -1}}  \right].
\end{eqnarray*}
Since $|\Z \K^{\ell_2}|=(q+\Delta)q^{4\ell_1 + \ell_2-1}$ and $\ell_1=\lceil \ell_1/2 \rceil+\lfloor \ell_1/2 \rfloor,$ we obtain 
$$\frac{\dim(\Theta) |\Z \K^{\ell_2}|}{|\G(\cO_{\ell})|}=
    (q-2)q^{\lfloor \ell_1/2 \rfloor+\ell_1-\ell_2-1}. 
$$
Hence the result follows. \end{proof}
\section{Further discussion and questions}
\label{sec:further-discussion-questions}
On the basis of computations in GAP, we conjecture the following number of regular constituents in the tensor products of regular representations of different types. 
\begin{table}[h]
\centering
\begin{tabular}{|c|c|c|c|c|}
\hline
 & $\#\cus$ & $\#\sns$ & \multicolumn{2}{|c|}{$\#\ss$} \\
\hline
 \textbf{multiplicity $\rightarrow$}& \textbf{1}  & \textbf{1}  & \textbf{1} & \textbf{2} \\
\hline
$\cus \otimes \ss$ & $\frac{(q^2-1)}{2}q^{\ell-2}$ & $q^{\ell-1}$ & $\frac{(q-1)^2}{2}q^{\ell-2}$ & - \\
\hline
$\cus \otimes \sns$ & $\frac{(q+1)(q-3)}{2}q^{\ell-2}$ & $q^{\ell-1}$ & $\frac{(q-1)^2}{2}q^{\ell-2}$ & - \\
\hline
$\ss \otimes \sns$ & $\frac{(q^2-1)}{2}q^{\ell-2}$ & $q^{\ell-1}$ & $\frac{(q-1)(q-3)}{2}q^{\ell-2}$ & $(q-1)q^{\ell-2}$ \\
\hline
\end{tabular}
\caption{Conjectured number of constituents in tensor products of regular representations with different types}
\label{tab:groupedD}
\end{table}
To determine the multiplicities of the non-regular constituents in tensor products of $\G(\cO_\ell)$ representations is a question we have not addressed in this work. Another natural direction is to study the tensor product problem for automorphism groups of rank two $\cO$-modules.

\vspace{.2cm} 
\noindent {\bf Acknowledgements:} The authors are grateful to Uri Onn and Santosh Nadimpalli for helpful discussions regarding this work. The second named author gratefully acknowledges the Post-Doctoral Fellowship provided by the National Board for Higher Mathematics (NBHM), India. The third named author acknowledges the financial support provided by SERB, India, through grant SPG/2022/001099.

\bibliography{refs}

\end{document}